\numberwithin{equation}{section}
\numberwithin{figure}{section}
\theoremstyle{plain}
\newtheorem{lyxalgorithm}{\protect\algorithmname}
\theoremstyle{plain}
\newtheorem*{lyxalgorithm*}{\protect\algorithmname}
\theoremstyle{plain}
\newtheorem{thm}{\protect\theoremname}
\theoremstyle{remark}
\newtheorem*{rem*}{\protect\remarkname}
\theoremstyle{remark}
\newtheorem{rem}[]{\protect\remarkname}
\theoremstyle{plain}
\newtheorem*{asm*}{\protect\assumptionname}
\theoremstyle{plain}
\theoremstyle{plain}
\newtheorem{lem}[thm]{\protect\lemmaname}
\theoremstyle{plain}
\newtheorem{cor}[thm]{\protect\corollaryname}
\theoremstyle{plain}
\newtheorem{prop}[thm]{\protect\propositionname}
\theoremstyle{definition}
\newtheorem*{example*}{\protect\examplename}
\newcommand{\1}{\mathbbm{1}}
\newcommand{\eqd}{\overset{d}{=}}
\newcommand{\C}{\mathcal{C}}
\newcommand{\D}{\mathcal{D}}
\newcommand{\W}{\mathcal{W}}
\newcommand{\E}{\mathbb{E}}
\newcommand{\V}{\mathbb{V}}
\newcommand{\p}{\mathbb{P}}
\newcommand{\Z}{\mathbb{Z}}
\newcommand{\R}{\mathbb{R}}
\newcommand{\N}{\mathbb{N}}
\newcommand{\ov}[1]{\overline{#1}}
\newcommand{\un}[1]{\underline{#1}}
\newcommand{\cl}[1]{\left\lceil{#1}\right\rceil}
\newcommand{\nf}[1]{{\normalfont#1}}
\newcommand{\tsqrt}[1]{{\textstyle\sqrt{#1}}}
\newcommand{\cid}{\overset{d}{\to}}
\newcommand{\sgn}{\text{\normalfont sgn}}
\newcommand{\Lip}{\text{\normalfont Lip}}
\newcommand{\locLip}{\text{\normalfont locLip}}
\newcommand{\BT}{\text{\normalfont BT}}
\newcommand{\BE}{\mathrm{BE}}
\newcommand{\U}{\text{\normalfont U}}
\newcommand{\Poi}{\text{\normalfont Poi}}
\newcommand{\MC}{\text{\normalfont MC}}
\newcommand{\ML}{\text{\normalfont ML}}
  \providecommand{\algorithmname}{Algorithm}
  \providecommand{\assumptionname}{Assumption}
  \providecommand{\examplename}{Example}
  \providecommand{\lemmaname}{Lemma}
  \providecommand{\propositionname}{Proposition}
  \providecommand{\remarkname}{Remark}
\providecommand{\corollaryname}{Corollary}
\providecommand{\theoremname}{Theorem}
\begin{document}

\title[Stick-breaking simulation with small-jump Gaussian approximation]{Simulation of the drawdown and its duraton in L\'evy models via 
	stick-breaking Gaussian approximation}

\author[Gonz\'alez C\'azares and Mijatovi\'c]{Jorge Gonz\'{a}lez C\'{a}zares \and Aleksandar Mijatovi\'{c}}

\address{Department of Statistics, University of Warwick \and 
	The Alan Turing Institute, UK}

\email{jorge.gonzalez-cazares@warwick.ac.uk}
\email{a.mijatovic@warwick.ac.uk}

\begin{abstract}
We develop a computational method for expected functionals of  the 
drawdown and its duration in exponential L\'evy models. 
It is based on a novel simulation algorithm for the joint law 
of the state, supremum and time the supremum is attained of 
the Gaussian approximation of a general L\'evy process. 
We bound the bias for various locally Lipschitz and 
discontinuous payoffs arising in applications and analyse 
the computational complexities of the corresponding 
Monte Carlo and multilevel Monte Carlo estimators. 
Monte Carlo methods for L\'evy processes (using Gaussian approximation) have been analysed 
for Lipschitz payoffs, in which case 
the computational complexity 
of our algorithm is up to two orders of magnitude smaller 
when the jump activity is high. 
At the core of our approach are bounds 
on certain Wasserstein 
distances, obtained via the novel SBG coupling 
between a L\'evy process and its Gaussian 
approximation. Numerical performance, based on the 
implementation in~\cite{Jorge_GitHub3}, exhibits a good 
agreement with our theoretical bounds. 
\end{abstract}

\keywords{L\'evy processes, extremes, Asmussen-Rosi\'nski approximation, 
	Gaussian approximation}
\subjclass[2020]{Primary: 60G51, 60G70, 65C05; Secondary: 91G60}

\maketitle
\section{Introduction}

\subsection{Setting and Motivation}	


L\'evy processes are increasingly popular for the modeling of the 
market prices of risky assets. 
They naturally address the shortcoming of the diffusion models 
by 
allowing large (often heavy-tailed) 
sudden movements of the asset price observed in the 
markets~\cite{schoutens2003levy,KouLevy,tankov2015}. 
For risk management, it is therefore crucial to quantify the 
probabilities of rare and/or extreme events in L\'evy models. 
Of particular interest in this context are the 
distributions of the drawdown (the current decline from a 
historical peak) and its duration (the elapsed time since the 
historical peak), 
see e.g.~\cite{DrawdownSornette,DrawdownVecer,CarrDrawdown,
	FutureDrawdowns, MR3556778}. 
Together with the hedges for barrier 
options~\cite{MR1919609,MR2202995,MR2519843,MR3723380} 
and ruin probabilities in 
insurance~\cite{MR2013414,MR2099651,MR3338431}, 
the expected drawdown and its duration constitute risk measures dependent 
on the following random vector, which is a statistic of the path of a L\'evy process $X$:
a historic maximum $\ov X_T$ at a time $T$, the time $\ov \tau_T(X)$ at which this 
maximum was attained and the value $X_T$ of the process at $T$. 
Since neither the distribution of the drawdown 
$1-\exp(X_T-\ov X_T)$ nor of its duration $T-\ov\tau_T(X)$ is 
analytically tractable for a general $X$,  
simulation provides a natural alternative. The main objective of the 
present paper is to develop and analyse a novel practical 
simulation algorithm for the joint law of $(X_T,\ov X_T,\ov\tau_T(X))$, applicable to a general L\'evy process $X$. 


Exact simulation of the drawdown of a L\'evy process is currently 
out of reach \emph{except} for the stable~\cite{MR4032169} and 
jump diffusion cases. However, even in the stable case it is not known 
how to jointly simulate any two 
components of 
$(X_T,\ov X_T,\ov\tau_T(X))$.
Among the approximate simulation algorithms, the recently 
developed stick-breaking approximation~\cite{LevySupSim} 
is the fastest in terms of its computational complexity, as it 
samples from 
the law of
$(X_T,\ov X_T,\ov\tau_T(X))$
with a geometrically decaying bias. 
However, like most 
approximate simulation algorithms for a statistic of the entire 
trajectory, it is only valid for L\'evy process whose increments can 
be sampled. Such a requirement does not hold for large classes of 
widely used L\'evy processes, including the general CGMY 
(aka KoBoL) model~\cite{MR1995283}. Moreover, nonparametric 
estimation of L\'evy processes typically yields L\'evy measures 
whose transitions cannot be 
sampled~\cite{MR2546805,MR2661599,MR2816339,cai_guo_you_2018,MR3909959}, again making a direct application of 
the algorithm in~\cite{LevySupSim} infeasible. 

If the increments of $X$ cannot be sampled, 
a general approach is to use the Gaussian approximation~\cite{MR1834755}, which
substitutes the small-jump component of the L\'evy process by a 
Brownian motion. Thus, the Gaussian approximation process is a jump diffusion 
and the exact sample of the random vector (consisting of 
the state of the process, the supremum and the time the supremum 
is attained)  can be obtained by 
applying~\cite[Alg.~{\small{MAXLOCATION}}]{MR2730908} 
between the consecutive jumps. However, little is known about how 
close these quantities are to the vector 
$(X_T,\ov X_T,\ov\tau_T(X))$
that is being approximated in either
Wasserstein or Kolmogorov distances. Indeed, bounds on the 
distances between the marginal of the Gaussian approximation 
and $X_T$ have been considered 
in~\cite{MR3077542} and recently improved 
in~\cite{MR3833470,Mariucci2}. A Wasserstein bound on the 
supremum is given in~\cite{MR3077542} but so far no 
improvement analogous to the marginal case has been established. 
Moreover, to the best of our knowledge, there are no 
corresponding results either for the joint law of 
$(X_T,\ov X_T)$ 
or the time 
$\ov\tau_T(X)$.
Furthermore, as explained in Subsection~\ref{subsubsec:Error_term} 
below, the exact simulation algorithm for the supremum 
and the time of the supremum of a Gaussian approximation 
based on~\cite[Alg.~{\small{MAXLOCATION}}]{MR2730908} is 
unsuitable for the multilevel Monte Carlo  
estimation. 

The main motivation for the present work is to provide 
an operational framework for L\'evy processes, 
which allows us to settle the issues raised in the previous paragraph, develop 
a general simulation algorithm for 
$(X_T,\ov X_T,\ov\tau_T(X))$
and analyse the computational complexity of its Monte Carlo (MC) and multilevel Monte Carlo (MLMC) estimators.

\subsection{Contributions}
\label{subsec:contributions}

The main contributions of this paper are twofold. \textbf{(I)} We establish 
bounds on the Wasserstein and Kolmogorov distances between 
the vector $\ov\chi_T=(X_T,\ov{X}_T,\ov\tau_T(X))$ and 
its Gaussian approximation
$\ov\chi_T^{(\kappa)} = (X_T^{(\kappa)},\ov{X}_T^{(\kappa)},\ov\tau_T(X^{(\kappa)}))$, 
where 
$X^{(\kappa)}$
is a jump diffusion 
equal to the L\'evy process $X$ 
with all the jumps smaller than $\kappa\in(0,1]$ substituted by a Brownian motion (see definition~\eqref{eq:ARA} below),
and
$\ov{X}^{(\kappa)}_T$ 
(resp. $\ov\tau_T(X^{(\kappa)})$) is the supremum 
of
$X^{(\kappa)}$
(resp. the time 
$X^{(\kappa)}$
attains the supremum)
over the time interval $[0,T]$.
\textbf{(II)} We introduce a simple and fast algorithm, \nameref{alg:SBG}, which 
samples exactly the vector of interest for the Gaussian 
approximation of any L\'evy process $X$, develop an MLMC 
estimator based on \nameref{alg:SBG} 
(see~\cite{Jorge_GitHub3} for an implementation in Julia) 
and analyse its 
complexity for discontinuous and locally Lipschitz payoffs arising 
in applications. 
We now briefly 
discuss each of the two contributions. 

\textbf{(I)}~In Theorem~\ref{thm:Wd-triplet} (see also 
Corollary~\ref{cor:Wd-chi}) we establish bounds on the 
Wasserstein distance between $\ov\chi_T$ and 
$\ov\chi_T^{(\kappa)}$
(as $\kappa$ tends to $0$)
under weak assumptions, typically satisfied by the 
models used in applications.
The proof of Theorem~\ref{thm:Wd-triplet} has two main ingredients.
First, in Subsection~\ref{subsec:proofThm1} below, we construct a  
novel \emph{SBG coupling} between $\ov \chi_T$ and 
$\ov\chi_T^{(\kappa)}$,
based on the stick-breaking (SB) representation of 
$\ov\chi_T$ in~\eqref{eq:chi_infty} and the minimal transport 
coupling between the increments of $X$ and its approximation  
$X^{(\kappa)}$.
The second ingredient consists of new bounds on the Wasserstein 
and Kolmogorov distances, given  in Theorems~\ref{thm:W-marginal}
and~\ref{thm:K-marginal} respectively, between the laws of 
$X_t$  and 
$X^{(\kappa)}_t$ for any $t>0$.

Theorem~\ref{thm:Wd-triplet} is our main tool for controlling the 
distance between $\ov \chi_T$ and $\ov\chi_T^{(\kappa)}$.
The SBG coupling underlying it cannot be simulated, but it provides
a bound on the bias of 
\nameref{alg:SBG}. Dominating the bias of the time
$\ov\tau_T(X)$,
which is a non-Lipschitz 
functional of the path of $X$, requires 
(by SB representations~\eqref{eq:chi_infty}) the bound in 
Theorem~\ref{thm:K-marginal} on the Kolmogorov distance 
between the marginals. Applications related to the duration of 
drawdown and the risk-management of barrier options require
bounding the bias of certain discontinuous functions of 
$\ov \chi_T$. In Subsection~\ref{subsec:bias} we develop such 
bounds. Their proofs are based on Theorem~\ref{thm:Wd-triplet} 
and Lemma~\ref{lem:Lp-to-barrier} of 
Subsection~\ref{subsec:proofProps}, which essentially 
converts Wasserstein distance into Kolmogorov distance 
for sufficiently regular distributions. We give explicit general 
sufficient conditions on the characteristic triplet of the L\'evy 
process $X$ (see Proposition~\ref{prop:SimpAsmH} below),
which guarantee the applicability of the results of 
Subsection~\ref{subsec:bias} to models typically used in practice. 
Moreover, we obtain bounds on the Kolmogorov distance between the components of
$(\ov X_T,\ov\tau_T(X))$ 
and
$(\ov X^{(\kappa)}_T,\ov\tau_T(X^{(\kappa)}))$
(see Corollary~\ref{cor:K-sup-tau} below), which we hope 
are of independent interest.

\textbf{(II)}
Our main simulation algorithm, \nameref{alg:SBG}, samples jointly 
coupled Gaussian approximations of $\ov\chi_T$ at distinct 
approximation levels. The coupling in \nameref{alg:SBG} exploits 
the following simple observations: 
\begin{itemize}\label{itemize:SBG_alg}
	\item Any Gaussian approximation $\ov \chi^{(\kappa)}_T$ 
	has an SB representation in~\eqref{eq:chi}, where 
		the law of $Y$ in~\eqref{eq:chi} must equal that of $X^{(\kappa)}$. 
	\item For any two Gaussian approximations, 
	the stick-breaking process in~\eqref{eq:chi} can be shared. 
	\item The increments in~\eqref{eq:chi} over the 
		shared sticks can be coupled using definition~\eqref{eq:ARA} of the Gaussian approximation $X^{(\kappa)}$.
\end{itemize}

We analyse the computational complexity of the MLMC estimator 
based on \nameref{alg:SBG} for a variety of payoff functions arising 
in applications. Figure~\ref{fig:SBG_func_comparison} shows the 
leading power of the resulting MC and MLMC complexities, 
summarised in Tables~\ref{tab:MC} and~\ref{tab:MLMC} below (see Theorem~\ref{thm:SBG_MLMC} for full details), 
for locally Lipschitz and discontinuous payoffs used in practice. 
To the best of 
our knowledge, neither locally Lipschitz nor discontinuous payoffs 
had been previously considered in the context of MLMC estimation 
under Gaussian approximation.

A key component of the analysis of the complexity of an MLMC estimator 
is the rate of decay of level variances 
(see Appendix~\ref{subsec:MLMC} for details). 
In the case of \nameref{alg:SBG}, the rate of decay is given in 
Theorem~\ref{thm:summary} below for locally Lipschitz and discontinuous payoffs of interest.  
Moreover, the proof of Theorem~\ref{thm:summary} shows that the decay of the level 
variances for Lipschitz payoffs under \nameref{alg:SBG} is 
asymptotically equal to that of Algorithm~\ref{alg:ARA}, which 
samples jointly the increments at two distinct levels only. 
Furthermore, an improved  coupling 
in Algorithm~\ref{alg:ARA}
for the increments of the Gaussian 
approximations (cf. the last bullet on the list above) would reduce 
the computational complexity the MLMC estimator for all payoffs 
considered in this paper (including the discontinuous ones). 
To the best of our knowledge, \nameref{alg:SBG} is the first exact 
simulation algorithm for coupled Gaussian approximations  
of $\ov\chi_T$ with vanishing level variances when $X$ has a 
Gaussian component, see also 
Subsection~\ref{subsubsec:Error_term} below.



In Section~\ref{sec:numerics},
using the code in repository~\cite{Jorge_GitHub3}, we test our 
theoretical findings against numerical results. 
We run \nameref{alg:SBG} for models 
in the tempered stable and Watanabe classes. The former is a 
widely used class of processes whose increments cannot be 
sampled for all parameter values and the latter is a well-known class of processes with 
infinite activity but singular continuous increments. In both cases 
we find a reasonable agreement between the theoretical prediction 
and the estimated decays of the bias and level variance, 
see Figures~\ref{fig:TS} \&~\ref{fig:Watanabe} below. 

In the context of MC estimation, a direct simulation algorithm 
based on~\cite[Alg.~{\small{MAXLOCATION}}]{MR2730908} (Algorithm~\ref{alg:ARA_2} below) can 
be used instead of \nameref{alg:SBG}. In Subsection~\ref{subsec:CP_example} we compare 
numerically its cost with that of \nameref{alg:SBG}. In the examples 
we considered, the speedup of \nameref{alg:SBG} over Algorithm~\ref{alg:ARA_2} is about 50, 
see Figure~\ref{fig:ARA-Speedup2}, remaining significant 
even for processes with small jump activity, see 
Figure~\ref{fig:ARA-Speedup}.

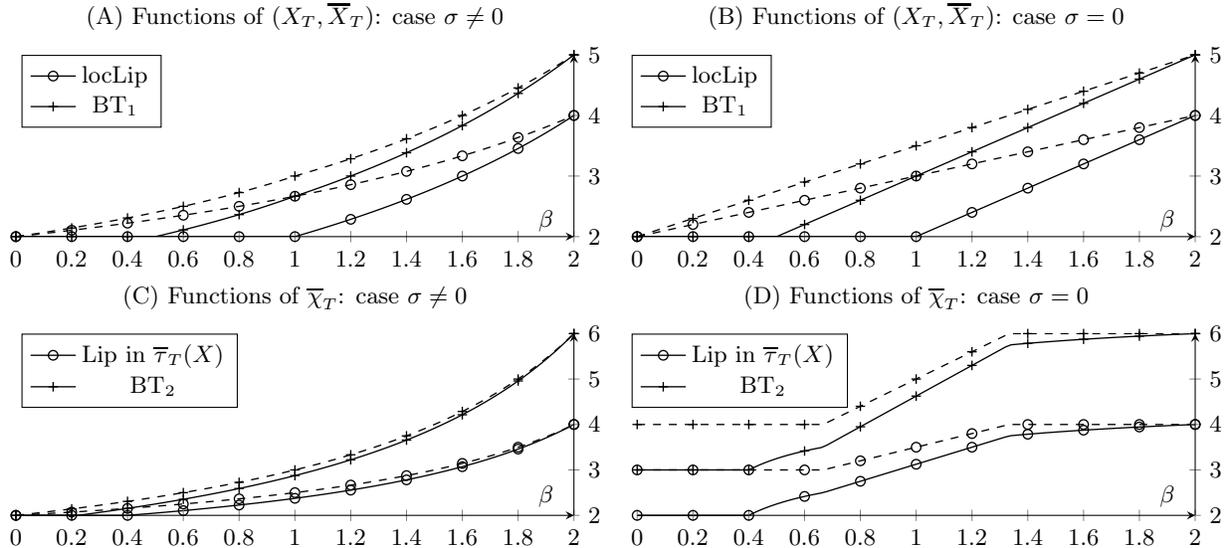
\begin{figure}[ht]
	\begin{center}
		\begin{subfigure}[l]{.49\linewidth}
			{\scalefont{.8}
				\begin{tikzpicture} 
					\begin{axis} 
						[
						title ={(A) Functions of $(X_T,\ov{X}_T)$: 
							case $\sigma\ne0$},
						ymin=2,
						ymax=5,
						xmin=0,
						xmax=2,
						width=9cm,
						height=4cm,
						axis on top=true,
						axis x line=bottom, 
						axis y line=right,
						legend style={at={(.01,1)},anchor=north west}]
						
						\addplot [dashed, mark=o, samples = 101, mark options={scale=.9, solid}, 
						mark repeat=10, color=black, line width = .5, domain=0:2]
						{2+x/(2-x/2)};		
						
						\addplot [solid, mark=o, samples = 101, mark options={scale=.9, solid}, 
						mark repeat=10, color=black, line width = .5, domain=0:2]
						{2 + max(0, 2 * (x-1)/(2-x/2))};
						
						\addplot [dashed, mark=+, samples = 101, mark options={scale=.9, solid}, 
						mark repeat=10, color=black, line width = .5, domain=0:2]
						{2 + x*min(3/(4-x),2/(3-x))};		
						
						\addplot [solid, mark=+, samples = 101, mark options={scale=.9, solid}, 
						mark repeat=10, color=black, line width = .5, domain=0:2]
						{2 + max(0,2*x-1)*min(2/(4-x),4/(9-3*x))};
						
						
						\node[label=above:{$\beta$}] at (190,-20) {}{};
						\legend {,
							\footnotesize $\locLip$,,
							\footnotesize $\BT_1$,
						};
					\end{axis}
			\end{tikzpicture}}
		\end{subfigure}
		\begin{subfigure}[r]{.49\linewidth}
			{\scalefont{.8}
				\begin{tikzpicture} 
					\begin{axis} 
						[
						title = {(B) Functions of $(X_T,\ov{X}_T)$: case $\sigma=0$},
						ymin=2,
						ymax=5,
						xmin=0,
						xmax=2,
						width=9cm,
						height=4cm,
						axis on top=true,
						axis x line=bottom, 
						axis y line=right,
						legend style={at={(-.01,1)},anchor=north west}]
						
						\addplot [dashed, mark=o, samples = 101, mark options={scale=.9, solid}, 
						mark repeat=10, color=black, line width = .5, domain=0:2]
						{x+2};		
						
						\addplot [solid, mark=o, samples = 101, mark options={scale=.9, solid}, 
						mark repeat=10, color=black, line width = .5, domain=0:2]
						{max(2, 2 * x };
						
						\addplot [dashed, mark=+, samples = 101, mark options={scale=.9, solid}, 
						mark repeat=10, color=black, line width = .5, domain=0:2]
						{2 + 3 * x/2};		
						
						\addplot [solid, mark=+, samples = 101, mark options={scale=.9, solid}, 
						mark repeat=10, color=black, line width = .5, domain=0:2]
						{max(2, 1 + 2 * x};
						
						
						\node[label=above:{$\beta$}] at (190,-20) {}{};
						\legend {,
							\footnotesize $\locLip$,,
							\footnotesize $\BT_1$,
						};
					\end{axis}
			\end{tikzpicture}}
		\end{subfigure}
		\begin{subfigure}[l]{.49\linewidth}
			{\scalefont{.8}
				\begin{tikzpicture} 
					\begin{axis} 
						[
						title = {(C) Functions of $\ov\chi_T$: 
							case $\sigma\neq0$},
						ymin=2,
						ymax=6,
						xmin=0,
						xmax=2,
						width=9cm,
						height=4cm,
						axis on top=true,
						axis x line=bottom, 
						axis y line=right,
						legend style={at={(.01,1)},anchor=north west}]
						
						\addplot [dashed, mark=o, samples = 101, mark options={scale=.9, solid}, 
						mark repeat=10, color=black, line width = .5, domain=0:2]
						{2 + max(0, x/(3-x))};		
						
						\addplot [solid, mark=o, samples = 101, mark options={scale=.9, solid}, 
						mark repeat=10, color=black, line width = .5, domain=0:2]
						{2 + max(0, (-1 / 2 + 5 * x / 4)/(3-x))};
						
						\addplot [dashed, mark=+, samples = 101, mark options={scale=.9, solid}, 
						mark repeat=10, color=black, line width = .5, domain=0:2]
						{2 + max(0, 2 * x/(3-x))};		
						
						\addplot [solid, mark=+, samples = 101, mark options={scale=.9, solid}, 
						mark repeat=10, color=black, line width = .5, domain=0:2]
						{2 + max(0, (9 * x - 2)/(4 * (3-x)))};
						
						
						\node[label=above:{$\beta$}] at (190,-20) {}{};
						\legend {,
							\footnotesize $\Lip$ in $\ov\tau_T(X)$,,
							\footnotesize $\BT_2$,
						};
					\end{axis}
			\end{tikzpicture}}
		\end{subfigure}
		\begin{subfigure}[r]{.49\linewidth}
			{\scalefont{.8}
				\begin{tikzpicture} 
					\begin{axis} 
						[
						title = {(D) Functions of $\ov\chi_T$: 
							case $\sigma=0$},
						ymin=2,
						ymax=6,
						xmin=0,
						xmax=2,
						width=9cm,
						height=4cm,
						axis on top=true,
						axis x line=bottom, 
						axis y line=right,
						legend style={at={(-.01,1)},anchor=north west}]
						
						\addplot [dashed, mark=o, samples = 101, mark options={scale=.9, solid}, 
						mark repeat=10, color=black, line width = .5, domain=0:2]
						{2 + min(2, max((3/2) * x, 1))};		
						
						\addplot [solid, mark=o, samples = 101, mark options={scale=.9, solid}, 
						mark repeat=10, color=black, line width = .5, domain=0:2]
						{2 + ifthenelse(x < 2 / 5, 0, ifthenelse(x < 2/3,5 / 4 - 1 / (2 * x), 
							ifthenelse(x < 4/3, 15 * x / 8  - 3 / 4, 5 / 2  - 1 / x)))};
						
						\addplot [dashed, mark=+, samples = 101, mark options={scale=.9, solid}, 
						mark repeat=10, color=black, line width = .5, domain=0:2]
						{2 + min(4, max(3 * x, 2))};		
						
						\addplot [solid, mark=+, samples = 101, mark options={scale=.9, solid}, 
						mark repeat=10, color=black, line width = .5, domain=0:2]
						{2 + ifthenelse(x < 2 / 5, 1, (x * 9 / 8 -  1 / 4) * min(4 / x, max(3, 2 / x)))};
						
						\node[label=above:{$\beta$}] at (190,-20) {}{};
						\legend {,
							\footnotesize $\Lip$ in $\ov\tau_T(X)$,,
							\footnotesize $\BT_2$,
						};
					\end{axis}
			\end{tikzpicture}}
		\end{subfigure}
		\caption{\footnotesize Dashed (resp. solid) line plots the power of 
			$\epsilon^{-1}$ in the computational 
			complexity of an MC (resp. MLMC) estimator, 
			as a function of the BG index $\beta$ defined in~\eqref{eq:I0_beta}, 
			for discontinuous functions in $\BT_1$~\eqref{def:BT1} and 
			$\BT_2$~\eqref{def:BT2}, locally Lipschitz payoffs as well as 
			Lipschitz functions of $\ov \tau_T(X)$. 
			The cases are split according to whether 
			$X$ is with ($\sigma\ne 0$) or without ($\sigma=0$) a 
			Gaussian component. 
			The pictures are based on Tables~\ref{tab:MC} and~\ref{tab:MLMC} 
			under assumptions typically satisfied in applications, 
			see Subsection~\ref{subsec:complexities} below for details. 
		}\label{fig:SBG_func_comparison}
	\end{center}
\end{figure}

\subsection{Comparison with the literature}
\label{subsec:literature}

Approximations of 
the pair 
$(X_T,\ov X_T)$
abound. They include the random walk approximation, a Wiener-Hopf based 
approximation~\cite{MR2895413,MR3138603}, the jump-adapted 
Gaussian (JAG) approximation~\cite{MR2802466,MR2759203} and, 
more recently, the SB approximation~\cite{LevySupSim}. The SB 
approximation converges the fastest as its bias decays 
geometrically in its computational cost. However, the JAG 
approximation is the only method known to us that does not require the ability 
to simulate the increments of the L\'evy process $X$. Indeed, the JAG 
approximation simulates all jumps above a cutoff level, together with their jump 
times, and then samples the transitions of the Brownian motion from the 
Gaussian approximation on a random grid containing all the jump times. 
In contrast, in the present paper we approximate 
$\ov\chi_T=(X_T,\ov{X}_T,\ov\tau_T(X))$ 
with an \emph{exact} sample from the law of 
the Gaussian approximation
$\ov\chi_T^{(\kappa)} = (X_T^{(\kappa)},\ov{X}_T^{(\kappa)},\ov\tau_T(X^{(\kappa)}))$. 

The JAG approximation has been analysed for Lipschitz payoffs of 
the pair $(X_T,\ov{X}_T)$ in~\cite{MR2802466,MR2759203}.
The discontinuous and locally Lipschitz payoffs arising in applications, 
considered in this paper (see Figure~\ref{fig:SBG_func_comparison}),
have to the best of our knowledge not been analysed for the JAG approximation.
Nor have the payoffs involving the time $\ov\tau_T(X)$ the supremum is attained.  
Within the class of Lipschitz payoffs of 
$(X_T,\ov{X}_T)$, 
the computational complexities of the MC and MLMC estimators 
based on \nameref{alg:SBG} are asymptotically dominated by those based on 
the JAG approximation, 
see Figure~\ref{fig:JAG_SBG}. 
In fact, 
\nameref{alg:SBG} applied to discontinuous payoffs 
outperforms the JAG approximation applied to Lipschitz payoffs by 
up to an order of magnitude in computational complexity, 
cf. Figure~\ref{fig:SBG_func_comparison}(A) \&~(B) and Figure~\ref{fig:JAG_SBG}.

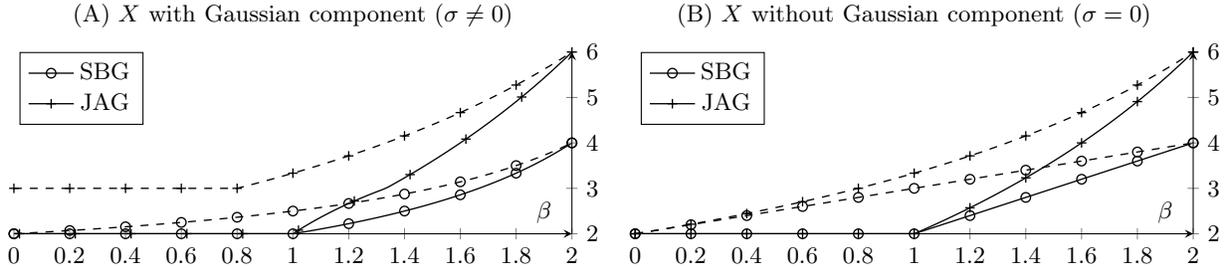
\begin{figure}[ht]
	\begin{subfigure}[r]{.49\linewidth}
		{\scalefont{.8}
			\begin{tikzpicture} 
				\begin{axis} 
					[
					title = {(A) $X$ with Gaussian component ($\sigma\neq0$)},
					ymin=2,
					ymax=6,
					xmin=0,
					xmax=2,
					width=9cm,
					height=4cm,
					axis on top=true,
					axis x line=bottom, 
					axis y line=right,
					legend style={at={(.01,1)},anchor=north west}]
					
					\addplot [dashed, mark=o, samples = 101, mark options={scale=.9, solid}, 
					mark repeat=10, color=black, line width = .5, domain=0:2]
					{x/(3-x)+2};		
					
					\addplot [solid, mark=o, samples = 101, mark options={scale=.9, solid}, 
					mark repeat=10, color=black, line width = .5, domain=0:2]
					{2 + max(0, 2 * (x-1) / (3-x)) };
					
					\addplot [dashed, mark=+, samples = 101, mark options={scale=.9, solid}, 
					mark repeat=10, 
					color=black, line width = .5, domain=0:2]
					{max(4*x/(4-x)+2,3)};
					
					\addplot [solid, mark=+, samples = 101, mark options={scale=.9, solid}, 
					mark repeat=10, 
					color=black, line width = .5, domain=0:2]
					{max(2,6-4/x,6*x/(4-x)};%
					
					\node[label=above:{$\beta$}] at (190,-15) {}{};
					\legend {
						,\footnotesize SBG,
						,\footnotesize JAG,
					};
				\end{axis}
		\end{tikzpicture}}
	\end{subfigure}
	\begin{subfigure}[l]{.49\linewidth}
		{\scalefont{.8}
			\begin{tikzpicture} 
				\begin{axis} 
					[
					title = {(B) $X$ without Gaussian component ($\sigma=0$)},
					ymin=2,
					ymax=6,
					xmin=0,
					xmax=2,
					width=9cm,
					height=4cm,
					axis on top=true,
					axis x line=bottom, 
					axis y line=right,
					legend style={at={(.01,1)},anchor=north west}]
					
					\addplot [dashed, mark=o, samples = 101, mark options={scale=.9, solid}, 
					mark repeat=10, color=black, line width = .5, domain=0:2]
					{x+2};		
					
					\addplot [solid, mark=o, samples = 101, mark options={scale=.9, solid}, 
					mark repeat=10, color=black, line width = .5, domain=0:2]
					{max(2, 2 * x};
					
					\addplot [dashed, mark=+, samples = 101, mark options={scale=.9, solid}, 
					mark repeat=10, 
					color=black, line width = .5, domain=0:2]
					{4*x/(4-x)+2};
					
					\addplot [solid, mark=+, samples = 101, mark options={scale=.9, solid}, 
					mark repeat=10, 
					color=black, line width = .5, domain=0:2]
					{max(2,6*x/(4-x)};
					
					\node[label=above:{$\beta$}] at (190,-15) {}{};
					\legend {
						,\footnotesize SBG,
						,\footnotesize JAG,
					};
				\end{axis}
		\end{tikzpicture}}
	\end{subfigure}
	\caption{\footnotesize Dashed (resp. solid) lines represent 
the power of $\epsilon^{-1}$ in the computational 
complexity of the 
MC (resp. MLMC) estimator
for the expectation of a Lipschitz functional $f(X_T,\ov{X}_T)$,
plotted as a function of the BG index $\beta$ defined in~\eqref{eq:I0_beta}. 
The SBG plots are based on 
Tables~\ref{tab:MC} and~\ref{tab:MLMC} below. 
The JAG plots are based on~\cite[Cor.~3.2]{MR2759203} for the MC 
cost, and~\cite[Cor.~1.2]{MR2759203} if $\beta\geq 1$ 
(resp.~\cite[Cor.~1]{MR2802466} if $\beta<1$) for the MLMC cost.
	}\label{fig:JAG_SBG}
\end{figure}

In order to understand where the differences in 
Figure~\ref{fig:JAG_SBG} come from, in Table~\ref{tab:JAG_SBG} 
we summarise the bias and level variance for \nameref{alg:SBG} and 
the JAG approximation as a function of the cutoff level 
$\kappa\in(0,1]$ in the Gaussian approximation 
(cf.~\eqref{eq:ARA} below). 

\begin{table}[ht]
{\scalefont{.9}
	\begin{tabular}{|c|c|c|c|}
		\hline
		Gaussian component & Approximation& Bias & Level variance \\
		\hline
		\multirow{2}{*}{With ($\sigma\ne0$)}
		& JAG
		& $\max\{\kappa^{1-\beta/4},\kappa^{\beta/2}\}
		\log^{1/2}(1/\kappa)$ 
		& $\max\{\kappa^{2-\beta},\kappa^\beta\log(1/\kappa)\}$\\
		& SBG
		& $\kappa^{3-\beta}\log(1/\kappa)$ 
		& $\kappa^{2-\beta}$\\
		\hline
		\multirow{2}{*}{Without ($\sigma=0$)}
		& JAG
		& $\max\{\kappa^{1-\beta/4}\log^{1/2}(1/\kappa),
		\kappa^\beta\}$ 
		& $\max\{\kappa^{2-\beta},\kappa^{2\beta}\}$\\
		& SBG
		& $\kappa\log(1/\kappa)$
		& $\kappa^{2-\beta}$\\
		\hline
	\end{tabular}
}\caption{\footnotesize 
The rates (as $\kappa\to 0$) of decay of bias and level variance for Lipschitz payoffs 
	of $(X_T,\ov X_T)$ under the JAG approximation are based on~\cite[Cor.~3.2]{MR2759203} 
and~\cite[Thm~2]{MR2802466}, respectively. The rates on the bias 
and level variance for the \nameref{alg:SBG} are given in 
Theorems~\ref{thm:Wd-triplet} \&~\ref{thm:summary} below.
}\label{tab:JAG_SBG}
\end{table}

Table~\ref{tab:JAG_SBG} shows that both bias and level variance 
decay at least as fast (and typically faster) for \nameref{alg:SBG} 
than for the JAG approximation. The large improvement in the 
computational complexity of the MC estimator in 
Figure~\ref{fig:JAG_SBG} is due to the faster decay of the bias under 
\nameref{alg:SBG}. Put differently, the SBG coupling constructed in 
this paper controls the Wasserstein distance much better than the 
KMT-based coupling in~\cite{MR2759203}. For the BG index $\beta>1$, 
the improvement in the computational complexity of the MLMC 
estimator is mostly due to an faster bias decay. For $\beta<1$, 
Figure~\ref{fig:JAG_SBG}(A) suggests that the computational 
complexity of the MLMC estimator under both algorithms is optimal.
However, in this case, Table~\ref{tab:JAG_SBG} and the equality 
in~\eqref{eq:ML_cost} imply that the MLMC estimator based on the 
JAG approximation has a computational complexity proportional to 
$\epsilon^{-2}\log^3(1/\epsilon)$ while that of 
\nameref{alg:SBG} is proportional to $\epsilon^{-2}$. This 
improvement is due solely to the faster decay of level variance 
under \nameref{alg:SBG}. 
The numerical experiments in Subsection~\ref{subsec:TSW_example} 
suggest that our bounds for Lipschitz and locally Lipschitz functions 
are sharp, see graphs (A) \& (C) in 
Figures~\ref{fig:TS} \&~\ref{fig:Watanabe}. 

To the best of our knowledge, in the literature there are no directly comparable results to 
either Theorem~\ref{thm:Wd-triplet} or 
Proposition~\ref{prop:barrier-tau}. 
Partial results in the direction of Theorem~\ref{thm:Wd-triplet} 
are given in~\cite{MR3077542,MR3833470,Mariucci2}. We will now briefly 
comment on these results.\\
\textit{Distance between the marginals $X_t$ and $X^{(\kappa)}_t$}: 
 Theorem~\ref{thm:W-marginal} below, a key step 
in the proof of Theorem~\ref{thm:Wd-triplet}, improves the 
bounds in \cite[Thm~9]{MR3833470} on the Wasserstein 
distance. Theorem~\ref{thm:K-marginal} below, a further 
key ingredient in the proof of Theorem~\ref{thm:Wd-triplet}, 
bounds the Kolmogorov distance with better rates than those 
of~\cite[Prop.~10 (part 1)]{MR3077542} (as $\kappa\to 0$). 
Papers~\cite{MR3833470,Mariucci2} obtain bounds on the total 
variation distance between $X_t$ and $X^{(\kappa)}_t$, dominating the Kolmogorov 
distance. However, Theorem~\ref{thm:K-marginal} again yields faster decay. 
For  more details about these comparisons see Subsection~\ref{subsec:wasserstein} below.\\
\textit{Distance between the suprema $\ov{X}_t$ and $\ov{X}^{(\kappa)}_t$}:
 the rate of the bound 
in~\cite[Thm~2]{MR3077542} on the Wasserstein distance is worse 
than that implied by the bound in Corollary~\ref{cor:Wd-chi} below 
on the Wasserstein distance between $(X_t,\ov{X}_t)$ and 
$(X^{(\kappa)}_t,\ov{X}^{(\kappa)}_t)$.
Proposition~\ref{prop:logLip} below 
bounds the bias of locally Lipschitz 
functions, generalising~\cite[Prop.~9]{MR3077542} and providing 
a faster decay rate. Proposition~\ref{prop:barrier} and 
Corollary~\ref{cor:K-sup-tau}(a) below cover a class of 
discontinuous payoffs, including the up-and-in digital option 
considered in~\cite[Prop.~10 (part 3)]{MR3077542}, and 
provide a faster rate of decay as $\kappa\to0$ if either $X$ has a 
Gaussian component or the BG index $\beta>2/3$. 

\subsection{Organisation}
\label{subsec:organisation}

The remainder of the paper is organised as follows. 
Section~\ref{sec:approximations} recalls SB 
representation~\eqref{eq:chi_infty}--\eqref{eq:chi}  and the 
Gaussian approximation~\eqref{eq:ARA} developed in~\cite{LevySupSim} and~\cite{MR1834755}, respectively. 
Section~\ref{sec:main-theory} presents bounds on Wasserstein and 
Kolmogorov distances between $\ov\chi_T$ and its Gaussian 
approximation 
$\ov\chi^{(\kappa)}_T$
and the biases of certain payoffs arising in 
applications. Section~\ref{sec:main-theory} also provides simple 
sufficient conditions, in terms of the L\'evy triplet, under which 
these bounds hold. Section~\ref{sec:main-apps} constructs our 
main algorithm, \nameref{alg:SBG}, and presents the computational 
complexity of the corresponding MC and MLMC estimators for all 
payoffs considered in this paper. 
In Section~\ref{sec:numerics} we illustrate numerically our 
results for a widely used class of L\'evy models. 
The proofs and the technical results are found in 
Section~\ref{sec:proofs}. Appendix~\ref{app:MonteCarlo} gives a 
brief account of the complexity analysis of MC and MLMC (introduced in~\cite{Heinrich_MLMC,MR2436856}) 
estimators. 

\section{The stick-breaking representation and the Gaussian approximation}
\label{sec:approximations}

Let $f:[0,\infty)\to\R$ be a  right-continuous function with left 
limits. For any $t\in(0,\infty)$, define $\un{f}_t:=\inf_{s\in[0,t]}f(s)$, 
$\ov{f}_t:=\sup_{s\in[0,t]}f(s)$ and let $\un\tau_t(f)$ 
(resp. $\ov\tau_t(f)$) be the last time before $t$ that 
the infimum $\un{f}_t$ (resp. supremum $\ov{f}_t$) is attained. 
Throughout $X=(X_t)_{t\geq0}$ denotes a L\'evy process, 
i.e. a  stochastic process started at the origin with independent, 
stationary increments and right-continuous paths with left limits, 
see~\cite{MR1406564,MR2250061,MR3185174} for background 
on L\'evy processes. In mathematical finance, the risky asset price 
$S=(S_t)_{t\geq0}$ under an exponential L\'evy model is given by 
$S_t:=S_0e^{X_t}$. The price $S_t$, its \textit{drawdown} 
$1-S_t/\ov{S}_t$ (resp. \textit{drawup} $1-\un{S}_t/S_t$) and 
\textit{duration} $t-\un\tau_t(S)$ (resp. $t-\ov\tau_t(S)$) at time $t$ 
can be recovered from the vector 
$\ov\chi_t:=(X_t,\ov{X}_t,\ov\tau_t(X))$ 
(resp. $\un\chi_t:=(X_t,\un{X}_t,\un\tau_t(X))$). Since $Z:=-X$ 
is a L\'evy process and $\ov\chi_t=(-Z_t,-\un{Z}_t,\un\tau_t(Z))$, 
it is sufficient to analyse the vector $\un\chi_t$. 

\subsection{The stick-breaking (SB) representation}
\label{subsec:SB-rep}
We begin by recalling~\cite[Thm~1]{LevySupSim}, which is at the core of the 
bounds and algorithms developed in this paper. Given a L\'evy process $X$  
and a time horizon $t>0$, there exists a 
coupling 
$(X,Y)$, where $Y\eqd X$ (throughout the paper $\eqd$ denotes 
equality in law), and a stick-breaking process 
$\ell=(\ell_n)_{n\in\N}$ on $[0,t]$ based on the uniform law $\U(0,1)$ 
(i.e. $L_0:=t$, $L_n:=L_{n-1}U_n$, 
$\ell_n:=L_n-L_{n-1}$  for $n\in\N$,
where $(U_n)_{n\in\N}$ is an iid sequence following $U_n\sim\U(0,1)$),
such that a.s. 
\begin{equation}
\label{eq:chi_infty}
\un\chi_t
=\sum_{k=1}^\infty \big(Y_{L_{k-1}}-Y_{L_k},
	\min\{Y_{L_{k-1}}-Y_{L_k},0\},
	\ell_k\cdot\1_{\{Y_{L_{k-1}}-Y_{L_k}\le 0\}}\big).
\end{equation}
Since, given $L_n$, $(\ell_k)_{k>n}$ is a stick-breaking process on $[0,L_n]$, for any $n\in\N$, \eqref{eq:chi_infty} implies  
\begin{equation}\label{eq:chi}
\un\chi_t
\eqd(Y_{L_n},\un{Y}_{L_n},\un\tau_{L_n}(Y))+
	\sum_{k=1}^n \big(Y_{L_{k-1}}-Y_{L_k},
	\min\{Y_{L_{k-1}}-Y_{L_k},0\},
	\ell_k\cdot\1_{\{Y_{L_{k-1}}-Y_{L_k}\le 0\}}\big).
\end{equation}

Observe that the vector $(Y_{L_n},\un{Y}_{L_n},\un\tau_{L_n}(Y))$
and the sum on the right-hand side of the identity in~\eqref{eq:chi}
are conditionally independent given $L_n$: the former (resp. latter) is a function of $(Y_s)_{s\in[0,L_n]}$ 
(resp. $(Y_s-Y_{L_n})_{s\in[L_n,t]}$), cf. Figure~\ref{fig:stick}.
The vector of interest
$\un\chi_t$
is thus represented by the corresponding vector 
$(Y_{L_n},\un{Y}_{L_n},\un\tau_{L_n}(Y))$
over an exponentially small interval 
(since $\E[L_n]=2^{-n}t$) and $n$ independent increments of the L\'evy process 
over random intervals independent of $Y$.
In~\eqref{eq:chi} 
and throughout $\1_A$ is an indicator of a set $A$:
$\1_A(x) =\1_{\{x\in A\}}$ 
is 1 (resp. 0) if $x\in A$ (resp. $x\notin A$).

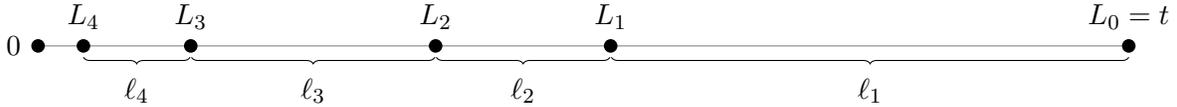
\begin{figure}[ht]
\begin{tikzpicture}
\draw [thin, draw=gray, ->] (0,0) -- (14.5,0);
\node[circle, fill=black, scale=.5, label=left:{$0$}] at (0,0){}{};
\node[circle, fill=black, scale=.5, label=above:{$L_0=t$}] at (14.5,0){}{};
\node[circle, fill=black, scale=.5, label=above:{$L_1$}] at (7.612,0){}{};
\node[circle, fill=black, scale=.5, label=above:{$L_2$}] at (5.285,0){}{};
\node[circle, fill=black, scale=.5, label=above:{$L_3$}] at (2.030,0){}{};
\node[circle, fill=black, scale=.5, label=above:{$L_4$}] at (0.603,0){}{};

\draw [
decoration={
	brace,
	mirror,
	raise=0.15cm
}, decorate] (7.622,0) -- (14.5,0) node 
	[pos=0.5,anchor=north,yshift=-.3cm] {$\ell_1$}; 

\draw [
decoration={
	brace,
	mirror,
	raise=0.15cm
}, decorate] (5.295,0) -- (7.602,0) node 
	[pos=0.5,anchor=north,yshift=-.3cm] {$\ell_2$}; 

\draw [
decoration={
	brace,
	mirror,
	raise=0.15cm
},
decorate] (2.040,0) -- (5.275,0) node 
[pos=0.5,anchor=north,yshift=-.3cm] {$\ell_3$};

\draw [
decoration={
	brace,
	mirror,
	raise=0.15cm
}, decorate] (0.603,0) -- (2.020,0) node 
[pos=0.5,anchor=north,yshift=-.3cm] {$\ell_4$};
\end{tikzpicture}
\caption{\footnotesize 
	The figure illustrates the first $n=4$ sticks of a 
	stick-breaking process. The increments of $Y$ in~\eqref{eq:chi} are taken 
	over the intervals $[L_k,L_{k-1}]$ of length $\ell_k$. Crucially, the time  
	$L_n$ featuring in the vector $(Y_{L_n},\un{Y}_{L_n},\un\tau_{L_n}(Y))$
	in~\eqref{eq:chi} is exponentially
	small in $n$.  \label{fig:stick} }
\end{figure}

We stress that~\eqref{eq:chi_infty} and~\eqref{eq:chi} reduce the analysis 
of the path-functional $\un\chi_t$ to that of the increments of $X$, 
since the ``error term'' 
$(Y_{L_n},\un{Y}_{L_n},\un\tau_{L_n}(Y))$
in~\eqref{eq:chi} is typically exponentially 
small in $n$. More generally, for another L\'evy process $X'$, the vectors
$\un\chi_t$ and $(X'_t,\un{X}'_t,\un\tau_t(X'))$ will be close if the 
increments of $Y$ and $Y'$ over the intervals $[L_k,L_{k-1}]$ are close: apply~\eqref{eq:chi} with a single 
stick-breaking process $\ell$, independent of both L\'evy processes 
$Y\eqd X$ and $Y'\eqd X'$, 
respectively. 
This observation constitutes a key step in the construction of the coupling used in the proof of 
Theorem~\ref{thm:Wd-triplet} below, which in turn plays a crucial role in controlling the bias 
(see the subsequent results 
of Section~\ref{sec:main-theory})
of our main simulation algorithm \nameref{alg:SBG}
described in Section~\ref{sec:main-apps} below. 
\nameref{alg:SBG} is 
based on~\eqref{eq:chi} with $X'$ being 
the Gaussian approximation of a general L\'evy process $X$ 
introduced  in~\cite{MR1834755} and recalled
briefly in the next subsection.

\subsection{The Gaussian approximation}
\label{subsec:ARA}

The law of a L\'evy process $X=(X_t)_{t\geq0}$ is 
uniquely determined by the law of its marginal $X_t$ (for any $t>0$),
which is in turn given by the L\'evy-Khintchine formula~\cite[Thm~8.1]{MR3185174} 
\begin{equation}
	\label{eq:Levy_Khinchin}
\frac{1}{t}\log\E\big[e^{iuX_t}\big]
= iub-\frac{1}{2}u^2\sigma^2
+ \int_{\R\setminus\{0\}}
	\big(e^{iux}-1-iux\cdot\1_{(-1,1)}(x)\big)\nu(dx),
\qquad u\in\R.
\end{equation}
The \textit{L\'evy measure} $\nu$ is required to satisfy 
$\int_{\R\setminus\{0\}} \min\{x^2,1\}\nu(dx)<\infty$, while $\sigma\geq0$ specifies the 
volatility of the Brownian component of $X$. Note that the drift $b\in\R$ 
depends on the cutoff function $x\mapsto\1_{(-1,1)}(x)$. Thus the L\'evy 
triplet $(\sigma^2,\nu,b)$, with respect to the cutoff function 
$x\mapsto\1_{(-1,1)}(x)$, determines the law of $X$.
All the L\'evy triplets in the present paper use this cutoff function. 

The \textit{L\'evy-It\^o decomposition} 
at level $\kappa\in(0,1]$ (see~\cite[Thms~19.2~\&~19.3]{MR3185174}) 
\label{page:levy_ito_decomp} is given by 
\begin{equation}\label{eq:levy-ito}
X_t=b_\kappa t+\sigma B_t+J^{1,\kappa}_t+J^{2,\kappa}_t,
\qquad t\geq0,
\end{equation}
where $b_\kappa := b-\int_{(-1,1)\setminus(-\kappa,\kappa)} x\nu(dx)$, 
$B=(B_t)_{t\geq0}$ is a standard Brownian motion and the processes 
$J^{1,\kappa}=(J^{1,\kappa}_t)_{t\geq0}$ and 
$J^{2,\kappa}=(J^{2,\kappa}_t)_{t\geq0}$ are L\'evy with triplets 
$(0,\nu|_{(-\kappa,\kappa)},0)$ and 
$(0,\nu|_{\R\setminus(-\kappa,\kappa)},b-b_\kappa)$, respectively.  
The processes $B$, $J^{1,\kappa}$, $J^{2,\kappa}$ in~\eqref{eq:levy-ito} are 
independent, $J^{1,\kappa}$ is an $L^2$-bounded martingale with jumps of 
magnitude less than $\kappa$ and $J^{2,\kappa}$ is a driftless
 (i.e. piecewise constant) compound Poisson process with intensity 
$\ov\nu(\kappa):=\nu(\R\setminus(-\kappa,\kappa))$
and jump distribution 
$\nu|_{\R\setminus(-\kappa,\kappa)}/\ov\nu(\kappa)$.

In applications, the main problem lies in the user's inability to simulate the 
increments of $J^{1,\kappa}$ in~\eqref{eq:levy-ito}, i.e. the small jumps 
of the L\'evy process $X$. Instead of ignoring this component for a small 
value of $\kappa$, the Gaussian approximation~\cite{MR1834755}
\begin{equation}
\label{eq:ARA}
X^{(\kappa)}_t 
	:=b_\kappa t + \tsqrt{\ov\sigma_\kappa^2+\sigma^2} W_t
		+ J^{2,\kappa}_t,
\quad\text{where}\quad 
\ov\sigma^2_{\kappa}
	:=\int_{(-\kappa,\kappa)\setminus\{0\}}x^2\nu(dx)
\quad\text{and}\quad\kappa\in(0,1],
\end{equation}
substitutes the martingale $\sigma B + J^{1,\kappa}$ in~\eqref{eq:levy-ito} with a 
Brownian motion with variance $\ov\sigma^2_{\kappa} + \sigma^2$. 
In~\eqref{eq:ARA}, the standard Brownian motion $W=(W_t)_{t\geq0}$ 
is independent of $J^{2,\kappa}$. Let $\ov\sigma_\kappa$ denote the 
non-negative square root of $\ov\sigma^2_\kappa$. The 
\textit{Gaussian approximation} of $X$ at level $\kappa$, given by 
the L\'evy process $X^{(\kappa)}=(X^{(\kappa)}_t)_{t\ge0}$, is natural in 
the following sense:  the weak convergence 
$\ov\sigma_\kappa^{-1}J^{1,\kappa}\cid W$ (in the Skorokhod space 
$\D[0,\infty)$) as $\kappa\to0$ holds if and only if 
$\ov\sigma_{\min\{K\ov\sigma_\kappa,\kappa\}}/
\ov\sigma_\kappa\to1$ for every $K>0$ (see~\cite{MR1834755}). 
This condition holds if $\ov\sigma_\kappa/\kappa\to\infty$ and the 
two conditions are equivalent if $\nu$ has no atoms in a 
neighbourhood of zero~\cite[Prop.~2.2]{MR1834755}. 

Since $J^{2,\kappa}$ has an average of $\ov\nu(\kappa)t$ jumps on 
$[0,t]$, the expected complexity of simulating the increment 
$X_t^{(\kappa)}$ is a constant multiple of $1+\ov\nu(\kappa)t$ 
(see Algorithm~\ref{alg:ARA} below). Moreover, the user need only 
be able to sample from the normalised tails of $\nu$, which can 
typically be achieved in multiple ways (see e.g.~\cite{MR1833707}). 
The behaviour of $\ov\nu(\kappa)$ and $\ov\sigma_\kappa$ as 
$\kappa\downarrow0$, key in the analysis of the MC/MLMC complexity, 
can be described in terms of the 
\textit{Blumenthal-Getoor} (BG) index~\cite{MR0123362} $\beta$, 
defined as 
\begin{equation}\label{eq:I0_beta}
\beta:=\inf\{p>0:I_0^p<\infty\},\quad\text{where}\quad 
	I_0^p:=\int_{(-1,1)\setminus\{0\}}|x|^p\nu(dx)\quad\text{for any }p\geq 0. 
\end{equation}
Note that $\beta\in[0,2]$, since $I_0^2<\infty$ by the definition of the L\'evy measure $\nu$. Furthermore, $I_0^1<\infty$ if and 
only if the paths of $J^{1,\kappa}$ have finite variation. Moreover, $I_0^p<\infty$ for any 
$p>\beta$, but $I_0^\beta$ can be either finite or infinite. 
If $q\in[0,2]$ satisfies $I_0^q<\infty$, the following inequalities hold for all 
$\kappa\in(0,1]$ (see e.g.~\cite[Lem.~9]{LevySupSim}):
\begin{equation}
\label{eq:BG_bounds}
\ov\sigma^2_{\kappa}\leq I_0^q \kappa^{2-q}\qquad\text{and}\qquad
\ov\nu(\kappa)\leq\ov\nu(1)+I_0^q \kappa^{-q}.
\end{equation}

Finally we stress that the dependence between $W$ in~\eqref{eq:ARA} 
and $\sigma B + J^{1,\kappa}$ in~\eqref{eq:levy-ito} has not been 
specified.  This coupling will vary greatly, depending on the 
circumstance (e.g. the analysis of the Wasserstein distance between 
functionals of $X$ and $X^{(\kappa)}$ (Section~\ref{sec:main-theory}) 
or the minimisation of level variances in MLMC 
(Section~\ref{sec:main-apps})). Thus, unless otherwise stated, no 
explicit dependence between $\sigma B + J^{1,\kappa}$ and $W$ is 
assumed.

\section{Distance between the laws of $\un \chi_t$ and its Gaussian approximation $\un\chi_t^{(\kappa)}$} 
\label{sec:main-theory}

In this section we present bounds on the distance between the laws 
of the vectors $\un\chi_t$, defined in 
Section~\ref{sec:approximations} above, and its Gaussian 
approximation 
$\un\chi_t^{(\kappa)}
:= (X_t^{(\kappa)},\un{X}_t^{(\kappa)},\un\tau_t(X^{(\kappa)}))$, 
based on the L\'evy process  $X^{(\kappa)}$ in~\eqref{eq:ARA}. 
Our bounds on the Wasserstein distance (see 
Theorem~\ref{thm:Wd-triplet} and Corollary~\ref{cor:Wd-chi} in 
Subsection~\ref{subsec:wasserstein}) are based on a coupling 
constructed in Subsection~\ref{subsec:proofThm1} below, which in 
turn draws on the coupling in~\eqref{eq:chi_infty}. 
Theorem~\ref{thm:Wd-triplet} is then applied to control the bias of 
certain discontinuous and non-Lipschitz functions of $\un\chi_t$ 
arising in applications (Subsection~\ref{subsec:bias} below) as well 
as the Kolmogorov distances between the components of 
$(\un{X}_t,\un\tau_t(X))$ and 
$(\un{X}_t^{(\kappa)},\un\tau_t(X^{(\kappa)}))$ 
(see Subsection~\ref{subsec:Kolmogorov} below).

\subsection{Bounds on the Wasserstein and Kolmogorov distances}
\label{subsec:wasserstein}

In order to study the Wasserstein distance between $\un\chi_t$ and 
$\un\chi_t^{(\kappa)}$ via~\eqref{eq:chi_infty}--\eqref{eq:chi}, we 
have to quantify the Wasserstein and Kolmogorov distances between 
the increments $X_s$ and $X^{(\kappa)}_s$ for any time $s>0$. 
With this in mind, we start with Theorems~\ref{thm:W-marginal} 
and~\ref{thm:K-marginal}, which play a key role in the proofs of the 
main results of the subsection, Theorem~\ref{thm:Wd-triplet} and 
Corollary~\ref{cor:Wd-chi} below, and are of independent interest. 

\begin{thm}
\label{thm:W-marginal}
There exist universal constants $K_1:=1/2$ and $K_p>0$, 
$p\in(1,2]$, independent of $(\sigma^2,\nu,b)$, such that for any 
$t>0$ and $\kappa\in(0,1]$ there exists a coupling 
$(X_t,X_t^{(\kappa)})$ satisfying 
\begin{equation}\label{eq:Lp-marginal}
\E\big[\big|X_t-X_t^{(\kappa)}\big|^p\big]^{1/p}
\leq\min\big\{\sqrt{2t}\ov\sigma_\kappa,
	K_p\kappa\varphi_\kappa^{2/p}\big\},
\quad\text{where $\varphi_\kappa:=\ov\sigma_\kappa/
	\tsqrt{\ov\sigma_\kappa^2+\sigma^2}$, 
	for all $p\in[1,2]$.}
\end{equation}
\end{thm}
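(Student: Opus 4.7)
The plan is to exploit the L\'evy--It\^o decomposition~\eqref{eq:levy-ito}
to cancel the shared components of $X_t$ and $X_t^{(\kappa)}$. Since both
processes contain the same drift $b_\kappa t$ and the same compound-Poisson
large-jump component $J^{2,\kappa}_t$, I take these to be literally identical
in the coupling. This reduces the task to coupling the centred martingale
$M_t := \sigma B_t + J^{1,\kappa}_t$ (with variance
$(\sigma^2+\ov\sigma_\kappa^2)t$) with the Brownian motion
$\sqrt{\sigma^2+\ov\sigma_\kappa^2}\,W_t$ of matching variance, so that
$X_t - X_t^{(\kappa)} = M_t - \sqrt{\sigma^2+\ov\sigma_\kappa^2}\,W_t$.

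For the first bound $\sqrt{2t}\,\ov\sigma_\kappa$ I use an elementary
\emph{independent coupling}: let $Z = (Z_t)_{t \ge 0}$ be a Brownian motion
with $\V(Z_t) = \ov\sigma_\kappa^2 t$, independent of
$(B, J^{1,\kappa}, J^{2,\kappa})$, and set
$\sqrt{\sigma^2+\ov\sigma_\kappa^2}\,W_t := \sigma B_t + Z_t$. Then $W$ is a
standard Brownian motion independent of $J^{2,\kappa}$, and
$M_t - \sqrt{\sigma^2+\ov\sigma_\kappa^2}\,W_t = J^{1,\kappa}_t - Z_t$. Since
$J^{1,\kappa}_t$ and $Z_t$ are independent, centred, and each have variance
$\ov\sigma_\kappa^2 t$, I get $\E[|J^{1,\kappa}_t - Z_t|^2] = 2\ov\sigma_\kappa^2 t$,
and Jensen's inequality yields the bound $\sqrt{2t}\,\ov\sigma_\kappa$ for every
$p \in [1,2]$.

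The $t$-independent bound $K_p\,\kappa\,\varphi_\kappa^{2/p}$ is the main
challenge and demands a more refined coupling that exploits both the bounded jump
size $\kappa$ and the Gaussian regularisation from $\sigma B_t$. For $p = 1$,
whose explicit constant $1/2$ should fall out of a direct computation, my approach
is to combine the Kantorovich--Rubinstein identity
$W_1(\mu,\nu) = \int_\R |F_\mu(x) - F_\nu(x)|\,dx$ with a Fourier analysis of
the characteristic-function ratio
$\E[e^{iu M_t}]/\E[e^{iu\sqrt{\sigma^2+\ov\sigma_\kappa^2}W_t}]
= \exp\bigl(t\int_{(-\kappa,\kappa)}(e^{iux} - 1 - iux + \tfrac{1}{2}u^2 x^2)\nu(dx)\bigr)$,
whose exponent is $O(u^3\kappa\ov\sigma_\kappa^2 t)$ near the origin; the
Gaussian factor $\exp(-\tfrac{1}{2}tu^2(\sigma^2+\ov\sigma_\kappa^2))$ then
provides enough decay for the $u$-integral to converge with the correct
$\varphi_\kappa^2 = \ov\sigma_\kappa^2/(\sigma^2+\ov\sigma_\kappa^2)$ scaling.
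For $p \in (1,2]$ I would either interpolate between the $p = 1$ and $p = 2$
estimates (the latter via a quantile coupling combined with Plancherel), or
alternatively apply a Rio--Zaitsev-type Wasserstein-$p$ CLT bound to the Poisson
random measure of $J^{1,\kappa}$, handling the infinite-activity case by an
additional truncation at some auxiliary scale $\kappa_0 < \kappa$.

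The main obstacle is establishing the sharp power $\varphi_\kappa^{2/p}$ with
constants $K_p$ independent of the L\'evy triplet and of $t$. In the pure-jump
case $\sigma = 0$, where $\varphi_\kappa = 1$, the bound $K_p\kappa$ must be
produced by a Wasserstein CLT rate for the small-jump martingale that uses
only the boundedness of the jumps by $\kappa$ and the Gaussian variance
$\ov\sigma_\kappa^2 t$ of the target. When $\sigma > 0$, one has to quantify
precisely how the extra convolution with a Gaussian of variance $\sigma^2 t$
tightens the distance by the factor $\varphi_\kappa^{2/p}$ without
re-introducing $t$-dependence. Keeping careful control of the constants through
both regimes, and in particular recovering the explicit value $K_1 = 1/2$, is
where the bulk of the technical work lies.
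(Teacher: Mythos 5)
Your reduction to coupling the small-jump martingale $M^{(\kappa)}_t=\sigma B_t+J^{1,\kappa}_t$ with $\sqrt{\sigma^2+\ov\sigma_\kappa^2}\,W_t$ (after cancelling the common drift and the common compound Poisson part) is exactly the paper's first step, and your independent coupling for the bound $\sqrt{2t}\,\ov\sigma_\kappa$ is correct: writing $\sqrt{\sigma^2+\ov\sigma_\kappa^2}\,W_t=\sigma B_t+Z_t$ with $Z$ an independent Brownian motion of variance $\ov\sigma_\kappa^2 t$ gives $\E[(J^{1,\kappa}_t-Z_t)^2]=2\ov\sigma_\kappa^2 t$, and Jensen finishes that half. (The paper instead sets $W:=B$, so the difference is $J^{1,\kappa}_t-(\sqrt{\ov\sigma_\kappa^2+\sigma^2}-\sigma)B_t$ with second moment $\le 2\ov\sigma_\kappa^2 t$; either variant works.)

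The bound $K_p\kappa\varphi_\kappa^{2/p}$, however, is not proved in your proposal: you list candidate strategies and acknowledge that the hard part remains. The missing idea is to exploit the infinite divisibility of $M^{(\kappa)}_t$: for every $m\in\N$ one has $M^{(\kappa)}_t\eqd\sum_{i=1}^m\xi_i$ with $\xi_i$ iid copies of $M^{(\kappa)}_{t/m}$, so a Wasserstein Berry--Esseen bound in the CLT (Rio's theorem for $p\in(1,2]$, and the classical $W_1$ bound with constant $1/2$ for $p=1$, which is where $K_1=1/2$ comes from) gives
\begin{equation*}
\W_p^p\big(M^{(\kappa)}_t,\tsqrt{\ov\sigma_\kappa^2+\sigma^2}\,W_t\big)
\le K_p^p\,\frac{(m/t)\,\E\big[|M^{(\kappa)}_{t/m}|^{p+2}\big]}{\ov\sigma_\kappa^2+\sigma^2}
\quad\text{for every }m\in\N .
\end{equation*}
Letting $m\to\infty$ and invoking the small-time moment asymptotics
$s^{-1}\E[|M^{(\kappa)}_s|^{p+2}]\to\int_{(-\kappa,\kappa)}|x|^{p+2}\nu(dx)\le\kappa^p\ov\sigma_\kappa^2$
makes the $t$-dependence disappear and produces exactly the factor $\kappa^p\ov\sigma_\kappa^2/(\ov\sigma_\kappa^2+\sigma^2)=\kappa^p\varphi_\kappa^2$. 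Your ``Rio--Zaitsev-type bound applied to the Poisson random measure'' gestures in this direction but, as stated, does not identify the $m\to\infty$ limit that removes $t$ and yields $\varphi_\kappa^{2/p}$; and your Fourier/Kantorovich--Rubinstein route for $p=1$ is unlikely to succeed as described --- an Esseen-type smoothing argument on $|\phi_1-\phi_2|/u$ naturally produces the Kolmogorov-distance rate $\kappa\varphi_\kappa^3/(\ov\sigma_\kappa\sqrt{t})$ (cf.\ Theorem~\ref{thm:K-marginal}(a)), with a spurious $t^{-1/2}$, rather than the $t$-free $W_1$ bound, and recovering the $L^1$ norm of $F_1-F_2$ from characteristic functions is not a Plancherel-type computation. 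So the main inequality needs the infinite-divisibility argument to be completed.
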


Theorem~\ref{thm:W-marginal} bounds the $L^p$-Wasserstein 
distance (see~\eqref{eq:Wasserstein} below for definition) between 
$X_t$ and $X_t^{(\kappa)}$. The inequality in~\eqref{eq:Lp-marginal} 
sharpens the bound 
$
\E[|X_t-X_t^{(\kappa)}|^p]^{1/p}
\leq\min\{\sqrt{2t}\ov\sigma_\kappa,
	K_p\kappa\}$
in~\cite[Thm~9]{MR3833470}: the factor 
$\varphi_\kappa^{2/p}\in[0,1]$ tends to zero (with $\kappa\to0$) as a constant multiple of $\ov \sigma_\kappa^{2/p}$ if the 
Brownian component is present (i.e. $\sigma>0$) and is equal to $1$ 
when $\sigma=0$. The bound in~\eqref{eq:Lp-marginal} cannot be 
improved in general in the sense that there exists a L\'evy processes 
for which, up to constants, the reverse inequality holds 
(see~\cite[Rem.~3]{MR3833470} and~\cite[Sec.~4]{MR2870514}). 

The proof of Theorem~\ref{thm:W-marginal}, given in 
Subsection~\ref{subsec:marginals} below, decomposes the increment  
$M_t^{(\kappa)}$ of the L\'evy martingale 
$M^{(\kappa)}:=\sigma B + J^{1,\kappa}$ into a sum of $m$ iid copies 
of $M_{t/m}^{(\kappa)}$ and applies a Berry-Essen-type bound for the 
Wasserstein distance~\cite{MR2548505} in the context of a central limit 
theorem (CLT) as $m\to\infty$. The small-time moment asymptotics of 
$M_{t/m}^{(\kappa)}$ in~\cite{MR2479503} imply that
$M^{(\kappa)}_t$ is much closer to the 
Gaussian limit in the CLT 
if the Brownian component is present 
than if $\sigma=0$. This explains a vastly 
superior rate in~\eqref{eq:Lp-marginal} in the case $\sigma^2>0$.

Bounds on the Kolmogorov distance may require the 
following generalisation of  Orey's condition, 
which makes the distribution of 
$X_t$ sufficiently regular (see~\cite[Prop.~28.3]{MR3185174}).

\begin{asm*}[O-$\delta$]\label{asm:(O)}
The inequality 
$\inf_{u\in(0,1]}u^{\delta-2}(\ov\sigma^2_u+\sigma^2)>0$
holds for some $\delta\in(0,2]$. 
\end{asm*}

\begin{thm}
\label{thm:K-marginal}
\nf{(a)} 
There exists a constant $C_{\BE}\in(0,\tfrac{1}{2})$, such that 
for any $\kappa\in(0,1]$, $t>0$ we have:
\begin{equation}\label{eq:K-marginal-1}
\sup_{x\in\R}\big|\p\big(X_t\le x\big)
	-\p\big(X_t^{(\kappa)}\le x\big)\big|
\le C_{\BE}(\kappa/\ov\sigma_\kappa)\varphi_\kappa^3/\sqrt{t}.
\end{equation}
	\nf{(b)} Let Assumption~(\nameref{asm:(O)}) hold.
Then for every $T>0$ there exists a constant $C>0$, depending only on 
$(T,\delta,\sigma,\nu)$, such that for any $\kappa\in(0,1]$ and 
$t\in(0,T]$ we have: 
\begin{equation}\label{eq:K-marginal-3}
\sup_{x\in\R}\big|\p\big(X_t\le x\big)
	-\p\big(X_t^{(\kappa)}\le x\big)\big|
\le\big(Ct^{-1/\delta}
	\min\big\{\sqrt{t}\ov\sigma_\kappa,
		\kappa\varphi_\kappa\big\}\big)^{2/3}.
\end{equation}
\end{thm}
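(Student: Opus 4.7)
By the L\'evy--It\^o decomposition~\eqref{eq:levy-ito} and~\eqref{eq:ARA}, I would couple $(X_t, X_t^{(\kappa)})$ so that both share the common independent component $b_\kappa t + J^{2,\kappa}_t$. Writing $M_t^{(\kappa)}:=\sigma B_t + J^{1,\kappa}_t$ and $N_t := \sqrt{\sigma^2+\ov\sigma_\kappa^2}\,W_t$, a simple conditioning argument shows that adding a common independent shift cannot increase the Kolmogorov distance (nor the $L^2$-distance), so it suffices to bound $d_K(M_t^{(\kappa)}, N_t)$ in~(a) and $\E[(M_t^{(\kappa)} - N_t)^2]$ in~(b).

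\textbf{Part (a).} I would apply the classical Berry--Esseen inequality in its iid CLT form. For each $m\in\N$, decompose $M_t^{(\kappa)}$ as a sum of $m$ iid copies of $M_{t/m}^{(\kappa)}$, each of mean zero and variance $(t/m)(\sigma^2+\ov\sigma_\kappa^2)$, whose exact Gaussian limit is $N_t$. Berry--Esseen (with the Shevtsova constant $C_{\BE}<1/2$) gives, for every $m$,
\[
d_K(M_t^{(\kappa)}, N_t) \le C_{\BE}\,\frac{m\,\E|M_{t/m}^{(\kappa)}|^3}{t^{3/2}(\sigma^2+\ov\sigma_\kappa^2)^{3/2}}.
\]
Letting $m\to\infty$, the Brownian contribution $\E|\sigma B_{t/m}|^3 = O((t/m)^{3/2})$ produces only $O(m^{-1/2})$ after multiplication by $m$; by independence of $\sigma B$ and $J^{1,\kappa}$ combined with the standard small-time asymptotic $s^{-1}\E|J^{1,\kappa}_s|^3 \to \int_{(-\kappa,\kappa)}|x|^3\nu(dx)$ for pure-jump L\'evy martingales, the limit of $m\,\E|M_{t/m}^{(\kappa)}|^3$ equals $t\int_{(-\kappa,\kappa)}|x|^3\nu(dx)\le t\kappa\ov\sigma_\kappa^2$ by~\eqref{eq:I0_beta} and the inequality $|x|^3\le\kappa x^2$. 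Substituting and using $(\kappa/\ov\sigma_\kappa)\varphi_\kappa^3 = \kappa\ov\sigma_\kappa^2/(\sigma^2+\ov\sigma_\kappa^2)^{3/2}$ yields~\eqref{eq:K-marginal-1}.

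\textbf{Part (b).} Under Assumption~(O-$\delta$), I would first show that $|\phi_t(u)|\le\exp(-c\,t|u|^\delta)$ for $|u|$ large: using $1-\cos(y)\ge c y^2$ on $|y|\le\pi$, the real part of the L\'evy--Khintchine exponent~\eqref{eq:Levy_Khinchin} of $X$ satisfies $-\nf{Re}\,\psi(u)\ge c\, u^2(\sigma^2+\ov\sigma_{\pi/|u|}^2)\ge c'|u|^\delta$ by the Orey-type lower bound. Fourier inversion then gives $\|f_{X_t}\|_\infty \le Ct^{-1/\delta}$; the identical estimate holds for $X_t^{(\kappa)}$ uniformly in $\kappa$ because substituting small jumps by a Gaussian with matching variance preserves $\sigma^2+\ov\sigma_u^2$ at every scale $u\ge\kappa$, while at $u\le\kappa$ the quantity $\sigma^2+\ov\sigma_\kappa^2\ge c\kappa^{2-\delta}\ge c\,u^{2-\delta}$ still dominates. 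Using the coupling in Theorem~\ref{thm:W-marginal} with $p=2$, set $\Delta:=\E[(X_t-X_t^{(\kappa)})^2]^{1/2}\le\min\{\sqrt{2t}\,\ov\sigma_\kappa, K_2\kappa\varphi_\kappa\}$. Markov's inequality applied to $(X_t-X_t^{(\kappa)})^2$ yields, for every $\epsilon>0$ and $x\in\R$,
\[
\p(X_t\le x) - \p(X_t^{(\kappa)}\le x) \le \|f_{X_t^{(\kappa)}}\|_\infty\,\epsilon + \Delta^2/\epsilon^2,
\]
and an analogous lower bound by symmetry. Optimising in $\epsilon$ at $\epsilon\sim(\Delta^2/\|f_{X_t^{(\kappa)}}\|_\infty)^{1/3}$ gives $d_K\le C(\|f_{X_t^{(\kappa)}}\|_\infty\Delta)^{2/3}$, and substituting the density and $L^2$-bounds produces~\eqref{eq:K-marginal-3}.

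\textbf{Main obstacles.} In (a), the delicate step is showing that the Brownian contribution to $\E|M_{t/m}^{(\kappa)}|^3$ vanishes in the $m\to\infty$ limit sharply enough so that the final proportionality constant is exactly the Berry--Esseen constant $C_{\BE}<1/2$ (rather than being inflated by a convexity factor); verifying this rigorously via the expansion $(|\sigma B_{t/m}|+|J^{1,\kappa}_{t/m}|)^3$, independence and small-time moment asymptotics is the core technical computation. In (b), the key ingredient is the $\kappa$-uniform density bound $\|f_{X_t^{(\kappa)}}\|_\infty\le Ct^{-1/\delta}$ from Assumption~(O-$\delta$), which requires carefully transferring the Orey-type lower bound from $X$ to $X^{(\kappa)}$ at every frequency scale.
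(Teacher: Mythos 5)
Your proposal is correct and follows essentially the same route as the paper: part (a) is the identical Berry--Esseen argument applied to the iid decomposition of $M_t^{(\kappa)}=\sigma B+J^{1,\kappa}$ together with the small-time third-moment asymptotics $m\,\E|M^{(\kappa)}_{t/m}|^3\to t\int_{(-\kappa,\kappa)}|x|^3\nu(dx)$, and part (b) is the same density-bound-plus-$L^2$-coupling smoothing argument, which the paper packages as Lemma~\ref{lem:Lp-to-barrier} with $p=2$ and the density bound of~\cite{MR1449834}. The only cosmetic difference is that in (b) you invoke a $\kappa$-uniform density bound for $X_t^{(\kappa)}$, whereas the paper's version of the smoothing inequality needs regularity of the law of $X_t$ only, so the transfer of Orey's condition to $X^{(\kappa)}$ is not required.
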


The proof of Theorem~\ref{thm:K-marginal} is in 
Subsection~\ref{subsec:marginals} below. Part~(a) follows the same 
strategy as the proof of Theorem~\ref{thm:W-marginal}, applying 
the Berry-Esseen theorem (instead of~\cite[Thm 4.1]{MR2548505}) 
to bound the Kolmogorov distance. For the same reason as 
in~\eqref{eq:Lp-marginal}, the rate in~\eqref{eq:K-marginal-1} is far 
better if $\sigma^2>0$. Proof of Theorem~\ref{thm:K-marginal}(b) 
bounds the density of $X_t$ using results in~\cite{MR1449834} 
and applies~\eqref{eq:Lp-marginal}. 

Note that  no assumption is made on the L\'evy process $X$ in 
Theorem~\ref{thm:K-marginal}(a). 
In particular, Assumption~(\nameref{asm:(O)}) is not 
required in part (a); however, if~(\nameref{asm:(O)}) is not satisfied, 
implying in particular that $\sigma=0$, it is possible for the bound 
in~\eqref{eq:K-marginal-1} not to vanish as $\kappa\to 0$  even if the 
L\'evy process has infinite activity, i.e. $\nu(\R\setminus\{0\})=\infty$. In fact, if $\sigma=0$, the bound 
in~\eqref{eq:K-marginal-1} vanishes (as $\kappa\to0$) if and only if $\ov\sigma_\kappa/\kappa\to \infty$, which is 
also a necessary and sufficient condition for the weak limit 
$\ov\sigma_\kappa^{-1}J^{1,\kappa}\cid W$ to hold  whenever $\nu$ has 
no atoms in a neighbourhood of $0$ (see~\cite[Prop.~2.2]{MR1834755}). 

If $X$ has a Brownian component (i.e. $\sigma\ne 0$), the bound on 
the total variation distance between the laws of $X_t$ and $X^{(\kappa)}_t$ established 
in~\cite[Prop.~8]{MR3833470} implies the following upper bound on 
the Kolmogorov distance:
$\sup_{x\in\R}|\p(X_t\le x)-\p(X_t^{(\kappa)}\le x)|
\le \min\{\sqrt{8t}\ov\sigma_\kappa,\kappa\}/\sqrt{2\pi\sigma^2t}$.
This inequality is both generalised and sharpened (as $\kappa\to0$) by the 
bound in~\eqref{eq:K-marginal-1}. Further improvements to the bound 
on the total variation were made in~\cite{Mariucci2}, but the implied 
rates for the Kolmogorov distance are worse than the ones 
in Theorem~\ref{thm:K-marginal}
and require model restrictions when $\sigma=0$ (beyond those of 
Theorem~\ref{thm:K-marginal}(b)) that can be hard to verify 
(see~\cite[Subsec.~2.1.1]{Mariucci2}). 

We stress that the dependence in $t$ in the bounds of 
Theorem~\ref{thm:K-marginal} is explicit. This is crucial in the proof of 
Theorem~\ref{thm:Wd-triplet}
as we need to apply~\eqref{eq:K-marginal-1}--\eqref{eq:K-marginal-3}
over intervals of small random lengths. 
A related result~\cite[Prop.~10]{MR3077542} contains similar 
bounds, which are  non-explicit in $t$ and suboptimal in $\kappa$. 

If  Assumption~(\nameref{asm:(O)}) is satisfied,
the parameter $\delta$ in part (b) of Theorem~\ref{thm:K-marginal} 
should be taken as large as possible to get the sharpest inequality 
in~\eqref{eq:K-marginal-3}. If $\sigma\ne 0$ (equivalently $\delta=2$), 
the bound in part (a) has a faster decay in $\kappa$ than the bound in 
part (b). If $\sigma=0$ (equivalently $0<\delta<2$), it is possible for 
the bound in part~(a) to be sharper than the one in part~(b) or vice 
versa. Indeed, it is easy to construct a L\'evy measure $\nu$ such that 
$\delta\in(0,2)$ in Theorem~\ref{thm:K-marginal}(b) satisfies 
$\lim_{u\downarrow0}u^{\delta-2}\ov\sigma_u^2
=\inf_{u\in(0,1]}u^{\delta-2}\ov\sigma_u^2=1$. Then
the bound in~\eqref{eq:K-marginal-1} is a multiple of 
$t^{-1/2}\kappa^{\delta/2}$ as $t,\kappa\to0$, while the one 
in~\eqref{eq:K-marginal-3} behaves as 
$t^{-2/(3\delta)}\kappa^{2/3}\min\{1,t^{1/3}\kappa^{-\delta/3}\}$. 
Hence one bound may be sharper than the other depending on the 
value of $\delta$, as $t$ and/or $\kappa$ tend to zero. In fact, 
we will use the bound in part (b) only when the maximal $\delta$ 
satisfying the assumption of Theorem~\ref{thm:K-marginal}(b)
is smaller than $4/3$, bounding the activity of the L\'evy measure 
around $0$ away from maximal possible activity.

Denote $x^+:=\max\{x,0\}$ for $x\in\R$. The next result quantifies the 
Wasserstein distance between the laws of the vectors 
$\un\chi_t$ and $\un\chi_t^{(\kappa)}$. 

\begin{thm}\label{thm:Wd-triplet}
For any $\kappa\in(0,1]$ and $t>0$, there exists a coupling between 
$X$ and $X^{(\kappa)}$
on the interval $[0,t]$
	such that the following inequalities hold for $p\in\{1,2\}$: 
\begin{align}\label{eq:Lp-chi}
&
\E\big[\max\big\{\big|X_t-X_t^{(\kappa)}\big|,
\big|\un{X}_t-\un{X}_t^{(\kappa)}\big|\big\}^p\big]^{1/p}
	\le \mu_p(\kappa,t),
	\qquad\text{where}\\
\label{def:mu}
	\begin{split}
		& \mu_1(\kappa,t): =
	\min\big\{2\sqrt{2t}\ov\sigma_\kappa,
	\kappa\varphi_\kappa^2\big\}
	\big(1+\log^+\big(2\sqrt{2t}(\ov\sigma_\kappa/\kappa)
		\varphi_\kappa^{-2}\big)\big), \\ 
& 
		\mu_2(\kappa,t):=\sqrt{2} \mu_1(\kappa,t)+
	\min\big\{\sqrt{2t}\ov\sigma_\kappa,
		K_2\kappa\varphi_\kappa\big\}
	\sqrt{1+2\log^+\big(K_2^{-1}\sqrt{2t}(\ov\sigma_\kappa/
		\kappa)\varphi_\kappa^{-1}\big)}, 
	\end{split}
\end{align}
	with $\varphi_\kappa=\ov\sigma_\kappa/
\sqrt{\ov\sigma_\kappa^2+\sigma^2}$ and the universal 
constant $K_2$ from Theorem~\ref{thm:W-marginal}. Furthermore we have 
\begin{equation}\label{eq:mu_0}
	\E\big|\un\tau_t(X)-\un\tau_t(X^{(\kappa)})\big|
\le \mu_0^\tau(\kappa,t):=\sqrt{t}(\kappa/\ov\sigma_\kappa)
	\varphi_\kappa^{3}.
\end{equation}
Moreover, if Assumption~(\nameref{asm:(O)}) holds, then for every 
$T>0$ there exists a constant $C>0$, dependent only on 
	$(T,\delta,\sigma,\nu)$, such that for all $t\in[0,T]$ and $\kappa\in(0,1]$ we have 
\begin{align}
\label{eq:mu_tau}
&
\E|\un\tau_t(X)-\un\tau_t(X^{(\kappa)})|\leq \mu^\tau_\delta(\kappa,t),
\quad\text{where}\quad 
\psi_\kappa:= C\kappa\varphi_\kappa
\quad\text{and}\\
\label{def:mu_tau_delta}
&\mu^\tau_\delta(\kappa,t)
\!:=\begin{cases}
\min\{t,\psi_\kappa^\delta\}
	+
		t^{1-2/(3\delta)}\psi_\kappa^{2/3}
	\big(1-\min\big\{1,t^{-1/\delta}
		\psi_\kappa\big\}^{\delta-2/3}\big),
&\! \delta\in(0,2]\setminus\{\tfrac{2}{3}\},\\
	\min\{t,\psi_\kappa^{2/3}\}
	(1+\log^+(t\psi_\kappa^{-2/3})),
&\! \delta=\tfrac{2}{3}.
\end{cases}
\end{align}
\end{thm}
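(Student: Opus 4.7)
The plan is to couple $X$ and $X^{(\kappa)}$ via the SB representation~\eqref{eq:chi_infty} applied to both processes with a shared stick-breaking sequence $(\ell_k)_{k\in\N}$, and, conditionally on $\ell$, to couple each pair of increments $(\xi_k,\xi_k^{(\kappa)})$ with $\xi_k\eqd X_{\ell_k}$ and $\xi_k^{(\kappa)}\eqd X_{\ell_k}^{(\kappa)}$ via the one-dimensional comonotonic (quantile) coupling, independently across $k$. Since the comonotonic coupling simultaneously attains the $L^p$-Wasserstein bound of Theorem~\ref{thm:W-marginal} and the Kolmogorov-distance bound of Theorem~\ref{thm:K-marginal} on each stick, this single SBG coupling suffices for all of~\eqref{eq:Lp-chi}--\eqref{eq:mu_tau}. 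The SB identities then yield
\begin{align*}
X_t-X_t^{(\kappa)}&=\sum_{k\ge1}(\xi_k-\xi_k^{(\kappa)}), \qquad
\un X_t-\un X_t^{(\kappa)}=\sum_{k\ge1}\big(\min\{\xi_k,0\}-\min\{\xi_k^{(\kappa)},0\}\big), \\
\un\tau_t(X)-\un\tau_t(X^{(\kappa)})&=\sum_{k\ge1}\ell_k\big(\1_{\{\xi_k\le0\}}-\1_{\{\xi_k^{(\kappa)}\le0\}}\big),
\end{align*}
and the Lipschitz bound $|\min\{a,0\}-\min\{b,0\}|\le|a-b|$ controls $\max\{|X_t-X_t^{(\kappa)}|,|\un X_t-\un X_t^{(\kappa)}|\}$ pointwise by $S:=\sum_k|\xi_k-\xi_k^{(\kappa)}|$.

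For $\mu_1$, I conditionally apply Theorem~\ref{thm:W-marginal} with $p=1$ and use Jensen ($\E\sqrt{\ell_k}\le\sqrt{\E\ell_k}=\sqrt{2^{-k}t}$), reducing matters to the dyadic sum $\sum_k\min\{\sqrt{2t}\ov\sigma_\kappa\cdot2^{-k/2},K_1\kappa\varphi_\kappa^2\}$, which I evaluate by splitting at the index $k^\star$ where both branches balance; the resulting $\min\{\cdot,\cdot\}(1+\log^+(\cdot))$ shape is precisely $\mu_1(\kappa,t)$. For $\mu_2$, the Minkowski-$L^2$ bound on $S$ via the $p=2$ instance of Theorem~\ref{thm:W-marginal} yields a $\mu_1$-type contribution corresponding to the $\sqrt{2}\mu_1$ piece of $\mu_2$; the remaining piece carrying the $\sqrt{1+2\log^+(\cdot)}$ factor comes from conditional orthogonality applied only to $X_t-X_t^{(\kappa)}$, namely that the centred differences $A_k:=(\xi_k-\xi_k^{(\kappa)})-\E[\xi_k-\xi_k^{(\kappa)}\mid\ell]$ are conditionally independent across $k$, so $\E[(\sum_k A_k)^2]=\sum_k\E A_k^2\le\sum_k\E\min\{2\ell_k\ov\sigma_\kappa^2,K_2^2\kappa^2\varphi_\kappa^2\}$; the same dyadic split, now with the square root outside, produces the characteristic $\sqrt{\log}$ factor.

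For the time of the infimum, the comonotonic coupling forces $\1_{\{\xi_k\le0\}}\ne\1_{\{\xi_k^{(\kappa)}\le0\}}$ exactly when the driving uniform falls in an interval of length $|\p(X_{\ell_k}\le0)-\p(X_{\ell_k}^{(\kappa)}\le0)|$, which is bounded by the Kolmogorov distance. Theorem~\ref{thm:K-marginal}(a) therefore gives $\E[\ell_k|\1_{\{\xi_k\le0\}}-\1_{\{\xi_k^{(\kappa)}\le0\}}|\mid\ell_k]\le C_{\BE}(\kappa/\ov\sigma_\kappa)\varphi_\kappa^3\sqrt{\ell_k}$, and $\sum_k\E\sqrt{\ell_k}\le\sqrt{t}/(\sqrt{2}-1)$ produces $\mu_0^\tau(\kappa,t)$ after absorbing the numerical constant. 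Under Assumption~(O-$\delta$), Theorem~\ref{thm:K-marginal}(b) gives a stick-wise bound of order $\ell_k\cdot(C\ell_k^{-1/\delta}\min\{\sqrt{\ell_k}\ov\sigma_\kappa,\kappa\varphi_\kappa\})^{2/3}$; summing after splitting on whether $\sqrt{\ell_k}\ov\sigma_\kappa$ is smaller or larger than $\kappa\varphi_\kappa$ isolates a ``small-stick'' contribution of size $\min\{t,\psi_\kappa^\delta\}$ and a ``large-stick'' contribution of order $\psi_\kappa^{2/3}\int_0^t\ell^{-2/(3\delta)}\,d\ell\sim t^{1-2/(3\delta)}\psi_\kappa^{2/3}$; the integral exponent vanishes exactly at $\delta=2/3$, which produces the logarithmic case of~\eqref{def:mu_tau_delta}.

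The main obstacle is this final estimate for $\mu_\delta^\tau$: one must juggle three scales (the stick scale $2^{-k}t$, the crossover $(\kappa\varphi_\kappa/\ov\sigma_\kappa)^2$, and the horizon $t$) together with the exponent transition at $\delta=2/3$, producing a borderline logarithm that has to be tracked carefully. Converting the sum over $k$ into an integral using the actual density of $\ell_k$ (rather than merely its mean $2^{-k}t$) requires uniform tail control, and the constant from Theorem~\ref{thm:K-marginal}(b) has to be absorbed so that the final constant depends only on $(T,\delta,\sigma,\nu)$.
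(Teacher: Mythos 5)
Your coupling is the paper's SBG coupling: a shared stick-breaking process with the increments over each stick coupled comonotonically, and your reduction of all three differences to $S:=\sum_k|\xi_k-\xi_k^{(\kappa)}|$ and to $\sum_k\ell_k|\1\{\xi_k\le0\}-\1\{\xi_k^{(\kappa)}\le0\}|$ is exactly right. The genuine gap is in your $p=2$ argument. Inequality~\eqref{eq:Lp-chi} with $p=2$ requires a bound on $\E[S^2]$, since only $S$ dominates both $|X_t-X_t^{(\kappa)}|$ and $|\un X_t-\un X_t^{(\kappa)}|$, and neither of your two mechanisms delivers it. Minkowski applied to $S$ gives $\sum_k\E[\min\{2\ell_k\ov\sigma_\kappa^2,K_2^2\kappa^2\varphi_\kappa^2\}]^{1/2}$, which sums to a quantity of order $\min\{\sqrt{2t}\ov\sigma_\kappa,K_2\kappa\varphi_\kappa\}(1+\log^+(\cdots))$ --- a \emph{full} logarithm at the scale $\kappa\varphi_\kappa$, strictly larger than $\mu_2$ when $\sigma>0$ (in $\mu_2$ the term at scale $\kappa\varphi_\kappa$ carries only $\sqrt{1+2\log^+(\cdots)}$, and the full logarithm occurs only at the smaller scale $\kappa\varphi_\kappa^2$ inside $\sqrt2\mu_1$). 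The conditional orthogonality of the centred differences $A_k$, on the other hand, controls only the signed sum $\sum_k(\xi_k-\xi_k^{(\kappa)})=X_t-X_t^{(\kappa)}$; it says nothing about $S$ or about $\un X_t-\un X_t^{(\kappa)}$, because taking absolute values destroys the orthogonality. The paper instead expands
\begin{equation*}
\E[S^2]=\sum_k\E\big[(\xi_k-\xi_k^{(\kappa)})^2\big]+2\sum_{k<m}\E\big[|\xi_k-\xi_k^{(\kappa)}|\,|\xi_m-\xi_m^{(\kappa)}|\big],
\end{equation*}
bounds the diagonal by the $p=2$ marginal bound of Theorem~\ref{thm:W-marginal} (this is the source of $\sqrt{1+2\log^+}$), and factors each cross term, using conditional independence given the stick-breaking process, into a product of conditional \emph{first} moments, each controlled by the $p=1$ marginal bound; summing via the tower property over the tail sticks gives exactly $2\mu_1(\kappa,t)^2$, whence $\|S\|_2\le\sqrt2\mu_1+\sqrt{\mathrm{diag}}=\mu_2$. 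No centering is needed and the argument applies directly to the absolute values.

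A secondary, quantitative issue: Jensen ($\E\sqrt{\ell_k}\le\sqrt{2^{-k}t}$) followed by a dyadic split is lossy at the level of constants. For~\eqref{eq:mu_0} this matters: you obtain $\sum_k\E\sqrt{\ell_k}\le\sqrt t/(\sqrt2-1)\approx2.41\sqrt t$, and since $C_{\BE}$ is only guaranteed to be $<1/2$, the product $C_{\BE}\cdot 2.41$ may exceed $1$, so you do not recover $\mu_0^\tau=\sqrt t(\kappa/\ov\sigma_\kappa)\varphi_\kappa^3$ as stated; similarly your $\mu_1$ constants come out worse than those claimed. The paper's Lemma~\ref{lem:mom_ell} computes $\sum_n\E[\phi(\ell_n/t)]=\int_0^1\phi(x)x^{-1}\,dx$ exactly (via the Gamma law of $-\log(\ell_n/t)$), which yields $\sum_k\E\sqrt{\ell_k}=2\sqrt t$ and the precise constants in $\mu_1$, $\mu_2$, $\mu_0^\tau$; the same identity performs, cleanly and with no tail control, the sum-to-integral conversion for $\mu_\delta^\tau$ that you flag as the main obstacle, including the borderline logarithm at $\delta=2/3$.
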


The proof of Theorem~\ref{thm:Wd-triplet}, given in 
Subsection~\ref{subsec:proofThm1} below, constructs the 
\emph{SBG coupling}
$(X,X^{(\kappa)})$, 
satisfying the above inequalities, 
in terms of the distribution functions of the marginals
$X_s$ and $X^{(\kappa)}_s$ (for $s>0$) 
and the coupling used in~\eqref{eq:chi_infty}, 
see~\cite{LevySupSim} for the latter.
The key idea is to 
couple $\un\chi_t$ and $\un\chi_t^{(\kappa)}$ so that  
they share the stick-breaking process in their respective SB representations~\eqref{eq:chi_infty}, 
while the increments of the associated L\'evy processes over each 
interval $[L_n,L_{n-1}]$ 
are coupled so that they minimise appropriate Wasserstein distances.
This coupling produces a bound 
on the distance between $\un\chi_t$ and $\un\chi_t^{(\kappa)}$ 
that depends only on the distances between the marginals of $X_s$ 
and $X_s^{(\kappa)}$, $s>0$, 
so that 
Theorems~\ref{thm:W-marginal} and~\ref{thm:K-marginal} above 
can be applied. We stress that the bound in~\eqref{eq:Lp-chi} 
cannot be obtained from Doob's $L^2$-maximal inequality 
(see, e.g.~\cite[Prop.~7.16]{MR1876169}) and 
Theorem~\ref{thm:W-marginal}: if the processes $X$ and 
$X^{(\kappa)}$ are coupled in such a way that 
$X_t-X_t^{(\kappa)}$ satisfies the inequality 
in~\eqref{eq:Lp-marginal}, the difference process 
$(X_s-X_s^{(\kappa)})_{s\in[0,t]}$ need not be a martingale. 

Inequality~\eqref{eq:Lp-chi} holds without assumptions on $X$
and is at most a logarithmic factor worse than the marginal inequality~\eqref{eq:Lp-marginal} for $p\in\{1,2\}$,
with 
the upper bound satisfying $\mu_p(\kappa,t)\leq  2\kappa\log(1/\kappa)$ for all sufficiently small $\kappa$.
Moreover, by Jensen's inequality, for all $1<p<2$ the SBA coupling satisfies the inequality
$\E[\max\{|X_t-X_t^{(\kappa)}|,
|\un{X}_t-\un{X}_t^{(\kappa)}|\}^p]^{1/p}
	\le \mu_2(\kappa,t)$.
In the absence of a Brownian component (i.e. $\sigma=0$)
we have $\varphi_\kappa=1$, making the upper bound $\mu_2(\kappa,t)$ 
proportional to $\mu_1(\kappa,t)$ as $\kappa\to0$.
If $\sigma>0$, 
then $\mu_1(\kappa,t)\leq 2\kappa\ov\sigma_\kappa^2\log(1/(\kappa \ov\sigma_\kappa))/\sigma^2$
for all small $\kappa$
and, 
typically, $\mu_2(\kappa,t)$ is proportional 
to $\kappa \ov\sigma_\kappa \tsqrt{\log(1/(\kappa\ov\sigma_\kappa))}$ 
as $\kappa\to0$, 
which dominates $\mu_1(\kappa,t)$. 

The bound in~\eqref{eq:mu_0} holds without assumptions on the L\'evy process $X$, 
while~\eqref{eq:mu_tau} requires Assumption~(\nameref{asm:(O)}) and is sharper 
the larger the value of 
$\delta\in(0,2]$, 
satisfying~(\nameref{asm:(O)}), is. 
Note that, if $\sigma\ne 0$, (\nameref{asm:(O)}) holds with 
$\delta=2$. If $\sigma= 0$
and $\delta$ satisfies~(\nameref{asm:(O)}),
we must have 
$\beta\ge\delta$, where
$\beta$ is 
the \emph{Blumenthal-Getoor (BG) index}
defined in~\eqref{eq:I0_beta} above. 
In fact, models typically used in applications either  have
$\sigma\neq0$ or~(\nameref{asm:(O)}) 
holds with $\delta=\beta$ (however, it is possible 
for~(\nameref{asm:(O)}) to hold for some $\delta<\beta$ but not $\delta=\beta$,
cf.~\cite[p.~362]{MR3185174}).

If $\sigma>0$, 
the inequality in~\eqref{eq:mu_0}  is sharper than~\eqref{eq:mu_tau}, i.e.
$\mu_0^\tau(t,\kappa)\le \mu_2^\tau(t,\kappa)$ 
for all small
$\kappa>0$. 
However,
if $\sigma=0$ and $\delta\in(0,2)$ satisfies~(\nameref{asm:(O)}),
then typically 
$ \mu_0^\tau(\kappa,t)$
is proportional to 
$\kappa^{\delta/2}$,
while 
$\mu_\delta^\tau(\kappa,t)$
behaves as 
$\kappa^{\min\{2/3,\delta\}}(1+\log(1/\kappa)\cdot\1_{\{2/3\}}(\delta))$
as $\kappa\to0$,
implying that~\eqref{eq:mu_tau} is sharper than~\eqref{eq:mu_0} 
for $\delta<4/3$.
The following quantity is the smallest of the 
upper bounds in~\eqref{eq:mu_0} and~\eqref{eq:mu_tau}:
\begin{equation*}
\mu^\tau_*(\kappa,t)
:=\min\big\{\mu_0^\tau(\kappa,t), 
	\inf\big\{\mu_\delta^\tau(\kappa,t)
		:\delta\in(0,2]\enskip
		\text{satisfies Assumption~(\nameref{asm:(O)})}\big\}\big\}.
\end{equation*}
Under Assumption~(\nameref{asm:(O)}),  
for some constant $c_t>0$ and all $\kappa\in(0,1]$,
we have 
\begin{equation}
	\label{eq:bound_mu_tau}
\mu^\tau_*(\kappa,t)\leq c_t
\kappa^{\max\{\delta/2, \min\{2/3,\delta\}\}}(1+\log(1/\kappa)\cdot\1_{\{2/3\}}(\delta)). 
\end{equation}


For any $a\in\R^d$, let $|a|:=\sum_{i=1}^d|a_i|$ denote its $\ell^1$-norm. 
Recall that for 
$p\ge 1$, the $L^p$-Wasserstein distance~\cite[Def.~6.1]{MR2459454} 
between the laws of random vectors $\xi$ and 
$\zeta$ in $\R^d$ can be defined as 
\begin{equation}
\label{eq:Wasserstein}
\W_p(\xi,\zeta) 
	:= \inf\big\{\E\big[|\xi'-\zeta'|^p\big]^{1/p}: 
	\xi'\eqd\xi,\zeta'\eqd\zeta\big\}.
\end{equation}
Theorem~\ref{thm:Wd-triplet} implies a bound on the 
$L^p$-Wasserstein distance 
between the vectors $\un\chi_t$ and $\un\chi_t^{(\kappa)}$,
extending the bound on the distance between the laws of the marginals $X_t$ and $X_t^{(\kappa)}$ in~\cite[Thm~9]{MR3833470}.

\begin{cor}\label{cor:Wd-chi}
Fix $\kappa\in(0,1]$ and $t>0$. Then 
$\W_p((X_t,\un X_t),(X_t^{(\kappa)},\un X_t^{(\kappa)}))
	\le 2(\1_{\{p=1\}}\mu_{1}(\kappa,t)+\1_{\{1<p\leq2\}}\mu_{2}(\kappa,t))$ for 
$p\in[1,2]$ 
	and  
	$\W_p(\un\tau_t(X),\un\tau_t(X^{(\kappa)})) \le t^{1-1/p}\mu_*^\tau (\kappa,t)^{1/p}$ 
for
$p\ge 1$. 
Moreover,
$$\W_p(\un\chi_t,\un\chi_t^{(\kappa)}) \le 
	2^{2-1/p}(\1_{\{p=1\}}\mu_{1}(\kappa,t)+\1_{\{1<p\leq2\}}\mu_{2}(\kappa,t)) +
	(2t)^{1-1/p}\mu_*^\tau (\kappa,t)^{1/p}, \quad p\in[1,2].$$ 
\end{cor}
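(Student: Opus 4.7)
The plan is to extract all three inequalities from the single SBG coupling $(X, X^{(\kappa)})$ produced by Theorem~\ref{thm:Wd-triplet}: under this coupling the joint law of each pair of vectors in the statement has the correct marginals, so by~\eqref{eq:Wasserstein} it suffices to bound the $L^p$-norm of the $\ell^1$-difference of the relevant components. I will write $A := |X_t - X_t^{(\kappa)}|$, $B := |\un{X}_t - \un{X}_t^{(\kappa)}|$, $C := |\un\tau_t(X) - \un\tau_t(X^{(\kappa)})|$ throughout.

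For the first inequality, I would use $A + B \leq 2\max\{A, B\}$ to get $\E[(A + B)^p]^{1/p} \leq 2\,\E[\max\{A, B\}^p]^{1/p}$. At $p = 1$ this is bounded by $2\mu_1(\kappa, t)$ directly from~\eqref{eq:Lp-chi}. For $p \in (1, 2]$, Jensen's inequality first reduces matters to $p = 2$: $\E[\max\{A, B\}^p]^{1/p} \leq \E[\max\{A, B\}^2]^{1/2} \leq \mu_2(\kappa, t)$, again by~\eqref{eq:Lp-chi}. For the second inequality, since $\un\tau_t(X), \un\tau_t(X^{(\kappa)}) \in [0, t]$ we have $C \leq t$ a.s., so $C^p \leq t^{p - 1}C$; taking expectations and combining~\eqref{eq:mu_0}--\eqref{eq:mu_tau} to bound $\E[C] \leq \mu_*^\tau(\kappa, t)$ yields $\E[C^p]^{1/p} \leq t^{1 - 1/p}\mu_*^\tau(\kappa, t)^{1/p}$, valid for every $p \geq 1$.

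For the third inequality, I would combine the $\ell^1$-identity $|\un\chi_t - \un\chi_t^{(\kappa)}| = A + B + C$ with the convexity inequality $(u + v)^p \leq 2^{p - 1}(u^p + v^p)$ (valid for $u, v \geq 0$ and $p \geq 1$), applied with $u = A + B$ and $v = C$, to obtain
\begin{equation*}
\E\bigl[(A + B + C)^p\bigr]^{1/p} \leq 2^{1 - 1/p}\bigl(\E\bigl[(A + B)^p\bigr]^{1/p} + \E\bigl[C^p\bigr]^{1/p}\bigr),
\end{equation*}
where I have also used subadditivity of $x \mapsto x^{1/p}$ on $[0, \infty)$ (valid for $p \geq 1$). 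Substituting the bounds from the previous paragraph yields the factor $2^{1 - 1/p} \cdot 2 = 2^{2 - 1/p}$ on the $\mu_p$ term and $2^{1 - 1/p} \cdot t^{1 - 1/p} = (2t)^{1 - 1/p}$ on the $\mu_*^\tau$ term, matching the statement exactly.

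There is no real obstacle here: the corollary is bookkeeping on top of Theorem~\ref{thm:Wd-triplet}. The only slightly non-obvious point is the choice to decompose $(A + B + C)^p$ as $((A + B) + C)^p$, rather than as a sum of three individual $p$-th powers, so that the constants on the $\mu_p$ and $\mu_*^\tau$ terms end up uncoupled with precisely the factors appearing in the statement.
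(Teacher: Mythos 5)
Your proposal is correct and is essentially the derivation the paper intends: the corollary is stated as an immediate consequence of Theorem~\ref{thm:Wd-triplet}, and your bookkeeping — $A+B\le 2\max\{A,B\}$ with Jensen's inequality for $p\in(1,2]$, the bound $C^p\le t^{p-1}C$ combined with \eqref{eq:mu_0}--\eqref{eq:mu_tau}, and the convexity inequality $(u+v)^p\le 2^{p-1}(u^p+v^p)$ for the joint bound — reproduces exactly the constants $2^{2-1/p}$ and $(2t)^{1-1/p}$ in the statement. No gaps.
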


Given the bounds in Corollary~\ref{cor:Wd-chi} and 
Theorem~\ref{thm:K-marginal}, it is natural to inquire about the 
convergence in the Kolmogorov distance of the components of
$(\un{X}_t^{(\kappa)}, \un\tau_t(X^{(\kappa)}))$ to 
$(\un{X}_t,\un\tau_t(X))$ as $\kappa\to0$.
This question is addressed by Corollary~\ref{cor:K-sup-tau} of 
Subsection~\ref{subsec:Kolmogorov} below. 

The famous K\'omlos-Major-Tusn\'ady (KMT) coupling  
is used in~\cite[Thm~6.1]{MR2759203} to bound the 
$L^2$-Wasserstein distance between the paths of $X$ and 
$X^{(\kappa)}$ on $[0,t]$ in the supremum norm, implying a bound 
on $\W_2((X_t,\un X_t),(X_t^{(\kappa)},\un X_t^{(\kappa)}))$ 
proportional to $\kappa\log(1/\kappa)$ as $\kappa\to 0$, 
cf.~\cite[Cor.~6.2]{MR2759203}. If $\sigma>0$, $\mu_2(\kappa,t)$ 
in~\eqref{eq:Lp-chi} is bounded by a multiple of 
$\kappa \ov\sigma_\kappa \log(1/(\kappa\ov\sigma_\kappa))$
for small $\kappa$ and is thus smaller than a multiple of 
$\kappa^{2-q/2}$ for any $q\in(\beta,2)$ (where $\beta$ is the 
BG index defined in~\eqref{eq:I0_beta} above). As mentioned above,
$\mu_2(\kappa,t)$ is bounded by a multiple of 
$\kappa\log(1/\kappa)$ for small $\kappa$ in the case $\sigma=0$. 
Unlike the SBG coupling, which underpins 
Theorem~\ref{thm:Wd-triplet}, the KMT coupling does not imply a 
bound on the distance between the times of the infima $\un\tau_t(X)$ 
and $\un\tau_t(X^{(\kappa)})$ as these are not Lipschitz functionals 
of the trajectories with respect to the supremum norm.

\begin{rem}
\label{rem:natural-drift}
The bounds on $\E|\un\tau_t(X)-\un\tau_t(X^{(\kappa)})|$ 
in Theorem~\ref{thm:Wd-triplet} and Corollary~\ref{cor:Wd-chi},
based on the SB representation in~\eqref{eq:chi_infty}, require the 
control on the expected difference between the signs of the 
components of $(X_s, X_s^{(\kappa)})$ as either $s$ or $\kappa$ 
tend to zero. This is achieved via the minimal transport coupling 
(see~\eqref{eq:comonotonic} and Lemma~\ref{lem:comonotonic_ind} 
below) and a general bound in Theorem~\ref{thm:K-marginal} on the 
Kolmogorov distance. However, further improvements seem possible 
in the finite variation case if the \emph{natural drift} (i.e. the drift of 
$X$ when small jumps are not compensated) is nonzero. Intuitively, 
the sign of the natural drift determines the sign of both components 
of $(X_s, X_s^{(\kappa)})$ with overwhelming likelihood as $s\to0$. 
This suggestion is left for future research. 
\end{rem}

\subsection{Bounds on the bias of functions of $\un\chi_t$}
\label{subsec:bias}

The main tool for studying the bias of various Lipschitz, 
non-Lipschitz and discontinuous functions of $\un\chi_t$ 
is the SBG coupling underpinning Theorem~\ref{thm:Wd-triplet}. 
The Lipschitz case is a direct consequence: for any $d\in\N$, 
let $\Lip_K(\R^d)$ denote the space of real-valued Lipschitz 
functions on $\R^d$ (under $\ell^1$-norm given above 
display~\eqref{eq:Wasserstein}) with Lipschitz constant $K\geq0$ 
and note that the triangle inequality and 
Theorem~\ref{thm:Wd-triplet} imply the following bounds on the bias 
\begin{equation}
\label{eq:summary_1}
\big|\E f(X_T,\un{X}_T)
	-\E f\big(X^{(\kappa)}_T,\un{X}^{(\kappa)}_T\big)\big| 
	\leq 2K\mu_1(\kappa,T)\>\>\&\>\>
	\big|\E g(\un\tau_T) -\E g\big(\un\tau_T(X^{(\kappa)})\big)\big|	
	\leq K'\mu_*^\tau(\kappa,T)
\end{equation}
for any time horizon $T>0$ and $f\in\Lip_K(\R^2)$, such that $\E|f(X_T,\un{X}_T)|<\infty$, and $g\in\Lip_{K'}(\R)$. 
Since in applications, the process $X$ is often used to model log-returns of a risky asset $(S_0 e^{X_t})_{t\geq0}$,
it is natural to study the bias of a Monte Carlo estimator of a 
locally Lipschitz function 
%
$f\in\locLip_K(\R^2)$,
satisfying
$|f(x,y)-f(x',y')|\leq K\big(\big|e^x-e^{x'}\big|+\big|e^y-e^{y'}\big|\big)$
	for any $x,x',y,y'\in\R$
(equivalently 
$(x,y)\mapsto f(\log x,\log y)$ is in 
$\Lip_K((0,\infty)^2)$).
Such payoffs arise in risk management 
(e.g. absolute drawdown) and in the pricing of hindsight call, 
perpetual American call and  lookback put options.

\begin{prop}\label{prop:logLip}
Let $f\in\locLip_K(\R^2)$ and assume 
$\int_{[1,\infty)}e^{2x}\nu(dx)<\infty$,
where $\nu$ is
the L\'evy measure of $X$. 
	For any $T>0$ 
and $\kappa\in(0,1]$ and $\mu_2(\kappa,T)$ defined in~\eqref{def:mu}, the SBG coupling satisfies 
\begin{align*}
\E \big|f(X_T,\un{X}_T)
	- f\big(X^{(\kappa)}_T,\un{X}^{(\kappa)}_T\big)\big|
&\leq 4K\E[e^{2X_T}]^{1/2}(1 + e^{\ov\sigma^2_{\kappa}T})
	\mu_2(\kappa,T).
\end{align*}
\end{prop}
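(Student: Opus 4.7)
The plan is to use the locally Lipschitz property of $f$ to reduce the bias to two expectations of the form $\E|e^Z-e^{Z'}|$, apply a mean-value estimate together with Cauchy--Schwarz, invoke Theorem~\ref{thm:Wd-triplet} for the $L^2$-distances, and finally compare the exponential moments of $X_T$ and $X_T^{(\kappa)}$ via the L\'evy--Khintchine formula. The assumption $f\in\locLip_K(\R^2)$ immediately yields
\[
\E|f(X_T,\un X_T) - f(X_T^{(\kappa)},\un X_T^{(\kappa)})| \le K\bigl(\E|e^{X_T}-e^{X_T^{(\kappa)}}| + \E|e^{\un X_T}-e^{\un X_T^{(\kappa)}}|\bigr),
\]
so it suffices to bound each summand.

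For each pair $(Z,Z')$ equal to $(X_T,X_T^{(\kappa)})$ or $(\un X_T,\un X_T^{(\kappa)})$, the mean-value estimate $|e^Z-e^{Z'}|\le e^{\max\{Z,Z'\}}|Z-Z'|$ combined with Cauchy--Schwarz gives
\[
\E|e^Z-e^{Z'}| \le \E\bigl[e^{2\max\{Z,Z'\}}\bigr]^{1/2}\E[|Z-Z'|^2]^{1/2}.
\]
The right-most factor is dominated by $\mu_2(\kappa,T)$ for each of the two pairs, by Theorem~\ref{thm:Wd-triplet} with $p=2$ under the SBG coupling. For the moment factor, $\un X_T\le X_T$ and $\un X_T^{(\kappa)}\le X_T^{(\kappa)}$ imply $e^{2\max\{\un X_T,\un X_T^{(\kappa)}\}}\le e^{2\max\{X_T,X_T^{(\kappa)}\}}\le e^{2X_T}+e^{2X_T^{(\kappa)}}$, so in both cases the moment factor is bounded by $\bigl(\E[e^{2X_T}]+\E[e^{2X_T^{(\kappa)}}]\bigr)^{1/2}$.

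The main work is to compare $\E[e^{2X_T^{(\kappa)}}]$ with $\E[e^{2X_T}]$. To this end, one evaluates both log-moment generating functions at $u=2$ via~\eqref{eq:Levy_Khinchin}: $X$ has triplet $(\sigma^2,\nu,b)$ while, by~\eqref{eq:ARA}, $X^{(\kappa)}$ has triplet $(\sigma^2+\ov\sigma_\kappa^2,\,\nu|_{\R\setminus(-\kappa,\kappa)},\,b)$ relative to the same cutoff $\1_{(-1,1)}$. Subtracting, their difference equals $T\bigl(2\ov\sigma_\kappa^2-\int_{(-\kappa,\kappa)\setminus\{0\}}(e^{2x}-1-2x)\nu(dx)\bigr)$, and since the function $x\mapsto e^{2x}-1-2x$ attains its global minimum $0$ at $x=0$, the integrand is non-negative, yielding $\E[e^{2X_T^{(\kappa)}}]\le \E[e^{2X_T}]\exp(2T\ov\sigma_\kappa^2)$; the hypothesis $\int_{[1,\infty)}e^{2x}\nu(dx)<\infty$ ensures both MGFs are finite. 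Applying $(a+b)^{1/2}\le \sqrt{a}+\sqrt{b}$ then gives $\bigl(\E[e^{2X_T}]+\E[e^{2X_T^{(\kappa)}}]\bigr)^{1/2}\le \E[e^{2X_T}]^{1/2}(1+e^{T\ov\sigma_\kappa^2})$, and summing the two contributions yields the claimed bound (the resulting prefactor $2K$ is absorbed into the $4K$ of the statement).
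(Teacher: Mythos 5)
Your proposal is correct and follows essentially the same route as the paper: the locally Lipschitz property reduces the bias to exponential differences, which are handled by a mean-value estimate, Cauchy--Schwarz, the $L^2$-bound $\mu_2(\kappa,T)$ from Theorem~\ref{thm:Wd-triplet}, and the L\'evy--Khintchine comparison $\E[e^{2X_T^{(\kappa)}}]\le\E[e^{2X_T}]e^{2T\ov\sigma_\kappa^2}$ (which is exactly~\eqref{eq:exp_bound} at $u=2$, rederived correctly). The only cosmetic difference is that you split the bias into two summands and apply Cauchy--Schwarz to each directly under the SBG coupling, whereas the paper applies it once to the combined expression and routes the $L^2$-distance through Corollary~\ref{cor:Wd-chi}; your version even yields the slightly better constant $2K$ in place of $4K$.
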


The assumption $\int_{[1,\infty)}e^{2x}\nu(dx)<\infty$ is equivalent 
to $\E[e^{2X_T}]<\infty$ (see~\cite[Thm~25.3]{MR3185174}), 
which is a natural requirement as  the asset price model 
$(S_0 e^{X_t})_{t\geq0}$ ought to have finite variance. Moreover, via 
the L\'evy-Khintchine formula, an explicit  bound on the expectation 
$\E[e^{2X_T}]$ (and hence the constant in the inequality of 
Proposition~\ref{prop:logLip}) in terms of the L\'evy triplet of $X$ 
can be obtained. If one instead considers $f(X_T,\ov X_T)$ 
(a function on the supremum $\ov X_T$), the proof of 
Proposition~\ref{prop:logLip} in Subsection~\ref{subsec:proofProps} 
below can be used to establish that $\E\big|f(X_T,\ov{X}_T)
	- f\big(X^{(\kappa)}_T,\ov{X}^{(\kappa)}_T\big)\big|$ is bounded 
by $4K(\E[e^{2\ov X_T}]+\E[e^{2\ov X^{(\kappa)}_T}])^{1/2}
	\mu_2(\kappa,T)$, where both expectations $\E[e^{2\ov X_T}]$ 
and $\E[e^{2\ov X^{(\kappa)}_T}]$ are finite under our assumption 
$\int_{[1,\infty)}e^{2x}\nu(dx)<\infty$ and bounded explicitly in 
terms of the L\'evy triplet of $X$, see the proof 
of~\cite[Prop.~2]{LevySupSim}. Thus, by 
Proposition~\ref{prop:logLip}, the bias for $f\in\locLip_K(\R^2)$ is 
at most a multiple of $\kappa\log(1/\kappa)$,  as is the case for 
$f\in\Lip_K(\R^2)$ by~\eqref{eq:summary_1}, cf. discussion after 
Theorem~\ref{thm:Wd-triplet}.

In financial markets, the class of barrier-type functions arises 
naturally: for $K,M\geq0$, $y<0$ let
\begin{equation}
\label{def:BT1}
\BT_1(y,K,M) := 
	\{f:\R^2\to\R \enskip:\enskip f(x,z)=h(x)\1_{[y,\infty)}(z),
		\enskip h\in\Lip_K(\R),
		\enskip 0\le h\leq M\}.
\end{equation} 
Note that the indicator function $\1_{[y,\infty)}$ lies in 
$\BT_1(y,0,1)$ and satisfies 
$\E[\1_{[y,\infty)}(\un{X}_T)]=\p(\un{X}_T\ge y)$. 
Moreover, a down-and-out put option payoff $x\mapsto \max\{e^k-e^x,0\}\1_{[y,\infty)}(x)$, for some constants
$y<0<k$, is in $\BT_1(y,e^k,e^k-e^y)$.
Bounding the 
bias of the estimators for 
functions in $\BT_1(y,K,M)$
requires the following regularity of the 
distribution of $\un{X}_T$ at $y$. 

\begin{asm*}[H]
\label{asm:(H)} 
Given $C,\gamma>0$ and $y<0$, the following inequality holds, 
\[
|\p(\un{X}_T\leq x+y)-\p(\un{X}_T\leq y)|\leq C|x|^\gamma
	\quad\text{for all }x\in\R.
\]
\end{asm*}

\begin{prop}\label{prop:barrier}
Let $f\in\BT_1(y,K,M)$ for some $K,M\geq0$ and $y<0$. If $y$ and 
some $C,\gamma>0$ satisfy Assumption~(\nameref{asm:(H)}), 
then for any $T>0$ and $\kappa\in(0,1]$, the SBG coupling satisfies 
\begin{equation}\label{eq:barrier}
\E \big|f(X_T,\un{X}_T)
	- f\big(X^{(\kappa)}_T,\un{X}^{(\kappa)}_T\big)\big|
\leq K\mu_1(\kappa,T)
	+ M'\min\{\mu_1(\kappa,T)^{\gamma/(1+\gamma)},
		\mu_2(\kappa,T)^{2\gamma/(2+\gamma)}\},
\end{equation}
where $M' = M\max\{(1+1/\gamma)(2C\gamma)^{1/(1+\gamma)},
	(1+2/\gamma)(C\gamma)^{2/(2+\gamma)}\}$.
\end{prop}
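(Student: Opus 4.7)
The plan is to split the bias via the triangle inequality into a Lipschitz contribution (controlled directly by $\mu_1(\kappa,T)$) and a contribution measuring how often the indicators $\1_{[y,\infty)}(\un X_T)$ and $\1_{[y,\infty)}(\un X_T^{(\kappa)})$ disagree, which in turn is controlled by optimising a free parameter $\epsilon>0$ that balances a concentration estimate from Theorem~\ref{thm:Wd-triplet} against Assumption~(\nameref{asm:(H)}).

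Concretely, write $A:=\1_{[y,\infty)}(\un X_T)$ and $B:=\1_{[y,\infty)}(\un X_T^{(\kappa)})$. Since $f\in\BT_1(y,K,M)$ means $f(x,z)=h(x)\1_{[y,\infty)}(z)$ with $h\in\Lip_K(\R)$ and $0\le h\le M$, I would bound
\[
|h(X_T)A-h(X_T^{(\kappa)})B|\le |h(X_T)-h(X_T^{(\kappa)})|\,A+|h(X_T^{(\kappa)})|\,|A-B|\le K|X_T-X_T^{(\kappa)}|+M|A-B|.
\]
Taking expectations under the SBG coupling and invoking~\eqref{eq:Lp-chi} with $p=1$ gives $K\E|X_T-X_T^{(\kappa)}|\le K\mu_1(\kappa,T)$, which is the first summand in~\eqref{eq:barrier}.

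The main work lies in bounding $\E|A-B|=\p(A\ne B)$. I would observe that on $\{A\ne B\}\cap\{|\un X_T-\un X_T^{(\kappa)}|\le\epsilon\}$, whichever of the two minima lies on the wrong side of $y$ is within distance $\epsilon$ of $y$, so necessarily $\un X_T\in(y-\epsilon,y+\epsilon]$. Hence
\[
\p(A\ne B)\le \p\bigl(|\un X_T-\un X_T^{(\kappa)}|>\epsilon\bigr)+\p\bigl(\un X_T\in(y-\epsilon,y+\epsilon]\bigr)\le \p\bigl(|\un X_T-\un X_T^{(\kappa)}|>\epsilon\bigr)+2C\epsilon^\gamma,
\]
using Assumption~(\nameref{asm:(H)}) on the last probability. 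For the concentration probability I would produce two estimates: Markov's inequality with Theorem~\ref{thm:Wd-triplet} for $p=1$ yields the bound $\mu_1(\kappa,T)/\epsilon$, while Chebyshev's inequality with the same theorem for $p=2$ yields $\mu_2(\kappa,T)^2/\epsilon^2$.

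Finally, I would optimise $\epsilon$ in each estimate separately. Minimising $\mu_1/\epsilon+2C\epsilon^\gamma$ by setting $\epsilon^{\gamma+1}=\mu_1/(2C\gamma)$ produces the bound $(1+1/\gamma)(2C\gamma)^{1/(1+\gamma)}\mu_1(\kappa,T)^{\gamma/(1+\gamma)}$; minimising $\mu_2^2/\epsilon^2+2C\epsilon^\gamma$ by setting $\epsilon^{\gamma+2}=\mu_2^2/(C\gamma)$ produces $(1+2/\gamma)(C\gamma)^{2/(2+\gamma)}\mu_2(\kappa,T)^{2\gamma/(2+\gamma)}$. Taking the smaller of these two bounds on $\p(A\ne B)$, multiplying by $M$, using the elementary inequality $\min\{c_1x,c_2y\}\le \max\{c_1,c_2\}\min\{x,y\}$ to factor out the constant as the $M'$ of the statement, and adding the Lipschitz term from the first paragraph gives~\eqref{eq:barrier}. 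The only genuinely delicate step is the geometric observation that $\{A\ne B\}\cap\{|\un X_T-\un X_T^{(\kappa)}|\le\epsilon\}\subset\{\un X_T\in(y-\epsilon,y+\epsilon]\}$, which is what allows Assumption~(\nameref{asm:(H)}) to be applied to the law of $\un X_T$ rather than to that of $\un X_T^{(\kappa)}$ (whose regularity at $y$ is not assumed).
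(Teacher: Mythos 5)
Your proposal is correct and is essentially the paper's argument: the paper packages your key step (splitting off the Lipschitz part, bounding $\p(A\ne B)$ by a concentration term plus $\p(|\un X_T-y|\le\epsilon)$, applying Markov/Chebyshev via Theorem~\ref{thm:Wd-triplet}, and optimising over $\epsilon$) as Lemma~\ref{lem:Lp-to-barrier}, which it then invokes with $p\in\{1,2\}$. Your inline derivation, including the set-inclusion that lets Assumption~(\nameref{asm:(H)}) act on the law of $\un X_T$ rather than $\un X_T^{(\kappa)}$ and the resulting constants, matches the paper's exactly.
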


\begin{rem}\label{rem:barrier}
Since $\mu_1(\kappa,T)\to0$ and $\mu_2(\kappa,T)\to0$ as 
$\kappa\to0$ and $\gamma/(1+\gamma)<2\gamma/(2+\gamma)$ 
for all $\gamma>0$, the bound in~\eqref{eq:barrier} is typically 
dominated by a multiple of 
$\mu_1(\kappa,T)^{\gamma/(1+\gamma)}$, if $\sigma\ne 0$ 
and $\beta<2-\gamma$ (recall the definition of the BG index 
$\beta$ in~\eqref{eq:I0_beta}), or 
$\mu_2(\kappa,T)^{2\gamma/(1+\gamma)}$, otherwise. 
By H\"older's inequality, $f$ in~\eqref{eq:barrier} need not be 
bounded if appropriate moments of $X$ exist. 
\end{rem}

The proof of Proposition~\ref{prop:barrier} is in 
Subsection~\ref{subsec:proofProps} below. 
Assumption~(\nameref{asm:(H)}) with  $\gamma=1$ requires the 
distribution function of $\un{X}_T$ to be  locally Lipschitz at~$y$. By 
the Lebesgue differentiation theorem~\cite[Thm~6.3.3]{MR3098996}, 
any distribution function is differentiable Lebesgue-a.e., implying 
that Assumption~(\nameref{asm:(H)}) holds for $\gamma=1$ and 
a.e. $y<0$. However, there exist L\'evy processes satisfying  
Assumption~(\nameref{asm:(H)}) for countably many levels $y$ with 
$\gamma<1$, but not with $\gamma=1$, 
see~\cite[App.~B]{LevySupSim}. Proposition~\ref{prop:SimpAsmH} 
below provides simple sufficient conditions, in terms of the L\'evy 
triplet of $X$, for Assumption~(\nameref{asm:(H)}) to hold with 
$\gamma=1$ for all $y<0$. In particular, this is the case if 
$\sigma\ne 0$. 

The next class arises in the analysis of the duration of drawdown: 
for $K,M\ge 0$, $s\in(0,T)$ let:  
\begin{equation}
\label{def:BT2}
\BT_2(s,K,M) := 
	\{f:\R^3\to\R \enskip:\enskip f(x,z,t)=h(x,z)\1_{(s,T]}(t),
		\enskip h\in\Lip_K(\R^2),
		\enskip 0\le h\leq M\}.
\end{equation}
The biases of these functions clearly include the error 
$|\p(\un\tau_T(X)>s)-\p(\un\tau_T(X^{(\kappa)})>s)|$. Analogous to 
Proposition~\ref{prop:barrier}, we require the following regularity from 
the distribution function of $\un\tau_T(X)$. 

\begin{asm*}[H$\tau$]
\label{asm:(Htau)} 
Given $C,\gamma>0$ and $s\in(0,T)$, the following inequality holds, 
\[
	|\p(\un\tau_T(X)\le s)-\p(\un\tau_T(X)\le s+t)|\le C|t|^\gamma,
\quad \text{for all }t\in\R.
\] 
\end{asm*}

\begin{prop}
\label{prop:barrier-tau}
Let Assumption~(\nameref{asm:(Htau)}) hold for some $s\in(0,T)$ 
and $C,\gamma>0$. Let $f\in\BT_2(s,K,M)$ for some $K,M\ge 0$. 
Then for all $\kappa\in(0,1]$ the SBG coupling satisfies 
\begin{equation}
\label{eq:barrier-tau}
\E \big|f(\un\chi_T)-f\big(\un\chi_T^{(\kappa)}\big)\big|
\le 2K\mu_1(\kappa,T) 
	+ M(2C\gamma)^{1/(1+\gamma)}(1+1/\gamma)
		\mu^\tau_*(\kappa,T)^{\gamma/(1+\gamma)}.
\end{equation}
\end{prop}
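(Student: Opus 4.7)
The plan is to mirror the decomposition used in the proof of Proposition~\ref{prop:barrier}: split the bias into a Lipschitz contribution (controlled by $\mu_1(\kappa,T)$ via Theorem~\ref{thm:Wd-triplet}) and an indicator-mismatch contribution (controlled by combining Assumption~(\nameref{asm:(Htau)}) with the bound $\E|\un\tau_T(X)-\un\tau_T(X^{(\kappa)})|\le\mu^\tau_*(\kappa,T)$). Writing $f(x,z,t)=h(x,z)\1_{(s,T]}(t)$, I would add and subtract $h(X_T^{(\kappa)},\un{X}_T^{(\kappa)})\1_{(s,T]}(\un\tau_T(X))$ to get the identity
\begin{align*}
f(\un\chi_T) - f(\un\chi_T^{(\kappa)})
&= \bigl[h(X_T,\un{X}_T) - h(X_T^{(\kappa)},\un{X}_T^{(\kappa)})\bigr]\1_{(s,T]}(\un\tau_T(X)) \\
&\quad + h(X_T^{(\kappa)},\un{X}_T^{(\kappa)})\bigl[\1_{(s,T]}(\un\tau_T(X)) - \1_{(s,T]}(\un\tau_T(X^{(\kappa)}))\bigr].
\end{align*}
Taking absolute values and expectations, and using $0\le h\le M$, reduces the task to bounding the two summands separately.

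For the first summand, the $\ell^1$-Lipschitz property of $h$ yields $|h(X_T,\un X_T)-h(X_T^{(\kappa)},\un X_T^{(\kappa)})|\le K(|X_T-X_T^{(\kappa)}|+|\un X_T-\un X_T^{(\kappa)}|)\le 2K\max\{|X_T-X_T^{(\kappa)}|,|\un X_T-\un X_T^{(\kappa)}|\}$, so inequality~\eqref{eq:Lp-chi} in Theorem~\ref{thm:Wd-triplet} with $p=1$ delivers the desired $2K\mu_1(\kappa,T)$.

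For the indicator mismatch, set $\Delta:=|\un\tau_T(X)-\un\tau_T(X^{(\kappa)})|$. The key observation is that whenever $\un\tau_T(X)$ and $\un\tau_T(X^{(\kappa)})$ lie on opposite sides of $s$ (the only configuration producing a mismatch), we necessarily have $|\un\tau_T(X)-s|\le\Delta$. Hence for any $\eta>0$ the mismatch probability is bounded, via a union bound on $\{\Delta\le\eta\}$ versus $\{\Delta>\eta\}$ and Markov's inequality, by
\[
\p(|\un\tau_T(X)-s|\le\eta) + \p(\Delta>\eta) \le 2C\eta^\gamma + \mu^\tau_*(\kappa,T)/\eta,
\]
where the first term uses Assumption~(\nameref{asm:(Htau)}) and the second uses~\eqref{eq:mu_0}--\eqref{eq:mu_tau} of Theorem~\ref{thm:Wd-triplet}. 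Optimising over $\eta>0$ (the minimum is attained at $\eta=(\mu^\tau_*(\kappa,T)/(2C\gamma))^{1/(1+\gamma)}$) gives the value $(1+1/\gamma)(2C\gamma)^{1/(1+\gamma)}\mu^\tau_*(\kappa,T)^{\gamma/(1+\gamma)}$; multiplying by $M$ produces the second term of~\eqref{eq:barrier-tau}.

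The main obstacle is the symmetric-difference step: one has to identify the clean set-theoretic inclusion $\{\text{mismatch}\}\subseteq\{|\un\tau_T(X)-s|\le\Delta\}$ and then leverage the two-layer bound to convert the $L^1$-control on $\Delta$ supplied by Theorem~\ref{thm:Wd-triplet} into a sharper rate by exploiting the local Hölder regularity of $\un\tau_T(X)$ at $s$ encoded in Assumption~(\nameref{asm:(Htau)}). The remaining computations are direct optimisation and follow the template of Proposition~\ref{prop:barrier}.
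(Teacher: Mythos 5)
Your argument is correct and is essentially the paper's own proof: the paper obtains \eqref{eq:barrier-tau} by combining Theorem~\ref{thm:Wd-triplet} with Lemma~\ref{lem:Lp-to-barrier} applied with $p=1$, and your decomposition, the inclusion $\{\text{indicator mismatch}\}\subseteq\{|\un\tau_T(X)-s|\le\Delta\}$, the Markov bound on $\p(\Delta>\eta)$ and the optimisation over $\eta$ are exactly the content of that lemma's proof. The constant you obtain matches the one in the statement, so nothing is missing.
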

\begin{rem}\label{rem:barrier2}
As in Remark~\ref{rem:barrier}, the bound in~\eqref{eq:barrier-tau} 
is proportional to 
$\mu^\tau_*(\kappa,T)^{\gamma/(1+\gamma)}$ as $\kappa\to0$. 
Inequality~\eqref{eq:barrier-tau} can be generalised to unbounded 
function $f$ if appropriate moments of $X$ exist.
\end{rem}

If $X$ is not a compound Poisson process, 
then Assumption~(\nameref{asm:(Htau)}) holds with $\gamma=1$ 
for all $s\in(0,T)$, since, by Lemma~\ref{lem:density-tau} in 
Subsection~\ref{subsec:proofProps} below, $\un\tau_T(X)$ has a 
locally bounded density, making the distribution function of 
$\un\tau_T(X)$ locally Lipschitz on $(0,T)$. 
Assumption~(\nameref{asm:(Htau)}) is satisfied if either 
$\nu(\R\setminus\{0\})=\infty$ or $\sigma\ne 0$. In particular, 
Assumption~(\nameref{asm:(O)}) implies~(\nameref{asm:(Htau)}). 
The proof of Proposition~\ref{prop:barrier-tau} is in 
Subsection~\ref{subsec:proofProps} below. 

\subsection{Convergence of $\un X_T^{(\kappa)}$ and $\un\tau_T(X^{(\kappa)})$ in the Kolmogorov distance}
\label{subsec:Kolmogorov}

As a consequence of Proposition~\ref{prop:barrier} 
(resp.~\ref{prop:barrier-tau}), if Assumption~(\nameref{asm:(H)}) 
(resp.~(\nameref{asm:(Htau)})) holds uniformly, then 
$\un{X}_T^{(\kappa)}$ (resp. $\un\tau_T(X^{(\kappa)})$) converges 
to $\un{X}_T$ (resp. $\un\tau_T(X)$) in Kolmogorov distance as 
$\kappa\to0$. 

\begin{cor}
\label{cor:K-sup-tau}
\nf{(a)} Suppose $C,\gamma>0$ satisfy~(\nameref{asm:(H)}) for all 
$y<0$. Then for any $\kappa\in(0,1]$ we have 
\begin{equation}
\label{eq:K-sup}
\sup_{x\in\R}\big|\p(\un{X}_T\le x) 
	- \p\big(\un{X}_T^{(\kappa)}\le x\big)\big|
\le M'\min\{\mu_1(\kappa,T)^{\gamma/(1+\gamma)},
	\mu_2(\kappa,T)^{2\gamma/(2+\gamma)}\},
\end{equation} 
where $M' = \max\{(1+1/\gamma)(2C\gamma)^{1/(1+\gamma)},
(1+2/\gamma)(C\gamma)^{2/(2+\gamma)}\}$.
\nf{(b)} Suppose $C,\gamma>0$ satisfy~(\nameref{asm:(Htau)}) 
for all $s\in[0,T]$. Then for any $\kappa\in(0,1]$ we have 
\begin{equation}
\label{eq:K-tau}
\sup_{x\in\R}\big|\p(\un\tau_T(X)\le x) 
	- \p\big(\un\tau_T(X^{(\kappa)})\le x\big)\big|
\le (2C\gamma)^{1/(1+\gamma)}(1+1/\gamma)
	\mu_*^\tau(\kappa,T)^{\gamma/(1+\gamma)}.
\end{equation}
\end{cor}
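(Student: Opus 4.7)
The plan is to reduce both bounds to direct applications of Propositions~\ref{prop:barrier} and~\ref{prop:barrier-tau}, taking the Lipschitz factor $h\equiv 1\in\Lip_0$ so that $K=0$ and $M=1$; the payoffs then become indicators of half-lines, which encode precisely the tails of the distribution functions whose distance we wish to control.

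For part~(a), fix $x<0$ and consider $f(u,z):=\1_{[x,\infty)}(z)$, which lies in $\BT_1(x,0,1)$. Since Assumption~(\nameref{asm:(H)}) is assumed at every level $y<0$ with the same constants $C,\gamma$, Proposition~\ref{prop:barrier} yields
\begin{equation*}
\big|\p(\un X_T\ge x)-\p\big(\un X_T^{(\kappa)}\ge x\big)\big|
\le \E\big|f(X_T,\un X_T)-f\big(X_T^{(\kappa)},\un X_T^{(\kappa)}\big)\big|
\le B,
\end{equation*}
with $B:=M'\min\{\mu_1(\kappa,T)^{\gamma/(1+\gamma)},\mu_2(\kappa,T)^{2\gamma/(2+\gamma)}\}$ the right-hand side of~\eqref{eq:K-sup}. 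To convert this into a bound on the distribution-function increment, apply the same inequality at the shifted levels $x+\epsilon$ (any $\epsilon>0$ with $x+\epsilon<0$) and let $\epsilon\downarrow 0$: continuity of measure along the increasing events $\{\un X_T\ge x+\epsilon\}\uparrow\{\un X_T>x\}$ and the analogous limit for $\un X_T^{(\kappa)}$ give $|\p(\un X_T>x)-\p(\un X_T^{(\kappa)}>x)|\le B$, and taking complements yields $|\p(\un X_T\le x)-\p(\un X_T^{(\kappa)}\le x)|\le B$. For $x\ge 0$ both distribution functions equal $1$ since $\un X_T,\un X_T^{(\kappa)}\le X_0=0$ a.s., so the bound is trivial.

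Part~(b) is entirely analogous. For each $s\in(0,T)$, set $f(u,z,t):=\1_{(s,T]}(t)\in\BT_2(s,0,1)$ and apply Proposition~\ref{prop:barrier-tau}; this bounds $|\p(\un\tau_T(X)>s)-\p(\un\tau_T(X^{(\kappa)})>s)|$, which equals $|\p(\un\tau_T(X)\le s)-\p(\un\tau_T(X^{(\kappa)})\le s)|$, by the right-hand side of~\eqref{eq:K-tau}. The boundary cases $s\in\{0,T\}$ are handled by the same shift-and-limit argument using the uniformity of~(\nameref{asm:(Htau)}); the ranges $s<0$ and $s>T$ are trivial. The only real subtlety is the mismatch between the closed half-line in the definitions of $\BT_1,\BT_2$ and the open half-line appearing in the distribution function, and this is resolved by the monotone limit above without worsening the constants, precisely because the hypotheses~(\nameref{asm:(H)}) and~(\nameref{asm:(Htau)}) hold uniformly in the threshold level.
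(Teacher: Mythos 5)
Your proposal is correct and is precisely the argument the paper intends: Corollary~\ref{cor:K-sup-tau} is stated as a direct consequence of Propositions~\ref{prop:barrier} and~\ref{prop:barrier-tau} applied with $h\equiv1$ (so $K=0$, $M=1$), which is exactly what you do, and your constants match. The only point needing care — the mismatch between the closed half-lines in $\BT_1$, $\BT_2$ and the events $\{\un X_T\le x\}$, $\{\un\tau_T(X)\le s\}$ — is handled correctly by your monotone shift-and-limit argument, which is available precisely because the hypotheses hold uniformly in the threshold.
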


Proposition~\ref{prop:SimpAsmH} gives sufficient conditions 
(in terms of the L\'evy triplet $(\sigma^2,\nu,b)$ of $X$) for 
Assumptions~(\nameref{asm:(H)}) and~(\nameref{asm:(Htau)}) 
to hold for all $y<0$ and $s\in[0,T]$, respectively.  
Recall that a function $f(x)$ is said to be regularly varying 
with index $r$ as $x\to0$ if $\lim_{x\to0}f(\lambda x)/f(x)=\lambda^r$ 
for every $\lambda>0$ (see~\cite[p.~18]{MR1015093}).

\begin{prop}
\label{prop:SimpAsmH}
Let $\ov\nu_+(x):=\nu([x,\infty))$ and $\ov\nu_-(x):=\nu((-\infty,-x])$ 
for $x>0$ and let $\beta$ be the BG index of $X$ defined 
in~\eqref{eq:I0_beta} above. 
Suppose that either (I) $\sigma> 0$ or (II) the L\'evy measure $\nu$ 
satisfies the following conditions: $\ov\nu_+(x)$ is regularly varying 
with index~$-\beta$ as $x\to 0$ and \\
\nf{$\bullet$} $\beta=2$ and 
	$\liminf_{x\to 0}\ov\nu_+(x)/\ov\nu_-(x)>0$,\\
\nf{$\bullet$} $\beta\in(1,2)$ and 
	$\lim_{x\to 0}\ov\nu_+(x)/\ov\nu_-(x)\in(0,\infty]$ or \\
\nf{$\bullet$} $\beta\in(0,1)$, $b=\int_{(-1,1)}x\nu(dx)$ and 
	$\lim_{x\to 0}\ov\nu_+(x)/\ov\nu_-(x)\in(0,\infty)$.\\ 
Then there exists constants $\gamma>0$ and $C$ such that 
Assumption~(\nameref{asm:(Htau)}) holds with $\gamma,C$ for all 
$s\in[0,T]$. Either (I) or (II) with $\beta>1$ imply 
that~(\nameref{asm:(H)}) holds with $\gamma=1$ and some 
constant $C_I$ for all $y$ in a compact $I\subset (-\infty,0)$. 
\end{prop}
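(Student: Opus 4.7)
I would reduce both assumptions to density-type estimates on the marginals: Assumption~(\nameref{asm:(H)}) with $\gamma=1$ at $y$ follows from a local bound on the density of $\un X_T$ in a neighbourhood of $y$, and Assumption~(\nameref{asm:(Htau)}) with exponent $\gamma$ at $s$ follows from local $\gamma$-H\"older control of the distribution function $s\mapsto\p(\un\tau_T(X)\le s)$. Then I would verify these two regularity statements separately under (I) and (II), observing that in both cases $X$ is not compound Poisson (as $\sigma>0$ or $\beta>0$) and that (II) gives $\ov\sigma_u^2\sim cu^{2-\beta}$ by Karamata's theorem, so Assumption~(\nameref{asm:(O)}) holds with $\delta=\beta$.

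\textbf{Assumption~(\nameref{asm:(Htau)}).} On any compact sub-interval of $(0,T)$, Lemma~\ref{lem:density-tau} of Subsection~\ref{subsec:proofProps} supplies a bounded density of $\un\tau_T(X)$, yielding Assumption~(\nameref{asm:(Htau)}) with $\gamma=1$. What remains is uniform H\"older control near the endpoints $s=0$ and $s=T$, where the density is typically unbounded. Near $s=0$ I would bound $\p(\un\tau_T(X)\le s)$ by $\p(\un X_s\le \un (X_{s+\cdot}-X_s)_{T-s})$ and control it through the small-time weak limit of $X$: under (I), $X_u/\sqrt{u}$ converges to a centered Gaussian; under (II), the regular variation of $\ov\nu_+$ with index $-\beta$ and the balance condition on $\ov\nu_+/\ov\nu_-$ place $X$ in the small-time domain of attraction of a \emph{strictly} $\beta$-stable law (the normalisation $b=\int_{(-1,1)}x\,\nu(dx)$ in the $\beta\in(0,1)$ case is precisely what removes the drift so that the stable limit is not a subordinator plus drift). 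The limit law has positivity parameter $\rho\in(0,1)$, from which a scaling/continuity argument produces $\p(\un\tau_T(X)\le s)\le Cs^{\rho'}$ for some $\rho'>0$. The endpoint $s=T$ is treated symmetrically by time reversal, bounding $\p(\un\tau_T(X)\ge T-s)$ and using that the reversed process is again a L\'evy process of the same class. Taking $\gamma$ to be the minimum of the H\"older exponents obtained at the two endpoints and on the interior yields uniform Hölder control on $[0,T]$ for a single $(\gamma,C)$.

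\textbf{Assumption~(\nameref{asm:(H)}) with $\gamma=1$.} It suffices to exhibit a bounded density for $\un X_T$ on any compact $I\subset(-\infty,0)$. Under (I), write $X=\sigma B+Y$ with $Y$ the pure-jump part independent of $B$; conditionally on the c\`adl\`ag path of $Y$, the minimum $\un X_T$ is the minimum of a Brownian motion with deterministic c\`adl\`ag drift, whose joint density with the terminal value is given by the reflection principle and is uniformly bounded on $\{y<0\}$ by a Gaussian factor, so integrating out the conditioning produces a bounded density on $I$. Under (II) with $\beta>1$, the characteristic function satisfies $|\E e^{iuX_s}|\le \exp(-cs|u|^\beta)$ for $|u|$ large, which upon Fourier inversion gives the uniform density bound $\sup_x p_s(x)\le C s^{-1/\beta}$; since $\int_0^T s^{-1/\beta}\,ds<\infty$ when $\beta>1$, a standard fluctuation identity expressing the law of $\un X_T$ restricted to $(-\infty,0)$ as an integral of $p_u$ against the downward ladder renewal measure (equivalently, via the Pecherskii--Rogozin identity) transfers this density bound to $\un X_T$ on compacta of $(-\infty,0)$.

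\textbf{Main obstacle.} The routine part is the density-based H\"older bound in the interior of $(0,T)$ and the density bound for $\un X_T$ away from the origin. The delicate step is the endpoint analysis for $\un\tau_T(X)$ under~(II): the density of $\un\tau_T(X)$ is typically unbounded at $s\in\{0,T\}$ with arcsine-type blow-up, so Lemma~\ref{lem:density-tau} is insufficient and a quantitative small-time domain-of-attraction argument is required, in which the crucial point is that the positivity parameter of the limiting stable law lies strictly in $(0,1)$ --- precisely the content of the balance condition on $\ov\nu_+/\ov\nu_-$ and of the drift normalisation when $\beta<1$.
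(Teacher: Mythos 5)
Your overall strategy matches the paper's: the hypotheses are first upgraded to small-time attraction to a strictly $\alpha$-stable law with positivity parameter $\rho=\lim_{t\to0}\p(X_t>0)\in(0,1)$ (this is Assumption~(\nameref{asm:(S)}), verified via \cite[Prop.~2.3]{ZoomIn}); Assumption~(\nameref{asm:(H)}) then follows from a continuous, hence locally bounded, density of $\un X_T$ on $(-\infty,0)$ (the paper simply cites \cite[Lem.~5.7]{ZoomIn}, which your conditioning/characteristic-function sketch essentially reproves); and the H\"older exponent for~(\nameref{asm:(Htau)}) is any $\gamma<\min\{\rho,1-\rho\}$.

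There is, however, a genuine gap in your treatment of~(\nameref{asm:(Htau)}) near the endpoints. The assumption demands $|\p(\un\tau_T\le s+t)-\p(\un\tau_T\le s)|\le C|t|^{\gamma}$ for \emph{every} $s\in[0,T]$ and every $t$, with a single pair $(\gamma,C)$. Your endpoint analysis yields only the one-point bound $F(s):=\p(\un\tau_T\le s)\le Cs^{\rho'}$; for $0<|t|\ll s\ll 1$ this gives $F(s+t)-F(s)\le C(s+t)^{\rho'}$, which does not vanish with $t$, while the interior bound from Lemma~\ref{lem:density-tau} degenerates as the compact subset approaches $\{0,T\}$ (the density has arcsine-type blow-up, as you yourself observe). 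What is needed, and what the paper's proof of Proposition~\ref{prop:AsmH}(b) supplies, is a quantitative density bound valid on all of $(0,T)$: since $\rho\in(0,1)$ makes $0$ regular for both half-lines, $\un\tau_T$ has density $s\mapsto\un n(\zeta>s)\,\ov n(\zeta>T-s)$ (tails of the excursion-length measures of the two reflected processes), and these tails are regularly varying at $0$ with indices $\rho-1$ and $-\rho$ respectively, so the density is dominated by $Cs^{\gamma-1}(T-s)^{\gamma-1}$ for any $\gamma\in(0,\min\{\rho,1-\rho\})$; integrating and using the concavity of $x\mapsto x^{\gamma}$ gives the uniform H\"older bound. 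You could alternatively close the gap with a Markov-property step reducing $\p(\un\tau_T\in(s,s+t])$ to $\p(\un\tau_{T-s}\le t)$ before invoking your endpoint estimate, but some such additional ingredient is indispensable: the ``scaling/continuity argument'' as stated does not deliver the required uniformity in $s$.
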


Note that Proposition~\ref{prop:SimpAsmH} holds if the roles of 
$\ov\nu_+$ and $\ov \nu_-$ are interchanged, i.e $\ov\nu_-(x)$ is 
regularly varying and the limit conditions are satisfied by the 
quotients $\ov\nu_-(x)/\ov\nu_+(x)$. The assumptions of 
Proposition~\ref{prop:SimpAsmH} are satisfied by most models used 
in practice, including tempered stable and most 
subordinated Brownian motion processes. Excluded are L\'evy 
processes without a Brownian component and with barely any 
jump activity (i.e. BG index $\beta=0$, which includes compound 
Poisson and variance gamma processes), where the Gaussian 
approximation $X^{(\kappa)}$ is not useful. 

Proposition~\ref{prop:SimpAsmH} is a consequence of a more 
general result, Proposition~\ref{prop:AsmH} below, stating that 
Assumptions~(\nameref{asm:(Htau)}) and~(\nameref{asm:(H)}) hold 
uniformly and locally uniformly, respectively, if over short time 
horizons, $X$ is ``attracted to'' an $\alpha$-stable process with 
non-monotone paths, see Subsection~\ref{subsec:proofProps} below 
for details. In this case $\rho:=\lim_{t\to 0}\p(X_t>0)$ exists in 
$(0,1)$ and $\gamma$ in the conclusion of 
Proposition~\ref{prop:SimpAsmH}, satisfying 
Assumption~(\nameref{asm:(Htau)}) on $[0,T]$, can be arbitrarily 
chosen in the interval $(0,\min\{\rho,1-\rho\})$. 
In contrast to Assumption~(\nameref{asm:(Htau)}), a simple 
sufficient condition for the uniform version of 
Assumption~(\nameref{asm:(H)}), required in 
Corollary~\ref{cor:K-sup-tau}(a), remains elusive beyond special 
cases such as stable or tempered stable processes with $\gamma$ 
in the interval $(0,\alpha(1-\rho))$, where $\alpha$ is the stability 
parameter and $\rho$ is as above. 


\section{Simulation and the computational complexity of MC and MLMC} 
\label{sec:main-apps}

This section describes an MC/MLMC  method for the simulation of 
$\un\chi_T^{(\kappa)}
= (X_T^{(\kappa)},\un{X}_T^{(\kappa)},\un\tau_T(X^{(\kappa)}))$
(\nameref{alg:SBG} in Subsection~\ref{subsec:algorithms}) 
and analyses the computational complexities for various locally 
Lipschitz and discontinuous functions of $\un\chi_T^{(\kappa)}$ 
(Subsection~\ref{subsec:complexities}). The numerical performance 
of \nameref{alg:SBG}, which is based on the SB representation 
in~\eqref{eq:chi_infty}-\eqref{eq:chi} of $\un\chi_T^{(\kappa)}$, 
is far superior to that of the ``obvious'' algorithm for jump diffusions 
(see Algorithm~\ref{alg:ARA_2} below), particularly when the jump 
intensity is large (cf. Subsections~\ref{subsubsec:Error_term} 
and~\ref{subsec:SBG_sampler}). Moreover, \nameref{alg:SBG}
is designed with MLMC in mind, which turns out not to be feasible 
in general for the ``obvious'' algorithm (see 
Subsections~\ref{subsubsec:Error_term}). 

\subsection{Simulation of $\un\chi_T^{(\kappa)}$}
\label{subsec:algorithms}

The main aim of the subsection is to develop a simulation algorithm for the pair of vectors
$(\un\chi_T^{(\kappa)},\un\chi_T^{(\kappa')})$
at levels $\kappa,\kappa'\in(0,1]$
over a time horizon $[0,T]$,
such that the $L^2$-distance between 
$\un\chi_T^{(\kappa)}$ and $\un\chi_T^{(\kappa')}$
tends to zero as $\kappa,\kappa'\to0$.
\nameref{alg:SBG} below, based on the SB representation in~\eqref{eq:chi}, achieves this aim: 
it applies Algorithm~\ref{alg:ARA} for the increments over the stick-breaking lengths that arise in~\eqref{eq:chi} 
and Algorithm~\ref{alg:ARA_2} for the ``error term''
over the time horizon $[0,L_n]$. 
By Theorem~\ref{thm:summary} below
the $L^2$-distance  for the coupling given in 
\nameref{alg:SBG} decays to zero,
ensuring the feasibility of MLMC (see Theorem~\ref{thm:SBG_MLMC} for the computational complexity of MLMC). 


\subsubsection{Increments in the SB representation} 
A simulation algorithm for a coupling 
$\big(X^{(\kappa_1)}_t,X^{(\kappa_2)}_t\big)$ of Gaussian 
approximations (at levels $1\geq \kappa_1>\kappa_2>0$) of $X_t$ 
at an arbitrary time $t>0$ is based on the following observation: 
the compound Poisson processes $J^{2,\kappa_1}$ and 
$J^{2,\kappa_2}$ in the L\'evy-It\^o decomposition 
in~\eqref{eq:levy-ito} can be simulated jointly, as the jumps of 
$J^{2,\kappa_1}$ are precisely those of $J^{2,\kappa_2}$ with 
modulus of at least $\kappa_1$. By choosing the same Brownian 
motion $W$ in representation~\eqref{eq:ARA} of $X^{(\kappa_1)}_t$ 
and $X^{(\kappa_2)}_t$, we obtain the coupling 
$\big(X^{(\kappa_1)}_t,X^{(\kappa_2)}_t\big)$ with law 
$\Pi_t^{\kappa_1,\kappa_2}$ given in Algorithm~\ref{alg:ARA}.

\begin{lyxalgorithm} 
Simulation of the law $\Pi_t^{\kappa_1,\kappa_2}$
\label{alg:ARA}
\begin{algorithmic}[1]
	\Require{Cutoff levels $1\geq\kappa_1>\kappa_2>0$ and time horizon $t>0$.}
	\State{Compute $b_{\kappa_i}$ and $\ov\sigma^2_{\kappa_i}$ 
		for $i\in\{1,2\}$ and $\ov\nu(\kappa_2)$}
	\State{Sample $W_t\sim N(0,t)$, $N_t\sim\Poi(\ov{\nu}(\kappa_2)t)$ and 
		$\lambda_k\sim \nu(\cdot\setminus(-\kappa_2,\kappa_2))/\ov{\nu}(\kappa_2)$ 
		for $k\in\{1,\ldots,N_t\}$\label{alg:Poison_number_line2}}
	\State{Put $J^{2,\kappa_i}_t:=\sum_{k=1}^{N_t}\lambda_k\cdot \1{\{|\lambda_k|\geq\kappa_i\}}$ for $i\in\{1,2\}$\label{alg_line:sums_of_jumps}}
	\State{\Return $\big(Z^{(\kappa_1)}_t,Z^{(\kappa_2)}_t\big)$, where 
	$Z^{(\kappa_i)}_t:=b_{\kappa_i}t +\tsqrt{\sigma^2+\ov\sigma^2_{\kappa_i}}W_t +J^{2,\kappa_i}_t$ for $i\in\{1,2\}$}
\end{algorithmic}
\end{lyxalgorithm}

Since $Z^{(\kappa_i)}_t\eqd X^{(\kappa_i)}_t$, $i\in\{1,2\}$, 
Proposition~\ref{prop:SBG_app_coupling}(a) below implies that 
the coupling $\Pi_t^{\kappa_1,\kappa_2}$ provides a bound on the 
$L^2$-Wasserstein distance 
$\W_2\big(X^{(\kappa_1)}_t,X^{(\kappa_2)}_t\big)
\le (2t(\ov\sigma^2_{\kappa_1}-\ov\sigma^2_{\kappa_2}))^{1/2}$.
This bound is suboptimal as the variables 
$J^{2,\kappa_2}_t-J^{2,\kappa_1}_t$ and 
$(\ov\sigma^2_{\kappa_2}-\ov\sigma^2_{\kappa_1})^{1/2}W_t$ in 
Algorithm~\ref{alg:ARA} are independent. The minimal transport 
coupling, with $L^2$-distance equal to 
$\W_2\big(X^{(\kappa_1)}_t,X^{(\kappa_2)}_t\big)$, is not 
accessible from the perspective of simulation. Since the law 
$\Poi(\ov{\nu}(\kappa_2)t)$ of the variable $N_t$ in 
line~\ref{alg:Poison_number_line2} of Algorithm~\ref{alg:ARA} is 
Poisson with mean $\ov\nu(\kappa_2)t$, the expected number of 
steps of Algorithm~\ref{alg:ARA} is bounded by a constant multiple 
of $1+\ov{\nu}(\kappa_2)t$, which is in turn bounded by a negative 
power of $\kappa_2$ by~\eqref{eq:BG_bounds}. Since  the 
computational complexity of sampling the law of $X^{(\kappa_2)}_t$ 
is of the same order as that of the law $\Pi_t^{\kappa_1,\kappa_2}$, 
in the complexity analysis of \nameref{alg:SBG} below, we may 
apply Algorithm~\ref{alg:ARA} with $\Pi_t^{1,\kappa}$ to sample 
$X^{(\kappa)}_t$ for any $\kappa\in(0,1]$.

\subsubsection{``Error term'' in the SB representation}
\label{subsubsec:Error_term}
Algorithm~\ref{alg:ARA_2} samples from
the law 
$\un\Pi_t^{\kappa_1,\kappa_2}$
of a coupling 
$(\un\chi_t^{(\kappa_1)},\un\chi_t^{(\kappa_2)})$
for levels 
$0<\kappa_2<\kappa_1\leq1$
and any (typically very small) $t>0$. 
In particular, it requires the sampler~\cite[Alg.~{\small{MAXLOCATION}}]{MR2730908} 
for the law  
$\Phi_t(v,\mu)$ of $(B^*_t,\un B^*_t,\tau_t(B^*))$ 
where
$(B^*_s)_{s\ge 0}=(v B_s+\mu s)_{s\ge 0}$
is a Brownian motion with drift $\mu\in\R$ and volatility $v>0$.

\begin{lyxalgorithm} 
Simulation of the law $\un\Pi_t^{\kappa_1,\kappa_2}$
\label{alg:ARA_2}
\begin{algorithmic}[1]
	\Require{Cutoff levels $1\geq\kappa_1>\kappa_2>0$ and time horizon $t>0$.}
	\State{Compute $b_{\kappa_i}$, $\ov\sigma^2_{\kappa_i}$ and 
		$\upsilon_{\kappa_i} :=\tsqrt{\sigma^2+\ov\sigma^2_{\kappa_i}}$ for 
		$i\in\{1,2\}$ and $\ov\nu(\kappa_2)$, see~\eqref{eq:levy-ito}--\eqref{eq:ARA}}
	\State{Sample $N_t\sim\Poi(\ov\nu(\kappa_2)t)$ and 
		$U_k\sim\U(0,t)$ for $k\in\{1,\ldots,N_t+1\}$}
	\State{Set $s := \sum_{i=1}^{N_t+1}\log U_k$ and let 
		$t_k:=s^{-1}\sum_{i=1}^k\log U_i$ for $k\in\{0,\ldots,N_t+1\}$}
	\label{alg_line:sorting_U}
	\State{Set 
		$(Z^{(\kappa_1)}_0,\un{Z}^{(\kappa_1)}_0,\un\tau_0^{(\kappa_1)},
		Z^{(\kappa_2)}_0,\un{Z}^{(\kappa_2)}_0,\un\tau_0^{(\kappa_2)}):=(0,0,0,0,0,0)$}
	\For{$k\in\{1,\ldots,N_t+1\}$\label{alg_ling:for_start}}
	\State{Sample $\lambda_k\sim
		\nu(\cdot\setminus(-\kappa_2,\kappa_2))/\ov\nu(\kappa_2)$ if $k\leq N_t$
		and otherwise put $\lambda_k=0$}
	\State{Let $\delta_k:=t_k-t_{k-1}$ and sample 
		$(\Delta_{k,i}^1,\Delta_{k,i}^2, \Delta_{k,i}^3)\sim
		\Phi_{\delta_k}(\upsilon_{\kappa_i},b_{\kappa_i})$ independently for $i\in\{1,2\}$ \label{alg_ling:indep_samples_BM_triplet}}
	\For{$i\in\{1,2\}$}
	\If{$\un{Z}^{(\kappa_i)}_{t_{k-1}}
			> Z^{(\kappa_i)}_{t_{k-1}}+\Delta_{k,i}^2$ \label{alg_line:condition>}}
	\State{Set $(Z_{t_k}^{(\kappa_i)},\un{Z}^{(\kappa_i)}_{t_k},\un\tau^{(\kappa_i)}_{t_k}):=(Z_{t_{k-1}}^{(\kappa_i)}+\Delta_{k,i}^1
			+\lambda_k\cdot\1{\{|\lambda_k|\geq\kappa_i\}},
		Z^{(\kappa_i)}_{t_{k-1}}+\Delta_{k,i}^2,t_{k-1}+\Delta_{k,i}^3)$}
	\Else
	\State{Set $(Z^{(\kappa_i)}_{t_k},\un{Z}^{(\kappa_i)}_{t_k},\un\tau^{(\kappa_i)}_{t_k}):=(Z_{t_{k-1}}^{(\kappa_i)}+\Delta_{k,i}^1
			+\lambda_k\cdot\1{\{|\lambda_k|\geq\kappa_i\}},
		\un{Z}^{(\kappa_i)}_{t_{k-1}},\un\tau^{(\kappa_i)}_{t_{k-1}})$}
	\EndIf
	\EndFor
	\EndFor \label{alg_ling:for_end}
	\State{\Return $(\un\zeta^{(\kappa_1)},\un\zeta^{(\kappa_2)})$, where $\un\zeta^{(\kappa_i)}:=(Z_t^{(\kappa_i)},\un{Z}_t^{(\kappa_i)}, \un\tau_t^{(\kappa_i)})$ 
	for $i\in\{1,2\}$
		\label{alg_line:funny_coupling}}
\end{algorithmic}
\end{lyxalgorithm}

Algorithm~\ref{alg:ARA_2} samples the jump times and sizes of the 
compound Poisson process $J^{2,\kappa_2}$ on the interval $(0,t)$ 
and prunes the jumps to get $J^{2,\kappa_1}$. Then it samples the 
increment,infimum and the time the infimum is attained for 
the Brownian motion with drift on each interval between the jumps 
of $J^{2,\kappa_2}$ and assembles the pair 
$(\un\zeta^{(\kappa_1)},\un\zeta^{(\kappa_2)})$, clearly satisfying 
$\un\zeta^{(\kappa_i)}\eqd\un\chi_t^{(\kappa_i)}$, $i\in\{1,2\}$.
Since~\cite[Alg.~{\small{MAXLOCATION}}]{MR2730908} samples 
the law $\Phi_t(v,\mu)$ 
with uniformly bounded 
expected runtime over the choice of parameters $\mu$, $v$ and $t$, 
the computational cost of sampling the pair of vectors 
$(\un\chi_t^{(\kappa_1)},\un\chi_t^{(\kappa_2)})$
using  Algorithm~\ref{alg:ARA_2}
is proportional to to the cost of sampling  $X^{(\kappa)}_t$
via Algorithm~\ref{alg:ARA}.

In principle, Algorithm~\ref{alg:ARA_2} is an exact algorithm for the 
simulation of a coupling 
$(\un\chi_t^{(\kappa_1)},\un\chi_t^{(\kappa_2)})$. However, it cannot be applied
within an MLMC simulation scheme 
for  a function of
$\un\chi_T^{(\kappa)}$ 
at a fixed time horizon $T$
(the next paragraph explains why).  
\nameref{alg:SBG} below 
circumvents this issue via the SB representation in~\eqref{eq:chi},
which also makes \nameref{alg:SBG} \textit{paralellizable} and 
thus much faster in practice even in the context of MC simulation 
(see the discussion after Corollary~\ref{cor:SBG_MC} below). 

To the best of our knowledge, there is no simulation algorithm for 
the increment, the infima and the times the infima are attained of a 
Brownian motion under different drifts, i.e. of the vector
\begin{equation*}
\big(B_t,\un{B}^{(c_1)}_t,\un\tau_t(B^{(c_1)}),
	\un{B}^{(c_2)}_t,\un\tau_t(B^{(c_2)})\big),
\quad\text{where}\quad 
	(B_s^{(c)})_{s\ge 0}=(B_s+cs)_{s\ge 0}
\quad\text{and}\quad 
	c_1\neq c_2.
\end{equation*}
Thus,
in line~\ref{alg_ling:indep_samples_BM_triplet} of 
Algorithm~\ref{alg:ARA_2}, we are forced to take independent 
samples from $\Phi_{\delta_k}(\upsilon_{\kappa_1},b_{\kappa_1})$
and $\Phi_{\delta_k}(\upsilon_{\kappa_2},b_{\kappa_2})$ at each 
step $k$. In particular, the coupling  of the marginals 
$X_t^{(\kappa_1)}$ and $X_t^{(\kappa_2)}$ of 
$\un\Pi_t^{\kappa_1,\kappa_2}$, given in 
line~\ref{alg_line:funny_coupling} of Algorithm~\ref{alg:ARA_2}, 
amounts to taking two independent Brownian motions in the 
respective representations in~\eqref{eq:ARA} of $X_t^{(\kappa_1)}$ 
and $X_t^{(\kappa_2)}$. Thus, unlike the coupling defined in 
Algorithm~\ref{alg:ARA}, here, 
by Proposition~\ref{prop:SBG_app_coupling}(b) below, 
the squared $L^2$-distance satisfies 
$\E[ (X_t^{(\kappa_1)}-X_t^{(\kappa_2)})^2]\ge 2t \sigma^2$ for all 
levels $1\geq \kappa_1>\kappa_2>0$, where $\sigma^2$ is the 
Gaussian component of $X$. Hence, for a fixed time horizon, 
the coupling $\un\Pi_t^{\kappa_1,\kappa_2}$ of 
$\un\chi_t^{(\kappa_1)}$ and $\un\chi_t^{(\kappa_2)}$ is not 
sufficiently strong for an MLMC scheme to be feasible if $X$ has a 
Gaussian component, because the level variances do not decay to 
zero. However, by Proposition~\ref{prop:SBG_app_coupling}(b), 
the $L^2$-distance between $\un\zeta^{(\kappa_1)}$ and 
$\un\zeta^{(\kappa_2)}$ constructed in Algorithm~\ref{alg:ARA_2}
does tend to zero with $t\to0$. Thus, \nameref{alg:SBG} below, 
which applies Algorithm~\ref{alg:ARA_2} over the time interval 
$[0,L_n]$ (recall $\E L_n=T/2^n$ from SB representation~\eqref{eq:chi}), circumvents this 
issue.

\subsubsection{The SBG sampler} 
\label{subsec:SBG_sampler}
For a time horizon $T$, we can now define the coupling 
$\un\Pi_{n,T}^{\kappa_1,\kappa_2}$ of the vectors 
$\un\chi^{(\kappa_1)}_T$ and $\un\chi^{(\kappa_2)}_T$ via the 
following algorithm.

\begin{lyxalgorithm*}[SBG-Alg]
Simulation of the coupling 
$(\un\chi^{(\kappa_1)}_T,\un\chi^{(\kappa_2)}_T)$ 
with law $\un\Pi_{n,T}^{\kappa_1,\kappa_2}$
\label{alg:SBG}
\begin{algorithmic}[1]
	\Require{Cutoff levels $1\geq\kappa_1>\kappa_2>0$, 
		number of sticks $n\in\N\cup\{0\}$ and time horizon  $T>0$.}
	\State{Set $L_0:=T$, sample $U_k\sim\U(0,1)$, put $\ell_k:=L_{k-1}U_k$ 
		and $L_k:=L_{k-1}-\ell_k$ for $k\in\{1,\ldots,n\}$ \label{alg_line:Sticks}}
	\State{Sample $\big(\xi_{k,1},\xi_{k,2}\big)\sim
		\Pi_{\ell_k}^{\kappa_1,\kappa_2}$ for $k\in\{1,\ldots,n\}$
		and $\big(\un\xi_1,\un\xi_2)
		\sim\un\Pi_{L_n}^{\kappa_1,\kappa_2}$ 
		\label{alg_line:samplingFrom_A1_A2}}
	\Comment{Algorithms~\ref{alg:ARA} \& \ref{alg:ARA_2}}
	\State{Put 
		$\un\chi_{n,T}^{(\kappa_i)}:=
		\un \xi_i + \sum_{k=1}^n
		\big(\xi_{k,i},\min\{\xi_{k,i},0\},\ell_k\cdot\1{\{\xi_{k,i}\le 0\}}\big)$ 
		for $i\in\{1,2\}$
		\label{alg_line:min_in_sum}}
	\State{\Return $\big(\un\chi_{n,T}^{(\kappa_1)},
		\un\chi_{n,T}^{(\kappa_2)}\big)$}
\end{algorithmic}
\end{lyxalgorithm*}

By SB representation~\eqref{eq:chi}, the law 
$\un\Pi_{n,T}^{\kappa_1,\kappa_2}$ is indeed a coupling of the 
vectors $\un\chi^{(\kappa_1)}_T$ and $\un\chi^{(\kappa_2)}_T$ 
for any $n\in\N\cup\{0\}$. Note that if $n$ equals zero, the set 
$\{1,\ldots,n\}$ in lines~\ref{alg_line:Sticks} 
and~\ref{alg_line:samplingFrom_A1_A2} of the algorithm is empty 
and the laws $\un\Pi_{0,T}^{\kappa_1,\kappa_2}$ and 
$\un\Pi_T^{\kappa_1,\kappa_2}$ coincide, implying that 
\nameref{alg:SBG} may be viewed as a generalisation of 
Algorithm~\ref{alg:ARA_2}. The main advantage of 
\nameref{alg:SBG} over Algorithm~\ref{alg:ARA_2} is that it 
samples $n$ increments of the Gaussian approximation over the 
interval $[L_n,T]$ using the fast Algorithm~\ref{alg:ARA}, with the 
``error term'' contribution $\un \xi_i$ being geometrically small.

The computational complexity of \nameref{alg:SBG} and 
Algorithms~\ref{alg:ARA} \&~\ref{alg:ARA_2} is simple to analyse. 
Assume throughout that all mathematical operations (addition, multiplication, exponentiation, etc.), as well as the evaluation  of
$\ov\nu(\kappa)$ and $\ov\sigma^2_\kappa$ for all $\kappa\in(0,1]$ have constant computational cost. 
Moreover, assume that 
the simulation of any of the following random variables has 
constant expected cost: standard normal $N(0,1)$, uniform $\U(0,1)$, 
Poisson random variable (independently of its mean) 
and any jump with distribution 
$\nu|_{\R\setminus(-\kappa,\kappa)}/\ov\nu(\kappa)$ 
(independently of the cutoff level $\kappa\in(0,1]$). 
Recall that~\cite[Alg.~{\small{MAXLOCATION}}]{MR2730908} samples 
the law $\Phi_t(v,\mu)$ 
with uniformly bounded 
expected cost for all values of the parameters $\mu\in\R$, $v>0$ and $t>0$. 
The next statement follows directly from the algorithms. 

\begin{cor}
\label{cor:algs_complexities}
Under assumptions above, 
there exists a positive constant $C_1$ (resp. $C_2$; $C_3$),
independent of $\kappa_1,\kappa_2\in(0,1]$, $n\in\N$ and time 
horizon $t>0$, such that the expected computational 
complexity of Algorithm~\ref{alg:ARA} (resp. Algorithm~\ref{alg:ARA_2}; 
\nameref{alg:SBG}) is bounded by $C_1(1+\ov\nu(\kappa_2)t)$ 
(resp. $C_2(1+\ov\nu(\kappa_2)t)$; $C_3(n+\ov\nu(\kappa_2)t)$).
\end{cor}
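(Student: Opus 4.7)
The plan is to verify each bound by direct cost accounting, using the stated constant-cost assumptions (arithmetic, $\ov\nu(\kappa)$, $\ov\sigma_\kappa^2$, samples from $N(0,1)$, $\U(0,1)$, Poisson, the normalized jump law and \cite[Alg.~{\small{MAXLOCATION}}]{MR2730908}) and the elementary identity $\E[\Poi(\lambda)] = \lambda$.

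First I would bound the cost of Algorithm~\ref{alg:ARA}. Lines~1--2 have constant expected cost except for the generation of the $N_t$ i.i.d.\ jumps $\lambda_k$ and, in line~\ref{alg_line:sums_of_jumps}, the formation of the sums. By Wald's identity (or equivalently by conditioning on $N_t$), the expected cost of sampling and summing the jumps is proportional to $\E[N_t]=\ov\nu(\kappa_2)t$. Hence the total expected cost is at most $C_1(1+\ov\nu(\kappa_2)t)$ for a suitable universal $C_1$.

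Next I would handle Algorithm~\ref{alg:ARA_2}. Setup and sampling of $N_t$, then the $N_t+1$ uniforms and the partial-sum rescaling in line~\ref{alg_line:sorting_U}, cost $O(1+N_t)$ in expectation. The \textbf{for} loop (lines~\ref{alg_ling:for_start}--\ref{alg_ling:for_end}) has $N_t+1$ iterations, each performing a bounded number of arithmetic operations, one jump sample, and two independent samples from the law $\Phi_{\delta_k}(\upsilon_{\kappa_i},b_{\kappa_i})$; by the assumption on \cite[Alg.~{\small{MAXLOCATION}}]{MR2730908} each such $\Phi$-sample has constant expected cost uniformly in $(\delta_k,\upsilon_{\kappa_i},b_{\kappa_i})$, so each iteration of the loop has constant expected cost. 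Taking total expectation and using $\E[N_t]=\ov\nu(\kappa_2)t$ yields the bound $C_2(1+\ov\nu(\kappa_2)t)$.

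Finally I would tackle \nameref{alg:SBG}. Line~\ref{alg_line:Sticks} and line~\ref{alg_line:min_in_sum} require $O(n)$ operations. Line~\ref{alg_line:samplingFrom_A1_A2} invokes Algorithm~\ref{alg:ARA} on intervals of length $\ell_1,\ldots,\ell_n$ and Algorithm~\ref{alg:ARA_2} on the interval of length $L_n$, all conditionally independent given $(\ell_1,\ldots,\ell_n,L_n)$. Conditioning on these lengths and applying the bounds just established gives an expected cost of at most
\begin{equation*}
\sum_{k=1}^n C_1(1+\ov\nu(\kappa_2)\ell_k) + C_2(1+\ov\nu(\kappa_2)L_n) + O(n).
\end{equation*}
The key observation that closes the argument is the deterministic identity $L_n+\sum_{k=1}^n\ell_k=T$ (built into the stick-breaking recursion), which yields $\ov\nu(\kappa_2)\bigl(L_n+\sum_{k=1}^n\ell_k\bigr)=\ov\nu(\kappa_2)T$ almost surely; taking expectation produces $C_3(n+\ov\nu(\kappa_2)T)$ for a suitable $C_3$, as claimed. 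There is no real obstacle here: the only thing to be careful with is to handle the randomness of the $\ell_k$ and $L_n$ via conditioning before summing, so that the telescoping identity $\sum_{k=1}^n\ell_k+L_n=T$ can be applied pathwise and the Wald-type step used inside Algorithms~\ref{alg:ARA} and~\ref{alg:ARA_2} remains valid after iterating expectations.
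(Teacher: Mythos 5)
Your accounting is correct and matches what the paper intends: the paper offers no written proof beyond "the next statement follows directly from the algorithms," and your direct cost count — constant expected cost per primitive operation, $\E[N_t]=\ov\nu(\kappa_2)t$ via Wald/conditioning, and the telescoping identity $\sum_{k=1}^n\ell_k+L_n=T$ to aggregate the sub-calls in \nameref{alg:SBG} — is precisely the argument being elided. No gaps.
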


Up to a multiplicative constant, 
Algorithms~\ref{alg:ARA} and~\ref{alg:ARA_2} have the same expected 
computational complexity. However,
Algorithm~\ref{alg:ARA_2} requires not only additional simulation of 
jump times of $X^{(\kappa_2)}$
and a sample from $\Phi_t(v,\mu)$ 
using~\cite[Alg.~{\small{MAXLOCATION}}]{MR2730908}   
between any two consecutive jumps, 
but also a sequential computation of the output (the ``for-loop'' in 
lines~\ref{alg_ling:for_start}-\ref{alg_ling:for_end}) due to the 
condition in line~\ref{alg_line:condition>} of 
Algorithm~\ref{alg:ARA_2}. This makes it hard to parallelise 
Algorithm~\ref{alg:ARA_2}. 
\nameref{alg:SBG} avoids this issue by using the fast 
Algorithm~\ref{alg:ARA} over the stick lengths in SB 
representation~\eqref{eq:chi} and calling Algorithm~\ref{alg:ARA_2} 
only over the short time interval $[0,L_n]$, during which very few 
(if any) jumps of $X^{(\kappa_2)}$ occur. Moreover, 
\nameref{alg:SBG} consists of several conditionally 
independent evaluations of Algorithm~\ref{alg:ARA}, which is 
paralellizable, leading to additional numerical benefits 
(see Subsection~\ref{subsec:CP_example} below).

\subsection{Computational complexity of the MC/MLMC estimator based on \nameref{alg:SBG}}
\label{subsec:complexities}

This subsection gives an overview of the bounds on the 
computational complexity of the MC and MLMC estimators defined 
respectively in~\eqref{eq:MC} and~\eqref{eq:MLMC} of 
Subsection~\ref{subsec:MC_MLMC} below. 
Corollary~\ref{cor:SBG_MC} (for MC) and 
Theorem~\ref{thm:SBG_MLMC} (for MLMC) in 
Subsection~\ref{subsec:MC_MLMC} give the full analysis. 

Assume~(\nameref{asm:(O)}) 
holds with some $\delta\in(0,2]$
throughout the subsection. 
As discussed in 
Subsection~\ref{subsec:wasserstein} above, we take $\delta$ as 
large as possible. In particular, if $\sigma\neq0$ then $\delta=2$. 
Let $q\in(0,2]$ be as in~\eqref{eq:BG_bounds} and thus
$q\ge\delta$ if $\sigma=0$. We take $q$ as small as possible. 
For processes used in practice with $\sigma=0$, we may typically 
take $\delta=q=\beta$, where $\beta$ is the BG index defined 
in~\eqref{eq:I0_beta}. 
Assumption~(\nameref{asm:(Htau)}), 
required for the analysis of the class $\BT_2$ in~\eqref{def:BT2} of 
discontinuous functions of $\un \tau_T(X)$, holds with $\gamma=1$ 
as~(\nameref{asm:(O)}) is satisfied (see the discussion following 
Proposition~\ref{prop:barrier-tau} above).
When analysing the class of discontinuous functions $\BT_1$ 
in~\eqref{def:BT1}, assume~(\nameref{asm:(H)}) holds throughout 
with some $\gamma>0$. 

\subsubsection{Monte Carlo.}
An MC estimator is $L^2$-accurate at level $\epsilon>0$, 
if its bias is smaller than $\epsilon/\sqrt{2}$ and the number $N$ of
independent samples is proportional to $\epsilon^{-2}$, 
see Appendix~\ref{app:MonteCarlo}. 
The following table contains a summary of the values $\kappa$, 
as a function of $\epsilon$, such that the bias of the estimator in~\eqref{eq:MC} 
is at most $\epsilon/\sqrt{2}$,
and the associated Monte Carlo cost $\C_\MC(\epsilon)$ (up to a constant) 
for various classes of functions of $\un\chi_T$
analysed in Subsection~\ref{subsec:bias}
(see also Corollary~\ref{cor:SBG_MC} below for full details).

\begin{table}[ht]
{\scalefont{.9}
\begin{tabular}{|c|c|c|c|}
	\hline
	Family of functions $f$ & Case & $\kappa$ & $\epsilon^2\cdot \C_\MC(\epsilon)$ \\
	\hline
	$\Lip$ in $(X_T,\un{X}_T)$
	& $\sigma\ne 0$
	& $\epsilon^{1/(3-q)}|\log\epsilon|^{-1}$ 
	& $\epsilon^{-q/(3-q)}|\log\epsilon|^{q}$\\
	$\locLip$ in $(X_T,\un{X}_T)$
	& $\sigma\ne 0$ 
	& $\epsilon^{2/(4-q)}|\log\epsilon|^{-1/2}$ 
	& $\epsilon^{-2q/(4-q)}|\log\epsilon|^{q/2}$\\
	$\Lip\cup\locLip$ in $(X_T,\un{X}_T)$
	& $\sigma=0$ & $\epsilon|\log\epsilon|^{-1}$ 
	& $\epsilon^{-q}|\log\epsilon|^q$ \\
	\hline
	\multirow{2}{*}{$\BT_1$ defined in~\eqref{def:BT1}} 
	& $\sigma\ne 0$ 
	& $\max\big\{\frac{\epsilon^{3/(4-q)}}{|\log\epsilon|},
			\frac{\epsilon^{2/(3-q)}}{|\log\epsilon|^{1/2}}\big\}$ 
	& $\min\big\{\frac{|\log\epsilon|^{q}}{\epsilon^{3q/(4-q)}},
			\frac{|\log\epsilon|^{q/2}}{\epsilon^{2q/(3-q)}}\big\}$\\
	& $\sigma=0$ 
	& $\epsilon^{1/2+1/\gamma}|\log\epsilon|^{-1}$ 
	& $\epsilon^{-q(1/2+1/\gamma)}|\log\epsilon|^q$ \\
	\hline
	\multirow{3}{*}{$\Lip$ in $\un\tau_T(X)$} 
	& $\sigma\ne 0$ 
	& $\epsilon^{1/(3-q)}$ 
	& $\epsilon^{-q/(3-q)}$ \\
	& $\delta\in(0,2)\setminus\{\tfrac{2}{3}\}$ 
	& $\epsilon^{\min\{2/\delta,\max\{3/2,1/\delta\}\}}$ 
	& $\epsilon^{-q\min\{2/\delta,\max\{3/2,1/\delta\}\}}$\\
	& $\delta=\tfrac{2}{3}$ 
	& $\epsilon^{3/2}|\log\epsilon|^{-1}$ 
	& $\epsilon^{-3q/2}|\log\epsilon|^q$\\
	\hline
	\multirow{3}{*}{$\BT_2$ defined in~\eqref{def:BT2}} 
	& $\sigma\ne 0$ 
	& $\epsilon^{2/(3-q)}$ 
	& $\epsilon^{-2q/(3-q)}$ \\
	& $\delta\in(0,2)\setminus\{\tfrac{2}{3}\}$ 
	& $\epsilon^{\min\{4/\delta,\max\{3,2/\delta\}\}}$ 
	& $\epsilon^{-q\min\{4/\delta,\max\{3,2/\delta\}\}}$\\
	& $\delta=\tfrac{2}{3}$ 
	& $\epsilon^{3}|\log\epsilon|^{-1/2}$ 
	& $\epsilon^{-3q}|\log\epsilon|^{q/2}$\\
	\hline
\end{tabular}
}\caption{\footnotesize
Asymptotic behaviour of the level $\kappa$ and the 
complexity $\C_\MC(\epsilon)$ as $\epsilon\to 0$ for the MC 
estimator in~\eqref{eq:MC}.}
\label{tab:MC}
\end{table}

The number of sticks $n\in\N\cup\{0\}$ in \nameref{alg:SBG} 
does not affect the law of $\un\chi_T^{(\kappa)}$. It only impacts 
the MC estimator in~\eqref{eq:MC} through numerical stability and 
the reduction of simulation cost. It is hard to determine the optimal 
choice for $n$. Clearly, the choice $n=0$ 
(i.e. Algorithm~\ref{alg:ARA_2}) is not a good one as discussed in 
Subsection~\ref{subsec:SBG_sampler}  above. A balance needs to 
be struck between (i) having a vanishingly small number of jumps
in the time interval $[0,L_n]$, so that  Algorithm~\ref{alg:ARA_2} 
behaves in a numerically stable way, and (ii) not having too many 
sticks so that line~\ref{alg_line:samplingFrom_A1_A2} of 
\nameref{alg:SBG} does not execute redundant computation 
of many geometrically small increments of $X^{(\kappa)}$, 
which are not detected in the final output. A good rule of thumb is 
$n=n_0 + \cl{\log^2(1+\ov\nu(\kappa)T)}$, where 
$\cl{x}:=\inf\{j\in\Z:j\ge x\}$, $x\in\R$, and 
the initial value 
$n_0$ is chosen so that  some sticks are present if for large 
$\kappa$ the total expected number of jumps $\ov\nu(\kappa)T$ 
is small (e.g. $n_0=5$ works well in 
Subsection~\ref{subsec:CP_example} for jump diffusions with 
low activity, see Figures~\ref{fig:ARA-Speedup} 
and~\ref{fig:ARA-Speedup2}), ensuring that the expected 
number of jumps in $[0,L_n]$ vanishes as $\epsilon$ (and hence 
$\kappa$) tends to zero. 

\subsubsection{Multilevel Monte Carlo.} 
The MLMC estimator in~\eqref{eq:MLMC} is based on the coupling in 
\nameref{alg:SBG} for consecutive levels of a geometrically 
decaying sequence $(\kappa_j)_{j\in\N}$ and an increasing 
sequence of the numbers of sticks $(n_j)_{j\in\N}$. 
Table~\ref{tab:MLMC} summarises the resulting MLMC complexity 
up to logarithmic factors, with full results available in 
Theorem~\ref{thm:SBG_MLMC} below. 

There are two key ingredients in the proof of 
Theorem~\ref{thm:SBG_MLMC}:
(I)  the bounds in 
Theorem~\ref{thm:summary}
on the $L^2$-distance (i.e. the level variance, see Appendix~\ref{subsec:MLMC}) between the functions of the
marginals of the coupling 
$\un \Pi_{n_j,T}^{\kappa_j,\kappa_{j+1}}$
constructed by \nameref{alg:SBG};
(II) the bounds on the bias of various functions in Section~\ref{sec:main-theory} above.
The number of levels $m$ in the MLMC estimator in~\eqref{eq:MLMC}
is chosen to ensure that its bias, equal to the bias of 
$\un \chi_T^{(\kappa_m)}$ at 
the  top cutoff level $\kappa_m$, 
is bounded by $\epsilon/\sqrt{2}$. Thus, the value of $m$ can be expressed in terms of $\epsilon$ using Table~\ref{tab:MC} and the explicit 
formula for the cutoff $\kappa_j$, 
given in the caption of Table~\ref{tab:MLMC}. 
The formula for $\kappa_j$ at level $j$ in the MLMC estimator in~\eqref{eq:MLMC}
is established in the proof of Theorem~\ref{thm:SBG_MLMC} 
by minimising the multiplicative constant in the computational complexity
$\C_\ML(\epsilon)$ over all possible rates of the geometric decay of the sequence $(\kappa_j)_{j\in\N}$.

We stress that the analysis of the level variances for the various 
payoff functions of the coupling 
$\un\Pi_{n_j,T}^{\kappa_j,\kappa_{j+1}}$ 
in Theorem~\ref{thm:summary} is carried out directly for locally 
Lipschitz payoffs, see Propositions~\ref{prop:SBG_app_coupling}. 
However, in the case of the discontinuous payoffs in 
$\BT_1$ (see~\eqref{def:BT1}) and $\BT_2$ (see~\eqref{def:BT2}),
the analysis requires a certain regularity (uniformly in the cutoff 
levels) of the coupling 
$(\un\chi_T^{(\kappa_j)},\un\chi_T^{(\kappa_{j+1})})$.
This leads to  a construction of a further coupling
$(\un\chi_T^{(\kappa_j)},\un\chi_T^{(\kappa_{j+1})}, \un \chi_T)$
where the components of 
$(\un\chi_T^{(\kappa_j)},\un\chi_T^{(\kappa_{j+1})})$ 
can be compared to the limiting object $\un \chi_T$, which can be shown to possess the necessary regularity
(see Proposition~\ref{prop:SBG_app_coupling+} below for details).


%

\begin{table}[ht]
{\scalefont{.9}
\begin{tabular}{|c|c|c|c|}
	\hline
	Family of functions $f$ & Case & $a$ 
	& The power of $\epsilon^{-1}$ in 
	$\epsilon^2\cdot \C_\ML(\epsilon)$ \\
	\hline
	$\Lip$ in $(X_T,\un{X}_T)$
	& $\sigma\ne 0$
	& $2(q-1)$
	& $2(q-1)^+/(3-q)$\\
	$\locLip$ in $(X_T,\un{X}_T)$
	& $\sigma\ne 0$ 
	& $2(q-1)$
	& $4(q-1)^+/(4-q)$\\
	$\Lip\cup\locLip$ in $(X_T,\un{X}_T)$
	& $\sigma= 0$ 
	& $2(q-1)$
	& $2(q-1)^+$\\
	\hline
	\multirow{2}{*}{$\BT_1$ defined in~\eqref{def:BT1}} 
	& $\sigma\ne 0$ 
	& $2(2q-1)/3$
	& $(2q-1)^+\min\{2/(4-q),4/(9-3q)\}$\\
	& $\sigma=0$ 
	& $2(q(1+\gamma)-\gamma)/(2+\gamma)$
	& $(q(1+1/\gamma)-1)^+$\\
	\hline
	\multirow{3}{*}{$\Lip$ in $\un\tau_T(X)$} 
	& $\sigma\ne 0$ 
	& $\frac{5}{4}q - \frac{1}{2}$
	& $(\frac{5}{4}q - \frac{1}{2})^+$ \\
	& $\sigma=0$ 
	& $q-(1-\frac{q}{2})\min\{\frac{1}{2},\frac{2\delta}{2-\delta}\}$ 
	& $\dfrac{(2q-(2-q)
			\min\{1,4\delta/(2-\delta)\})^+}
		{\max\{\delta,\min\{4/3,2\delta\}\}}$\\
	\hline
	\multirow{2}{*}{$\BT_2$ defined in~\eqref{def:BT2}} 
	& $\sigma\ne 0$ 
	& $\frac{9}{8}q-\frac{1}{4}$ 
	& $(\frac{9}{4}q-\frac{1}{2})^+$ \\
	& $\sigma=0$ 
	& $q-(1-\frac{q}{2})\min\{\frac{1}{4},\frac{\delta}{2-\delta}\}$ 
	& $\dfrac{(2q-(2-q)
			\min\{1/2,2\delta/(2-\delta)\})^+}
		{\max\{\delta/2,\min\{2/3,\delta\}\}}$\\
	\hline
\end{tabular}
}\caption{\footnotesize
	The table presents the power of 
	$\epsilon^{-1}$ in $\epsilon^2\cdot \C_\ML(\epsilon)$ as 
	$\epsilon\to 0$, neglecting only the logarithmic factors 
	(see Theorem~\ref{thm:SBG_MLMC} below for the complete result). 
	Parameter $a$ in the table determines the decreasing sequence of 
	cutoff levels $(\kappa_j)_{j\in\N}$ as follows: 
	$\kappa_j=(1+|a|/q)^{-2(j-1)/|a|}$ if $a\neq0$ and 
	$\kappa_j=\exp(-(2/q)(j-1))$ otherwise. 
	The corresponding increasing number of sticks $n_j$ in the 
	definition of the law $\un \Pi_{n_j,T}^{\kappa_j,\kappa_{j+1}}$
	can be taken to grow asymptotically as 
	$\log^2(1+\ov\nu(\kappa_j)T)$ for large~$j$, 
	see Theorem~\ref{thm:SBG_MLMC}.}
\label{tab:MLMC}
\end{table}

\vspace{-5mm}

\section{Numerical examples}
\label{sec:numerics}


In this section we study numerically the performance  
of \nameref{alg:SBG}. All the results are based on the code available in 
repository~\cite{Jorge_GitHub3}.
In Subsection~\ref{subsec:TSW_example} we apply 
\nameref{alg:SBG} to two families of L\'evy models 
(tempered stable and Watanabe processes) 
and 
verify numerically 
the decay of the bias 
(established in Subsection~\ref{subsec:bias} above) and 
level variance (see Theorem~\ref{thm:summary} below) of the Gaussian approximations. 
In Subsection~\ref{subsec:CP_example} we study numerically
the cost reduction of \nameref{alg:SBG}, when compared to 
Algorithm~\ref{alg:ARA_2}, for the simulation of the vector $\un \chi_T^{(\kappa)}$. 

\subsection{Numerical performance of \nameref{alg:SBG} for tempered stable and Watanabe processes} 
\label{subsec:TSW_example}

To illustrate numerically our results, we consider two classes of 
exponential L\'evy models $S=S_0e^{X}$. 
The first is the tempered stable class, containing 
the CGMY (or KoBoL) model, a widely used process for modeling 
risky assets in financial mathematics (see e.g.~\cite{tankov2015} 
and the references therein), 
which satisfies the regularity assumptions from 
Subsection~\ref{subsec:bias} above. 
The second is the Watanabe class, which has diffuse but singular 
transition laws~\cite[Thm~27.19]{MR3185174}, making it a good 
candidate to stress test our results. 

We numerically study the decay of the bias and level variance of 
the MLMC estimator in~\eqref{eq:MLMC} for the prices of a 
lookback put $\E[\ov S_T-S_T]$ and an up-and-out call 
$\E[(S_T-K)^+\1\{\ov S_T\le M\}]$ as well as the values of the 
ulcer index (UI) $100\E[(S_T/\ov S_T -1)^2]^{1/2}$~\cite{
	martin1989investor,Investopedia_UI} and a modified ulcer index 
(MUI) $100\E[(S_T/\ov S_T -1)^2\1\{\ov\tau_T(S)< T/2\}]^{1/2}$. 
The first three quantities are commonplace in applications, 
see~\cite{tankov2015,Investopedia_UI}. The MUI refines the UI 
by incorporating the information on the drawdown duration, 
weights trends more heavily than short-time fluctuations. 

In Subsections~\ref{subsec:TS} and~\ref{subsec:W} we use $N=10^5$ 
independent samples to estimate the means and variances of the 
variables $D^1_j$ in~\eqref{eq:MLMC} (with 
$\un\chi_T^{(\kappa_j)}$ substituted by $\ov\chi_T^{(\kappa_j)}$), 
where $\kappa_j=e^{-r(j-1)}$ and 
$n_j=\cl{\max\{j,\log^2(1+\ov\nu(\kappa_{j+1}))\}}$, $j\in\N$, 
discussed in Subsection~\ref{subsec:MC_MLMC} below.  

\subsubsection{Tempered stable.}
\label{subsec:TS}

The characteristic triplet $(\sigma^2,\nu,b)$ of the tempered stable 
L\'evy process $X$ is given by $\sigma=0$, drift $b\in\R$ and 
L\'evy measure $\nu(dx)=|x|^{-1-\alpha_{\sgn(x)}}c_{\sgn(x)}
	e^{-\lambda_{\sgn(x)}|x|}dx$, where $\alpha_\pm\in[0,2)$, 
$c_\pm\ge0$ and $\lambda_\pm>0$, cf.~\eqref{eq:Levy_Khinchin}. 
Exact simulation of increments is currently out of reach if either
$\alpha_+>1$ or $\alpha_->1$ (see e.g.~\cite{MR3969059}) and 
requires the Gaussian approximation. 

\begin{figure}[ht]
	\begin{center}
		\begin{subfigure}[c]{.49\linewidth}
			{\scalefont{.8}
				\begin{tikzpicture} 
				\begin{axis} 
				[
				title={(A) $\ov{S}_T-S_T$},
				ymin=-21,
				ymax=-6,
				xmin=-12,
				xmax=0,
				xlabel={\footnotesize $\log\kappa_j$},
				width=8cm,
				height=3.75cm,
				axis on top=true,
				axis x line=bottom, 
				axis y line=left,
				axis line style={->},
				x label style={at={(axis description cs:0.93,0.5)},anchor=north},
				legend style={at={(.76,.015)},anchor=south east}
				]
				
				\addplot[
				solid, mark=o, mark options={scale=.85, solid},
				color=black,
				]
				coordinates {
(-0.5,-7.153)(-1.0,-7.074)(-1.5,-7.385)(-2.0,-11.73)(-2.5,-7.411)(-3.0,-6.978)(-3.5,-6.779)(-4.0,-6.684)(-4.5,-6.682)(-5.0,-6.921)(-5.5,-7.46)(-6.0,-8.087)(-6.5,-8.646)(-7.0,-9.245)(-7.5,-9.814)(-8.0,-10.45)(-8.5,-11.14)(-9.0,-11.66)(-9.5,-12.55)(-10.0,-12.89)(-10.5,-13.27)(-11.0,-14.04)(-11.5,-14.5)(-12.0,-15.61)
				};
				
				\addplot[
				loosely dashed, 
				color=blue,
				]
				coordinates{
(-0.5,-3.479)(-1.0,-3.927)(-1.5,-4.377)(-2.0,-4.829)(-2.5,-5.284)(-3.0,-5.74)(-3.5,-6.198)(-4.0,-6.658)(-4.5,-7.12)(-5.0,-7.583)(-5.5,-8.047)(-6.0,-8.513)(-6.5,-8.979)(-7.0,-9.447)(-7.5,-9.916)(-8.0,-10.39)(-8.5,-10.86)(-9.0,-11.33)(-9.5,-11.8)(-10.0,-12.27)(-10.5,-12.75)(-11.0,-13.22)(-11.5,-13.7)(-12.0,-14.17)
				};
				
				\addplot[
				solid, mark=+, mark options={scale=.85, solid},
				color=black,
				]
				coordinates {
(-0.5,-8.406)(-1.0,-8.002)(-1.5,-8.144)(-2.0,-8.314)(-2.5,-8.698)(-3.0,-9.224)(-3.5,-9.818)(-4.0,-10.39)(-4.5,-10.93)(-5.0,-11.47)(-5.5,-11.98)(-6.0,-12.53)(-6.5,-13.1)(-7.0,-13.7)(-7.5,-14.34)(-8.0,-14.99)(-8.5,-15.62)(-9.0,-16.3)(-9.5,-16.96)(-10.0,-17.64)(-10.5,-18.29)(-11.0,-18.96)(-11.5,-19.64)(-12.0,-20.29)
				};
				
				\addplot[
				loosely dashed, 
				color=red,
				]
				coordinates{
(-0.5,-4.98)(-1.0,-5.65)(-1.5,-6.32)(-2.0,-6.99)(-2.5,-7.66)(-3.0,-8.33)(-3.5,-9.0)(-4.0,-9.67)(-4.5,-10.34)(-5.0,-11.01)(-5.5,-11.68)(-6.0,-12.35)(-6.5,-13.02)(-7.0,-13.69)(-7.5,-14.36)(-8.0,-15.03)(-8.5,-15.7)(-9.0,-16.37)(-9.5,-17.04)(-10.0,-17.71)(-10.5,-18.38)(-11.0,-19.05)(-11.5,-19.72)(-12.0,-20.39)
				};
				
				\end{axis}
				\end{tikzpicture}}
		\end{subfigure}
		\begin{subfigure}[c]{.49\linewidth}
			{\scalefont{.8}
				\begin{tikzpicture} 
				\begin{axis} 
				[
				title={(B) $(S_T-K)^+\1\{\ov{S}_T\le M\}$},
				ymin=-20,
				ymax=-5,
				xmin=-12,
				xmax=-.5,
				xlabel={\footnotesize $\log\kappa_j$},
				width=8cm,
				height=3.75cm,
				axis on top=true,
				axis x line=bottom, 
				axis y line=left,
				axis line style={->},
				x label style={at={(axis description cs:0.93,0.5)},anchor=north},
				legend style={at={(.76,.015)},anchor=south east}
				]
				
				\addplot[
				solid, mark=o, mark options={scale=.85, solid},
				color=black,
				]
				coordinates {
(-0.5,-10.15)(-1.0,-6.729)(-1.5,-5.308)(-2.0,-5.383)(-2.5,-7.961)(-3.0,-7.889)(-3.5,-8.519)(-4.0,-8.798)(-4.5,-9.022)(-5.0,-11.69)(-5.5,-10.75)(-6.0,-11.62)(-6.5,-11.2)(-7.0,-9.701)(-7.5,-11.44)(-8.0,-11.15)(-8.5,-11.3)(-9.0,-11.17)(-9.5,-14.46)(-10.0,-18.25)(-10.5,-10.61)(-11.0,-11.36)(-11.5,-11.33)(-12.0,-16.28)
				};
				
				\addplot[
				loosely dashed, 
				color=blue,
				]
				coordinates{
(-0.5,-6.167)(-1.0,-6.447)(-1.5,-6.731)(-2.0,-7.016)(-2.5,-7.304)(-3.0,-7.594)(-3.5,-7.886)(-4.0,-8.179)(-4.5,-8.474)(-5.0,-8.77)(-5.5,-9.068)(-6.0,-9.367)(-6.5,-9.667)(-7.0,-9.968)(-7.5,-10.27)(-8.0,-10.57)(-8.5,-10.88)(-9.0,-11.18)(-9.5,-11.49)(-10.0,-11.79)(-10.5,-12.1)(-11.0,-12.41)(-11.5,-12.72)(-12.0,-13.03)
				};
				
				\addplot[
				solid, mark=+, mark options={scale=.85, solid},
				color=black,
				]
				coordinates {
(-0.5,-7.658)(-1.0,-6.663)(-1.5,-5.441)(-2.0,-5.416)(-2.5,-6.52)(-3.0,-6.615)(-3.5,-6.832)(-4.0,-7.155)(-4.5,-7.345)(-5.0,-7.65)(-5.5,-7.397)(-6.0,-7.855)(-6.5,-8.574)(-7.0,-8.749)(-7.5,-10.03)(-8.0,-9.543)(-8.5,-10.05)(-9.0,-10.05)(-9.5,-10.46)(-10.0,-10.46)(-10.5,-9.769)(-11.0,-11.18)(-11.5,-11.17)(-12.0,-19.86)
				};
				
				\addplot[
				loosely dashed, 
				color=red,
				]
				coordinates{
(-0.5,-6.08)(-1.0,-6.304)(-1.5,-6.527)(-2.0,-6.75)(-2.5,-6.974)(-3.0,-7.197)(-3.5,-7.42)(-4.0,-7.644)(-4.5,-7.867)(-5.0,-8.09)(-5.5,-8.314)(-6.0,-8.537)(-6.5,-8.76)(-7.0,-8.984)(-7.5,-9.207)(-8.0,-9.43)(-8.5,-9.654)(-9.0,-9.877)(-9.5,-10.1)(-10.0,-10.32)(-10.5,-10.55)(-11.0,-10.77)(-11.5,-10.99)(-12.0,-11.22)
				};
				
				\end{axis}
				\end{tikzpicture}}
		\end{subfigure}
		\begin{subfigure}[c]{.49\linewidth}
			{\scalefont{.8}
				\begin{tikzpicture} 
				\begin{axis} 
				[
				title={(C) $(S_T/\ov{S}_T-1)^2$},
				ymin=-20,
				ymax=-7,
				xmin=-12,
				xmax=0,
				xlabel={\footnotesize $\log\kappa_j$},
				width=8cm,
				height=3.75cm,
				axis on top=true,
				axis x line=bottom, 
				axis y line=left,
				axis line style={->},
				x label style={at={(axis description cs:0.7,0.5)},anchor=north},
				legend style={at={(1,.15)},anchor=south east}
				]
				
				\addplot[
				solid, mark=o, mark options={scale=.85, solid},
				color=black,
				]
				coordinates {
(-0.5,-12.47)(-1.0,-14.83)(-1.5,-14.78)(-2.0,-13.9)(-2.5,-11.62)(-3.0,-9.925)(-3.5,-8.949)(-4.0,-8.367)(-4.5,-8.071)(-5.0,-8.011)(-5.5,-8.178)(-6.0,-8.466)(-6.5,-8.794)(-7.0,-9.182)(-7.5,-9.587)(-8.0,-10.07)(-8.5,-10.51)(-9.0,-10.95)(-9.5,-11.39)(-10.0,-11.87)(-10.5,-12.26)(-11.0,-12.99)(-11.5,-13.35)(-12.0,-13.9)
				};
				
				\addplot[
				loosely dashed, 
				color=blue,
				]
				coordinates{
(-0.5,-3.119)(-1.0,-3.567)(-1.5,-4.017)(-2.0,-4.469)(-2.5,-4.924)(-3.0,-5.38)(-3.5,-5.838)(-4.0,-6.298)(-4.5,-6.76)(-5.0,-7.223)(-5.5,-7.687)(-6.0,-8.153)(-6.5,-8.619)(-7.0,-9.087)(-7.5,-9.556)(-8.0,-10.03)(-8.5,-10.5)(-9.0,-10.97)(-9.5,-11.44)(-10.0,-11.91)(-10.5,-12.39)(-11.0,-12.86)(-11.5,-13.34)(-12.0,-13.81)
				};
				
				\addplot[
				solid, mark=+, mark options={scale=.85, solid},
				color=black,
				]
				coordinates {
(-0.5,-15.59)(-1.0,-16.56)(-1.5,-17.55)(-2.0,-17.79)(-2.5,-16.34)(-3.0,-15.43)(-3.5,-14.91)(-4.0,-14.25)(-4.5,-13.79)(-5.0,-13.57)(-5.5,-13.59)(-6.0,-13.78)(-6.5,-14.07)(-7.0,-14.45)(-7.5,-14.85)(-8.0,-15.25)(-8.5,-15.69)(-9.0,-16.14)(-9.5,-16.56)(-10.0,-17.02)(-10.5,-17.49)(-11.0,-17.96)(-11.5,-18.38)(-12.0,-18.87)
				};
				
				\addplot[
				loosely dashed, 
				color=red,
				]
				coordinates{
(-0.5,-8.187)(-1.0,-8.648)(-1.5,-9.109)(-2.0,-9.57)(-2.5,-10.03)(-3.0,-10.49)(-3.5,-10.95)(-4.0,-11.41)(-4.5,-11.88)(-5.0,-12.34)(-5.5,-12.8)(-6.0,-13.26)(-6.5,-13.72)(-7.0,-14.18)(-7.5,-14.64)(-8.0,-15.1)(-8.5,-15.56)(-9.0,-16.02)(-9.5,-16.48)(-10.0,-16.95)(-10.5,-17.41)(-11.0,-17.87)(-11.5,-18.33)(-12.0,-18.79)
				};
				
				\end{axis}
				\end{tikzpicture}}
		\end{subfigure}
		\begin{subfigure}[c]{.49\linewidth}
			{\scalefont{.8}
				\begin{tikzpicture}
				\begin{axis} 
				[
				title={(D) $(S_T/\ov{S}_T-1)^2\1\{\ov\tau_T(S)<T/2\}$},
				ymin=-20,
				ymax=-7,
				xmin=-12,
				xmax=0,
				xlabel={\footnotesize $\log\kappa_j$},
				width=8cm,
				height=3.75cm,
				axis on top=true,
				axis x line=bottom, 
				axis y line=left,
				axis line style={->},
				x label style={at={(axis description cs:0.7,0.5)},anchor=north},
				legend style={at={(1,.15)},anchor=south east}
				]
				
				\addplot[
				solid, mark=o, mark options={scale=.85, solid},
				color=black,
				]
				coordinates {
(-0.5,-14.89)(-1.0,-13.63)(-1.5,-15.16)(-2.0,-14.76)(-2.5,-12.33)(-3.0,-10.45)(-3.5,-9.416)(-4.0,-8.723)(-4.5,-8.327)(-5.0,-8.248)(-5.5,-8.432)(-6.0,-8.741)(-6.5,-9.052)(-7.0,-9.477)(-7.5,-9.937)(-8.0,-10.38)(-8.5,-10.74)(-9.0,-11.52)(-9.5,-11.63)(-10.0,-12.14)(-10.5,-12.67)(-11.0,-13.67)(-11.5,-16.3)(-12.0,-14.33)
				};
			
				\addplot[
				loosely dashed, 
				color=blue,
				]
				coordinates{
(-0.5,-9.466)(-1.0,-9.538)(-1.5,-9.613)(-2.0,-9.69)(-2.5,-9.77)(-3.0,-9.851)(-3.5,-9.935)(-4.0,-10.02)(-4.5,-10.11)(-5.0,-10.19)(-5.5,-10.28)(-6.0,-10.37)(-6.5,-10.47)(-7.0,-10.56)(-7.5,-10.65)(-8.0,-10.75)(-8.5,-10.84)(-9.0,-10.94)(-9.5,-11.04)(-10.0,-11.13)(-10.5,-11.23)(-11.0,-11.33)(-11.5,-11.43)(-12.0,-11.53)
				};
				
				\addplot[
				solid, mark=+, mark options={scale=.85, solid},
				color=black,
				]
				coordinates {
(-0.5,-16.75)(-1.0,-17.71)(-1.5,-18.86)(-2.0,-18.86)(-2.5,-17.1)(-3.0,-15.88)(-3.5,-15.13)(-4.0,-14.24)(-4.5,-13.67)(-5.0,-13.4)(-5.5,-13.38)(-6.0,-13.5)(-6.5,-13.74)(-7.0,-14.03)(-7.5,-14.32)(-8.0,-14.7)(-8.5,-14.85)(-9.0,-15.28)(-9.5,-15.58)(-10.0,-15.98)(-10.5,-16.16)(-11.0,-16.49)(-11.5,-16.65)(-12.0,-17.13)
				};
				
				\addplot[
				loosely dashed, 
				color=red,
				]
				coordinates{
(-0.5,-14.31)(-1.0,-14.37)(-1.5,-14.43)(-2.0,-14.48)(-2.5,-14.54)(-3.0,-14.6)(-3.5,-14.66)(-4.0,-14.71)(-4.5,-14.77)(-5.0,-14.83)(-5.5,-14.89)(-6.0,-14.94)(-6.5,-15.0)(-7.0,-15.06)(-7.5,-15.12)(-8.0,-15.18)(-8.5,-15.23)(-9.0,-15.29)(-9.5,-15.35)(-10.0,-15.41)(-10.5,-15.46)(-11.0,-15.52)(-11.5,-15.58)(-12.0,-15.64)
				};
			
				
				\end{axis}
				\end{tikzpicture}}
		\end{subfigure}
	\caption{\footnotesize 
Gaussian approximation of a tempered stable process: 
log-log plot of the bias and level variance for various payoffs 
as a function of $\log\kappa_j$. 
Circle ($\mathbf{\circ}$) and plus ($\mathbf{+}$) correspond 
to $\log|\E[D_j^1]|$ and $\log\V[D_j^1]$, respectively, 
where $D_j^1$ is given in~\eqref{eq:MLMC} with 
$\kappa_j=\exp(-r(j-1))$ for $r=1/2$. 
The dashed lines in all the graphs plot the rates of the theoretical 
bounds in Subsection~\ref{subsec:bias} (blue for the bias) 
and Theorem~\ref{thm:summary} (red for level variances). 
In plots (A)--(D) the initial value of the risky asset is normalised to 
$S_0=1$ and the time horizon is set to $T=1/6$. In plot (B) we set 
$K=1$ and $M=1.2$. The model parameters are given in 
Table~\ref{tab:TS} below. 
	}\label{fig:TS}
	\end{center}
\end{figure}

\begin{table}[ht]
{\scalefont{.9}
	\begin{tabular}{|c|c|c|c|c|c|c|c|c|}
		\hline
		Parameter set
		& $b$
		& $\alpha_+$ & $\alpha_-$ 
		& $c_+$ & $c_-$ 
		& $\lambda_+$ & $\lambda_-$ 
		& Graphs in Figure~\ref{fig:TS}\\
		\hline
		1
		& 0
		& .66 & .66 
		& .1305 & .0615
		& 6.5022 & 3.0888
		& (A) and (B)\\
		\hline
		2
		& .1274
		& 1.0781 & 1.0781
		& .41077 & .41077
		& 49.663 & 59.078
		& (C) and (D)\\
		\hline
	\end{tabular}
}\caption{The parameters used for  
	Figure~\ref{fig:TS}. The first set of parameters corresponds 
	to the risk-neutral calibration to vanilla options on the USD/JPY 
	exchange rate, see~\cite[Table~3]{MR3038608}. 
	The second set is the maximum likelihood estimate based on 
	the real-world S\&P stock prices, see~\cite[Table~1]{NewTS}.
	}\label{tab:TS}
\end{table}

Figure~\ref{fig:TS} suggests that our bounds are 
close to the exhibited numerical behaviour for continuous payoff 
functions. In the discontinuous case, $\ov\chi_T^{(\kappa_j)}$ 
appears to be much closer to $\ov\chi_T$ 
(resp. $\ov\chi_T^{(\kappa_{j+1})}$), than predicted by 
Propositions~\ref{prop:barrier} \&~\ref{prop:barrier-tau} 
(resp. Theorem~\ref{thm:summary}(b) \&~(d)).  

\subsubsection{Watanabe model.}
\label{subsec:W}

The characteristic triplet $(\sigma,\nu,b)$ of the Watanabe 
process is given by $\sigma=0$, the L\'evy measure $\nu$ equals 
$\sum_{n\in\N}c_+\delta_{a^{-n}}+c_-\delta_{-a^{-n}}$, 
where $a\in\N\setminus\{1\}$ and $\delta_x$ is the Dirac measure 
at $x$, and the drift $b\in\R$ is arbitrary. 
The increments of the Watanabe process are diffuse but have no 
density (see~\cite[Thm~27.19]{MR3185174}). Since the process has 
very little jump activity, the bound in 
Proposition~\ref{prop:barrier-tau} (see also~\eqref{eq:mu_0}) is 
non-vanishing and the bounds in Theorem~\ref{thm:summary}(c) 
\&~(d) are not applicable, meaning that we have no theoretical 
control on the approximation of $\ov\tau_T(S)$. This is not 
surprising as such acute lack of jump activity makes the Gaussian 
approximation unsuitable (cf.~\cite[Prop.~2.2]{MR1834755}). 

\begin{figure}[ht]
	\begin{center}
		\begin{subfigure}[c]{.49\linewidth}
			{\scalefont{.8}
				\begin{tikzpicture} 
				\begin{axis} 
				[
				title={(A) $\ov{S}_T-S_T$ with drift $b=0$},
				ymin=-60,
				ymax=0,
				xmin=-30,
				xmax=0,
				xlabel={\footnotesize $\log\kappa_j$},
				width=8cm,
				height=3.75cm,
				axis on top=true,
				axis x line=bottom, 
				axis y line=left,
				axis line style={->},
				x label style={at={(axis description cs:0.93,0.45)},anchor=north},
				legend style={at={(1,.15)},anchor=south east}
				]
				
				\addplot[
				solid, mark=o, mark options={scale=.85, solid},
				color=black,
				]
				coordinates {
(-1.0,-1.615)(-2.0,-2.445)(-3.0,-3.262)(-4.0,-5.628)(-5.0,-6.041)(-6.0,-8.14)(-7.0,-8.665)(-8.0,-10.56)(-9.0,-11.34)(-10.0,-11.8)(-11.0,-13.61)(-12.0,-14.02)(-13.0,-15.66)(-14.0,-15.97)(-15.0,-17.96)(-16.0,-18.38)(-17.0,-20.81)(-18.0,-20.91)(-19.0,-21.39)(-20.0,-22.97)(-21.0,-23.55)(-22.0,-25.1)(-23.0,-25.72)(-24.0,-27.4)(-25.0,-27.76)(-26.0,-30.09)(-27.0,-30.15)(-28.0,-31.01)(-29.0,-32.52)(-30.0,-32.92)
				};
				
				\addplot[
				loosely dashed, 
				color=blue,
				]
				coordinates{
(1.0,-3.04)(-30.0,-32.04)
				};
				
				\addplot[
				solid, mark=+, mark options={scale=.85, solid},
				color=black,
				]
				coordinates {
(-1.0,0.7442)(-2.0,-1.122)(-3.0,-2.69)(-4.0,-5.611)(-5.0,-6.439)(-6.0,-9.559)(-7.0,-11.07)(-8.0,-14.16)(-9.0,-15.69)(-10.0,-16.84)(-11.0,-19.92)(-12.0,-21.01)(-13.0,-24.1)(-14.0,-25.18)(-15.0,-28.31)(-16.0,-29.39)(-17.0,-32.42)(-18.0,-33.85)(-19.0,-34.92)(-20.0,-38.0)(-21.0,-39.08)(-22.0,-42.13)(-23.0,-43.26)(-24.0,-46.31)(-25.0,-47.42)(-26.0,-50.51)(-27.0,-51.88)(-28.0,-52.98)(-29.0,-56.05)(-30.0,-57.13)
				};
				
				\addplot[
				loosely dashed, 
				color=red,
				]
				coordinates{
(-1.0,1.0)(-30.0,-57.0)
				};
				
				\end{axis}
				\end{tikzpicture}}
		\end{subfigure}
		\begin{subfigure}[c]{.49\linewidth}
			{\scalefont{.8}
				\begin{tikzpicture}
				\begin{axis} 
				[
				title={(B) $(S_T/\ov{S}_T-1)^2\1\{\ov\tau_T(S)< T/2\}$ 
					with drift $b=0$},
				ymin=-17,
				ymax=0,
				xmin=-30,
				xmax=0,
				xlabel={\footnotesize $\log\kappa_j$},
				width=8cm,
				height=3.75cm,
				axis on top=true,
				axis x line=bottom, 
				axis y line=left,
				axis line style={->},
				x label style={at={(axis description cs:0.93,0.45)},anchor=north},
				legend style={at={(.005,1)},anchor=north west}
				]
				
				\addplot[
				solid, mark=o, mark options={scale=.85, solid},
				color=black,
				]
				coordinates {
(-1.0,-5.232)(-2.0,-4.942)(-3.0,-6.075)(-4.0,-8.777)(-5.0,-8.525)(-6.0,-9.883)(-7.0,-10.07)(-8.0,-10.65)(-9.0,-12.54)(-10.0,-9.981)(-11.0,-11.73)(-12.0,-10.33)(-13.0,-11.17)(-14.0,-11.21)(-15.0,-12.8)(-16.0,-10.14)(-17.0,-10.16)(-18.0,-11.33)(-19.0,-10.11)(-20.0,-11.62)(-21.0,-11.12)(-22.0,-12.11)(-23.0,-11.06)(-24.0,-11.39)(-25.0,-11.12)(-26.0,-11.39)(-27.0,-11.07)(-28.0,-10.9)(-29.0,-15.25)(-30.0,-13.85)
				};
				
				\addplot[
				solid, mark=+, mark options={scale=.85, solid},
				color=black,
				]
				coordinates {
(-1.0,-3.72)(-2.0,-4.724)(-3.0,-5.335)(-4.0,-6.694)(-5.0,-6.909)(-6.0,-8.035)(-7.0,-7.951)(-8.0,-9.078)(-9.0,-9.288)(-10.0,-8.885)(-11.0,-9.873)(-12.0,-9.193)(-13.0,-10.21)(-14.0,-9.575)(-15.0,-10.58)(-16.0,-9.926)(-17.0,-10.38)(-18.0,-11.36)(-19.0,-10.21)(-20.0,-10.92)(-21.0,-10.33)(-22.0,-12.71)(-23.0,-11.21)(-24.0,-10.77)(-25.0,-10.94)(-26.0,-11.19)(-27.0,-10.76)(-28.0,-11.27)(-29.0,-15.56)(-30.0,-11.28)
				};
				
				\end{axis}
				\end{tikzpicture}}
		\end{subfigure}
		\begin{subfigure}[c]{.49\linewidth}
			{\scalefont{.8}
				\begin{tikzpicture} 
				\begin{axis} 
				[
				title={(C) $\ov{S}_T-S_T$ with drift $b=-.5$},
				ymin=-60,
				ymax=0,
				xmin=-30,
				xmax=0,
				xlabel={\footnotesize $\log\kappa_j$},
				width=8cm,
				height=3.75cm,
				axis on top=true,
				axis x line=bottom, 
				axis y line=left,
				axis line style={->},
				x label style={at={(axis description cs:0.93,0.45)},anchor=north},
				legend style={at={(1,.15)},anchor=south east}
				]
				
				\addplot[
				solid, mark=o, mark options={scale=.85, solid},
				color=black,
				]
				coordinates {
(-1.0,-2.374)(-2.0,-2.608)(-3.0,-3.63)(-4.0,-6.944)(-5.0,-7.791)(-6.0,-10.27)(-7.0,-11.57)(-8.0,-11.67)(-9.0,-18.22)(-10.0,-14.37)(-11.0,-16.72)(-12.0,-16.48)(-13.0,-20.07)(-14.0,-19.0)(-15.0,-19.91)(-16.0,-22.47)(-17.0,-24.6)(-18.0,-22.19)(-19.0,-24.73)(-20.0,-25.4)(-21.0,-26.17)(-22.0,-27.09)(-23.0,-29.03)(-24.0,-28.94)(-25.0,-29.2)(-26.0,-30.39)(-27.0,-31.75)(-28.0,-32.06)(-29.0,-33.46)(-30.0,-36.43)
				};
				
				\addplot[
				loosely dashed, 
				color=blue,
				]
				coordinates{
(1.0,-5.238)(-30.0,-34.24)
				};
				
				\addplot[
				solid, mark=+, mark options={scale=.85, solid},
				color=black,
				]
				coordinates {
(-1.0,0.1569)(-2.0,-1.681)(-3.0,-3.14)(-4.0,-6.174)(-5.0,-7.234)(-6.0,-8.244)(-7.0,-11.04)(-8.0,-12.32)(-9.0,-15.77)(-10.0,-17.16)(-11.0,-20.3)(-12.0,-21.41)(-13.0,-24.46)(-14.0,-25.56)(-15.0,-28.68)(-16.0,-29.71)(-17.0,-32.82)(-18.0,-34.22)(-19.0,-35.29)(-20.0,-38.35)(-21.0,-39.45)(-22.0,-42.54)(-23.0,-43.63)(-24.0,-46.72)(-25.0,-47.79)(-26.0,-50.85)(-27.0,-52.24)(-28.0,-53.33)(-29.0,-56.4)(-30.0,-57.49)
				};
				
				\addplot[
				loosely dashed, 
				color=red,
				]
				coordinates{
(-1.0,1.0)(-30.0,-57.0)
				};
				
				\end{axis}
				\end{tikzpicture}}
		\end{subfigure}
		\begin{subfigure}[c]{.49\linewidth}
			{\scalefont{.8}
				\begin{tikzpicture}
				\begin{axis} 
				[
				title={(D) $(S_T/\ov{S}_T-1)^2\1\{\ov\tau_T(S)< T/2\}$ 
				with drift $b=-.5$},
				ymin=-65,
				ymax=0,
				xmin=-30,
				xmax=0,
				xlabel={\footnotesize $\log\kappa_j$},
				width=8cm,
				height=3.75cm,
				axis on top=true,
				axis x line=bottom, 
				axis y line=left,
				axis line style={->},
				x label style={at={(axis description cs:0.93,0.45)},anchor=north},
				legend style={at={(1,.15)},anchor=south east}
				]
				
				\addplot[
				solid, mark=o, mark options={scale=.85, solid},
				color=black,
				]
				coordinates {
(-1.0,-4.918)(-2.0,-4.975)(-3.0,-5.414)(-4.0,-10.5)(-5.0,-8.856)(-6.0,-11.26)(-7.0,-10.49)(-8.0,-12.89)(-9.0,-12.31)(-10.0,-11.71)(-11.0,-14.29)(-12.0,-15.07)(-13.0,-21.69)(-14.0,-21.07)(-15.0,-24.02)(-16.0,-22.83)(-17.0,-23.64)(-18.0,-27.66)(-19.0,-25.5)(-20.0,-26.99)(-21.0,-27.21)(-22.0,-30.42)(-23.0,-30.81)(-24.0,-31.08)(-25.0,-31.84)(-26.0,-34.67)(-27.0,-36.01)(-28.0,-35.55)(-29.0,-36.33)(-30.0,-39.29)
				};
				
				\addplot[
				solid, mark=+, mark options={scale=.85, solid},
				color=black,
				]
				coordinates {
(-1.0,-3.727)(-2.0,-4.714)(-3.0,-5.497)(-4.0,-6.993)(-5.0,-7.623)(-6.0,-9.197)(-7.0,-9.688)(-8.0,-12.44)(-9.0,-13.57)(-10.0,-12.56)(-11.0,-16.4)(-12.0,-18.66)(-13.0,-28.78)(-14.0,-29.92)(-15.0,-33.0)(-16.0,-34.06)(-17.0,-37.16)(-18.0,-38.52)(-19.0,-39.65)(-20.0,-42.66)(-21.0,-43.81)(-22.0,-46.87)(-23.0,-47.98)(-24.0,-51.02)(-25.0,-52.15)(-26.0,-55.18)(-27.0,-56.57)(-28.0,-57.68)(-29.0,-60.72)(-30.0,-61.87)
				};
				
				\end{axis}
				\end{tikzpicture}}
		\end{subfigure}
		\caption{\footnotesize 
Gaussian approximation of a Watanabe process: 
log-log plot of the bias and level variance for various payoffs as a 
function of $\log\kappa_j$. 
Circle ($\mathbf{\circ}$) and plus ($\mathbf{+}$) correspond to 
$\log|\E[D_j^1]|$ and $\log\V[D_j^1]$, respectively, where $D_j^1$ 
is given in~\eqref{eq:MLMC} with $\kappa_j=\exp(-r(j-1))$ for $r=1$. 
The dashed lines in graphs (A) \&~(C) plot the rates of the theoretical 
bounds in Subsection~\ref{subsec:bias} (blue for the bias) 
and Theorem~\ref{thm:summary} (red for level variances). 
In plots (A)--(D) the initial value of the risky asset is normalised to 
$S_0=1$ and the time horizon is set to $T=1$. 
The model parameters are given by $a=2$, $c_+=c_-=1$. 
}\label{fig:Watanabe}
	\end{center}
\end{figure}

The pictures in Figure~\ref{fig:Watanabe} (A) \&~(C) suggest 
that our bounds on the bias and level variance in 
Subsection~\ref{subsec:bias} and Theorem~\ref{thm:summary} are 
robust for continuous payoff functions even if the underlying L\'evy 
process has no transition densities. There are no dashed lines in 
Figure~\ref{fig:Watanabe} (B) \&~(D) as there are no 
results for discontinuous functions of $\ov\tau_T(S)$ in this case. 
In fact, Figure~\ref{fig:Watanabe}(B) suggests that 
the decay rate of the bias and level variance for functions of 
$\ov\tau_T(S)$ can be arbitrarily slow if the process does not have 
sufficient activity. Figure~\ref{fig:Watanabe}(D), however, 
suggests that this decay is still fast if the underlying finite 
variation process $X$ has a nonzero natural drift (see also 
Remark~\ref{rem:natural-drift}). 

\subsection{The cost reduction of \nameref{alg:SBG} over Algorithm~\ref{alg:ARA_2}} 
\label{subsec:CP_example}

Recall that Algorithm~\ref{alg:ARA_2} and~\nameref{alg:SBG} 
both draw exact samples of a Gaussian approximation 
$\un\chi_T^{(\kappa)}$. However, in practice, 
\nameref{alg:SBG} may be many times faster than 
Algorithm~\ref{alg:ARA_2}: Figure~\ref{fig:ARA-Speedup2} 
plots the speedup factor in the case of a tempered stable 
process, defined in Subsection~\ref{subsec:TS} above, 
as a function of $\kappa$. In conclusion, one should use 
\nameref{alg:SBG} instead of Algorithm~\ref{alg:ARA_2} 
for the MC estimator in~\eqref{eq:MC} (recall that 
Algorithm~\ref{alg:ARA_2} is not suitable for the MLMC 
estimator, as discussed in Subsection~\ref{subsubsec:Error_term}). 

\begin{figure}[ht]
	\begin{center}
		\hspace{-3cm}
		\begin{subfigure}[l]{.49\linewidth}
			{\scalefont{.8}
				\begin{tikzpicture} 
				\begin{axis} 
				[
				title={\normalsize $\alpha_\pm=1.2$},
				ymin=0,
				ymax=65,
				ytick={10, 20, 30, 40, 50, 60},
				xmax=.1,
				xmode=log,
				xlabel={\small $\kappa$},
				width=8.5cm,
				height=4cm,
				axis on top=true,
				axis x line=middle, 
				axis y line=left,
				axis line style={->},
				x label style={at={(axis description cs:1,0.20)},anchor=north},
				legend style={at={(1.165,-.22)},anchor=north, legend columns=-1}
				]
				
				\addplot[
				loosely dashed, 
				color=black,
				]
				coordinates {
					(1.0,0.3518)(0.5,0.4424)(0.25,1.147)(0.125,0.1574)(0.0625,5.715)(0.03125,5.806)(0.01562,8.751)(0.007812,12.52)(0.003906,16.24)(0.001953,9.455)(0.0009766,15.0)(0.0004883,19.96)(0.0002441,12.23)(0.0001221,14.41)(6.104e-5,18.9)(3.052e-5,22.83)(1.526e-5,19.17)
				};
				
				\addplot[
				dashdotted, 
				color=black,
				]
				coordinates {
					(1.0,0.2264)(0.5,0.2421)(0.25,1.172)(0.125,2.292)(0.0625,4.771)(0.03125,6.277)(0.01562,8.751)(0.007812,20.08)(0.003906,32.01)(0.001953,29.22)(0.0009766,25.23)(0.0004883,34.54)(0.0002441,37.65)(0.0001221,33.98)(6.104e-5,42.25)(3.052e-5,52.45)(1.526e-5,50.3)
				};
				
				\addplot[
				densely dotted, 
				color=black,
				]
				coordinates {
					(1.0,0.1273)(0.5,0.205)(0.25,0.8192)(0.125,1.621)(0.0625,3.286)(0.03125,5.341)(0.01562,7.802)(0.007812,22.74)(0.003906,30.44)(0.001953,28.05)(0.0009766,40.4)(0.0004883,38.67)(0.0002441,36.29)(0.0001221,43.8)(6.104e-5,45.07)(3.052e-5,48.13)(1.526e-5,58.41)
				};
				
				\addplot[
				solid, 
				color=black,
				]
				coordinates {
					(1.0,0.1135)(0.5,0.1624)(0.25,0.4214)(0.125,1.453)(0.0625,2.931)(0.03125,5.39)(0.01562,7.213)(0.007812,21.5)(0.003906,31.75)(0.001953,28.28)(0.0009766,37.54)(0.0004883,43.67)(0.0002441,54.16)(0.0001221,49.05)(6.104e-5,56.8)(3.052e-5,51.64)(1.526e-5,57.76)
				};
				
				\legend {\small$n=5$,
					\small$n=10$,
					\small$n=15$,
					\small$n=20$};
				\end{axis}
				\end{tikzpicture}}
		\end{subfigure}
		\hspace{-4cm}
		\begin{subfigure}[l]{.49\linewidth}
			{\scalefont{.8}
				\begin{tikzpicture} 
				\begin{axis} 
				[
				title={\normalsize $\alpha_\pm=1.4$},
				ymin=0,
				ymax=65,
				ytick={10, 20, 30, 40, 50, 60},
				xmax=.1,
				xmode = log,
				xlabel={\small $\kappa$},
				width=8.5cm,
				height=4cm,
				axis on top=true,
				axis x line=middle, 
				axis y line=left,
				axis line style={->},
				x label style={at={(axis description cs:1,0.20)},anchor=north},
				legend style={at={(-.05,-.22)},anchor=north, legend columns=-1}
				]
				
				\addplot[
				loosely dashed, 
				color=black,
				]
				coordinates {
					(1.0,0.3291)(0.5,1.063)(0.25,0.5754)(0.125,1.565)(0.0625,4.112)(0.03125,6.976)(0.01562,7.791)(0.007812,22.57)(0.003906,17.6)(0.001953,20.64)(0.0009766,14.24)(0.0004883,14.56)(0.0002441,14.93)(0.0001221,19.35)(6.104e-5,21.96)
				};
				
				\addplot[
				dashdotted, 
				color=black,
				]
				coordinates {
					(1.0,0.1229)(0.5,0.6662)(0.25,0.3726)(0.125,1.548)(0.0625,3.71)(0.03125,9.98)(0.01562,16.21)(0.007812,47.83)(0.003906,39.86)(0.001953,59.42)(0.0009766,44.99)(0.0004883,50.36)(0.0002441,49.72)(0.0001221,60.94)(6.104e-5,56.26)
				};
				
				\addplot[
				densely dotted, 
				color=black,
				]
				coordinates {
					(1.0,0.09906)(0.5,0.4035)(0.25,0.2312)(0.125,1.128)(0.0625,2.279)(0.03125,9.097)(0.01562,14.13)(0.007812,43.72)(0.003906,40.76)(0.001953,61.46)(0.0009766,49.04)(0.0004883,54.92)(0.0002441,54.82)(0.0001221,62.18)(6.104e-5,59.75)
				};
				
				\addplot[
				solid, 
				color=black,
				]
				coordinates {
					(1.0,0.112)(0.5,0.3537)(0.25,0.2976)(0.125,0.5918)(0.0625,2.635)(0.03125,7.715)(0.01562,13.69)(0.007812,41.3)(0.003906,36.59)(0.001953,57.35)(0.0009766,43.22)(0.0004883,49.1)(0.0002441,54.15)(0.0001221,55.36)(6.104e-5,61.7)
				};
				
				\legend {\small$n=5\quad$,
					\small$n=10\quad$,
					\small$n=15\quad$,
					\small$n=20$};
				\end{axis}
				\end{tikzpicture}}
		\end{subfigure}
\caption{\footnotesize The pictures show the ratio of the cost of 
Algorithm~\ref{alg:ARA_2} over the cost of \nameref{alg:SBG} 
(both in seconds) for the Gaussian approximations of a tempered 
stable process as a function of the cutoff level $\kappa$. 
The parameters used are $\lambda_\pm=5$, $c_\pm=2$. 
The number of sticks $n$ in \nameref{alg:SBG} varies 
between 5 and 20. The ratio for $n=20$ is 57.8 (resp. 61.7) in the 
case $\alpha_\pm=1.2$ (resp. $\alpha_\pm=1.4$) 
for $\kappa=2^{-16}$ (resp. $\kappa=2^{-14}$).
		}\label{fig:ARA-Speedup2}
	\end{center}
\end{figure}
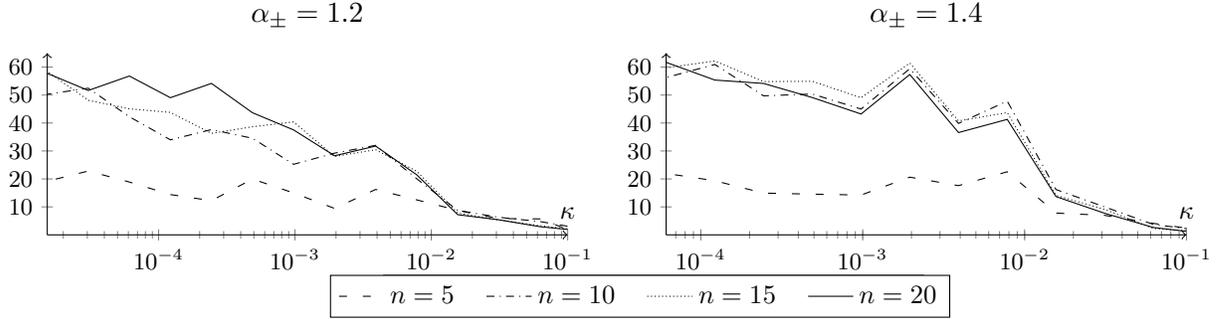

If the L\'evy process $X$ is a jump diffusion, i.e. 
$\nu(\R\setminus\{0\})<\infty$, we may apply 
Algorithms~\ref{alg:ARA} \&~\ref{alg:ARA_2} 
and~\nameref{alg:SBG} with $\kappa_1=\kappa_2=0$. 
In that case \nameref{alg:SBG} still outperforms 
Algorithm~\ref{alg:ARA_2} by a constant factor, with 
computational benefits being more pronounced when 
the total expected number of jumps 
$\lambda:=\nu(\R\setminus\{0\})T$ is large. 
The cost reduction is most drastic when $\lambda$ is large, 
but the improvement is already significant for $\lambda=2$. 

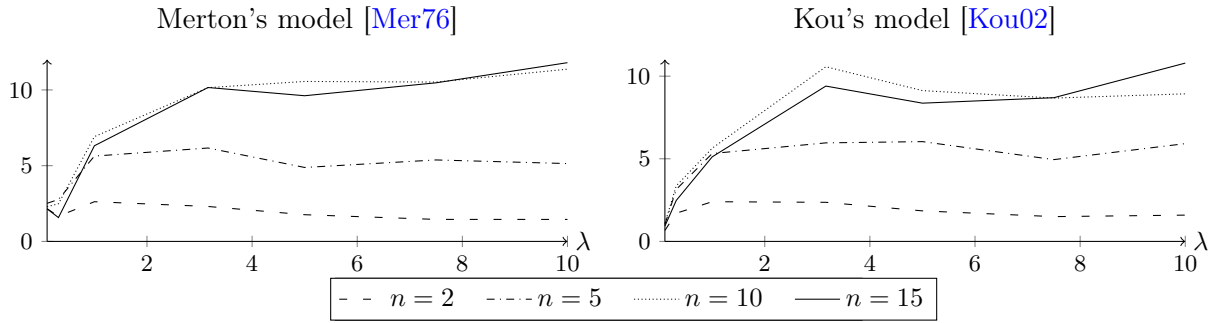
\begin{figure}[ht]
	\begin{center}
	\hspace{-3cm}
	\begin{subfigure}[l]{.49\linewidth}
		{\scalefont{.8}
		\begin{tikzpicture} 
		\begin{axis} 
		[
		title={\normalsize Merton's model~\cite{Merton76optionpricing}},
		ymin=0,
		ymax=12,
		xmin=.1,
		xmax=10,
		xlabel={\small $\lambda$},
		width=8.5cm,
		height=4cm,
		axis on top=true,
		axis x line=middle, 
		axis y line=left,
		axis line style={->},
		x label style={at={(axis description cs:1.03,0.1)},anchor=north},
		legend style={at={(1.06,-.2)},anchor=north, legend columns=-1}
		]
			
		\addplot[
		loosely dashed, 
		color=black,
		]
		coordinates {
(0.1,2.156)(0.3162,1.661)(1.0,2.621)(3.162,2.306)(5.0,1.763)(7.5,1.453)(10.0,1.452)
		};
		
		\addplot[
		dashdotted, 
		color=black,
		]
		coordinates {
(0.1,2.515)(0.3162,2.768)(1.0,5.633)(3.162,6.166)(5.0,4.882)(7.5,5.378)(10.0,5.139)
		};
	
		\addplot[
		densely dotted, 
		color=black,
		]
		coordinates {
(0.1,2.29)(0.3162,2.487)(1.0,6.918)(3.162,10.15)(5.0,10.56)(7.5,10.52)(10.0,11.37)
		};
		
		\addplot[
		solid, 
		color=black,
		]
		coordinates {
(0.1,2.15)(0.3162,1.573)(1.0,6.315)(3.162,10.16)(5.0,9.617)(7.5,10.47)(10.0,11.8)
		};
		
		\legend {\small$n=2$\enskip,
			\small$n=5$\enskip,
			\small$n=10$\enskip,
			\small$n=15$};
		\end{axis}
		\end{tikzpicture}}
	\end{subfigure}
	\hspace{-4cm}
	\begin{subfigure}[l]{.49\linewidth}
		{\scalefont{.8}
		\begin{tikzpicture} 
		\begin{axis} 
		[
		title={\normalsize Kou's model~\cite{Kou02JumpDiffusion}},
		ymin=0,
		ymax=11,
		xmin=0.1,
		xmax=10,
		xlabel={\small $\lambda$},
		width=8.5cm,
		height=4cm,
		axis on top=true,
		axis x line=middle, 
		axis y line=left,
		axis line style={->},
		x label style={at={(axis description cs:1.03,0.1)},anchor=north},
		legend style={at={(-.06,-.2)},anchor=north, legend columns=-1}
		]
		
		\addplot[
		loosely dashed, 
		color=black,
		]
		coordinates {
(0.1,0.6662)(0.3162,1.706)(1.0,2.402)(3.162,2.366)(5.0,1.848)(7.5,1.5)(10.0,1.591)
		};
		
		\addplot[
		dashdotted, 
		color=black,
		]
		coordinates {
(0.1,1.028)(0.3162,3.148)(1.0,5.319)(3.162,5.963)(5.0,6.039)(7.5,4.95)(10.0,5.917)
		};
		
		\addplot[
		densely dotted, 
		color=black,
		]
		coordinates {
(0.1,1.096)(0.3162,3.365)(1.0,5.639)(3.162,10.57)(5.0,9.126)(7.5,8.674)(10.0,8.93)
		};
		
		\addplot[
		solid, 
		color=black,
		]
		coordinates {
(0.1,0.9261)(0.3162,2.48)(1.0,5.117)(3.162,9.4)(5.0,8.366)(7.5,8.697)(10.0,10.79)
		};
		\legend {\small$n=2\quad$\enskip,
			\small$n=5\quad$\enskip,
			\small$n=10\quad$\enskip,
			\small$n=15$};
		\end{axis}
		\end{tikzpicture}}
	\end{subfigure}
	\caption{\footnotesize The pictures show, for multiple number of sticks 
	$n$, the ratio of the cost of Algorithm~\ref{alg:ARA_2} over the cost 
	of \nameref{alg:SBG} (both in seconds) for jump diffusions 
	as a function of the mean number of jumps 
	$\lambda=\nu(\R\setminus\{0\})T$. The ratio for $n=15$ is $11.8$ 
	(resp. $10.8$) in Merton's (resp. Kou's) model when $\lambda=10$.
	}\label{fig:ARA-Speedup}
	\end{center}
\end{figure}


\section{Proofs}
\label{sec:proofs}


In the remainder of the paper we use the notation 
$\un\tau_t:=\un\tau_t(X)$, 
$\un\tau_t^{(\kappa)}:=\un\tau_t(X^{(\kappa)})$ for all $t>0$.

\subsection{Proof of Theorems~\ref{thm:W-marginal} and~\ref{thm:K-marginal}}
\label{subsec:marginals}

In this subsection we establish bounds on the Wasserstein and 
Kolmogorov distances between the increment $X_t$ and its Gaussian 
approximation $X^{(\kappa)}_t$ in~\eqref{eq:ARA}.

\begin{proof}[Proof of Theorem~\ref{thm:W-marginal}]
Recall the L\'evy-It\^o decomposition of $X$ at 
level $\kappa$ in~\eqref{eq:levy-ito} and the martingale 
$M^{(\kappa)} =\sigma B + J^{1,\kappa}$. Set $Z:=X-M^{(\kappa)}$ 
and note $X^{(\kappa)}= Z+\sqrt{\ov\sigma_\kappa^2+\sigma^2}W$,
	where $W$ is a standard Brownian motion in~\eqref{eq:ARA}, independent of $Z$. 
	Hence any coupling $(W_t, M^{(\kappa)}_t)$ yields a coupling of 
	$(X_t,X^{(\kappa)}_t)$ satisfying 
$\E[|X_t-X_t^{(\kappa)}|^p] =\E[|M_t^{(\kappa)}-\sqrt{\ov\sigma_\kappa^2+\sigma^2}W_t|^p]$. 
Setting $W:=B$, which  amounts to the
	independence coupling $(W,J^{1,\kappa})$, and applying Jensen's inequality 
	for $p\in[1,2]$
	yields
\[
	\E[|X_t-X_t^{(\kappa)}|^p]^{2/p}\leq 
\E\big[\big|J_t^{1,\kappa}
-\big(\tsqrt{\ov\sigma_\kappa^2+\sigma^2}-\sigma\big)W_t\big|^2\big]
= \E[||J_t^{1,\kappa}|^2] + 
\big(\tsqrt{\ov\sigma_\kappa^2+\sigma^2}-\sigma\big)^2t
\le 2t\ov\sigma_\kappa^2.
\]

For any $m\in\N$ we have $M_t^{(\kappa)}\eqd \sum_{i=1}^m \xi_i$, 
where $\xi_1,\ldots,\xi_m$ are iid with 
$\xi_1\eqd M^{(\kappa)}_{t/m}$. Hence~\cite[Thm~16]{MR0388499} 
and~\cite[Thm~4.1]{MR2548505} imply the existence of universal 
constants $K_p$, $p\in[1,2]$, with $K_1=1/2$, satisfying 
\[
\W_p^p\big(M^{(\kappa)}_t,
	\tsqrt{\ov\sigma_\kappa^2+\sigma^2}W_t\big)
\le K_p^p\frac{[t(\ov\sigma_\kappa^2+\sigma^2)]^{p/2}
		\E[|\xi_1|^{p+2}]}
	{m^{p/2}\E[\xi_1^2]^{(p+2)/2}}
= K_p^p\frac{(m/t)\E[|M_{t/m}^{(\kappa)}|^{p+2}]}
	{\ov\sigma_\kappa^2+\sigma^2}
\quad\text{for all }m\in\N.
\]
According to~\cite[Thm~1.1]{MR2479503}, the limit as $m\to\infty$ 
of the right-hand side of the display above equals 
$K_p^p\int_{(-\kappa,\kappa)}|x|^{p+2}\nu(dx)
/(\ov\sigma_\kappa^2+\sigma^2)
\le K_p^p\kappa^p\varphi_\kappa^2$, implying the claim in the 
theorem.  
\end{proof}

\begin{proof}[Proof of Theorem~\ref{thm:K-marginal}]
(a) Define 
$d_\kappa:=\sup_{x\in\R}|\p(M_t^{(\kappa)}\le x)
	-\p(\tsqrt{\ov\sigma_\kappa^2 +\sigma^2}W_t\le x)|$ and note  
\[
|\p(X_t\le x)-\p(X_t^{(\kappa)}\le x)|
=\big|\E\big[\p(M_t^{(\kappa)}\le x-Z_t|Z_t)
-\p\big(\tsqrt{\ov\sigma_\kappa^2 +\sigma^2} W_t
	\le x-Z_t\big|Z_t\big)\big]\big|
\le d_\kappa,
\]
where the processes $Z$ and $M^{(\kappa)}$ are as in the proof 
of Theorem~\ref{thm:W-marginal}. Since $M^{(\kappa)}$ is a L\'evy 
process, for any $m\in\N$ we have 
$M_t^{(\kappa)}\eqd \sum_{i=1}^m \xi_i$, where 
$\xi_1,\ldots,\xi_m$ are iid with $\xi_1\eqd M^{(\kappa)}_{t/m}$. 
By the Berry-Esseen inequality~\cite[Thm~1]{MR2929524}, there 
exists a constant $C_{\BE}\in(0,\tfrac{1}{2})$ such that 
\[
d_\kappa
\le \frac{C_{\BE}\E[|\xi_1|^3]}{\sqrt{m}\E[\xi_1^2]^{3/2}}
= \frac{C_{\BE}t}{m\sqrt{m}}\cdot\frac{(m/t)\E[|M^{(\kappa)}_{t/m}|^3]}
	{(t/m)^{3/2}(\ov\sigma_\kappa^2+\sigma^2)^{3/2}}
= C_{\BE}\frac{(m/t)\E[|M^{(\kappa)}_{t/m}|^3]}
	{\sqrt{t}(\ov\sigma_\kappa^2+\sigma^2)^{3/2}} 
\quad\text{ for all $m\in\N$.}
\]
According to~\cite[Thm~1.1]{MR2479503}, the limit as $m\to\infty$ 
of the right-hand side of the display above equals 
$C_{\BE}\int_{(-\kappa,\kappa)}|x|^3\nu(dx)
	/(\sqrt{t}(\ov\sigma_\kappa^2+\sigma^2)^{3/2})
\le C_{\BE}(\kappa/\ov\sigma_\kappa)\varphi_\kappa^3/\sqrt{t}$, 
implying~(a). 

(b) By~\cite[Thm~3.1(a)]{MR1449834}, $X_t$ has a smooth density 
$f_t$ and, given $T>0$, the constant 
$C'=\sup_{(t,x)\in(0,T]\times\R}t^{1/\delta}f_t(x)$ is finite. 
Applying~\eqref{eq:Lp-marginal} and~\eqref{eq:Lp-to-barrier} in 
Lemma~\ref{lem:Lp-to-barrier} with $p=2$ 
gives~\eqref{eq:K-marginal-3}. 
\end{proof}

\subsection{Proof of Theorem~\ref{thm:Wd-triplet}}
\label{subsec:proofThm1}

We recall an elementary result for stick-breaking processes. 

\begin{lem}
\label{lem:mom_ell}
Let $(\varpi_n)_{n\in\N}$ be a stick-breaking process on $[0,1]$ 
based on the law $\U(0,1)$. 
For any measurable function $\phi\ge0$, we have  
\[
\sum_{n\in\N} \E[\phi(\varpi_n)]
=\int_0^1\frac{\phi(x)}{x}dx.
\]
In particular, for any $a_1,a_2>0$ and $b_1<b_2$ with $b_2>0$, we have
\[
\sum_{n\in\N}
\E[\min\{a_1\varpi_n^{b_1},a_2\varpi_n^{b_2}\}]
=\begin{cases}
\tfrac{a_2}{b_2}
\min\big\{1,\tfrac{a_1}{a_2}\big\}^{b_2/(b_2-b_1)}
+\tfrac{a_1}{b_1}\big(1
-\min\big\{1,\tfrac{a_1}{a_2}\big\}^{b_1/(b_2-b_1)}\big),
&b_1\neq0,\\
b_2^{-1}\min\{a_2,a_1\}\big(1+\log^+\big(\tfrac{a_2}{a_1}\big)\big),
&b_1=0.\\
\end{cases}
\]
\end{lem}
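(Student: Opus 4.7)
The plan is to compute the density of each $\varpi_n$ in closed form, sum the densities, and then perform a straightforward case analysis for the $\min$ integral. Write $\varpi_n=L_{n-1}(1-U_n)$ with $L_{n-1}=U_1\cdots U_{n-1}$ independent of $U_n\sim\U(0,1)$. Since $-\log L_{n-1}=\sum_{i=1}^{n-1}(-\log U_i)$ is $\Gamma(n-1,1)$-distributed (with the convention $L_0=1$ for $n=1$), the density of $L_{n-1}$ on $(0,1)$ is $y\mapsto (-\log y)^{n-2}/(n-2)!$ for $n\ge 2$. A direct integration of the joint density of $(L_{n-1},1-U_n)$, followed by the substitution $u=-\log y$, will give
\[
f_{\varpi_n}(x)=\int_x^1 \frac{(-\log y)^{n-2}}{(n-2)!}\cdot\frac{dy}{y}=\frac{(-\log x)^{n-1}}{(n-1)!},\qquad x\in(0,1),
\]
for every $n\in\N$ (with the $n=1$ case agreeing directly with $\varpi_1\sim \U(0,1)$).

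Summing these densities yields the Taylor expansion
\[
\sum_{n=1}^\infty f_{\varpi_n}(x)=\sum_{k=0}^\infty \frac{(-\log x)^k}{k!}=e^{-\log x}=\frac{1}{x}, \qquad x\in(0,1),
\]
so that the monotone convergence theorem (applied to the nonnegative $\phi$) gives $\sum_{n\in\N}\E[\phi(\varpi_n)]=\int_0^1\phi(x)/x\,dx$, the first claim.

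For the second claim, apply the first identity to $\phi(x)=\min\{a_1x^{b_1},a_2x^{b_2}\}$ and observe that, since $b_2>b_1$, the inequality $a_1x^{b_1}\le a_2x^{b_2}$ holds iff $x\ge x^\ast$, where $x^\ast:=\min\{1,(a_1/a_2)^{1/(b_2-b_1)}\}\in(0,1]$. When $b_1\ne 0$, splitting the integral at $x^\ast$ gives
\[
\int_0^1\frac{\phi(x)}{x}dx=\int_0^{x^\ast} a_2 x^{b_2-1}\,dx+\int_{x^\ast}^1 a_1 x^{b_1-1}\,dx=\frac{a_2}{b_2}(x^\ast)^{b_2}+\frac{a_1}{b_1}\big(1-(x^\ast)^{b_1}\big),
\]
which matches the stated formula since $(x^\ast)^{b_j}=\min\{1,a_1/a_2\}^{b_j/(b_2-b_1)}$ for $j\in\{1,2\}$. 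When $b_1=0$ the inner integral becomes $\int_{x^\ast}^1 a_1\,dx/x=a_1\log(1/x^\ast)$; combining with $a_2(x^\ast)^{b_2}/b_2$ and considering separately the cases $a_1\le a_2$ and $a_1>a_2$ (where in the latter case $x^\ast=1$ and the logarithmic term vanishes) recovers the single expression $b_2^{-1}\min\{a_1,a_2\}(1+\log^+(a_2/a_1))$.

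The only slightly technical step is the explicit computation of $f_{\varpi_n}$, but this follows from a routine change of variables, and I do not anticipate any genuine obstacle.
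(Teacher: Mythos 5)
Your proof is correct and follows essentially the same route as the paper: both arguments rest on the fact that $-\log\varpi_n$ is Gamma$(n,1)$, so that summing over $n$ telescopes the Gamma densities into the density $1/x$ on $(0,1)$ (the paper performs this in log-space via Fubini, you in $x$-space via the explicit density $(-\log x)^{n-1}/(n-1)!$), and the evaluation for $\phi(x)=\min\{a_1x^{b_1},a_2x^{b_2}\}$ is the same splitting of the integral at the crossover point that the paper leaves as a ``direct calculation.'' No gaps.
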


\begin{proof}
The law of $-\log\varpi_n$ 
is gamma with shape $n$ and scale $1$. Applying Fubini's theorem, 
implies 
\[
\sum_{n\in\N}\E[\phi(\varpi_n)]
=\sum_{n\in\N}
	\int_0^\infty\frac{x^{n-1}}{(n-1)!}e^{-x}\phi(e^{-x})dx
=\int_0^\infty\phi(e^{-x})dx
=\int_0^1 \frac{\phi(x)}{x}dx.
\]
The formula for $\phi(x):=\min\{a_1x^{b_1},a_2x^{b_2}\}$ follows 
by a direct calculation.
\end{proof}

The $L^p$-Wasserstein distance, defined in~\eqref{eq:Wasserstein}, 
satisfies
$\W_p^p(\xi,\xi_*)=\int_0^1 |F^{-1}(u)-F_*^{-1}(u)|^p du$, where $F^{-1}$ 
(resp. $F_*^{-1}$) is the right inverse of the distribution function $F$ (resp. 
$F_\ast$) of the real-valued random variable $\xi$ (resp. $\xi_*$) 
(see~\cite[Thm~2.10]{MR4028181}). 
Thus the \textit{comonotonic} (or \textit{minimal transport}) coupling, defined by
\begin{equation}\label{eq:comonotonic}
(\xi,\xi_*):=(F^{-1}(U),F_*^{-1}(U)) \qquad
\text{for some }U\sim\U(0,1),
\end{equation}
attains the infimum in definition~\eqref{eq:Wasserstein}.

\begin{lem}\label{lem:comonotonic_ind}
If  real-valued random variables $\xi$ and $\xi_*$ are comonotonically coupled, then 
\[
\E[|\1{\{\xi\le x\}}-\1{\{\xi_*\le x\}}|]
=|\E[\1{\{\xi\le x\}}-\1{\{\xi_*\le x\}}]|
\qquad \text{for any}\enskip x\in\R.
\]
\end{lem}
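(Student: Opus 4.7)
The plan is to exploit the explicit form of the comonotonic coupling in~\eqref{eq:comonotonic}. Writing $F$ and $F_*$ for the distribution functions of $\xi$ and $\xi_*$, we have $\xi = F^{-1}(U)$ and $\xi_* = F_*^{-1}(U)$ for a common $U\sim\U(0,1)$. The standard identity $F^{-1}(u)\le x \iff u\le F(x)$ (valid for the right-continuous inverse and following from right-continuity of $F$) gives
\[
\1_{\{\xi\le x\}} = \1_{\{U\le F(x)\}}\quad\text{and}\quad \1_{\{\xi_*\le x\}} = \1_{\{U\le F_*(x)\}} \quad \text{a.s.}
\]

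The key observation is that the difference
\[
Z := \1_{\{U\le F(x)\}} - \1_{\{U\le F_*(x)\}}
\]
has a deterministic sign: if $F(x)\le F_*(x)$, then $Z = -\1_{\{F(x)<U\le F_*(x)\}}\le 0$ a.s., whereas if $F(x)>F_*(x)$, then $Z = \1_{\{F_*(x)<U\le F(x)\}}\ge 0$ a.s. In either case $|Z| = \sgn(F_*(x)-F(x))\cdot Z$ with a constant (nonrandom) sign factor, so $\E|Z| = |\E Z|$, which is precisely the claimed identity.

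There is no real obstacle here; the only subtlety is the use of the right-continuous generalised inverse, which is the convention already adopted above~\eqref{eq:comonotonic}. The argument is short enough that I would simply state the two displayed identifications and the sign dichotomy, and conclude in one line.
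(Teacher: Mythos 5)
Your proof is correct and follows essentially the same route as the paper: both write the coupling as $(F^{-1}(U),F_*^{-1}(U))$ and observe that the difference of indicators has a deterministic sign determined by the ordering of $F(x)$ and $F_*(x)$. (Minor terminological point: the identity $F^{-1}(u)\le x\iff u\le F(x)$ is the one for the \emph{left}-continuous generalised inverse $F^{-1}(u)=\inf\{y:F(y)\ge u\}$, valid because $F$ itself is right-continuous; this does not affect the argument.)
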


\begin{proof}
Suppose $(\xi,\xi_*)=(F^{-1}(U),F_*^{-1}(U))$ for some $U\sim\U(0,1)$, 
where $F$ and $F_*$ are the distribution functions of $\xi$ and $\xi_*$. 
Suppose $y:=F(x)\le F_*(x)=:y_*$. Since $F^{-1}$ and $F_*^{-1}$ are 
monotonic functions, it follows that $\1{\{\xi\le x\}}-\1{\{\xi_*\le x\}}\le 0$ 
a.s. since this difference equals $-1$ or $0$ according to $U\in(y,y_*]$ or 
$U\in(0,1)\setminus(y,y_*]$, respectively. If $y\ge y_*$, we have 
$\1{\{\xi\le x\}}-\1{\{\xi_*\le x\}}\ge 0$ a.s. In either case, the result follows. 
\end{proof}

For any $t>0$, let $G_t^\kappa$ denote the joint law of the
comonotonic coupling 
of $X_t$ and 
$X_t^{(\kappa)}$ 
defined in~\eqref{eq:comonotonic}. Note that a coupling
$(X_t,X_t^{(\kappa)})$
with law 
$G_t^\kappa$
satisfies the inequality in Theorem~\ref{thm:W-marginal}.
The following lemma is crucial in 
the proof of Theorem~\ref{thm:Wd-triplet}. 

\begin{lem}\label{lem:bound_sum_xi}
Let $\ell=(\ell_n)_{n\in\N}$ be a stick-breaking process on $[0,t]$ 
and $(\xi_n,\xi_n^{(\kappa)})$, $n\in\N$, a sequence of 
random vectors that, conditional on $\ell$, are independent 
and satisfy $(\xi_n,\xi_n^{(\kappa)})\sim G_{\ell_n}^{\kappa}$ 
for all $n\in\N$. Then for any $p\in[1,2]$ and $x\in\R$ we have 
\begin{equation}\label{eq:bound_sum_xi}
\E\bigg[\bigg(\sum_{n=1}^\infty |\xi_n-\xi_n^{(\kappa)}|\bigg)^p\bigg]^{1/p}
\le \mu_p(\kappa,t)
\quad\text{and}\quad
\E\bigg[\sum_{n=1}^\infty \ell_n
	\big|\1\{\xi_n\le x\}-\1\{\xi_n^{(\kappa)}\le x\}\big|\bigg]
\le \mu_0^\tau(\kappa,t),
\end{equation}
where $\mu_p$ and $\mu_0^\tau$ are defined in~\eqref{def:mu} 
and~\eqref{eq:mu_0}, respectively. Moreover, 
if Assumption~(\nameref{asm:(O)}) holds,
	then for every $T>0$ there exists a constant $C>0$, dependent only on $(T,\delta,\sigma,\nu)$, such that for all $t\in[0,T]$, 
$\kappa\in(0,1]$ and $x\in\R$ we have 
\begin{equation}\label{eq:bound_sum_xi2}
\E\bigg[\sum_{n=1}^\infty \ell_n
\big|\1\{\xi_n\le x\}-\1\{\xi_n^{(\kappa)}\le x\}\big|\bigg]
\le \mu_\delta^\tau(\kappa,t),
\end{equation}
where $\mu_\delta^\tau$ is defined in~\eqref{def:mu_tau_delta}.
\end{lem}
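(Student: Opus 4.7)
The plan is to reduce each bound to a per-stick estimate from Theorem~\ref{thm:W-marginal} or~\ref{thm:K-marginal} evaluated at $\ell_n$, and then aggregate the series using Lemma~\ref{lem:mom_ell}. Throughout I exploit two facts: conditional on the stick-breaking sequence $\ell$, the pairs $(\xi_n,\xi_n^{(\kappa)})$ are independent with $(\xi_n,\xi_n^{(\kappa)})\sim G_{\ell_n}^\kappa$, so Theorem~\ref{thm:W-marginal} yields $\E[|\xi_n-\xi_n^{(\kappa)}|^p\mid\ell]^{1/p}\le\min\{\sqrt{2\ell_n}\ov\sigma_\kappa,K_p\kappa\varphi_\kappa^{2/p}\}$; and, by Lemma~\ref{lem:comonotonic_ind}, $\E[|\1\{\xi_n\le x\}-\1\{\xi_n^{(\kappa)}\le x\}|\mid\ell]=|F_{\ell_n}(x)-F_{\ell_n}^{(\kappa)}(x)|$, where $F_s$ and $F_s^{(\kappa)}$ denote the distribution functions of $X_s$ and $X_s^{(\kappa)}$, controlled by Theorem~\ref{thm:K-marginal}.

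For the first inequality in~\eqref{eq:bound_sum_xi} with $p=1$, Tonelli's theorem and Theorem~\ref{thm:W-marginal} (with $K_1=1/2$) give $\E\sum_n|\xi_n-\xi_n^{(\kappa)}|\le\sum_n\E[\min\{\sqrt{2\ell_n}\ov\sigma_\kappa,\kappa\varphi_\kappa^2/2\}]$. Writing $\ell_n=t\varpi_n$ for the SB sequence $\varpi$ on $[0,1]$ and invoking Lemma~\ref{lem:mom_ell} with $b_1=0$, $b_2=1/2$, $a_1=\kappa\varphi_\kappa^2/2$, $a_2=\sqrt{2t}\ov\sigma_\kappa$ evaluates this to exactly $\mu_1(\kappa,t)$. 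For $p\in(1,2]$, Jensen's inequality reduces matters to $p=2$. I would split $|\xi_n-\xi_n^{(\kappa)}|=A_n+D_n$, where $A_n:=\E[|\xi_n-\xi_n^{(\kappa)}|\mid\ell]$ and $D_n$ is the conditionally centred residual. Since the $D_n$ are conditionally independent and mean-zero given $\ell$, conditional $L^2$-orthogonality gives
\begin{equation*}
\E\Big[\Big(\sum_n D_n\Big)^2\Big]=\sum_n\E[D_n^2]\le\sum_n\E[|\xi_n-\xi_n^{(\kappa)}|^2].
\end{equation*}
The $p=2$ case of Theorem~\ref{thm:W-marginal} together with Lemma~\ref{lem:mom_ell} (case $b_1=0$, $b_2=1$) then produces, after taking square roots, precisely the second summand of $\mu_2(\kappa,t)$. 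The residual mean part $\E[(\sum_n A_n)^2]^{1/2}$ is bounded by $\sqrt{2}\mu_1(\kappa,t)$ using the pointwise domination $A_n\le\min\{\sqrt{2\ell_n}\ov\sigma_\kappa,\kappa\varphi_\kappa^2/2\}$ together with direct estimates of the stick-breaking cross-moments $\E[\sqrt{\ell_n\ell_m}]$, whose geometric decay in $n\vee m$ keeps the $L^2$ and $L^1$ norms of the random sum within a constant factor. Minkowski's inequality then assembles $\mu_2(\kappa,t)$.

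For the second inequality in~\eqref{eq:bound_sum_xi} and for~\eqref{eq:bound_sum_xi2}, Lemma~\ref{lem:comonotonic_ind} together with $\ell$-measurability of $\ell_n$ give
\begin{equation*}
\E\sum_n\ell_n\big|\1\{\xi_n\le x\}-\1\{\xi_n^{(\kappa)}\le x\}\big|
=\sum_n\E\big[\ell_n|F_{\ell_n}(x)-F_{\ell_n}^{(\kappa)}(x)|\big].
\end{equation*}
For~\eqref{eq:bound_sum_xi}, Theorem~\ref{thm:K-marginal}(a) yields $|F_{\ell_n}(x)-F_{\ell_n}^{(\kappa)}(x)|\le C_{\BE}(\kappa/\ov\sigma_\kappa)\varphi_\kappa^3\ell_n^{-1/2}$, reducing the sum to $C_{\BE}(\kappa/\ov\sigma_\kappa)\varphi_\kappa^3\sum_n\E[\sqrt{\ell_n}]$. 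Lemma~\ref{lem:mom_ell} with $\phi(u)=\sqrt{u}$ gives $\sum_n\E[\sqrt{\ell_n}]=2\sqrt{t}$, and since $2C_{\BE}<1$ the bound $\mu_0^\tau(\kappa,t)$ follows. For~\eqref{eq:bound_sum_xi2}, combine the trivial bound $|F_{\ell_n}(x)-F_{\ell_n}^{(\kappa)}(x)|\le 1$ with Theorem~\ref{thm:K-marginal}(b), using $\min\{\sqrt{\ell_n}\ov\sigma_\kappa,\kappa\varphi_\kappa\}\le\kappa\varphi_\kappa$, to obtain $|F_{\ell_n}(x)-F_{\ell_n}^{(\kappa)}(x)|\le\min\{1,\psi_\kappa^{2/3}\ell_n^{-2/(3\delta)}\}$. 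Multiplying by $\ell_n$ yields $\min\{\ell_n,\psi_\kappa^{2/3}\ell_n^{1-2/(3\delta)}\}$, and Lemma~\ref{lem:mom_ell} with $b_1=1-2/(3\delta)$, $b_2=1$ evaluates the resulting sum to $\mu_\delta^\tau(\kappa,t)$; the borderline $\delta=2/3$ corresponds to $b_1=0$ and produces the logarithmic formula.

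The delicate step is the $p=2$ case of the first inequality: squaring the sum produces a residual $L^2$-norm $\E[(\sum_n A_n)^2]^{1/2}$ that is not immediately controlled by Lemma~\ref{lem:mom_ell}, and comparing it to its $L^1$-norm $\mu_1(\kappa,t)$ requires exploiting the geometric decay of stick-breaking cross-moments. This is the origin of the extra $\sqrt{2}\mu_1$ term in $\mu_2(\kappa,t)$.
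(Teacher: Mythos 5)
Your argument follows the paper's proof in all essentials: condition on the stick lengths, invoke Theorem~\ref{thm:W-marginal} (resp.\ Lemma~\ref{lem:comonotonic_ind} combined with Theorem~\ref{thm:K-marginal}) stick by stick, and aggregate the resulting series with Lemma~\ref{lem:mom_ell}. Your treatment of the $p=1$ case, of the $\mu_0^\tau$ bound and of~\eqref{eq:bound_sum_xi2} is correct and matches the paper step for step.

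The one place you deviate is the $p=2$ case, and as written the step does not deliver the constant $\sqrt2$ in $\mu_2$. Put $S_A:=\sum_n A_n$ and $S_D:=\sum_n D_n$. You bound $\|S_A+S_D\|_2\le\|S_A\|_2+\|S_D\|_2$ by Minkowski and claim $\|S_A\|_2\le\sqrt2\,\mu_1(\kappa,t)$. But $\E[S_A^2]=\sum_n\E[A_n^2]+2\sum_{n<m}\E[A_nA_m]$, and only the cross term is controlled by the stick-breaking computation you allude to (it gives exactly $2\mu_1(\kappa,t)^2$, as in the paper); the diagonal $\sum_n\E[A_n^2]$ is an additional nonnegative contribution, which Lemma~\ref{lem:mom_ell} bounds by $\mu_1(\kappa,t)^2/2$ but not by $0$. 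So the estimates you cite only yield $\|S_A\|_2\le\sqrt{5/2}\,\mu_1(\kappa,t)$, and the lemma asserts the bound with the specific $\mu_2$ of~\eqref{def:mu}. The fix is immediate and shows your decomposition is the paper's direct expansion in disguise: $S_A$ is $\ell$-measurable while $S_D$ is conditionally centred given $\ell$, so they are orthogonal in $L^2$ and $\E[(S_A+S_D)^2]=\E[S_A^2]+\E[S_D^2]$ exactly; since $\E[D_n^2]=\E[|\xi_n-\xi_n^{(\kappa)}|^2]-\E[A_n^2]$, the diagonal of $S_A$ cancels against the deficit in $\E[S_D^2]$ and one recovers $\sum_n\E[|\xi_n-\xi_n^{(\kappa)}|^2]+2\sum_{n<m}\E[A_nA_m]$, which is bounded by $\min\{2t\ov\sigma_\kappa^2,K_2^2\kappa^2\varphi_\kappa^2\}\big(1+2\log^+\big(\sqrt{2t}\ov\sigma_\kappa/(K_2\kappa\varphi_\kappa)\big)\big)+2\mu_1(\kappa,t)^2$; the elementary inequality $\sqrt{a+b}\le\sqrt a+\sqrt b$ then gives precisely $\mu_2(\kappa,t)$. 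With that correction the proof is complete.
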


\begin{proof}
Note that $\mu_p(\kappa,t)=\mu_2(\kappa,t)$ for all $p\in(1,2]$. 
Hence, by Jensen's inequality, in~\eqref{eq:bound_sum_xi} we need 
only consider $p\in\{1,2\}$. 
Pick $n\in\N$ 
and set 
$\kappa_p:=K_p^p\kappa^p\varphi_\kappa^2$, $p\in\{1,2\}$, 
where $K_p$ and $\varphi_\kappa$ are as in the statement of Theorem~\ref{thm:W-marginal}.
	Condition on $\ell_n$ and apply the bound 
in~\eqref{eq:Lp-marginal} to obtain 
\begin{equation}\label{eq:coupling_bound}
\E[|\xi^{(\kappa)}_n-\xi_n|^p|\ell_n]
\leq \min\big\{2^{p/2}\ov\sigma_\kappa^p\ell_n^{p/2},
	\kappa_p\big\},
\qquad p\in\{1,2\}.
\end{equation}
An application of~\eqref{eq:coupling_bound} 
and Lemma~\ref{lem:mom_ell} yield the first inequality 
in~\eqref{eq:bound_sum_xi} for $p=1$:
\[
\sum_{n=1}^\infty \E\big[\big|\xi_n-\xi^{(\kappa)}_n\big|\big]
\le\sum_{n=1}^\infty
	\E[\min\big\{\sqrt{2\ell_n}\ov\sigma_\kappa,\kappa_1\big\}]
=2\min\big\{\sqrt{2t}\ov\sigma_\kappa,
	\kappa_1\big\}\big(
	1+\log^+\big(\sqrt{2t}\ov\sigma_\kappa/\kappa_1\big)\big).
\]

Consider the case $p=2$. A simple expansion yields 
\[
\E\bigg[\bigg(\sum_{n=1}^\infty
	\big|\xi_n-\xi^{(\kappa)}_n\big|\bigg)^2\bigg]
=\sum_{n=1}^\infty \E\big[\big(\xi_n-\xi^{(\kappa)}_n\big)^2\big]
	+2\sum_{n=1}^\infty \sum_{m=n+1}^\infty 
		\E\big[\big|\xi_n-\xi^{(\kappa)}_n\big|
			\big|\xi_m-\xi^{(\kappa)}_m\big|\big].
\]
We proceed to bound the two sums. The inequality 
in~\eqref{eq:coupling_bound} for $p=2$ and 
Lemma~\ref{lem:mom_ell} imply 
\[
\sum_{n=1}^\infty \E\big[\big(\xi_n-\xi^{(\kappa)}_n\big)^2\big]
\le\sum_{n=1}^\infty
\E\big[\min\big\{2\ov\sigma_\kappa^2\ell_n,
	\kappa_2\big\}\big]
=\min\big\{2t\ov\sigma_\kappa^2,\kappa_2\big\}\big(
		1+2\log^+\big(\sqrt{2t}\ov\sigma_\kappa/\sqrt{\kappa_2}
	\big)\big).
\]
Define the $\sigma$-algebra $\mathcal{F}_n
:=\sigma(\ell_1,\ldots,\ell_n)$ and use the conditional 
independence to obtain 
\[
\E\big[\big|\xi_n-\xi^{(\kappa)}_n\big|
	\big|\xi_m-\xi^{(\kappa)}_m\big|
\big|\mathcal{F}_m\big]
\le\min\{\sqrt{2\ell_n}\ov\sigma_\kappa,\kappa_1\}
	\min\{\sqrt{2\ell_m}\ov\sigma_\kappa,\kappa_1\},
\qquad n<m.
\]
Note that $(\ell_m/L_n)_{m=n+1}^\infty$ is a stick-breaking process 
on $[0,1]$ independent of $\mathcal{F}_n$. Use the tower property 
and apply~\eqref{eq:Lp-marginal} and Lemma~\ref{lem:mom_ell} to 
get 
\begin{align*}
\sum_{m=n+1}^\infty
\E\big[\big|\xi_n-\xi^{(\kappa)}_n\big|
\big|\xi_m-\xi^{(\kappa)}_m\big|\big|\mathcal{F}_n\big]
&\leq \min\{\sqrt{2\ell_n}\ov\sigma_\kappa,\kappa_1\}
	\sum_{m=n+1}^\infty
		\E\big[\min\big\{\sqrt{2\ell_m}\ov\sigma_\kappa,\kappa_1\big\}
			\big|\mathcal{F}_n\big]\\
&=2\min\big\{\sqrt{2\ell_n}\ov\sigma_\kappa,\kappa_1\big\}
\min\big\{\sqrt{2L_n}\ov\sigma_\kappa,\kappa_1\big\}\Big(
1+\log^+\Big(\tfrac{\sqrt{2L_n}\ov\sigma_\kappa}{\kappa_1}\Big)\Big)\\
	& \leq 2 \min\big\{2L_{n-1}\ov\sigma_\kappa^2,\kappa_1^2\big\}
	\big(1 + \log^+\big(\sqrt{2t}\ov\sigma_\kappa/\kappa_1\big)\big),
\end{align*}
where  
$\max\{L_n,\ell_n\}\le L_{n-1}\le t$
is used in the last step.
Since $\ell_n\eqd L_n$, $n\in\N$,
Lemma~\ref{lem:mom_ell} yields 
\begin{align*}
2\sum_{n=1}^\infty\sum_{m=n+1}^\infty
\E\big[\big|\xi_n-\xi^{(\kappa)}_n\big|
\big|\xi_m-\xi^{(\kappa)}_m\big|\big]
&\le 4\sum_{n=1}^\infty
	\E\left[	\min\big\{2L_{n-1}\ov\sigma_\kappa^2,\kappa_1^2\big\}\right]
	\big(1
		+ \log^+\big(\sqrt{2t}\ov\sigma_\kappa/\kappa_1\big)\big)\\
	& =2 \mu_1(\kappa,t)^2.
\end{align*}
Putting everything together yields the first inequality 
in~\eqref{eq:bound_sum_xi} for $p=2$. 

Next we prove the second inequality in~\eqref{eq:bound_sum_xi}. 
By Lemma~\ref{lem:comonotonic_ind}, we have 
\begin{equation}
\label{eq:ell_xi}
\E[|\1{\{\xi_n\le x\}}-\1{\{\xi_n^{(\kappa)}\le x\}}||\ell_n]
=\big|\p(X_{\ell_n}\le x|\ell_n)
	-\p\big(X^{(\kappa)}_{\ell_n}\le x\big|\ell_n\big)\big|.
\end{equation}
Applying~\eqref{eq:K-marginal-1} in 
Theorem~\ref{thm:K-marginal}(a) implies 
$\ell_n\big|\p(X_{\ell_n}\le x|\ell_n)
	-\p\big(X^{(\kappa)}_{\ell_n}\le x\big|\ell_n\big)\big|\le
\frac{1}{2}(\kappa/\ov\sigma_\kappa)\varphi_\kappa^3\ell_n^{1/2}$. 
By Fubini's theorem, conditioning in each summand on $\ell_n$, 
equality~\eqref{eq:ell_xi} and Lemma~\ref{lem:mom_ell}, we have 
\[
\E\bigg[\sum_{n\in\N}\ell_n
	\big|\1\{\xi_n\le x\}-\1\{\xi_n^{(\kappa)}\le x\}\big|\bigg] 
\le\frac{1}{2}\sqrt{t} (\kappa/\ov\sigma_\kappa)\varphi_\kappa^3
	\sum_{n\in\N}\E\big[(\ell_n/t)^{1/2}\big]
= \mu_0^\tau(\kappa,t).
\]

Next let $\delta\in(0,2]$ satisfy 
$\inf_{u\in(0,1]}u^{\delta-2}(\ov\sigma_u^2+\sigma^2)>0$. 
By~\eqref{eq:K-marginal-3} in Theorem~\ref{thm:K-marginal}(b), 
we see that $\ell_n\big|\p(X_{\ell_n}\le x|\ell_n)
	-\p\big(X^{(\kappa)}_{\ell_n}\le x\big|\ell_n\big)\big|
\le\psi_\kappa^{2/3}\ell_n^{1-2/(3\delta)}$, where 
$\psi_\kappa = C\kappa\varphi_\kappa$ as defined 
in~\eqref{eq:mu_tau}. Moreover, we have 
$\ell_n\big|\p(X_{\ell_n}\le x|\ell_n)
-\p\big(X^{(\kappa)}_{\ell_n}\le x\big|\ell_n\big)\big|\le \ell_n$.
Hence by~\eqref{eq:ell_xi} and Lemma~\ref{lem:mom_ell}, 
we obtain 
\begin{align*}
&\hspace{-16mm}
\sum_{n=1}^\infty\E\big[\ell_n\big|\1{\{\xi_n\le x\}}
	-\1{\{\xi^{(\kappa)}_n\le x\}}\big|\big]
\le \sum_{n=1}^\infty\E\big[\min\big\{\ell_n,
	\psi_\kappa^{2/3}\ell_n^{1-2/(3\delta)}\big\}\big]\\
&=\begin{cases}
\min\{t,\psi_\kappa^\delta\}
	+\frac{3\delta}{3\delta-2}\psi_\kappa^{2/3}t^{1-\tfrac{2}{3\delta}}
	\big(1-\min\big\{1,t^{-1/\delta}\psi_\kappa\big\}^{\delta-2/3}\big),
&\delta\in(0,2]\setminus\{\tfrac{2}{3}\},\\
\min\{t,\psi_\kappa^{2/3}\}
	(1+\log^+(t\psi_\kappa^{-2/3})),
&\delta=\tfrac{2}{3}.
\end{cases}\qedhere
\end{align*}
\end{proof}

\begin{proof}[Proof of Theorem~\ref{thm:Wd-triplet}]
Let $\ell=(\ell_n)_{n\in\N}$ and  $(\xi_n,\xi_n^{(\kappa)})$, 
$n\in\N$, be as in Lemma~\ref{lem:bound_sum_xi}. 
Define the vector 
\begin{align*}
\big(\zeta_1,\zeta_2,\zeta_3,
	\zeta_1^{(\kappa)},\zeta_2^{(\kappa)},\zeta_3^{(\kappa)}\big)
&:=\sum_{n=1}^\infty 
	\big(\xi_n,\min\{\xi_n,0\},\ell_n\cdot\1{\{\xi_n\le 0\}},
		\xi^{(\kappa)}_n,\min\{\xi_n^{(\kappa)},0\},\ell_n\cdot\1{\{\xi^{(\kappa)}_n\le 0\}}\big).
\end{align*}
By~\eqref{eq:chi_infty} and~\eqref{eq:comonotonic}, 
we have $(\zeta_1,\zeta_2,\zeta_3)\eqd\un\chi_t$ 
and $(\zeta^{(\kappa)}_1,\zeta^{(\kappa)}_2,\zeta^{(\kappa)}_3)
	\eqd\un\chi_t^{(\kappa)}$. Hence, 
it suffices to show that these vectors satisfy~\eqref{eq:Lp-chi}, 
\eqref{eq:mu_0} and~\eqref{eq:mu_tau}. 
Since $x\mapsto\min\{x,0\}$ is in $\Lip_1(\R)$, the inequalities
\[
\max\big\{\big|\zeta_1-\zeta_1^{(\kappa)}\big|,
	\big|\zeta_2-\zeta_2^{(\kappa)}\big|\big\}
\le\sum_{n=1}^\infty \big|\xi_n-\xi^{(\kappa)}_n\big|
\quad\text{and}\quad
\big|\zeta_3-\zeta^{(\kappa)}_3\big|
\le\sum_{n=1}^\infty
	\ell_n\big|\1{\{\xi_n\le 0\}}-\1{\{\xi^{(\kappa)}_n\le 0\}}\big|
\]
follow from the triangle inequality.
The theorem follows from Lemma~\ref{lem:bound_sum_xi}. 
\end{proof}

\begin{rem}
\label{rem:Wd-triplet}
Let $C_t$ and $C_t^{(\kappa)}$ denote the convex minorants of 
$X$ and $X^{(\kappa)}$ on $[0,t]$, respectively. 
Couple $X$ and $X^{(\kappa)}$ in such a 
way that the stick-breaking processes describing the lengths of the 
faces of their convex minorants (see~\cite[Thm~1]{MR2978134}  
and~\cite[Sec.~4.1]{LevySupSim}) coincide. (The Skorokhod space 
$\D[0,t]$ and the space of sequences on $\R$ are both Borel 
spaces by~\cite[Thms~A1.1, A1.2 \&~A2.2]{MR1876169}, so the 
existence of such a coupling is guaranteed 
by~\cite[Thm~6.10]{MR1876169}.) Geometric arguments, similar to 
the ones in~\cite{EpsStrongSim}, show that the sequences of heights 
of the faces of the convex minorants, denoted by 
$(\xi_n)_{n\in\N}$ and $(\xi^{(\kappa)}_n)_{n\in\N}$, satisfy 
\[
\sup_{s\in[0,t]}\big|C_t(s)-C_t^{(\kappa)}(s)\big|
\le \sum_{n=1}^\infty\big|\xi_n-\xi_n^{(\kappa)}\big|
\quad\text{and}\quad
\big|\un\tau_t-\un\tau_t^{(\kappa)}\big|
\le \sum_{n=1}^\infty 
	\ell_n\big|\1\{\xi_n\le 0\}-\1\{\xi_n^{(\kappa)}\le 0\}\big|.
\]
Hence, if $(\xi_n,\xi^{(\kappa)}_n)$, $n\in\N$, are coupled as in 
Lemma~\ref{lem:bound_sum_xi}, the inequalities 
in~\eqref{eq:bound_sum_xi} and~\eqref{eq:bound_sum_xi2}
yield the same bounds as in Theorem~\ref{thm:Wd-triplet} but in a 
stronger metric (namely, the distance between the convex minorants 
in the supremum norm), while retaining the control on the 
time of the infimum. 
\end{rem}

\subsection{The proofs of Propositions~\ref{prop:logLip},~\ref{prop:barrier},~\ref{prop:barrier-tau} and~\ref{prop:SimpAsmH}}
\label{subsec:proofProps}

The L\'evy-Khintchine formula for $X_t$ in~\eqref{eq:Levy_Khinchin},
the definition of $X^{(\kappa)}_t$ in~\eqref{eq:ARA}
and the inequality $e^z\geq 1+z$ (for all $z\in\R$) imply
\begin{equation}\label{eq:exp_bound}
\begin{split}
t^{-1}\log\E\big[e^{uX^{(\kappa)}_t}\big]
&=bu+(\sigma^2+\ov\sigma^2_{\kappa})u^2/2
+\int_{\R\setminus(-\kappa,\kappa)}(e^{ux}-1-ux\cdot\1_{(-1,1)}(x))\nu(dx)\\
&\leq\ov\sigma^2_{\kappa}u^2/2+t^{-1}\log\E\big[e^{uX_t}\big]
\quad	\text{for any $u\in\R$, $t>0$ and $\kappa\in(0,1]$.}
\end{split}
\end{equation}
Thus
$\E[\exp(uX^{(\kappa)}_t)]\leq\E[\exp(uX_t)]
\exp(\ov\sigma^2_{\kappa}u^2t/2)$ and,  in particular, the Gaussian approximation 
$X^{(\kappa)}$ has as many exponential moments as the L\'evy process $X$. 

\begin{proof}[Proof of Proposition~\ref{prop:logLip}]
By~\cite[Thm~6.16]{MR2459454}, there exists a coupling between 
$(\xi,\zeta)\eqd(X_T,\un{X}_T)$ and 
$(\xi',\zeta')\eqd (X^{(\kappa)}_T,\un{X}^{(\kappa)}_T)$, such that 
$\E[(|\xi-\xi'|+|\zeta-\zeta'|)^2]^{1/2}
=\W_2((X_T,\un{X}_T),(X^{(\kappa)}_T,\un{X}^{(\kappa)}_T))$. 
The identity $e^b-e^a=\int_a^b e^zdz$ implies that, for $x\ge y$ and 
$x'\ge y'$,  we have
\begin{equation}
\label{eq:loclip}
|f(x,y)-f(x',y')|\leq K(|e^x-e^{x'}|+|e^y-e^{y'}|)
\leq K(|x-x'|+|y-y'|)e^{\max\{x,x'\}}.
\end{equation}
Apply this inequality, the Cauchy-Schwartz inequality, the elementary 
inequalities, which hold for all $a,b\geq0$, $(a+b)^2\le 2(a^2+b^2)$ and 
$(a+b)^{1/2}\le a^{1/2}+b^{1/2}$ 
and the bound in~\eqref{eq:exp_bound} to obtain  
\begin{align*}
\E|f(\xi,\zeta) - f(\xi',\zeta')|
&\le K\E[(|\xi-\xi'|+|\zeta-\zeta'|)^2]^{1/2}\E[(e^{\xi}+e^{\xi'})^2]^{1/2}\\
&\le 2^{1/2}K\W_2((X_T,\un{X}_T),(X^{(\kappa)}_T,\un{X}^{(\kappa)}_T))
	\E[e^{2\xi}+e^{2\xi'}]^{1/2}\\
&\le 2K\W_2((X_T,\un{X}_T),(X^{(\kappa)}_T,\un{X}^{(\kappa)}_T))
	\E[e^{2X_T}]^{1/2}(1 + e^{\ov\sigma^2_{\kappa}T}).
\end{align*}
Applying Corollary~\ref{cor:Wd-chi} gives the desired inequality, 
concluding the proof of the proposition.
\end{proof}

We now introduce a tool that uses the $L^p$-distance 
$\E[|\zeta-\zeta'|^p]^{1/p}$ between random variables $\zeta$ 
and $\zeta'$ to bound the $L^1$-distance between the indicators 
$\E|\1_{[y,\infty)}(\zeta)-\1_{[y,\infty)}(\zeta')|$. 

\begin{lem}\label{lem:Lp-to-barrier}
Let $(\xi,\zeta)$ and $(\xi',\zeta')$ be random vectors in 
$\R^n\times\R$. Fix $y\in\R$ and let $h\in\Lip_K(\R^n)$ satisfy  
$0\le h\le M$ for some constants $K,M\geq0$. Then for any 
$p,r>0$, we have 
\begin{align}
\label{eq:lem-barrier}
\E\big|h(\xi)\1_{[y,\infty)}(\zeta)-h(\xi')\1_{[y,\infty)}(\zeta')\big|
&\le K\E\|\xi-\xi'\| 
+ M\p(|\zeta-y|\le r) + Mr^{-p}\E[|\zeta-\zeta'|^p].
\end{align}
In particular, if $|\p(\zeta\le y)-\p(\zeta\le y+r)|\le C|r|^\gamma$ 
for some $C,\gamma>0$ and all $r\in\R$, then 
\begin{equation}
\label{eq:Lp-to-barrier}	
\begin{split}
\E\big|h(\xi)\1_{[y,\infty)}(\zeta)
	- h(\xi')\1_{[y,\infty)}(\zeta')\big|
&\le K\E\|\xi-\xi'\|
	+ M(2C\gamma/p)^{\frac{p}{p+\gamma}}(1+p/\gamma)
	\E[|\zeta-\zeta'|^p]^{\frac{\gamma}{p+\gamma}}.
\end{split}\end{equation}
\end{lem}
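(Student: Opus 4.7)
The plan is to decompose the difference via the standard telescoping trick,
\[
h(\xi)\1_{[y,\infty)}(\zeta) - h(\xi')\1_{[y,\infty)}(\zeta')
= \big(h(\xi)-h(\xi')\big)\1_{[y,\infty)}(\zeta) + h(\xi')\big(\1_{[y,\infty)}(\zeta)-\1_{[y,\infty)}(\zeta')\big),
\]
take absolute values and apply $0\le h\le M$ together with the Lipschitz bound on $h$. Since $\1_{[y,\infty)}(\zeta)\le 1$, the first summand is dominated pointwise by $K\|\xi-\xi'\|$, so after taking expectation the task reduces to controlling $M\cdot\E|\1_{[y,\infty)}(\zeta)-\1_{[y,\infty)}(\zeta')|$.

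The key observation for the indicator piece is that $\1_{[y,\infty)}(\zeta)-\1_{[y,\infty)}(\zeta')$ is nonzero only when $\zeta$ and $\zeta'$ lie on opposite sides of $y$, which forces $|\zeta-\zeta'|\ge|\zeta-y|$. Hence, for any $r>0$,
\[
\big|\1_{[y,\infty)}(\zeta)-\1_{[y,\infty)}(\zeta')\big|\le \1\{|\zeta-y|\le r\}+\1\{|\zeta-\zeta'|>r\}.
\]
Taking expectations and bounding the second indicator via Markov's inequality with exponent $p$ immediately produces~\eqref{eq:lem-barrier}.

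For the refined inequality~\eqref{eq:Lp-to-barrier}, the regularity hypothesis, combined as two one-sided increments of the distribution function at $y$, gives $\p(|\zeta-y|\le r)\le 2Cr^\gamma$. Plugging this into~\eqref{eq:lem-barrier} reduces the proof to minimising in $r>0$ the scalar expression $2MCr^\gamma + Mr^{-p}\E|\zeta-\zeta'|^p$. The first-order condition fixes the optimiser $r^{p+\gamma}=p\E|\zeta-\zeta'|^p/(2C\gamma)$, and substituting back produces the constant $(2C\gamma/p)^{p/(p+\gamma)}(1+p/\gamma)$ multiplying $\E|\zeta-\zeta'|^p$ raised to the power $\gamma/(p+\gamma)$; the factor $1+p/\gamma$ comes from the identity $(2C)^{p/(p+\gamma)}(p/\gamma)^{\gamma/(p+\gamma)}=(2C\gamma/p)^{p/(p+\gamma)}\cdot(p/\gamma)$, which lets the two terms at the minimum be recombined into a common prefactor. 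No step is genuinely delicate; the closest thing to an obstacle is simply carrying out this last algebraic simplification cleanly.
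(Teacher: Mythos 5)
Your argument is correct and follows essentially the same route as the paper: the same telescoping decomposition, the same bound $|\1_{[y,\infty)}(\zeta)-\1_{[y,\infty)}(\zeta')|\le \1_{\{|\zeta-y|\le r\}}+\1_{\{|\zeta-\zeta'|>r\}}$ (you derive it via the observation $|\zeta-\zeta'|\ge|\zeta-y|$ on the event where the indicators differ, the paper via a four-term case split — both valid), Markov's inequality, and optimisation over $r$. Your algebra for the optimal $r$ and the resulting constant $(2C\gamma/p)^{p/(p+\gamma)}(1+p/\gamma)$ checks out.
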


\begin{rem}
An analogous bound to the one in~\eqref{eq:lem-barrier} holds for 
the indicator $\1_{(-\infty,y]}$. Moreover, it follows from the 
proof below that the boundedness of the function $h$ assumed in 
Lemma~\ref{lem:Lp-to-barrier} may be replaced with a moment 
assumption $\xi,\xi'\in L^q$ for some $q>1$. In such a case, 
H\"older's inequality could be invoked to obtain an analogue 
to~\eqref{eq:barrier-dec-bound} below. Similar arguments 
may be used to simultaneously handle multiple indicators. 
\end{rem}

\begin{proof}[Proof of Lemma~\ref{lem:Lp-to-barrier}]
Applying the local $\gamma$-H\"older continuous property of the 
distribution function of $\zeta$ to~\eqref{eq:lem-barrier} and 
optimising over $r>0$ yields~\eqref{eq:Lp-to-barrier}. Thus, it 
remains to establish~\eqref{eq:lem-barrier}.
	
Elementary set manipulation yields 
\begin{align*}
|\1_{\{y\le\zeta\}}- \1_{\{y\le\zeta'\}}|
&= |\1_{\{\zeta'<y\le\zeta\}}-\1_{\{\zeta<y\le\zeta'\}}|\\
&\le \1_{\{|\zeta-\zeta'|>r,\zeta'<y\le\zeta\}} 
	+ \1_{\{|\zeta-\zeta'|\le r,\zeta'< y\le\zeta\}} 
	+ \1_{\{|\zeta-\zeta'|>r,\zeta<y\le\zeta'\}}  
	+ \1_{\{|\zeta-\zeta'|\le r,\zeta< y\le\zeta'\}}\\
&\le \1_{\{|\zeta-\zeta'|>r\}} + \1_{\{|\zeta-y|\le r\}}.
\end{align*}
Hence, the triangle inequality and the Lipschitz property gives 
\begin{equation}
\label{eq:barrier-dec-bound}
\begin{split}
|h(\xi)\1_{[y,\infty)}(\zeta)-h(\xi')\1_{[y,\infty)}(\zeta')|
&\le|h(\xi)| |\1_{[y,\infty)}(\zeta)-\1_{[y,\infty)}(\zeta')|
	+|h(\xi)-h(\xi')| \1_{[y,\infty)}(\zeta')\\
&\le M(\1_{\{|\zeta-y|\le r\}} + \1_{\{|\zeta-\zeta'|>r\}})
	+ K\|\xi-\xi'\|.
\end{split}
\end{equation}
Taking expectations and using Markov's inequality 
$\p(|\zeta-\zeta'|>r)\le r^{-p}\E[|\zeta-\zeta'|^p]$ 
yields~\eqref{eq:lem-barrier}.
\end{proof}

\begin{proof}[Proof of Proposition~\ref{prop:barrier}]
Theorem~\ref{thm:Wd-triplet} and~\eqref{eq:Lp-to-barrier} in 
Lemma~\ref{lem:Lp-to-barrier} (with $C$ and $\gamma$ given in 
Assumption~(\nameref{asm:(H)}) and $p=2$) applied to 
$(X_T,\un{X}_T)$ and $(X^{(\kappa)}_T,\un{X}^{(\kappa)}_T)$ 
under the SBG coupling give the claim. 
\end{proof}

\begin{proof}[Proof of Proposition~\ref{prop:barrier-tau}]
Analogous to the proof of Proposition~\ref{prop:barrier}, 
applying Theorem~\ref{thm:Wd-triplet} and~\eqref{eq:Lp-to-barrier} 
in Lemma~\ref{lem:Lp-to-barrier} (with $C$ and $\gamma$ given 
in Assumption~(\nameref{asm:(Htau)}) and $p=1$), gives the result.
\end{proof}

\begin{lem}
\label{lem:density-tau}
Suppose $X$ is not a compound Poisson process. 
Then the law of $\un\tau_T$ is absolutely continuous on $(0,T)$ 
and its density is locally bounded on $(0,T)$. 
\end{lem}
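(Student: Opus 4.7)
Plan. The plan is to combine time-reversal, the Pecherskii-Rogozin identity at an independent exponential time, and excursion theory to handle each fixed $T>0$.

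If $X$ is a subordinator (resp.\ the negative of one), then $\un\tau_T \in \{0, T\}$ a.s.\ and the claim on $(0,T)$ is vacuous. Otherwise, the time-reversed process $\hat X_s := X_{(T-s)-} - X_T$ on $[0,T]$ is a L\'evy process with $\hat X \eqd -X$ and satisfies $\un\tau_T(X) = T - \ov\tau_T(\hat X)$ almost surely. Replacing $X$ by $-X$ if necessary, it therefore suffices to show that $\ov\tau_T$ has a locally bounded density on $(0,T)$ for any non-compound-Poisson L\'evy process $X$ for which $0$ is regular for $(0,\infty)$; equivalently, $\rho_* := \liminf_{t\downarrow 0}\p(X_t>0) > 0$.

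Take $e_q \sim \Exp(q)$ independent of $X$ with $q>0$. The Pecherskii-Rogozin identity~\cite[Ch.~VI]{MR1406564} yields
\[
\E\bigl[e^{-\alpha\ov\tau_{e_q}}\bigr] = \exp\Bigl(-\int_0^\infty (1-e^{-\alpha t})\,t^{-1}e^{-qt}\p(X_t>0)\,dt\Bigr), \qquad \alpha\ge 0,
\]
identifying $\ov\tau_{e_q}$ as an infinitely divisible random variable on $[0,\infty)$ with zero drift and L\'evy measure $\mu_q(dt) = t^{-1}e^{-qt}\p(X_t>0)\1_{\{t>0\}}\,dt$. The assumption $\rho_* > 0$ gives density $\mu_q(dt)/dt \gtrsim \rho_*/t$ near $0$ and hence $\mu_q((0,t_0))=\infty$ for every $t_0>0$. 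Splitting off a Gamma-type minorant $\mu^{(0)}_q(dt) := \tfrac{\rho_*}{2}\,t^{-1}e^{-qt}\1_{(0,t_0)}(t)\,dt \le \mu_q$ (for $t_0$ small) realises $\ov\tau_{e_q}$ as the convolution of a Gamma-type law (absolutely continuous with locally bounded density) and an independent positive random variable, yielding absolute continuity of $\ov\tau_{e_q}$ on $(0,\infty)$ with locally bounded density.

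To pass from exponential to fixed time horizons, I use the ascending ladder subordinator $\tau^+ = (\tau^+_u)_{u\ge 0}$, which is proper with infinite L\'evy measure thanks to the regularity of $0$, together with the identity $\ov\tau_T = \tau^+_{L_T-}$, where $L_T$ is the local time of $\ov X - X$ at $T$ (see~\cite[Ch.~VI]{MR1406564}). Applying the same Pecherskii-Rogozin decomposition to $\tau^+_u$ itself (its L\'evy measure inherits the $\sim c/t$ asymptotic at $0$ from the excursion measure of $X-\un X$) produces a locally bounded density $f_{\tau^+_u}$ for every $u>0$. Integrating over the Poissonian structure of the ladder excursions then yields
\[
\p(\ov\tau_T\in ds)\big|_{s\in(0,T)} = \Bigl(\int_0^\infty f_{\tau^+_u}(s)\,n(\zeta > T-s)\,du\Bigr)\,ds,
\]
with $n(\zeta>r)$ the $n$-measure of excursions of $X-\un X$ of length larger than $r$, giving the required locally bounded density. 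The principal obstacle is precisely this last transfer: a direct Fubini argument on the mixture $\p(\ov\tau_{e_q}\in\cdot) = q\int_0^\infty e^{-qT}\p(\ov\tau_T\in\cdot)\,dT$ only delivers absolute continuity of $\ov\tau_T$ for Lebesgue-a.e.~$T$, so the path decomposition via the ladder subordinator $\tau^+$ is essential to establish the density at every fixed $T>0$.
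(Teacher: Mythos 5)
Your proposal has two genuine gaps. First, the quantitative input you rely on is not implied by the hypothesis. You assert that regularity of $0$ for $(0,\infty)$ is ``equivalently $\rho_*:=\liminf_{t\downarrow0}\p(X_t>0)>0$''; this is false. Rogozin's criterion says regularity is equivalent to $\int_0^1 t^{-1}\p(X_t>0)\,dt=\infty$, which holds, e.g., when $\p(X_t>0)\sim 1/\log(1/t)$, a case with $\rho_*=0$. Your Gamma-minorant argument needs the L\'evy measure $t^{-1}e^{-qt}\p(X_t>0)\,dt$ of $\ov\tau_{e_q}$ to dominate $c\,t^{-1}dt$ near $0$, i.e.\ precisely $\rho_*>0$, and ``$X$ not compound Poisson'' does not deliver this (nor does it deliver it for $-X$: take $X$ of finite variation with positive natural drift and two-sided infinite-activity jumps, so that $\p(-X_t>0)\to0$). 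The same unproved lower bound reappears when you claim the ladder-time subordinator's L\'evy measure ``inherits the $\sim c/t$ asymptotic'': in general $n(\zeta>t)\to\infty$ at an unspecified rate, and under Spitzer-type conditions it is regularly varying of index $-\rho$ with $\rho\in(0,1)$, not $-1$.

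Second, and more importantly, the passage from $e_q$ to fixed $T$ — which you correctly identify as the principal obstacle — is not actually carried out. The mixture formula $\p(\ov\tau_T\in ds)=\bigl(\int_0^\infty f_{\tau^+_u}(s)\,n(\zeta>T-s)\,du\bigr)ds$ presupposes that each $\tau^+_u$ has a density $f_{\tau^+_u}$ and that $\int_0^\infty f_{\tau^+_u}(\cdot)\,du$ is locally bounded; neither is established, and the first is a strictly stronger statement than what is needed (only the \emph{renewal measure} $\int_0^\infty\p(\tau^+_u\in ds)\,du$ must be absolutely continuous). This is exactly the content of the result the paper invokes: Theorem~5 of~\cite{MR3098676} (with $F\equiv K\equiv1$) gives directly that $\un\tau_T$ has no atoms in $(0,T)$ and density $s\mapsto\un n(\zeta>s)\,\ov n(\zeta>T-s)$ there, after which local boundedness is immediate from the monotonicity of $s\mapsto\un n(\zeta>s)$ and $s\mapsto\ov n(\zeta>s)$ — no lower bound on these tails is ever needed. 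If you want a self-contained argument along your lines, you would have to prove the absolute continuity of the ladder-time renewal measure and identify its density with $\un n(\zeta>\cdot)$, which is essentially reproving the cited theorem.
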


\begin{proof}
If $X$ or $-X$ is a subordinator then $\un\tau_T$ is a.s. $0$ or $T$, 
respectively. In either case, the result follows immediately. 
Suppose now that neither $X$ nor $-X$ is a subordinator. Denote by 
$\ov{n}(\zeta>\cdot)$ (resp. $\un{n}(\zeta>\cdot)$) the intensity 
measures of the lengths $\zeta$ of the excursions away from $0$ of 
the Markov process $\ov{X}-X$ (resp. $X-\un{X}$). Then, 
by Theorem~5 in~\cite{MR3098676} with $F\equiv K\equiv 1$, 
the law of $\un\tau_T$ can only have atoms at $0$ or $T$, is 
absolutely continuous on $(0,T)$ and its density is given by 
$s\mapsto\un{n}(\zeta>s)\ov{n}(\zeta>T-s)$, $s\in(0,T)$. The maps 
$s\mapsto\un{n}(\zeta>s)$ and $s\mapsto\ov{n}(\zeta>s)$ are 
non-increasing, so the density is bounded on any compact subset of 
$(0,T)$, completing the proof.
\end{proof}

In preparation for the next result, we introduce the following 
assumption. 

\begin{asm*}[S-$\alpha$]\label{asm:(S)}
There exists some function $a:(0,\infty)\to(0,\infty)$ such that 
$X_t/a(t)$ converges in distribution to an $\alpha$-stable law as 
$t\to 0$.
\end{asm*}

\begin{prop}
\label{prop:AsmH}
Let Assumption~(\nameref{asm:(S)}) hold for some $\alpha\in(0,2]$.\\
\nf{(a)} If $\alpha>1$, then Assumption~(\nameref{asm:(H)}) holds 
uniformly on compact subsets of $(-\infty,0)$ with $\gamma=1$.\\
\nf{(b)} Suppose $\rho:=\lim_{t\to 0}\p(X_t>0)\in(0,1)$. Then for any 
$\gamma\in(0,\min\{\rho,1-\rho\})$, there exists some constant 
$C>0$ such that Assumption~(\nameref{asm:(Htau)}) holds for all 
$s\in[0,T]$. 
\end{prop}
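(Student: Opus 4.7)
The common ingredient in both parts is that Assumption~(\nameref{asm:(S)}), together with non-monotonicity of the attracting stable law (automatic for $\alpha>1$ in (a) and explicit via $\rho\in(0,1)$ in (b)), forces the positivity parameter $\rho_0:=\lim_{t\to 0}\p(X_t>0)$ to lie in $(0,1)$. By Rogozin's criterion this means $0$ is regular for both $(0,\infty)$ and $(-\infty,0)$ under $X$, so $X$ is not compound Poisson and $\un\tau_T$ charges neither $\{0\}$ nor $\{T\}$. The rest of the argument exploits the small-time behaviour of the ascending and descending ladder (height and time) processes, which inherit their scaling from the attracting stable law.

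For part (a), regularity upwards (i.e.\ $\rho_0>0$) implies, by a standard Wiener--Hopf argument, absolute continuity of $\un X_T$ on $(-\infty,0)$. To obtain a density bounded on a compact $I\subset(-\infty,0)$, I plan to combine this with the SB representation~\eqref{eq:chi_infty}: for $\alpha>1$ the stable attraction controls the density (or smoothed distribution function) of $X_t$ at length scales of order $a(t)\asymp t^{1/\alpha}$ (up to slowly varying factors), and conditional independence of the summands in~\eqref{eq:chi_infty} given the sticks permits a convolution argument. Integrating against the joint law of the sticks yields a density of $\un X_T$ that is finite on any closed set bounded away from $0$. Uniformity over such a compact $I$ then gives the local Lipschitz bound; extension from $y,y+x\in I$ to arbitrary $x\in\R$ is routine using the bound $1$ on the distribution function and the fact that $I$ is bounded away from~$0$.

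For part (b), Lemma~\ref{lem:density-tau} together with the absence of atoms at $\{0,T\}$ supplies the density $f(s)=\un n(\zeta>s)\ov n(\zeta>T-s)$ on $(0,T)$. The key step is to establish, via Karamata's Tauberian theorem applied to the small-time Spitzer identity $\p(X_t>0)\to\rho$, the asymptotics $\un n(\zeta>s)=s^{\rho-1}\ell_1(s)$ and $\ov n(\zeta>s)=s^{-\rho}\ell_2(s)$ as $s\to 0$, for slowly varying $\ell_1,\ell_2$. Granting these, $\int_s^{s+t}f(u)du\leq Ct^\rho$ (up to slowly varying factors) near $s=0$ and $\leq Ct^{1-\rho}$ near $s=T$, while on any interior interval $f$ is bounded. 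Absorbing the slowly varying factors into the constant at the cost of weakening the exponent then yields Assumption~(\nameref{asm:(Htau)}), uniformly in $s\in[0,T]$, for any $\gamma<\min\{\rho,1-\rho\}$.

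The principal obstacle in both parts is this absorption of slowly varying corrections: the sharp scaling $s^{\rho-1}$ is available only for the attracting stable law itself, while under (S-$\alpha$) one has only regular variation, which precludes reaching the endpoint exponent and forces the strict inequality $\gamma<\min\{\rho,1-\rho\}$ in (b). A secondary obstacle in part (a) is obtaining the required smoothing estimate on the marginals $X_t$: under (S-$\alpha$) alone the marginals need not be absolutely continuous, so one must use $\alpha>1$ (which provides additional regularity via, e.g., the ladder height subordinator or a small-time local limit statement) rather than the bare attraction hypothesis.
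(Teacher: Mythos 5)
Your part (b) is essentially the paper's own argument: Rogozin's criterion gives regularity of both half-lines, \cite[Thm~6]{MR3098676} gives the density $s\mapsto\un n(\zeta>s)\,\ov n(\zeta>T-s)$ of $\un\tau_T$ on $(0,T)$, the excursion-length tails are regularly varying with indices $\rho-1$ and $-\rho$, and integrating while absorbing the slowly varying factors into a strictly smaller exponent yields the claim for any $\gamma<\min\{\rho,1-\rho\}$. The paper imports the regular variation from an external proposition rather than re-deriving it from Spitzer's condition via Karamata, but that is exactly how the cited result is proved, so nothing is lost. Do record the short patching step that turns the two local estimates (near $0$ and near $T$) into the uniform statement for all $s\in[0,T]$ and all $t\in\R$: bound the increment separately on $[0,T/2]$ and $[T/2,T]$ using concavity of $x\mapsto x^\gamma$, and use the triangle inequality for pairs straddling $T/2$.

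Part (a) is where there is a genuine gap. (The paper disposes of it by citing an external lemma asserting that $\un X_T$ has a continuous density on $(-\infty,0)$; since you are supplying an argument, it has to close, and the one you sketch does not.) A minor correction first: under (\nameref{asm:(S)}) with a non-degenerate limit the marginals are automatically absolutely continuous with bounded continuous densities, because the attraction forces the real part of the characteristic exponent to dominate a regularly varying function of index $\alpha-\varepsilon>0$ at infinity; the true role of $\alpha>1$ is that $\|f_t\|_\infty=O(a(t)^{-1})$ is then integrable near $t=0$. The substantive problem is the convolution step. Implementing your recursion through~\eqref{eq:chi_infty} on a Borel set $A\subset(-\infty,-\eta]$ (peel off the first increment on $\{\xi_1\le0\}$, recurse on $\{\xi_1>0\}$) gives
\[
\p(\un X_T\in A)\;\le\;|A|\sum_{k\ge1}\E\Big[\textstyle\prod_{j<k}\p(X_{\ell_j}>0\mid\ell_j)\cdot\|f_{\ell_k}\|_\infty\Big],
\]
and the natural bounds --- $\prod_{j<k}\p(X_{\ell_j}>0\mid\ell_j)\le(\rho^*)^{k-1}$ with $\rho^*:=\sup_{s\in(0,T]}\p(X_s>0)<1$, and $\E[\|f_{\ell_k}\|_\infty]\le C\,\E[\ell_k^{-1/\alpha'}]=C\,T^{-1/\alpha'}\big(\alpha'/(\alpha'-1)\big)^{k}$ for $1<\alpha'<\alpha$, since $\ell_k\eqd T\prod_{j\le k}U_j$ --- produce a series with ratio $\rho^*\alpha'/(\alpha'-1)$. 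This is below $1$ only if $\rho^*<1-1/\alpha'<1-1/\alpha\le 1/2$, which already fails (at best is borderline) for Brownian motion and fails outright for asymmetric processes. So the SB representation does not by itself deliver a locally bounded density of $\un X_T$; one needs a different mechanism, e.g.\ the pre-/post-infimum factorisation together with the renewal density of the descending ladder height process, which is in effect what the cited lemma provides.
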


Note that $\rho$ is well defined under 
Assumption~(\nameref{asm:(S)}) and that $X_t/a(t)$ can only have a 
nonzero weak limit as $t\to 0$ if the limit is $\alpha$-stable.  
Moreover, in that case, $a$ is necessarily regularly varying at $0$ 
with index $1/\alpha$ and $\alpha$ is given in terms of the L\'evy 
triplet $(\sigma^2,\nu,b)$ of $X$:
\[
\alpha
:=\begin{cases}
2, &\sigma\ne 0,\\
1, &\beta\in(0,1)\text{ and }b\ne\int_{(-1,1)}x\nu(dx),\\
\beta, &\text{otherwise}, 
\end{cases}
\]
where $\beta$ is the BG index introduced in~\eqref{eq:I0_beta}. 
In fact, the assumptions of Proposition~\ref{prop:SimpAsmH} imply 
Assumption~(\nameref{asm:(S)}) by~\cite[Prop.~2.3]{ZoomIn}, 
so Proposition~\ref{prop:AsmH} generalises 
Proposition~\ref{prop:SimpAsmH}. We refer the reader 
to~\cite[Sec.~3~\&~4]{MR3784492} for conditions that are 
equivalent to Assumption~(\nameref{asm:(S)}). 

Assumption~(\nameref{asm:(S)}) allows for the cases $\rho=0$ or 
$\rho=1$ when $\alpha\le 1$, correspond to the stable limit being 
a.s. negative or a.s. positive, respectively. In these cases, the 
distribution of $\ov\tau_T(X)$ may have an atom at $0$ or $T$, 
while the law of $\ov\tau_T(X^{(\kappa)})$ is absolutely continuous, 
making the convergence in Kolmogorov distance impossible. 
This is the reason for excluding $\rho\in\{0,1\}$
in Proposition~\ref{prop:AsmH}.

\begin{proof}[Proof of Proposition~\ref{prop:AsmH}]
By~\cite[Lem.~5.7]{ZoomIn}, under the assumptions in part~(a) of 
the proposition, $\un X_T$ has a continuous density on $(-\infty,0)$, 
implying the conclusion in~(a). 

Since $\rho=\lim_{t\to0}\p(X_t>0)\in(0,1)$, $0$ is regular for both 
half-lines by Rogozin's criterion~\cite[Thm~47.2]{MR3185174}. 
\cite[Thm~6]{MR3098676} then asserts that the law of $\un\tau_T$ 
is absolutely continuous with density given by 
$s\mapsto\un{n}(\zeta>s)\ov{n}(\zeta>T-s)$, $s\in(0,T)$. 
The maps $s\mapsto\un{n}(\zeta>s)$ and $s\mapsto\ov{n}(\zeta>s)$ 
are non-increasing and, by~\cite[Prop.~3.5]{ZoomIn}, regularly 
varying with indices $\rho-1$ and $-\rho$, respectively. Thus for 
any $\gamma\in(0,\min\{\rho,1-\rho\})$ there exists some 
$C>0$ such that $\un{n}(\zeta>s)\ov{n}(\zeta>T-s)
\le Cs^{\gamma-1}(T-s)^{\gamma-1}$ for all $s\in(0,T)$. Thus, for any 
$s,t\in[0,T/2]$ with $t\ge s$, we have 
\begin{align*}
\p(\un\tau_T\le t)-\p(\un\tau_T\le s)
&\le\int_s^t Cu^{\gamma-1}(T-u)^{\gamma-1}du
\le C\int_s^t u^{\gamma-1}(T/2)^{\gamma-1}du\\
&\le C\gamma^{-1}(T/2)^{\gamma-1}(t^\gamma-s^\gamma)
\le C\gamma^{-1}(T/2)^{\gamma-1}(t-s)^\gamma.
\end{align*}
since the map $x\mapsto x^\gamma$ is concave. 
A similar bound holds for $s,t\in[T/2,T]$. Moreover, when 
$s\in[0,T/2]$ and $t\in[T/2,T]$ we have 
\begin{align*}
\p(\un\tau_T\le t)-\p(\un\tau_T\le s)
&\le \p(\un\tau_T\le t)-\p(\un\tau_T\le T/2)
	+ \p(\un\tau_T\le T/2)-\p(\un\tau_T\le s)\\
&\le C\gamma^{-1}(T/2)^{\gamma-1}[(T/2-s)^\gamma
	+ (t-T/2)^\gamma]
\le C\gamma^{-1}(T/2)^{2\gamma-2}(t-s)^\gamma.
\end{align*}
This gives part~(b) of the proposition. 
\end{proof}


\subsection{Level variances under \nameref{alg:SBG}}
\label{subsec:couplings}

In the present subsection we establish bounds on the level variances 
under the coupling $\un\Pi_{n,T}^{\kappa_1,\kappa_2}$ 
(constructed in \nameref{alg:SBG}) for Lipschitz, locally 
Lipschitz and discontinuous payoff functions (see $\BT_1$ 
in~\eqref{def:BT1} and $\BT_2$ in~\eqref{def:BT2}) of $\un\chi_T$.

\begin{thm}
\label{thm:summary}
Fix $T>0$, $n\in\N$ and $1\geq\kappa_1>\kappa_2>0$. 
Denote $(Z_{n,T}^{(\kappa_i)},\un Z_{n,T}^{(\kappa_i)},
\un\tau_{n,T}^{(\kappa_i)})=\un\chi_{n,T}^{(\kappa_i)}$, 
$i\in\{1,2\}$, where the vector 
$\big(\un\chi_{n,T}^{(\kappa_1)},\un\chi_{n,T}^{(\kappa_2)}\big)$, 
constructed in \nameref{alg:SBG}, follows the law 
$\un\Pi_{n,T}^{\kappa_1,\kappa_2}$. 
\item[\nf{(a)}] For any Lipschitz function $f\in\Lip_K(\R^2)$, $K>0$, 
we have 
\begin{equation}
\label{eq:summary_2}
\E\big[\big(f\big(Z_{n,T}^{(\kappa_2)},
\un Z_{n,T}^{(\kappa_2)}\big)
-f\big(Z_{n,T}^{(\kappa_1)},
\un Z_{n,T}^{(\kappa_1)}\big)\big)^2\big] 
\le K^2T\big(27\sigma^2 2^{-n}+40\ov\sigma^2_{\kappa_1}\big).
\end{equation}
For $f\in\locLip_K(\R^2)$, defined in Subsection~\ref{subsec:bias} 
above, if $\int_{[1,\infty)}e^{4x}\nu(dx)<\infty$ then there exists a 
constant $C>0$ independent of $(n,\kappa_1,\kappa_2)$ such that  
\begin{align}
	\label{eq:summary_3}
	\E\big[\big(f\big(Z_{n,T}^{(\kappa_2)},
	\un Z_{n,T}^{(\kappa_2)}\big)
	-f\big(Z_{n,T}^{(\kappa_1)},
	\un Z_{n,T}^{(\kappa_1)}\big)\big)^2\big]
	&\leq C\big((2/3)^{n/2}\cdot\1_{\R\setminus\{0\}}(\sigma) 
	+ \ov\sigma^2_{\kappa_1} 
	+ \ov\sigma_{\kappa_1}\kappa_1\big).
\end{align}
\item[\nf{(b)}] Suppose Assumption~(\nameref{asm:(H)}) is satisfied 
by some $y<0$ and $C,\gamma>0$. Then for any $f\in\BT_1(y,K,M)$, 
$K,M\ge 0$, there exists some $K'>0$ independent of 
$(n,\kappa_1,\kappa_2)$ such that 
\begin{equation}
	\label{eq:summary_4}
	\begin{split}
		\E\big[\big(f\big(Z_{n,T}^{(\kappa_2)},
		\un Z_{n,T}^{(\kappa_2)}\big)
		-f\big(Z_{n,T}^{(\kappa_1)},
		\un Z_{n,T}^{(\kappa_1)}\big)\big)^2\big]
		&\le K'\big(\sigma^2 2^{-n}
		+ \ov\sigma^2_{\kappa_1}\big)^{\frac{\gamma}{2+\gamma}}.
	\end{split}
\end{equation}
\item[\nf{(c)}] If $\delta\in(0,2]$ satisfies 
Assumption~(\nameref{asm:(O)}), then there exists some $C>0$ 
such that for any $K>0$, $f\in\Lip_K(\R)$, $n\in\N$, 
$\kappa_1>\kappa_2$ and $p\in\{1,2\}$, we have 
\begin{equation}\label{eq:summary_5}
	\E\big[\big|f\big(\un\tau_{n,T}^{(\kappa_1)}\big)
	-f\big(\un\tau_{n,T}^{(\kappa_2)}\big)\big|^p\big]
	\le 2K^pT^p\big[2^{-n} 
	+ C\ov\sigma_{\kappa_1}^{\min\{\frac{2\delta}{2-\delta},
		\frac{1}{2}\}}
	\big(1+|\log\kappa_1|\cdot\1_{\{2/5\}}(\delta)\big)\big].
\end{equation}
\item[\nf{(d)}] Fix $s\in(0,T)$ and let 
Assumption~(\nameref{asm:(O)}) hold for some $\delta\in(0,2]$, 
then for any $f\in\BT_2(s,K,M)$, $K,M\ge 0$, there exists a constant 
$C>0$ such that for any $n\in\N$, $p\in\{1,2\}$ and 
$\kappa_1>\kappa_2$, we have 
\begin{equation}
	\label{eq:summary_6}
	\begin{split}
		\E\big[\big|f\big(\un\chi_{n,T}^{(\kappa_1)}\big)
		-f\big(\un\chi_{n,T}^{(\kappa_2)}\big)\big|^p\big]
		&\le C\big[2^{-n/2} 
		+ \ov\sigma_{\kappa_1}^{\min\{\frac{\delta}{2-\delta},
			\frac{1}{4}\}}
		\big(1+\tsqrt{|\log\kappa_1|}
		\cdot\1_{\{2/5\}}(\delta)\big)\big].
	\end{split}
\end{equation}
\end{thm}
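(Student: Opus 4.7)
The plan is to exploit the SBG decomposition
\begin{equation*}
\un\chi_{n,T}^{(\kappa_i)}=\un\xi_i+\sum_{k=1}^n\big(\xi_{k,i},\min\{\xi_{k,i},0\},\ell_k\1\{\xi_{k,i}\le 0\}\big),\qquad i\in\{1,2\},
\end{equation*}
where $\un\xi_i\sim\un\Pi_{L_n}^{\kappa_1,\kappa_2}$ is produced by Algorithm~\ref{alg:ARA_2} over the exponentially small random time $L_n$ (with $\E L_n=T/2^n$) and $(\xi_{k,1},\xi_{k,2})\sim\Pi_{\ell_k}^{\kappa_1,\kappa_2}$ are, conditionally on $\ell=(\ell_k)$, independent across $k$. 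The crucial structural point is that Algorithm~\ref{alg:ARA} uses a \emph{common} Brownian motion for the two levels, so $\xi_{k,2}-\xi_{k,1}$ picks up only the extra Brownian variance $\ov\sigma^2_{\kappa_2}-\ov\sigma^2_{\kappa_1}$ and the jumps of modulus in $[\kappa_2,\kappa_1)$, yielding the tight conditional bound $\E[(\xi_{k,2}-\xi_{k,1})^2|\ell_k]\le C\ov\sigma_{\kappa_1}^2\ell_k$ (Proposition~\ref{prop:SBG_app_coupling}(a)); by contrast Algorithm~\ref{alg:ARA_2} uses two independent Brownian motions, producing $\E[((\un\xi_2)_1-(\un\xi_1)_1)^2|L_n]\gtrsim 2\sigma^2 L_n$, but this cost is tamed by $\E L_n=T/2^n$.

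For part~(a), the Lipschitz property reduces matters to $L^2$-control of the marginals; the triangle inequality splits the task into the ``error term'' contribution (bounded by $C(\sigma^2+\ov\sigma_{\kappa_1}^2)T/2^n$ using the above) and the ``sticks'' contribution, which, after absorbing the drift discrepancy $b_{\kappa_2}-b_{\kappa_1}$ as a conditional mean, becomes a sum of conditionally independent centered terms whose variance telescopes to $\sum_k\E[2\ov\sigma_{\kappa_1}^2\ell_k]\le 2T\ov\sigma_{\kappa_1}^2$; the infimum coordinate is handled via $|\min\{a,0\}-\min\{b,0\}|\le|a-b|$, delivering~\eqref{eq:summary_2}. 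The locally Lipschitz case follows the template of Proposition~\ref{prop:logLip}: the bound $|f(x,y)-f(x',y')|\le K(|x-x'|+|y-y'|)e^{\max\{x,x'\}}$ combined with Cauchy--Schwarz, the assumption $\int_{[1,\infty)}e^{4x}\nu(dx)<\infty$ and the moment identity $\E[L_n^{1/2}]=T^{1/2}(2/3)^n$ produce the $(2/3)^{n/2}$ factor in~\eqref{eq:summary_3}. For part~(b), I invoke Lemma~\ref{lem:Lp-to-barrier} with $\zeta=\un Z^{(\kappa_1)}_{n,T}$, $\zeta'=\un Z^{(\kappa_2)}_{n,T}$ and $p=2$: the $L^2$-distance is from part~(a), and the local H\"older regularity of the law of $\un Z^{(\kappa_1)}_{n,T}$ at $y$ (needed to control $\p(|\zeta-y|\le r)$) is imported from Assumption~(\nameref{asm:(H)}) on $\un X_T$ by triangulating through a further coupling $(\un\chi_T^{(\kappa_1)},\un\chi_T^{(\kappa_2)},\un\chi_T)$ in the spirit of Proposition~\ref{prop:SBG_app_coupling+}; optimising $r$ yields the exponent $\gamma/(2+\gamma)$ in~\eqref{eq:summary_4}. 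Part~(d) is entirely analogous, applying Lemma~\ref{lem:Lp-to-barrier} to the time coordinate and using Lemma~\ref{lem:density-tau} (valid because~(\nameref{asm:(O)}) excludes the compound Poisson case) to furnish the required local Lipschitz regularity of the distribution function of $\un\tau_T$ on $(0,T)$.

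Part~(c) will be the principal technical hurdle. After the Lipschitz reduction, I split
\begin{equation*}
\un\tau_{n,T}^{(\kappa_2)}-\un\tau_{n,T}^{(\kappa_1)}=\big[(\un\xi_2)_3-(\un\xi_1)_3\big]+\sum_{k=1}^n\ell_k\big[\1\{\xi_{k,2}\le 0\}-\1\{\xi_{k,1}\le 0\}\big];
\end{equation*}
the first summand is bounded by $L_n\le T$ and contributes $T^p 2^{-n}$ (up to constants) via the moments of $L_n$. For the sticks sum, a sign disagreement forces $|\xi_{k,2}-\xi_{k,1}|$ to exceed $\min\{|\xi_{k,1}|,|\xi_{k,2}|\}$, and conditioning on $\ell_k$ the probability of this event admits two complementary bounds: Markov applied to $\E[(\xi_{k,2}-\xi_{k,1})^2|\ell_k]\le C\ov\sigma_{\kappa_1}^2\ell_k$, effective when $\ell_k$ is small, and the Kolmogorov estimate of Theorem~\ref{thm:K-marginal}(b) applied near zero to $\xi_{k,1}$ or $\xi_{k,2}$, effective when $\ell_k$ is large. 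Combining these with Lemma~\ref{lem:mom_ell} to sum over $k$, and optimising the crossover threshold in $\ell_k$, produces the sharp exponent $\min\{2\delta/(2-\delta),1/2\}$ in~\eqref{eq:summary_5}; the logarithmic correction at $\delta=2/5$ mirrors the boundary case in Lemma~\ref{lem:mom_ell}. The delicate interpolation between the $L^2$-regime and the Kolmogorov regime over sticks of varying sizes is the step requiring the most care, and it is exactly where the use of Theorem~\ref{thm:K-marginal} with explicit time-dependence (as emphasised after that theorem) becomes essential.
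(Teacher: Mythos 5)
Your proposal is correct and follows the paper's own strategy in all four parts: the SBG decomposition, the moment bounds of Proposition~\ref{prop:SBG_app_coupling}, the indicator-to-moment conversion of Lemma~\ref{lem:Lp-to-barrier}, and Lemma~\ref{lem:mom_ell} to sum over sticks. The one organizational difference is in parts (b)--(d): the paper never establishes regularity of the laws of the approximating quantities $\un Z^{(\kappa_1)}_{n,T}$ or $X^{(\kappa_1)}_{\ell_k}$ themselves; instead it inserts the limiting object into the coupling (Proposition~\ref{prop:SBG_app_coupling+}) and applies Lemma~\ref{lem:Lp-to-barrier} to the pairs (limit, approximation), so that the small-ball term $\p(|\zeta-y|\le r)$ is always evaluated at $\un X_T$, $\un\tau_T$ or $X_{\ell_k}$, whose regularity comes directly from Assumption~(\nameref{asm:(H)}), Lemma~\ref{lem:density-tau}, or the density bound $f_s(x)\le C's^{-1/\delta}$ under~(\nameref{asm:(O)}). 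Your variant --- importing the regularity to the approximations by triangulation and then applying the lemma to the pair $(\kappa_1,\kappa_2)$ directly --- yields the same exponents after optimising the extra threshold, so it is sound, but be aware of two points in part (c): Theorem~\ref{thm:K-marginal}(b) alone does not control $\p(|X^{(\kappa_1)}_{\ell_k}|\le r)$ (it bounds a distance between distribution functions, not a modulus of continuity), so you must combine it with the same $s^{-1/\delta}$ density bound for $X_s$ that the paper feeds into Proposition~\ref{prop:SBG_app_coupling+}; and if you really use the second-moment Chebyshev step $r^{-2}\E[(\xi_{k,2}-\xi_{k,1})^2\mid\ell_k]$ you will land on the exponent $\min\{2\delta/(2-\delta),2/3\}$ with the logarithm at $\delta=1/2$ (which is stronger than, and hence still implies, the stated bound), whereas reproducing exactly $\min\{2\delta/(2-\delta),1/2\}$ with the logarithm at $\delta=2/5$ corresponds to the first-moment Markov step the paper uses.
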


The synchronous coupling of the large jumps of the Gaussian 
approximations, implicit in \nameref{alg:SBG}, ensures that 
no moment assumption on the large jumps of $X$ is necessary 
for~\eqref{eq:summary_2} to hold. For locally Lipschitz payoffs, 
however, the function may magnify the distance when a large 
jump occurs. This leads to the moment assumption 
$\int_{[1,\infty)}e^{4x}\nu(dx)<\infty$ for $f\in\locLip_K(\R^2)$. 

The proof of Theorem~\ref{thm:summary} requires bounds on 
certain moments of the differences of the components of the output 
of Algorithms~\ref{alg:ARA} \&~\ref{alg:ARA_2} and~\nameref{alg:SBG}, 
given in Proposition~\ref{prop:SBG_app_coupling}. 

\begin{prop}\label{prop:SBG_app_coupling}
For any $1\geq\kappa_1>\kappa_2>0$, $t>0$ and 
$n\in\N$, the following statements hold.\\
\nf{(a)} The pair $\big(Z_t^{(\kappa_1)},Z_t^{(\kappa_2)}\big)
\sim\Pi_t^{\kappa_1,\kappa_2}$, constructed in 
Algorithm~\ref{alg:ARA}, satisfies the following inequalities 
\[
\E\big[\big(Z_t^{(\kappa_1)}-Z_t^{(\kappa_2)}\big)^2\big]
\leq 2(\ov\sigma^2_{\kappa_1}-\ov\sigma^2_{\kappa_2})t,
\qquad
\E\big[\big(Z_t^{(\kappa_1)}-Z_t^{(\kappa_2)}\big)^4\big]
\leq 12(\ov\sigma^2_{\kappa_1}-\ov\sigma^2_{\kappa_2})^2t^2
+ (\ov\sigma^2_{\kappa_1}-\ov\sigma^2_{\kappa_2})\kappa_1^2t.
\]
Moreover, we have 
$\E[(\un Z_t^{(\kappa_1)}-\un Z_t^{(\kappa_2)})^{2p}]
\le 4\E[(Z_t^{(\kappa_1)}-Z_t^{(\kappa_2)})^{2p}]$, 
for any $p\in\{1,2\}$.\\
\nf{(b)} The vector 
$\big(Z^{(\kappa_1)}_t,\un{Z}^{(\kappa_1)}_t,\un\tau_t^{(\kappa_1)},
Z_t^{(\kappa_2)},\un Z_t^{(\kappa_2)},\un\tau_t^{(\kappa_2)}\big)
\sim\un\Pi_t^{\kappa_1,\kappa_2}$, constructed in 
Algorithm~\ref{alg:ARA_2}, 
satisfies the following inequalities 
\[
\E\big[\big(Z_t^{(\kappa_1)}-Z_t^{(\kappa_2)}\big)^2\big]
= 2(\sigma^2+\ov\sigma^2_{\kappa_1})t,
\qquad
\E\big[\big(Z_t^{(\kappa_1)}-Z_t^{(\kappa_2)}\big)^4\big]
\le 12(\sigma^2+\ov\sigma^2_{\kappa_1})^2t^2
+ (\ov\sigma^2_{\kappa_1}-\ov\sigma^2_{\kappa_2})\kappa_1^2t.
\]
Moreover, we have 
$\E[(\un{Z}_t^{(\kappa_1)}-\un Z_t^{(\kappa_2)})^{2p}]
\le 4\E[(Z_t^{(\kappa_1)}-Z_t^{(\kappa_2)})^{2p}]$, 
for any $p\in\{1,2\}$.\\
\nf{(c)} The coupling 
$\big(\un\chi^{(\kappa_1)}_{n,t},\un\chi^{(\kappa_2)}_{n,t}\big)
\sim\un\Pi_{n,t}^{\kappa_1,\kappa_2}$, constructed in 
\nameref{alg:SBG}, with components 
$\un\chi_{n,t}^{(\kappa_i)}=(Z_{n,t}^{(\kappa_i)},
\un Z_{n,t}^{(\kappa_i)},\un\tau_{n,t}^{(\kappa_i)})$, $i\in\{1,2\}$, 
satisfies the following inequalities: 
\begin{align}
\label{eq:SBG_1}
\E\big[\big(Z^{(\kappa_1)}_{n,t}
	-Z^{(\kappa_2)}_{n,t}\big)^2\big]
&\leq 2(\sigma^2 2^{-n} + \ov\sigma^2_{\kappa_1})t,\\
\label{eq:SBG_3}
\E\big[\big(Z^{(\kappa_1)}_{n,t}
	-Z^{(\kappa_2)}_{n,t}\big)^4\big]
&\leq (25\ov\sigma^4_{\kappa_1} 
	+ 24\sigma^4 3^{-n})t^2 
+ \ov\sigma^2_{\kappa_1}\kappa_1^2 t,\\
\label{eq:SBG_2}
\E\big[\big(\un Z^{(\kappa_1)}_{n,t}
	-\un Z^{(\kappa_2)}_{n,t}\big)^2\big]
&\leq (2+3\pi)(\sigma^2+\ov\sigma_{\kappa_1}^2) 2^{-n}t 
	+ (2+5\pi)\ov\sigma^2_{\kappa_1}t,\\
\label{eq:SBG_4}
\E\big[\big(\un Z^{(\kappa_1)}_{n,t}
	-\un Z^{(\kappa_2)}_{n,t}\big)^4\big]
&\leq 2\cdot 10^3\big[(\sigma^2+\ov\sigma_{\kappa_1})^2 3^{-n} 
	+\ov\sigma^4_{\kappa_1}\big]t^2
	+ 2\pi\ov\sigma^{5/2}_{\kappa_1}\kappa_1^{3/2}t^{5/4} 
	+ 4\ov\sigma^2_{\kappa_1}\kappa_1^2 t.
\end{align}
\end{prop}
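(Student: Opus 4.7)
In both settings I will argue that the difference $D_t := Z_t^{(\kappa_1)} - Z_t^{(\kappa_2)}$ is a mean-zero L\'evy martingale equal to the independent sum of a Gaussian part $A_t$ and a compensated compound Poisson part $B_t$ carrying only the ``intermediate'' jumps with modulus in $[\kappa_2,\kappa_1)$. The drift difference $(b_{\kappa_1}-b_{\kappa_2})t = t\int_{\kappa_2\le|x|<\kappa_1}x\,\nu(dx)$ is precisely the compensator of the small compound Poisson piece, so drift and compensator cancel exactly. For Algorithm~\ref{alg:ARA} the common Brownian motion yields $\E A_t^2 = c^2 t$ with $c^2 = (\sqrt{\sigma^2+\ov\sigma^2_{\kappa_1}}-\sqrt{\sigma^2+\ov\sigma^2_{\kappa_2}})^2 \le \ov\sigma^2_{\kappa_1}-\ov\sigma^2_{\kappa_2}$ (by rationalising the difference of square roots), whereas for Algorithm~\ref{alg:ARA_2} the two Brownians are independent, giving the exact identity $\E A_t^2 = (2\sigma^2+\ov\sigma^2_{\kappa_1}+\ov\sigma^2_{\kappa_2})t$. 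The required second and fourth moments then follow from the independence identities $\E D_t^2 = \E A_t^2+\E B_t^2$ and $\E D_t^4 = \E A_t^4+6\E A_t^2\E B_t^2+\E B_t^4$, combined with $\E A_t^4 = 3(\E A_t^2)^2$ (Gaussian) and the fourth-cumulant identity $\E B_t^4 = 3(\E B_t^2)^2 + t\int_{\kappa_2\le|x|<\kappa_1}x^4\,\nu(dx)$ for a centred L\'evy process, together with the trivial bound $\int x^4\,\nu(dx) \le \kappa_1^2\int x^2\,\nu(dx) = \kappa_1^2(\ov\sigma^2_{\kappa_1}-\ov\sigma^2_{\kappa_2})$. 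The infimum inequalities then follow from $|\un Z_t^{(\kappa_1)}-\un Z_t^{(\kappa_2)}| \le \sup_{s\le t}|D_s|$ and Doob's $L^{2p}$ maximal inequality for the martingale $D$, whose constant $(2p/(2p-1))^{2p}$ equals $4$ at $p=1$ and is smaller at $p=2$.

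\textbf{Part (c), first component.} The plan is to condition on the sticks $\ell = (\ell_k)_{k=1}^n$ and $L_n$. The within-stick increments $Y_k := \xi_{k,1}-\xi_{k,2}$ are conditionally independent with mean zero by part~(a), and the error-term component $V_0^Z := (\un\xi_1)_1-(\un\xi_2)_1$ is conditionally mean zero by part~(b). Hence
\[
\E\big[(Z_{n,t}^{(\kappa_1)}-Z_{n,t}^{(\kappa_2)})^2\,\big|\,\ell\big] = \E[(V_0^Z)^2\,|\,L_n] + \sum_{k=1}^n \E[Y_k^2\,|\,\ell_k],
\]
and taking expectation with $\E L_n = t\cdot 2^{-n}$ and $\sum_k\E\ell_k = t(1-2^{-n})$ delivers~\eqref{eq:SBG_1}. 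For~\eqref{eq:SBG_3} I will apply the elementary inequality $\E(\textstyle\sum_i V_i)^4 \le \sum_i \E V_i^4 + 3(\sum_i \E V_i^2)^2$ (valid for independent mean-zero summands) conditionally on $\ell$, substitute the fourth-moment bounds of (a) and (b), and use $\E L_n^2 = t^2\cdot 3^{-n}$ and $\sum_k \E\ell_k^2 \le t^2/2$ (since $L_k \eqd L_{k-1}U_k$ gives $\E L_k^2 = t^2\cdot 3^{-k}$).

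\textbf{Part (c), infimum (the main obstacle).} The difficulty here is that the terms $V_k := \min\{\xi_{k,1},0\} - \min\{\xi_{k,2},0\}$ are \emph{not} mean zero: the nonlinearity $x\mapsto\min\{x,0\}$ destroys the cancellation between $\xi_{k,1}$ and $\xi_{k,2}$. The key step will be the conditional variance decomposition
\[
\E\Big[\Big(\textstyle\sum_{k=1}^n V_k\Big)^2\,\Big|\,\ell\Big] = \sum_{k=1}^n \mathrm{Var}(V_k\,|\,\ell_k) + \Big(\sum_{k=1}^n \E[V_k\,|\,\ell_k]\Big)^2.
\]
The variance term is dominated by $\E V_k^2 \le \E Y_k^2 \le 2\ov\sigma^2_{\kappa_1}\ell_k$ (using $|\min\{a,0\}-\min\{b,0\}| \le |a-b|$ and part (a)), summing to at most $2\ov\sigma^2_{\kappa_1}t$. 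The conditional means are controlled by Jensen, $|\E[V_k\,|\,\ell_k]| \le \E[|Y_k|\,|\,\ell_k] \le \sqrt{2\ov\sigma^2_{\kappa_1}\ell_k}$, whence $(\textstyle\sum_k \E[V_k\,|\,\ell_k])^2 \le 2\ov\sigma^2_{\kappa_1}(\textstyle\sum_k\sqrt{\ell_k})^2$. The $\pi$ factors in~\eqref{eq:SBG_2} then emerge from the explicit stick-breaking computation
\[
\E\Big[\Big(\textstyle\sum_{k=1}^n \sqrt{\ell_k}\Big)^2\Big] = \sum_k \E\ell_k + 2\sum_{1\le j<k\le n} \E\sqrt{\ell_j\ell_k} \le (1+\pi)t,
\]
which I will derive from the Beta integral $\E\sqrt{U(1-U)} = \pi/8$, the identity $\E\sqrt{U} = 2/3$ for $U\sim\U(0,1)$, and Lemma~\ref{lem:mom_ell}. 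Combining with the (b)-bound $\E V_0^2 \le 8(\sigma^2+\ov\sigma^2_{\kappa_1})L_n$ and the conditional independence of $V_0 := (\un\xi_1)_2-(\un\xi_2)_2$ from $(V_k)_{k=1}^n$ given $\ell$ then yields~\eqref{eq:SBG_2}. Inequality~\eqref{eq:SBG_4} will be established by running the same decomposition at fourth order: expand $(V_0+\textstyle\sum V_k)^4$, exploit conditional independence to split the expectation into contributions involving $\sum\ell_k^2$, $(\sum\sqrt{\ell_k})^4$ and various crossed terms, and bound each via Lemma~\ref{lem:mom_ell}. The hard part will be tracking the interaction of the $\pi$-dependent Beta integrals with the geometric rates $2^{-n}$ and $3^{-n}$ carefully enough to extract the explicit constants $2\cdot 10^3$, $2\pi$ and $4$ appearing in~\eqref{eq:SBG_4}.
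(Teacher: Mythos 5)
Your proposal follows essentially the same route as the paper's proof. Parts (a) and (b) are handled identically: decompose the difference into an independent Gaussian martingale plus a compensated compound Poisson martingale carrying the intermediate jumps (the paper obtains the fourth moment by differentiating the L\'evy--Khintchine exponent, which is the same cumulant identity you invoke), and pass to the infimum via Doob's $L^p$ inequality with $(p/(p-1))^p\le 4$ for $p\in\{2,4\}$. In part (c) the paper likewise conditions on the sticks, uses $|\min\{a,0\}-\min\{b,0\}|\le|a-b|$, and extracts the $\pi$'s from $\E\sqrt{\ell_m\ell_k}$ via exactly the Beta integrals you cite; your conditional variance-plus-squared-mean decomposition is equivalent to the paper's expansion of $\bigl(\sum_k|V_k|\bigr)^2$ with term-by-term conditional Cauchy--Schwarz, since conditional independence turns the off-diagonal terms into products of conditional first moments.

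The one place to be careful is the fourth-moment bounds \eqref{eq:SBG_3} and \eqref{eq:SBG_4}: your inequality $\E\bigl[\bigl(\sum_iV_i\bigr)^4\bigr]\le\sum_i\E V_i^4+3\bigl(\sum_i\E V_i^2\bigr)^2$ adds back the diagonal $3\sum_i(\E[V_i^2\mid\ell])^2$, which contributes an extra term of order $6\ov\sigma_{\kappa_1}^4t^2$ after summing $\E\ell_k^2=3^{-k}t^2$, and this can push the coefficient of $\ov\sigma_{\kappa_1}^4t^2$ above the stated $25$. The paper instead keeps the exact expansion $\sum_k\E[(\xi_{1,k}-\xi_{2,k})^4]+6\sum_{m<k}\E[(\xi_{1,m}-\xi_{2,m})^2(\xi_{1,k}-\xi_{2,k})^2]$ and evaluates the off-diagonal terms with the precise identities $\E[\ell_m\ell_k]=3^{-m}2^{m-k-1}t^2$ and $\E[\ell_kL_n]=3^{-k}2^{k-n-1}t^2$. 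Your argument still yields a bound of the same form (and hence all downstream consequences), but to recover the exact constants as stated you should drop the diagonal and use the exact product moments, as the paper does.
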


\begin{rem}\label{rem:MLMC_CM}
(i) By Proposition~\ref{prop:SBG_app_coupling}, the $L^2$-norms 
of the differences $Z^{(\kappa_1)}_{n,t}-Z^{(\kappa_2)}_{n,t}$ and 
$\un Z^{(\kappa_1)}_{n,t}-\un Z^{(\kappa_2)}_{n,t}$ of the 
components of 
$(\un\chi_{n,t}^{(\kappa_1)},\un\chi_{n,t}^{(\kappa_2)})$, 
constructed in \nameref{alg:SBG}, decay at the same rate as 
the $L^2$-norm of $Z_t^{(\kappa_1)}-Z_t^{(\kappa_2)}$, constructed 
in Algorithm~\ref{alg:ARA}. 
Indeed, assume that $\kappa_1=c\kappa_2$ for some $c>1$, 
$\kappa_2\to0$ and, for some $c',r>0$ and all $x>0$, 
we have $\ov\nu(x)=\nu(\R\setminus(-x,x))\ge c'x^{-r}$. 
Then, for $n=\cl{\log^2(1+\ov\nu(\kappa_2))}$ we have 
$2^{-n}\le\ov\sigma_{\kappa_1}^2$ for all sufficiently small 
$\kappa_1$, implying the claim by 
Proposition~\ref{prop:SBG_app_coupling}(a) \&~(c). Moreover, 
by Corollary~\ref{cor:algs_complexities}, the corresponding 
expected computational complexities of Algorithm~\ref{alg:ARA} 
and~\nameref{alg:SBG} are proportional as $\kappa_2\to 0$. 
Furthermore, since the decay of the bias of \nameref{alg:SBG} 
is, by Theorem~\ref{thm:Wd-triplet}, at most a logarithmic factor 
away from that of Algorithm~\ref{alg:ARA}, the MLMC estimator 
based on Algorithm~\ref{alg:ARA} for $\E f(X_t)$ has the same 
computational complexity (up to logarithmic factors) as the 
MLMC estimator for $\E f(X_t,\un X_t)$ based on 
\nameref{alg:SBG} (see Table~\ref{tab:MLMC} above for the 
complexity of the latter). 

\noindent (ii) The proof of Proposition~\ref{prop:SBG_app_coupling} implies 
that an improvement in Algorithm~\ref{alg:ARA} (i.e. a simulation 
procedure for a coupling with a smaller $L^2$-norm of 
$Z_t^{(\kappa_1)}-Z_t^{(\kappa_2)}$) would result in an 
improvement in \nameref{alg:SBG} for the simulation of a 
coupling $(\un\chi_t^{(\kappa_1)},\un\chi_t^{(\kappa_2)})$. 
Interestingly, this holds in spite of the fact that 
\nameref{alg:SBG} calls Algorithm~\ref{alg:ARA_2} whose 
coupling $\un\Pi_t^{\kappa_1,\kappa_2}$ is inefficient in terms 
of the $L^2$-distance but is applied over the short interval $[0,L_n]$. 

\noindent (iii)  A nontrivial 
bound on the moments of the difference 
$\un\tau_t^{(\kappa_1)}-\un\tau_t^{(\kappa_2)}$ under the 
coupling of Algorithm~\ref{alg:ARA_2}, which would 
complete the statement in Proposition~\ref{prop:SBG_app_coupling}(b),
appears to be out of reach.
By the SB representation in~\eqref{eq:chi}, such a bound 
is not necessary for our purposes. 
The corresponding bound 
on the moments of the difference $\un\tau_{n,t}^{(\kappa_1)}-\un\tau_{n,t}^{(\kappa_2)}$,
constructed in \nameref{alg:SBG},
follows from Proposition~\ref{prop:SBG_app_coupling+} below, see~\eqref{eq:SBG_tau}.

\noindent (iv) The bounds on the fourth moments 
in~\eqref{eq:SBG_3} and~\eqref{eq:SBG_4} are required to 
control the level variances of the MLMC estimator in the case 
of locally Lipschitz payoff functions and are applied in the proof of 
Theorem~\ref{thm:summary}(a). 
\end{rem}

\begin{proof}[Proof of Proposition~\ref{prop:SBG_app_coupling}]
(a) The difference $Z^{(\kappa_1)}_t-Z^{(\kappa_2)}_t$ (constructed  by Algorithm~\ref{alg:ARA}) equals  
by~\eqref{eq:ARA} a sum of two independent martingales: 
$((\ov\sigma_{\kappa_1}^2+\sigma^2)^{1/2}
	-(\ov\sigma_{\kappa_2}^2+\sigma^2)^{1/2})W_t$ and 
$J_t^{2,\kappa_1}-J_t^{2,\kappa_2} + (b_{\kappa_1}-b_{\kappa_2})t$. 
Thus, we obtain the identity 
\[
\E\big[\big(Z_t^{(\kappa_1)}-Z_t^{(\kappa_2)}\big)^2\big]
=\Big[\Big(\sqrt{\sigma^2+\ov\sigma^2_{\kappa_1}} 
-\sqrt{\sigma^2+\ov\sigma^2_{\kappa_2}}\Big)^2
+\ov\sigma^2_{\kappa_1}-\ov\sigma^2_{\kappa_2}\Big]t.
\]
The first inequality follows since 
$0<(\sigma^2+\ov\sigma^2_{\kappa_1})^{1/2}
	-(\sigma^2+\ov\sigma^2_{\kappa_2})^{1/2}
\le(\ov\sigma^2_{\kappa_1}-\ov\sigma^2_{\kappa_2})^{1/2}$. 
Since $Z^{(\kappa_1)}_t-Z^{(\kappa_2)}_t$ is a L\'evy process,
differentiating its L\'evy-Khintchine formula in~\eqref{eq:Levy_Khinchin} 
yields the identity
\begin{align*}
\E\big[\big(Z_t^{(\kappa_1)}-Z_t^{(\kappa_2)}\big)^4\big]
&=3\Big[\Big(\sqrt{\sigma^2+\ov\sigma^2_{\kappa_1}} 
-\sqrt{\sigma^2+\ov\sigma^2_{\kappa_2}}\Big)^2
+\ov\sigma^2_{\kappa_1}-\ov\sigma^2_{\kappa_2}\Big]^2t^2
+t\int_{(-\kappa_1,\kappa_1)\setminus(-\kappa_2,\kappa_2)}x^4\nu(dx),
\end{align*}
which implies the second inequality. 
Since $|\un Z_t^{(\kappa_1)}-\un Z_t^{(\kappa_2)}|
\le\sup_{s\in[0,t]}|Z_s^{(\kappa_1)}-Z_s^{(\kappa_2)}|$, 
Doob's maximal martingale 
inequality~\cite[Prop.~7.16]{MR1876169} applied to the martingale 
$(Z_s^{(\kappa_1)}-Z_s^{(\kappa_2)})_{s\in[0,t]}$ yields 
\[
\E\big[\big|\un Z_t^{(\kappa_1)}-\un Z_t^{(\kappa_2)}\big|^p\big]
\le \big(1-1/p\big)^{-p}
\E\big[\big| Z_t^{(\kappa_1)}- Z_t^{(\kappa_2)}\big|^p\big],
	\qquad p>1.
\]
The corresponding inequalities follow because $(p/(p-1))^p\le 4$ 
for $p\in\{2,4\}$. 

(b) Analogous to part (a), the difference 
$Z_t^{(\kappa_1)}-Z_t^{(\kappa_2)}$ constructed in 
Algorithm~\ref{alg:ARA_2} is a sum of two independent martingales: 
$(\ov\sigma_{\kappa_1}^2+\sigma^2)^{1/2}B_t
	-(\ov\sigma_{\kappa_2}^2+\sigma^2)^{1/2}W_t$ and 
$J_t^{2,\kappa_1}-J_t^{2,\kappa_2} + (b_{\kappa_1}-b_{\kappa_2})t$, 
where $B$ and $W$ are independent standard Brownian motions. 
Thus the statements follow as in part (a). 

(c) 
	Let $(\xi_{1,k},\xi_{2,k})\sim \Pi_{\ell_k}^{\kappa_1,\kappa_2}$, $k\in\{1,\ldots,n\}$, and 
$(\un\zeta_1,\un\zeta_2)\sim\un\Pi_{L_n}^{\kappa_1,\kappa_2}$ 
	be independent draws as in line~\ref{alg_line:samplingFrom_A1_A2} of
	\nameref{alg:SBG} above. 
Denote by $(\xi_{i,n+1},\un\xi_{i,n+1})$ the first two coordinates 
of $\un\zeta_i$, $i\in\{1,2\}$. Since the variables 
$\{\xi_{1,k}-\xi_{2,k}\}_{k=1}^{n+1}$ 
have zero mean and are uncorrelated, by conditioning on 
$\{\ell_k\}_{k=1}^{n}$ and $L_n$ and applying parts~(a) and~(b) 
we obtain 
\begin{align*}
\E\big[\big(Z_{n,t}^{(\kappa_1)}-Z_{n,t}^{(\kappa_2)}\big)^2\big]
&=\V\big[Z_{n,t}^{(\kappa_1)}-Z_{n,t}^{(\kappa_2)}\big]
	=\V\big[\xi_{1,n+1}-\xi_{2,n+1}\big]
	+\sum_{k=1}^{n}\V\big[\xi_{1,k}-\xi_{2,k}\big]\\
&\leq 2(\sigma^2+\ov\sigma^2_{\kappa_1})\E[L_{n}]
	+2\ov\sigma^2_{\kappa_1}\sum_{k=1}^{n}\E[\ell_k]=
2(\sigma^2+\ov\sigma^2_{\kappa_1})2^{-n}t
	+2\ov\sigma^2_{\kappa_1}(1-2^{-n}) t.
\end{align*}
implying~\eqref{eq:SBG_1}.
Similarly, by conditioning on $\{\ell_k\}_{k=1}^{n}$ and $L_{n}$, 
we deduce that the expectations of 
\[
(\xi_{1,k_1}-\xi_{2,k_1})^3(\xi_{1,k_2}-\xi_{2,k_2}),
\qquad
(\xi_{1,k_1}-\xi_{2,k_1})^2\prod_{i=2}^3 (\xi_{1,k_i}-\xi_{2,k_i}),
\qquad\text{and}\qquad
\prod_{i=1}^4 (\xi_{1,k_i}-\xi_{2,k_i}),
\]
vanish for any distinct $k_1,k_2,k_3,k_4\in\{1,\ldots,n+1\}$. 
Thus, by expanding, we obtain 
\begin{align*}
\E\big[\big(Z_{n,t}^{(\kappa_1)}-Z_{n,t}^{(\kappa_2)}\big)^4\big]
&=\sum_{k=1}^{n+1} \E\big[\big(\xi_{1,k}-\xi_{2,k}\big)^4\big]
	+6\sum_{m=1}^{n}\sum_{k=m+1}^{n+1} 
		\E\big[\big(\xi_{1,m}-\xi_{2,m}\big)^2
		\big(\xi_{1,k}-\xi_{2,k}\big)^2\big].
\end{align*}
The summands in the first sum are easily bounded by parts (a) and 
(b). To bound the summands of the second sum, condition on 
$\{\ell_k\}_{k=1}^n$ and $L_n$ and apply parts (a) and (b): 
\[
\E\big[\big(\xi_{1,k}-\xi_{2,k}\big)^2
	\big(\xi_{1,m}-\xi_{2,m}\big)^2\big]
\le\begin{cases}
4\ov\sigma^4_{\kappa_1}\E[\ell_m\ell_k], 
& m<k\le n,\\ 
4(\sigma^2+\ov\sigma^2_{\kappa_1})
	\ov\sigma^2_{\kappa_1}\E[\ell_mL_n], 
& m<k=n+1.
\end{cases}
\]
Inequality~\eqref{eq:SBG_3} follows since 
$\E[\ell_m\ell_k]=3^{-m}2^{m-k-1}t^2$, 
$\E[\ell_kL_{n}]=3^{-k}2^{k-n-1}t^2$ for $m<k\le n$ and 
$\sigma^22^{-n}\ov\sigma_\kappa^2
\le \sigma^23^{-n/2}\ov\sigma_\kappa^2
\le (\sigma^4 3^{-n}+\ov\sigma_\kappa^4)/2$.

The representation in line~\ref{alg_line:min_in_sum} of \nameref{alg:SBG}
and the elementary inequality $|\min\{a,0\}-\min\{b,0\}|\leq|a-b|$ 
(for all $a,b\in\R$) imply 
\begin{equation}
\label{eq:Z_nt}
\begin{split}
\E\big[\big(\un Z_{n,t}^{(\kappa_1)}
	-\un Z_{n,t}^{(\kappa_2)}\big)^2\big]
& \le\E\bigg[\big(\un\xi_{1,n+1}-\un\xi_{2,n+1}\big)^2
	+ \sum_{k=1}^n\big(\xi_{1,k}-\xi_{2,k}\big)^2\bigg]\\
&\hspace{-15mm}+ 2\E\bigg[\sum_{k=1}^n
	\big|\un\xi_{1,n+1}-\un\xi_{2,n+1}
		\big|\big|\xi_{1,k}-\xi_{2,k}\big| 
	+ \sum_{m=1}^{n-1}\sum_{k=m+1}^n
		\big|\xi_{1,m}-\xi_{2,m}\big|
			\big|\xi_{1,k}-\xi_{2,k}\big|\bigg].
\end{split}
\end{equation}
The first term on the right-hand side of this inequality is easily 
bounded via the inequalities in parts (a) and (b). To bound 
the second term, condition on $\{\ell_k\}_{k=1}^n$ and 
$L_n$, apply the Cauchy-Schwarz inequality, denote 
$\upsilon:=\sqrt{\sigma^2+\ov\sigma_{\kappa_1}^2}$ and 
observe that for $m<k\le n$ we get 
\begin{align*}
\E\big[\big|\un\xi_{1,n+1}-\un\xi_{2,n+1}\big|
	\big|\xi_{1,k}-\xi_{2,k}\big|\big]
&\leq\E\Big[\tsqrt{16(\sigma^2+\ov\sigma^2_{\kappa_1})
	\ov\sigma^2_{\kappa_1}\ell_k L_n}\Big]
=\pi \upsilon\ov\sigma_{\kappa_1}(2/3)^{n}(3/4)^kt,\\
	\E\big[\big|\xi_{1,m}-\xi_{2,m}\big|
		\big|\xi_{1,k}-\xi_{2,k}\big|\big]
&\le\E\Big[\tsqrt{4\ov\sigma^4_{\kappa_1}\ell_m \ell_k}\Big]
=\pi\ov\sigma^2_{\kappa_1}(1/2)^{m+1}(2/3)^{k-m}t,
\end{align*}
where the equalities follow from the definition of the stick-breaking 
process (see Subsection~\ref{subsec:SB-rep}). 
By~\eqref{eq:Z_nt} we have 
\begin{align*}
\E\big[\big(\un Z_{n,t}^{(\kappa_1)}
-\un Z_{n,t}^{(\kappa_2)}\big)^2\big]
& \le \upsilon^22^{1-n}t
	+ 2\ov\sigma_{\kappa_1}^2 t\sum_{k=1}^\infty 2^{-k}
	+ 2\pi\upsilon\ov\sigma_{\kappa_1}\big(\tfrac{2}{3}\big)^n t
		\sum_{k=1}^\infty\big(\tfrac{3}{4}\big)^k
	+ \pi\ov\sigma_{\kappa_1}^2 t
		\sum_{m=1}^{\infty}\sum_{k=1}^{\infty}
			2^{-m}\big(\tfrac{2}{3}\big)^k,
\end{align*}
so~\eqref{eq:SBG_2} follows from the inequalities 
$v(2/3)^{n}\ov\sigma_\kappa
\le \upsilon2^{-n/2}\ov\sigma_\kappa
\le (\upsilon^2 2^{-n}+\ov\sigma_\kappa^2)/2$.

As before, 
$|\min\{a,0\}-\min\{b,0\}|\leq|a-b|$ for $a,b\in\R$, 
yields the inequality 
\begin{equation}\label{eq:4th-inf}
\E\big[\big(\un Z_{n,t}^{(\kappa_1)}
	-\un Z_{n,t}^{(\kappa_2)}\big)^4\big]
\le \E\bigg[\bigg(\big|\un\xi_{1,n+1}-\un\xi_{2,n+1}\big|
	+\sum_{k=1}^n\big|\xi_{1,k}-\xi_{2,k}\big|\bigg)^4\bigg].
\end{equation}
By Jensen's inequality, $\E[|\vartheta|^3]\le\E[\vartheta^4]^{3/4}$ 
and $\E[\vartheta]\le\sqrt{\E[\vartheta^2]}$ for any random 
variable $\vartheta$. Hence, we may bound the first and third 
conditional moments of $|\xi_{1,k}-\xi_{2,k}|$ and 
$|\un\xi_{1,n+1}-\un\xi_{2,n+1}|$ given $\{\ell_k\}_{k=1}^n$ and 
$L_n$. Thus, by expanding~\eqref{eq:4th-inf}, conditioning on 
$\{\ell_k\}_{k=1}^n$ and $L_n$, and using elementary estimates as 
in all the previously developed bounds, we obtain~\eqref{eq:SBG_4}. 
\end{proof}

In order to control the level variances of the MLMC estimator 
in~\eqref{eq:MLMC} for discontinuous payoffs of $\un\chi_t$ and 
functions of $\un \tau_t$, we would need to apply 
Lemma~\ref{lem:Lp-to-barrier} to the components of 
$\big(\un\chi^{(\kappa_1)}_{n,t},\un\chi^{(\kappa_2)}_{n,t}\big)$
constructed in \nameref{alg:SBG}. In particular, 
the assumption in Lemma~\ref{lem:Lp-to-barrier} requires a control 
on the constants in the locally Lipschitz property of the distribution 
functions of the various components of 
$\big(\un\chi^{(\kappa_1)}_{n,t},\un\chi^{(\kappa_2)}_{n,t}\big)$
in terms of the cutoff levels $\kappa_1$ and $\kappa_2$. As such 
a uniform bound in the cutoff level appears to be out of reach,
we establish Proposition~\ref{prop:SBG_app_coupling+},
which allows us to compare the sampled quantities 
$\un\chi^{(\kappa_1)}_{n,t}$ and $\un\chi^{(\kappa_2)}_{n,t}$
with their limit $\un\chi_t$ (as $\kappa_1,\kappa_2\to0$). Since, 
under mild assumptions, the distribution functions of the 
components of the limit $\un\chi_t$ possess the necessary 
regularity and do not depend on the cutoff level, the application 
of Lemma~\ref{lem:Lp-to-barrier} in the proof of 
Theorem~\ref{thm:summary} becomes feasible using 
Proposition~\ref{prop:SBG_app_coupling+}.


\begin{prop}
\label{prop:SBG_app_coupling+}
There exists a coupling between $\un\chi_t=(X_t,\un X_t,\un\tau_t)$ 
and $\big(\un\chi^{(\kappa_1)}_{n,t},\un\chi^{(\kappa_2)}_{n,t}\big)
\sim\un\Pi_{n,t}^{\kappa_1,\kappa_2}$ such that for any 
$i\in\{1,2\}$ and $p\ge 1$, the vector $(Z_{n,t}^{(\kappa_i)},
\un Z_{n,t}^{(\kappa_i)},\un\tau_{n,t}^{(\kappa_i)})
=\un\chi_{n,t}^{(\kappa_i)}$ satisfies 
\begin{align}
\label{eq:SBG+_1}
\E\big[\big(X_t - Z^{(\kappa_i)}_{n,t}\big)^2\big]
&\le (4\sigma^2 2^{-n} \cdot\1_{\{1\}}(i) 
	+ 2\ov\sigma^2_{\kappa_i})t,\\
\label{eq:SBG+_2}
\E\big[\big(\un X_t - \un Z^{(\kappa_i)}_{n,t}\big)^2\big]
&\le (48\sigma^2 2^{-n} \cdot\1_{\{1\}}(i) 
	+ 42\ov\sigma^2_{\kappa_i})t.
\end{align}
Moreover, if $\delta\in(0,2]$ satisfies 
Assumption~(\nameref{asm:(O)}), we have 
\begin{equation}
\label{eq:SBG+_3}
\E\big[\big|\un\tau_t-\un\tau^{(\kappa_i)}_{n,t}\big|^p\big]
\le 2^{-n}t^p + t^{p-1}\theta(t,\kappa_i),
\end{equation}
where, given $T\ge t$, there exists a constant $C>0$ dependent only 
on $(T,\sigma^2,\nu,b)$ such that for all $\kappa\in(0,1]$, 
the function $\theta(t,\kappa)$ is defined as 
\[
\theta(t,\kappa)
:=\begin{cases}
	\min\{1,\sqrt{C\ov\sigma_{\kappa}}\}t,
	&\delta=2,\\
	\min\{t,(C\ov\sigma_{\kappa})^{\frac{2\delta}{2-\delta}}\}
	+\tfrac{4\delta }{5\delta-2}\sqrt{C\ov\sigma_{\kappa}}
	\big(t^{\frac{5\delta-2}{4\delta}}
	-\min\{t,(C\ov\sigma_{\kappa})^{\frac{4\delta}{4-2\delta}}\}
	^{\frac{5\delta-2}{4\delta}}\big),
	&\delta\in(0,2)\setminus\{\tfrac{2}{5}\},\\
	\min\{t,\sqrt{C\ov\sigma_{\kappa}}\}+\sqrt{C\ov\sigma_{\kappa}}
	\log^+\big(t/\sqrt{C\ov\sigma_{\kappa}}\big),
	&\delta=\tfrac{2}{5}.
\end{cases}
\]
\end{prop}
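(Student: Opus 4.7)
The plan is to extend \nameref{alg:SBG} to a three-way coupling by running it on a probability space that carries, over each stick $[L_k, L_{k-1}]$ ($k\le n$), the Brownian increment $W_k\sim N(0, \ell_k)$ and the compound Poisson contribution $J^{2, \kappa_2}_k$ appearing in Algorithm~\ref{alg:ARA} (whose thinning to jumps of modulus at least $\kappa_1$ produces $J^{2, \kappa_1}_k$) together with an \emph{independent} compensated small-jump L\'evy increment $J^{1, \kappa_2}_k$ with triplet $(0, \nu|_{(-\kappa_2, \kappa_2)}, 0)$. Setting $\tilde\xi_k := b_{\kappa_2}\ell_k + \sigma W_k + J^{1, \kappa_2}_k + J^{2, \kappa_2}_k$, one has $\tilde\xi_k\eqd X_{\ell_k}$; moreover, the shared $J^{2, \cdot}$ jumps and the drift terms cancel in both differences, so that $\tilde\xi_k-\xi_{k,i}$ becomes a zero-mean martingale with conditional $L^2$-norm bounded by $\sqrt{2\ell_k}\,\ov\sigma_{\kappa_i}$ for $i\in\{1,2\}$. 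Over the error interval $[0, L_n]$, retain Algorithm~\ref{alg:ARA_2}'s output and couple the increment of $X$ to the $\kappa_2$-branch by sharing its Brownian driver and big jumps (and sampling the small jumps of $X$ independently), so $X^{(\kappa_1)}$'s Brownian motion remains independent of $X$'s on $[0,L_n]$ by line~\ref{alg_ling:indep_samples_BM_triplet} of Algorithm~\ref{alg:ARA_2}.

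For~\eqref{eq:SBG+_1}, use~\eqref{eq:chi} to express $X_t - Z_{n, t}^{(\kappa_i)}$ as a sum of conditionally independent (given $\ell$) zero-mean terms with within-stick variance at most $2\ell_k \ov\sigma^2_{\kappa_i}$ and error-interval variance at most $2L_n \ov\sigma^2_{\kappa_2}$ for $i = 2$, respectively $2L_n(\sigma^2 + \ov\sigma^2_{\kappa_1})$ for $i = 1$ (the extra $\sigma^2 L_n$ arising from the two independent Brownian drivers); summing and using $\sum_k\E[\ell_k] + \E[L_n] = t$ and $\E[L_n]=t2^{-n}$ yields~\eqref{eq:SBG+_1}. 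For~\eqref{eq:SBG+_2}, decompose via the second component of~\eqref{eq:chi} and invoke $|\min\{a, 0\} - \min\{b, 0\}| \leq |a - b|$. The stick contribution is handled by expanding the square and bounding the cross terms by Cauchy--Schwarz conditionally on $\ell$, following the structure of the proof of~\eqref{eq:SBG_2} in Proposition~\ref{prop:SBG_app_coupling}(c); the error-interval infimum difference is bounded by $\sup_{s\leq L_n}|X_s - X^{(\kappa_i)}_s|$ and controlled via Doob's $L^2$-maximal inequality applied to the difference martingale, which enlarges the conditional variances above by at most a factor of four.

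For~\eqref{eq:SBG+_3}, write $|\un\tau_t - \un\tau^{(\kappa_i)}_{n, t}| \leq L_n + \sum_k \ell_k \bigl|\1_{\{\tilde\xi_k \leq 0\}} - \1_{\{\xi_{k, i} \leq 0\}}\bigr|$ from the third component of~\eqref{eq:chi}. Using $L_n^p \leq t^{p - 1} L_n$ produces the $t^p 2^{-n}$ error-term contribution, and Cauchy--Schwarz in the case $p = 2$ reduces the stick sum to its $p = 1$ counterpart (the indicator squaring); it thus suffices to bound $\sum_k \E\bigl[\ell_k\, \p(A_k \triangle B_k \mid \ell_k)\bigr]$ with $A_k = \{\tilde\xi_k \leq 0\}$ and $B_k = \{\xi_{k, i} \leq 0\}$. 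Starting from the inclusion $A_k \triangle B_k \subseteq \{|\tilde\xi_k - \xi_{k, i}| > r\} \cup \{|\tilde\xi_k| \leq r\}$ (valid for every $r > 0$), combine the density bound $\sup_x f_{X_{\ell_k}}(x) \leq C \ell_k^{-1/\delta}$ from~\cite[Thm~3.1(a)]{MR1449834} (available under~(\nameref{asm:(O)})) with moment/concentration bounds on the martingale $\tilde\xi_k - \xi_{k, i}$ derived from the cancellations above, optimise $r$, and sum via Lemma~\ref{lem:mom_ell} after the rescaling $\ell_k = t\varpi_k$ with parameters $a_1 = \sqrt{C\ov\sigma_{\kappa_i}}\,t^{(5\delta-2)/(4\delta)}$, $b_1 = (5\delta-2)/(4\delta)$, $a_2 = t$, $b_2 = 1$, which reproduces $\theta(t, \kappa_i)$ after simplification; the degeneracy $b_1 = 0$ at $\delta = 2/5$ accounts for the logarithmic factor.

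The principal technical obstacle is the sharp exponent $\sqrt{\ov\sigma_{\kappa_i}}$ in $\theta(t, \kappa_i)$: a crude $L^p$-Markov bound on $|\tilde\xi_k - \xi_{k, i}|$ combined with the linear density of $X_{\ell_k}$ only produces an $\ov\sigma^{p/(p + 1)}_{\kappa_i}$ factor, attaining $1/2$ only in the trivial limit $p \to 1$. Reaching the stated exponent for finite $p$ requires isolating the Brownian part $(\tsqrt{\sigma^2+\ov\sigma_{\kappa_i}^2}-\sigma)W_k$ and the bounded-jump martingale $J^{1,\kappa_i}_k$ in $\tilde\xi_k - \xi_{k, i}$, applying Gaussian and Bernstein-type concentration to each, and tuning $r$ as a multiple of $(\ell_k \ov\sigma^2_{\kappa_i})^{1/2}$ up to a logarithmic correction so that the induced polylogarithmic losses are absorbed into the constants of $\theta$.
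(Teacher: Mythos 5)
Your coupling construction and your treatment of \eqref{eq:SBG+_1}--\eqref{eq:SBG+_2} coincide with the paper's: the paper likewise shares the Brownian driver $W$ of Algorithm~\ref{alg:ARA} with the Brownian component of $X$ and the large jumps of $J^{2,\kappa_2}$ over the sticks, samples $J^{1,\kappa_i}$ independently, ties $X$ to the $\kappa_2$-branch of Algorithm~\ref{alg:ARA_2} over $[0,L_n]$, computes $\E[(X_s-Z_s^{(\kappa_i)})^2]=[(\sqrt{\sigma^2+\ov\sigma^2_{\kappa_i}}-\sigma)^2+\ov\sigma^2_{\kappa_i}]s\le 2\ov\sigma^2_{\kappa_i}s$ per stick (and $2(\sigma^2\1_{\{1\}}(i)+\ov\sigma^2_{\kappa_i})L_n$ on the error interval), and passes to the infimum via Doob. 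The decomposition $|\un\tau_t-\un\tau^{(\kappa_i)}_{n,t}|\le L_n+\sum_k\ell_k|\1\{\xi'_k<0\}-\1\{\xi_{i,k}<0\}|$ and the summation via Lemma~\ref{lem:mom_ell} with exactly your parameters $(a_1,b_1,a_2,b_2)$ are also the paper's route.

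The problem is your final paragraph. The ``principal technical obstacle'' you describe does not exist: the exponent $\sqrt{\ov\sigma_{\kappa_i}}$ in $\theta$ is obtained precisely by the ``crude'' argument you dismiss, namely Lemma~\ref{lem:Lp-to-barrier} with $p=\gamma=1$. That choice is plain Markov's inequality $\p(|\xi'_k-\xi_{i,k}|>r)\le r^{-1}\E|\xi'_k-\xi_{i,k}|$ --- $p=1$ is an admissible value, not a ``trivial limit'' --- and, after optimising $r$, it yields a bound proportional to $\ell_k^{-1/(2\delta)}\E[|\xi'_k-\xi_{i,k}|\,|\,\ell_k]^{1/2}\le \ell_k^{-1/(2\delta)}(2\ov\sigma^2_{\kappa_i}\ell_k)^{1/4}$ by Jensen, i.e.\ exactly the factor $\sqrt{\ov\sigma_{\kappa_i}}$ together with the stick exponent $5/4-1/(2\delta)=(5\delta-2)/(4\delta)$ appearing in $\theta$, with \emph{no} logarithmic loss. (Your formula $\ov\sigma^{p/(p+1)}$ is correct, but note it is increasing in $p$, so larger $p$ gives a \emph{better} $\kappa$-rate at the cost of a worse $\ell_k$-exponent; the statement only needs $p=1$.) Your proposed repair --- Gaussian and Bernstein concentration with $r$ inflated by a logarithmic correction --- would in fact break the result as stated: the resulting polylogarithmic factor depends on $\ell_k$ and $\ov\sigma_{\kappa_i}$ and cannot be ``absorbed into the constants of $\theta$'', since the constant $C$ there must be independent of $\kappa$ and $t\le T$, and the summation in Lemma~\ref{lem:mom_ell} would then produce a different functional form. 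Deleting the last paragraph and carrying out the $r$-optimisation at $p=1$ (then extending to $p>1$ via $|\un\tau_t-\un\tau^{(\kappa_i)}_{n,t}|^p\le t^{p-1}|\un\tau_t-\un\tau^{(\kappa_i)}_{n,t}|$, which covers all $p\ge1$ rather than only $p=2$) recovers the paper's proof.
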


As a simple consequence of~\eqref{eq:SBG+_3} (with $p=1$) in 
Proposition~\ref{prop:SBG_app_coupling+} and the elementary 
inequality $|\un\tau^{(\kappa_1)}_{n,t}
	-\un\tau^{(\kappa_2)}_{n,t}|\le t$, we deduce that the coupling in 
\nameref{alg:SBG} satisfies 
\begin{equation}
\label{eq:SBG_tau}
\E\big[\big|\un\tau^{(\kappa_1)}_{n,t}
-\un\tau^{(\kappa_2)}_{n,t}\big|^p\big]
\le 2^{1-n}t^p + 2t^{p-1}\theta(t,\kappa_1),
\quad\text{for any }p\ge1.
\end{equation}
The bounds in~\eqref{eq:SBG+_1} and~\eqref{eq:SBG+_2} of Proposition~\ref{prop:SBG_app_coupling+} imply the inequalities  
in~\eqref{eq:SBG_1} and~\eqref{eq:SBG_2} 
of Proposition~\ref{prop:SBG_app_coupling}(c) with slightly worse 
constants.

\begin{proof}
The proof and construction of the random variables is analogous to 
that of Proposition~\ref{prop:SBG_app_coupling}(c), where, for 
$i\in\{1,2\}$, we compare the increment $Z_s^{(\kappa_i)}$ defined 
in Algorithm~\ref{alg:ARA} with the L\'evy-It\^o decomposition 
$X_s = bs+\sigma W_s + J^{1,\kappa_i}_s + J^{2,\kappa_i}_s$
($W$ is as in Algorithm~\ref{alg:ARA}, independent of 
$J^{1,\kappa_i}$ and $J^{2,\kappa_i}$) over the time horizons 
$s\in\{\ell_1,\ldots,\ell_{n-1}\}$. Similarly, we compare the pair of 
vectors $(\un\chi_s^{(\kappa_1)},\un\chi_s^{(\kappa_2)})$ output 
by Algorithm~\ref{alg:ARA_2} with $\un \chi_s$ for $s=L_n$, where 
we assume that the (standardised) Brownian component of $X$ 
equals that of $\un\chi_s^{(\kappa_2)}$ (and is thus independent 
of the one in $\un\chi_s^{(\kappa_1)}$) and all jumps in 
$J^{2,\kappa_2}$ are synchronously coupled. 

Denote the first and fourth components of the vector 
$(\un\chi_s^{(\kappa_1)},\un\chi_s^{(\kappa_2)})$ by 
$Z_s^{(\kappa_1)}$ and $Z_s^{(\kappa_2)}$, respectively.
Hence, it is enough to obtain the analogous bounds and identities 
to those presented in parts (a) and (b) for the expectations 
$\E[(X_t-Z_t^{(\kappa_i)})^2]$, $i\in\{1,2\}$ under both 
couplings: $\Pi_t^{\kappa_1,\kappa_2}$ and 
$\un\Pi_t^{\kappa_1,\kappa_2}$. Such bounds may be obtained 
using the proofs of parts (a) and (b), resulting in the following: 
for $i\in\{1,2\}$, we have 
\begin{align}
\label{eq:Xt-Zt}
\E\big[\big(X_t-Z_t^{(\kappa_i)}\big)^2\big]
&=\Big[\Big(\sqrt{\sigma^2+\ov\sigma^2_{\kappa_i}}-\sigma\Big)^2
	+\ov\sigma^2_{\kappa_i}\Big]t\leq 2\ov\sigma^2_{\kappa_i}t,
	&\text{under }\Pi_t^{\kappa_1,\kappa_2},\\
\nonumber
\E\big[\big(X_t-Z_t^{(\kappa_i)}\big)^2\big]
&=2(\sigma^2\cdot\1_{\{1\}}(i)+\ov\sigma^2_{\kappa_1})t,
	&\text{under }\un\Pi_t^{\kappa_1,\kappa_2}.
\end{align}
Thus Doob's martingale inequality and elementary inequalities 
give~\eqref{eq:SBG+_1} and~\eqref{eq:SBG+_2}. 

By the construction of the law $\un\Pi_{n,t}^{\kappa_1,\kappa_2}$ 
in \nameref{alg:SBG}, there exist random 
variables $(\xi'_k)_{k=1}^{n}$ such that for $k\in\{1,\ldots,n\}$, 
conditional on $\ell_k=s$ and independently of $\{\ell_j\}_{j\ne k}$, 
the distributional equality $(\xi'_k,\xi_{1,k},\xi_{2,k})
\eqd (X_s,Z_s^{(\kappa_1)},Z_s^{(\kappa_2)})$ holds, where 
$(Z_t^{(\kappa_1)},Z_t^{(\kappa_2)})\sim\Pi^{\kappa_1,\kappa_2}_t$ 
and~$W$ in Algorithm~\ref{alg:ARA} equals the Brownian component 
of~$X$ in~\eqref{eq:levy-ito}. Note that by~\eqref{eq:chi} we have 
\begin{equation}
\label{eq:dif_tau_kappa}
\big|\un\tau_t-\un\tau^{(\kappa_i)}_{n,t}\big|
\le L_n+\sum_{k=1}^n
\ell_k\big|\1\{\xi_k'<0\}-\1\{\xi_{i,k}<0\}\big|,
\quad\text{for }i\in\{1,2\}.
\end{equation}

Let $\delta\in(0,2]$ be as in the statement of the proposition. 
By~\cite[Thm~3.1(a)]{MR1449834}, as in the proof of 
Theorem~\ref{thm:Wd-triplet}, we know that the density $f_t$ of 
$X_t$ exists, is smooth and, given $T>0$, the constant 
$C':=2^{3/2}\sup_{(s,x)\in(0,T]\times\R}s^{1/\delta}f_s(x)$ is 
finite. Thus,~\eqref{eq:Lp-to-barrier} in Lemma~\ref{lem:Lp-to-barrier}
(with constants $\gamma=1$ \& $C=2^{-3/2}\ell_k^{-1/\delta}C'$ 
and $M=1$, $K=0$ \& $p=1$) gives 
\begin{align*}
\E\big[\big|\1{\{\xi'_k<0\}}-\1{\{\xi_{i,k}<0\}}\big|\big|\ell_k\big]
&\le\min\big\{1,2^{-1/4}\sqrt{C'}\ell_k^{-\frac{1}{2\delta}}
	\E\big[|\xi'_k-\xi_{i,k}|\big|\ell_k\big]^{1/2}\big\}\\
&\le \min\big\{1,2^{-1/4}\sqrt{C'}\ell_k^{-\frac{1}{2\delta}}
	(2\ov\sigma_{\kappa_i}^2\ell_k)^{1/4}\big\},
\end{align*}
for any $i\in\{1,2\}$ and $k\in\{1,\ldots,n\}$, where the second 
inequality follows from Jensen's inequality and~\eqref{eq:Xt-Zt}. 
Hence, elementary inequalities,~\eqref{eq:dif_tau_kappa} and 
Lemma~\ref{lem:mom_ell} imply the following: for $i\in\{1,2\}$, 
\begin{align*}
\E|\un\tau_t-\un\tau^{(\kappa_i)}_{n,t}|
&\le \E L_{n} + \sum_{k=1}^n
	\E\big[\ell_k \big|\1\{\xi'_k<0\} - \1\{\xi_{i,k}<0\}\big|\big]\\
&\le 2^{-n}t + \sum_{k=1}^\infty 
	\E\big[\min\big\{\sqrt{C'\ov\sigma_{\kappa_i}}
		\ell_k^{\frac{5}{4}-\frac{1}{2\delta}},\ell_k\big\}\big]
\le 2^{-n}t + \theta(t,\kappa_i).
\end{align*}
For $p>1$, the result follows from the case $p=1$ and 
the inequality $|\un\tau_t-\un\tau_{n,t}^{(\kappa_i)}|^p
\le t^{p-1}|\un\tau_t-\un\tau_{n,t}^{(\kappa_i)}|$. 
\end{proof}

\begin{proof}[Proof of Theorem~\ref{thm:summary}]
\label{proof:Thm_bias-cost-var}
(a) Proposition~\ref{prop:SBG_app_coupling}(c) and elementary 
inequalities yield~\eqref{eq:summary_2}, so it remains to 
consider the case $f\in\locLip_K(\R^2)$. As in the proof of 
Proposition~\ref{prop:logLip}, by the inequality in~\eqref{eq:loclip} 
and the Cauchy-Schwarz inequality, we have 
\begin{align*}
\E\big[\big(f(Z_{n,T}^{(\kappa_1)},\un Z_{n,T}^{(\kappa_1)})
	-f(Z_{n,T}^{(\kappa_2)},\un Z_{n,T}^{(\kappa_2)})\big)^2\big]^2
&\le K^4K'\E\big[(|Z_{n,T}^{(\kappa_1)}-Z_{n,T}^{(\kappa_2)}|
	+ |\un Z_{n,T}^{(\kappa_1)}-\un Z_{n,T}^{(\kappa_2)}|)^4\big],
\end{align*}
where 
$K':=\E[(\exp(Z_{n,T}^{(\kappa_1)})+\exp(Z_{n,T}^{(\kappa_2)}))^4]
\le 8\E[\exp(4X_T^{(\kappa_1)})+\exp(4X_T^{(\kappa_2)})\big]$. 
Applying~\eqref{eq:exp_bound}, we get $\E[\exp(4X_T^{(\kappa_i)})]
	\le\E[\exp(4X_T)]\exp(4T\ov\sigma^2_{\kappa_i})$ and 
$\ov\sigma^2_{\kappa_i}\le \ov\sigma^2_1$, $i\in\{1,2\}$, 
where $\E[\exp(4X_T)]$ is finite since 
$\int_{[1,\infty)}e^{4x}\nu(dx)<\infty$.  
The concavity of $x\mapsto\sqrt{x}$ and 
Inequalities~\eqref{eq:SBG_3} \&~\eqref{eq:SBG_4} 
Proposition~\ref{prop:SBG_app_coupling}(c) imply the existence 
of a constant $C>0$ satisfying 
\[
\tsqrt{\E\big[(|Z_{n,T}^{(\kappa_1)}-Z_{n,T}^{(\kappa_2)}|
	+ |\un Z_{n,T}^{(\kappa_1)}-\un Z_{n,T}^{(\kappa_2)}|)^4\big]}
\leq C(2/3)^{n/2} + 11T\ov\sigma^2_{\kappa_1} 
	+ \sqrt{2\pi}T^{5/8}\ov\sigma^{5/4}_{\kappa_1}\kappa_1^{3/4}
	+ \sqrt{5T}\ov\sigma_{\kappa_1}\kappa_1.
\]
Inequality~\eqref{eq:summary_3} then follows from the fact that 
$\ov\sigma_{\kappa_1}^{1/4}\kappa_1^{3/4}
\le\max\{\ov\sigma_{\kappa_1},\kappa_1\}
\le\ov\sigma_{\kappa_1}+\kappa_1$.

(b) Let 
$(\un\chi_T,\un\chi_{n,T}^{(\kappa_1)},\un\chi_{n,T}^{(\kappa_2)})$ 
be coupled as in Proposition~\ref{prop:SBG_app_coupling+}, where 
$\un\chi_T=(X_T,\un X_T,\un\tau_T)$ and  
$\un\chi_{n,T}^{(\kappa_i)}=(Z_{n,T}^{(\kappa_i)},
\un Z_{n,T}^{(\kappa_i)},\un\tau_{n,T}^{(\kappa_i)})$, $i\in\{1,2\}$. 
The triangle inequality and the inequalities $0\leq f\leq M$ give 
\begin{align*}
\E\big[\big(f\big(Z_{n,T}^{(\kappa_1)},
		\un Z_{n,T}^{(\kappa_1)}\big)
	-f\big(Z_{n,T}^{(\kappa_2)},
		\un Z_{n,T}^{(\kappa_2)}\big)\big)^2\big]
&\le M\E\big|f\big(Z_{n,T}^{(\kappa_1)},
		\un Z_{n,T}^{(\kappa_1)}\big)
	-f\big(Z_{n,T}^{(\kappa_2)},
		\un Z_{n,T}^{(\kappa_2)}\big)\big|\\
&\leq M\sum_{i=1}^2
	\E\big|f\big(Z_{n,T}^{(\kappa_i)},
		\un Z_{n,T}^{(\kappa_i)}\big)
	-f\big(X_T,\un X_T\big)\big|.
\end{align*}
Apply~\eqref{eq:Lp-to-barrier} in Lemma~\ref{lem:Lp-to-barrier} 
with $C$ and $\gamma$ from Assumption~(\nameref{asm:(H)}) to 
$(X_T,\un X_T)$ and 
$\big(Z^{(\kappa_i)}_{n,T},\un Z^{(\kappa_i)}_{n,T}\big)$ 
to~get 
\begin{align*}
\E\big|f(X_T,\un X_T)
	-f\big(Z^{(\kappa_i)}_{n,T},\un Z^{(\kappa_i)}_{n,T}\big)\big|
&\le K\E\big[\big|Z^{(\kappa_i)}_{n,T}-X_T\big|\big]
	+ M(1+2/\gamma)
	(C^2\gamma^2\E\big[\big|\un Z_{n,T}^{(\kappa_1)}
		-\un X_T\big|^2\big]^\gamma)^{\frac{1}{2+\gamma}}\\
&\le K\sqrt{T(4\sigma^2 2^{-n}\cdot\1_{\{1\}}(i) 
		+ 2\ov\sigma^2_{\kappa_i})}
	+ K''(\sigma^2 2^{-n}\cdot\1_{\{1\}}(i) 
		+ \ov\sigma^2_{\kappa_i})^{\frac{\gamma}{2+\gamma}},
\end{align*} 
for any $i\in\{1,2\}$, where $K'':=M(1+2/\gamma)
(48C^2\gamma^2T^\gamma)^{1/(2+\gamma)}$. In the second 
inequality we used the bounds~\eqref{eq:SBG+_1} 
\&~\eqref{eq:SBG+_2} in Proposition~\ref{prop:SBG_app_coupling+}. 
Since $\ov\sigma_{\kappa_1}\ge \ov\sigma_{\kappa_2}$, 
the result follows.

(c) Recall that the inequality in~\eqref{eq:SBG_tau} follows 
from~\eqref{eq:SBG+_3} 
of Proposition~\ref{prop:SBG_app_coupling+}. The inequality 
in~\eqref{eq:summary_5} in the proposition is a direct consequence 
of the Lipschitz property and~\eqref{eq:SBG_tau}.

(d) The proof follows along the same lines as in part (b): we 
apply~\eqref{eq:Lp-to-barrier} in Lemma~\ref{lem:Lp-to-barrier} 
with $C$ and $\gamma$ from Assumption~(\nameref{asm:(Htau)}) 
and bounds~\eqref{eq:SBG+_1}--\eqref{eq:SBG+_3} in 
Proposition~\ref{prop:SBG_app_coupling+}.
\end{proof}

\subsection{MC and MLMC estimators}
\label{subsec:MC_MLMC}

In the present subsection we address the application of our previous 
results to estimate the expectation $\E[f(\un\chi_T)]$ for various 
real-valued functions $f$ satisfying $\E[f(\un\chi_T)^2]<\infty$. 
By definition, an estimator 
$\Upsilon$ of $\E[f(\un\chi_T)]$ has $L^2$-accuracy of level 
$\epsilon>0$ if $\E[(\Upsilon-\E f(\un\chi_T))^2]<\epsilon^2$. 
We assume in this subsection that $X$ has jumps of infinite activity, i.e. $\nu(\R\setminus\{0\})=\infty$.
If the jumps of $X$ are finite activity, 
both Algorithm~\ref{alg:ARA_2} and~\nameref{alg:SBG} are exact with the 
latter outperforming the former in practice by a constant factor, 
which is a function of the total number of jumps 
$T\nu(\R\setminus\{0\})<\infty$, see 
Subsection~\ref{subsec:CP_example} for a numerical example.

\subsubsection{MC estimator} 
Pick 
$\kappa\in(0,1]$ and let the sequence $\un\chi_T^{\kappa,i}$, 
$i\in\N$, be iid with the same distribution as 
$\un\chi_T^{(\kappa)}$ simulated by \nameref{alg:SBG} 
with $n\in\N\cup\{0\}$ sticks. Note that the choice of $n$ 
does not affect the asymptotic behaviour as $\epsilon\searrow 0$ 
of the computational complexity $\C_\MC(\epsilon)$. The MC 
estimator based on $N\in\N$ independent samples is given by 
\begin{equation}\label{eq:MC}
\Upsilon_\MC:=\frac{1}{N}\sum_{i=1}^N f\big(\un\chi_T^{\kappa,i}\big).
\end{equation}
The requirements on the bias and variance of the estimator
$\Upsilon_\MC$ (see Appendix~\ref{app:MonteCarlo}), together with 
Theorem~\ref{thm:Wd-triplet} and the  bounds in~\eqref{eq:summary_1} 
as well as 
Propositions~\ref{prop:logLip}, \ref{prop:barrier} \&~\ref{prop:barrier-tau},  imply 
Corollary~\ref{cor:SBG_MC}. 
By expressing $\kappa$ in terms of 
$\epsilon$ via Corollary~\ref{cor:SBG_MC} and~\eqref{def:mu},~\eqref{def:mu_tau_delta}--\eqref{eq:bound_mu_tau}, 
the formulae for the expected computational complexity $\C_\MC(\epsilon)$ in Table~\ref{tab:MC} 
(of Subsection~\ref{subsec:complexities} above) follow. 

\begin{cor}
\label{cor:SBG_MC}
For any $\epsilon\in(0,1)$, define $\kappa$  as in (a)--(d) below 
and set
$N:=\cl{2\epsilon^{-2}\V\big[f\big(\un\chi_T^{(\kappa)}\big)\big]}$ 
as in Appendix~\ref{app:MonteCarlo}.
Then the MC estimator $\Upsilon_\MC$ 
of $\E[f(\un\chi_T)]$ has $L^2$-accuracy of level $\epsilon$ and 
expected cost $\C_\MC(\epsilon)$ bounded by a constant multiple 
of $(1+\ov\nu(\kappa)T)N$.\\ 
\nf{(a)} For any $K>0$, $g\in\Lip_K(\R^2)$ 
(resp. $g\in\locLip_K(\R^2)$) and $f:(x,z,t)\mapsto g(x,z)$, set
\begin{align*}
	\kappa
	&:=\sup\{\kappa'\in(0,1]:2\mu_1(\kappa',T)<\epsilon/\sqrt{2}\}\\
(\text{resp. }
\kappa
	&:=\sup\{\kappa'\in(0,1]:8K^2\mu_2(\kappa',T)
	(1+\exp(2T\ov\sigma^2_{\kappa'}))\E[\exp(2X_T)]<\epsilon^2/2\}).
\end{align*}
\nf{(b)} Pick $y<0$ and let Assumption~(\nameref{asm:(H)}) hold for 
some $C,\gamma>0$. Suppose $f:\R^3\to\R$ is given by 
$f(x,z,t)=h(x)\1_{[y,\infty)}(z)$ where $h\in\Lip_K(\R)$ and 
$0\le h\le M$ for some $K,M>0$. Then 
\begin{equation*}
\kappa := \sup\{\kappa'\in(0,1]:
	M(C\gamma)^{2/(2+\gamma)}(1+2/\gamma)
		\mu_2(\kappa',T)^{2\gamma/(2+\gamma)}
	+ K\mu_1(\kappa',T)<\epsilon/\sqrt{2}\}.
\end{equation*}
\nf{(c)} Let $\delta\in(0,2]$ satisfy Assumption~(\nameref{asm:(O)}). 
Let $f:(x,z,t)\mapsto g(t)$, where $g\in\Lip_K(\R)$, $K>0$, then 
	\begin{equation*}
\kappa := \sup\{\kappa'\in(0,1]:K\mu^\tau_*(\kappa',T)<\epsilon/\sqrt{2}\}.
\end{equation*}
\nf{(d)} Fix $s\in(0,T)$ and let $\delta\in(0,2]$ satisfy 
Assumption~(\nameref{asm:(O)}). Then there exists a constant $C>0$ 
such that for $f\in\BT_2(s,K,M)$, $K,M>0$, we have 
\begin{equation*}
\kappa := \sup\{\kappa'\in(0,1]:
	C\sqrt{K\mu^\tau_*(\kappa',T)}<\epsilon/\sqrt{2}\}.
\end{equation*}
\end{cor}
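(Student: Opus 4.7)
The plan is the textbook bias--variance decomposition. For iid samples $\un\chi_T^{\kappa,i}\eqd\un\chi_T^{(\kappa)}$,
\[
\E\big[(\Upsilon_\MC-\E f(\un\chi_T))^2\big]
=\frac{1}{N}\V[f(\un\chi_T^{(\kappa)})]
+\big(\E f(\un\chi_T^{(\kappa)})-\E f(\un\chi_T)\big)^2.
\]
The choice $N=\cl{2\epsilon^{-2}\V[f(\un\chi_T^{(\kappa)})]}$ ensures the first (variance) term is at most $\epsilon^2/2$, so it suffices to verify that the stated definition of $\kappa$ in each of (a)--(d) enforces $|\E f(\un\chi_T^{(\kappa)})-\E f(\un\chi_T)|\le\epsilon/\sqrt{2}$, i.e.\ squared bias $\le\epsilon^2/2$.

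Each case then reduces to invoking an earlier bias bound together with the trivial inequality $|\E\Delta|\le\E|\Delta|$. In case~(a), the Lipschitz bound in~\eqref{eq:summary_1} gives bias $\le 2K\mu_1(\kappa,T)$, while Proposition~\ref{prop:logLip} gives the $\locLip_K$ bound $4K\E[e^{2X_T}]^{1/2}(1+e^{\ov\sigma_\kappa^2 T})\mu_2(\kappa,T)$. Case~(b) follows from Proposition~\ref{prop:barrier}, retaining the $\mu_2^{2\gamma/(2+\gamma)}$-branch of the minimum there, so that the constant $M'=M(1+2/\gamma)(C\gamma)^{2/(2+\gamma)}$ matches the supremum definition. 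Case~(c) is the second bound of~\eqref{eq:summary_1}, namely bias $\le K\mu_*^\tau(\kappa,T)$. Case~(d) uses Proposition~\ref{prop:barrier-tau} combined with the observation made after that proposition that Assumption~(\nameref{asm:(O)}) implies~(\nameref{asm:(Htau)}) with $\gamma=1$, so that the bias is bounded by $2K\mu_1(\kappa,T)+C'\sqrt{\mu_*^\tau(\kappa,T)}$; since $\mu_1(\kappa,T)$ decays at least as fast as $\sqrt{\mu_*^\tau(\kappa,T)}$ by Theorem~\ref{thm:Wd-triplet} and~\eqref{eq:bound_mu_tau}, the two summands can be collapsed into a single constant $C$ as in the statement. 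In every case the supremum defining $\kappa$ is attained in $(0,1]$ for all $\epsilon$ sufficiently small because each bias bound is continuous and tends to $0$ as $\kappa\downarrow 0$.

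The cost estimate is immediate from Corollary~\ref{cor:algs_complexities}: a single call to \nameref{alg:SBG} costs in expectation at most $C_3(n+\ov\nu(\kappa)T)$, so for $n$ fixed (or any $n$ absorbed into the multiplicative constant) the $N$ independent samples cost at most a constant multiple of $N(1+\ov\nu(\kappa)T)$. The only real obstacle is bookkeeping: matching the multiplicative constants and the squaring conventions of each bias inequality to the precise supremum definitions of $\kappa$ in (a)--(d); no new probabilistic estimate is required beyond those already established in Section~\ref{sec:main-theory}.
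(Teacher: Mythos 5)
Your proposal is correct and follows exactly the route the paper takes: the bias--variance decomposition of Appendix~\ref{app:MonteCarlo} fixes $N$ to control the variance term, and the definitions of $\kappa$ in (a)--(d) are read off from the bias bounds in~\eqref{eq:summary_1} and Propositions~\ref{prop:logLip}, \ref{prop:barrier} and~\ref{prop:barrier-tau} (the last with $\gamma=1$ via Assumption~(\nameref{asm:(O)})), with the cost coming from Corollary~\ref{cor:algs_complexities}. The remaining constant-matching you flag as bookkeeping is all the paper itself does, so nothing is missing.
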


\subsubsection{MLMC estimator} 
Let $(\kappa_j)_{j\in\N}$ (resp. $(n_j)_{j\in\N\cup\{0\}}$) be a 
decreasing (resp. increasing) sequence in $(0,1]$ (resp. $\N$) such 
that $\lim_{j\to\infty}\kappa_j=0$. 
Let $\un\chi^{0,i}\eqd\un\chi_T^{(\kappa_1)}$ and 
$(\un\chi^{j,i}_1,\un\chi^{j,i}_2)\sim \un\Pi_{n_j,T}^{\kappa_j,\kappa_{j+1}}$, $i,j\in\N$, be independent 
draws constructed by \nameref{alg:SBG}. 
Then, for the parameters $m,N_0,\ldots,N_m\in\N$, 
the MLMC estimator takes the form 
\begin{equation}\label{eq:MLMC}
\Upsilon_\ML
:=\sum_{j=0}^m\frac{1}{N_j}\sum_{i=1}^{N_j}D_j^i,
\quad\text{where}\quad 
D_j^i:=\begin{cases}
f\big(\un\chi^{j,i}_2\big)-f\big(\un\chi^{j,i}_1\big),& j\geq 1,\\
f\big(\un\chi^{0,i}\big),& j=0.
\end{cases}
\end{equation}
The bias of the MLMC estimator is equal to that of the MC estimator 
in~\eqref{eq:MC} with $\kappa=\kappa_m$. Given the sequences 
$(n_j)_{j\in\N\cup\{0\}}$ and $(\kappa_j)_{j\in\N}$, which 
determine the simulation algorithms used in 
estimator~\eqref{eq:MLMC}, Appendix~\ref{subsec:MLMC} derives 
the asymptotically optimal (as $\epsilon\searrow0$) values for the 
integers $m$ and $(N_j)_{j=0}^m$ minimising the expected 
computational complexity of~\eqref{eq:MLMC} under the  constraint 
that the $L^2$-accuracy  of $\Upsilon_\ML$ is of level $\epsilon$.
The key quantities are the bounds  $B(j)$, $V(j)$ and $C(j)$ on the bias, level variance and the computational complexity 
of \nameref{alg:SBG} at level $j$ (i.e. run with parameters 
$\kappa_j$ and $n_j$). The number of levels $m$ 
in~\eqref{eq:MLMC} is determined by the bound on the bias $B(j)$,  
while the number of samples $N_j$ used at level $j$ is given by the 
bounds on the complexity and level variances, see the formulae 
in~\eqref{eq:bias_m_value}--\eqref{eq:MLMC_const}. 
Proposition~\ref{prop:optimal_SBG_MLMC}, which is a consequence 
of Theorem~\ref{thm:Wd-triplet} and 
Propositions~\ref{prop:logLip},~\ref{prop:barrier} 
\&~\ref{prop:barrier-tau} (for bias), Theorem~\ref{thm:summary} 
(for level variance) and Corollary~\ref{cor:algs_complexities} 
(for complexity), summarises the relevant bounds $B(j)$, $V(j)$ and 
$C(j)$ established in this paper (suppressing the unknown 
constants as we are only interested in the asymptotic behaviour 
as $\epsilon\searrow0$).

\begin{prop}
\label{prop:optimal_SBG_MLMC}
Given sequences $(\kappa_j)_{j\in\N}$ and $(n_j)_{j\in\N\cup\{0\}}$ 
as above, set $C(j):=n_j+\ov\nu(\kappa_{j+1})T$. The following 
choices of functions $B$ and $V$ ensure that, for any $\epsilon>0$, 
the MLMC estimator $\Upsilon_\ML$, with integers $m$ and 
$\{N_j\}_{j=0}^m$ given 
by~\eqref{eq:bias_m_value}-\eqref{eq:MLMC_const}, has 
$L^2$-accuracy of level $\epsilon$ with complexity asymptotically 
proportional to $\C_\ML(\epsilon)
=2\epsilon^{-2}\big(\sum_{j=0}^m\sqrt{C(j)V(j)}\big)^2$.\\ 
\nf{(a)} If $K>0$, $g\in\Lip_K(\R^2)$ (resp. $g\in\locLip_K(\R^2)$) 
and $f:(x,z,t)\mapsto g(x,z)$, then for any $j\in\N$, 
\[\begin{split}
B(j) := \mu_1(\kappa_j,T)\quad&\text{and}\quad
V(j) := \sigma^2 2^{-n_j}+\ov\sigma^2_{\kappa_j},\\
(\text{resp. }
B(j) := \mu_2(\kappa_j,T)\quad&\text{and}\quad
V(j) := (2/3)^{n_j/2}\cdot\1_{\R\setminus\{0\}}(\sigma) 
	+ \ov\sigma^2_{\kappa_j} 
	+ \ov\sigma_{\kappa_j}\kappa_j).
\end{split}\]
\nf{(b)} Pick $y<0$ and let Assumption~(\nameref{asm:(H)}) hold for 
some $C,\gamma>0$. If $f\in\BT_1(y,K,M)$, $K,M>0$, then for any 
$j\in\N$,
\[
B(j) := \min\{\mu_1(\kappa_j,T)^{\gamma/(1+\gamma)},
		\mu_2(\kappa_j,T)^{2\gamma/(2+\gamma)}\}
\quad\text{and}\quad
V(j) := \sigma^{2\gamma/(2+\gamma)} 2^{-n_j\gamma/(2+\gamma)}
	+ \ov\sigma^{2\gamma/(2+\gamma)}_{\kappa_j}.
\]
\nf{(c)} Let Assumption~(\nameref{asm:(O)}) hold for some 
$\delta\in(0,2]$ and $f:(x,z,t)\mapsto g(t)$ 
for some $g\in\Lip_K(\R)$, $K>0$, then for any $j\in\N$, 
\[
B(j) := \mu^\tau_*(\kappa_j,T)
\quad\text{and}\quad
V(j) := 2^{-n_j} 
	+ \ov\sigma_{\kappa_j}^{\min\{1/2,2\delta/(2-\delta)\}}
	(1+|\log\kappa_j|\cdot\1_{\{2/5\}}(\delta)).
\]
\nf{(d)} Let $f\in\BT_2(s,K,M)$ for some $s\in(0,T)$ and $K,M\ge 0$. 
If $\delta\in(0,2]$ satisfies Assumption~(\nameref{asm:(O)}), 
then for any $j\in\N$,
\[
B(j) := \tsqrt{\mu^\tau_*(\kappa_j,T)}
\quad\text{and}\quad
V(j) := 2^{-n_j/2} 
	+ \ov\sigma_{\kappa_j}^{\min\{1/4,\delta/(2-\delta)\}}
	\big(1+\tsqrt{|\log\kappa_j|}
		\cdot\1_{\{2/5\}}(\delta)\big).
\]
\end{prop}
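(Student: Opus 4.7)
The plan is to view the proposition as a direct compilation of the bias, level-variance and cost bounds established in Sections~\ref{sec:main-theory} and~\ref{sec:main-apps}, combined with the generic MLMC tuning formulae~\eqref{eq:bias_m_value}-\eqref{eq:MLMC_const} of Appendix~\ref{subsec:MLMC}. The generic MLMC framework asserts that, given nonnegative sequences $B(j)$, $V(j)$, $C(j)$ dominating the bias $|\E[f(\un\chi_T^{(\kappa_j)})]-\E[f(\un\chi_T)]|$, the level variance $\V[D_j^i]$ and the expected per-sample cost at level $j$ respectively, the prescribed choices of $m$ and $N_0,\ldots,N_m$ yield an $L^2$-accurate estimator of level $\epsilon$ whose expected cost is asymptotically proportional to $2\epsilon^{-2}\bigl(\sum_{j=0}^m\sqrt{C(j)V(j)}\bigr)^2$. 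Thus I need only verify that the triples $(B(j),V(j),C(j))$ stated in (a)--(d) are legitimate upper bounds.

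First, the common cost bound $C(j)=n_j+\ov\nu(\kappa_{j+1})T$ is immediate from Corollary~\ref{cor:algs_complexities}, since \nameref{alg:SBG} at level $j$ calls Algorithm~\ref{alg:ARA} for $n_j$ sticks and Algorithm~\ref{alg:ARA_2} over $[0,L_{n_j}]$ with cutoff $\kappa_{j+1}$. Second, the bias bound $B(j)$ is the bound on $|\E f(\un\chi_T)-\E f(\un\chi_T^{(\kappa_j)})|$ supplied, respectively, by~\eqref{eq:summary_1} for case~(a) in the Lipschitz sub-case, by Proposition~\ref{prop:logLip} in the locally Lipschitz sub-case, by Proposition~\ref{prop:barrier} for case~(b), by~\eqref{eq:summary_1} for case~(c), and by Proposition~\ref{prop:barrier-tau} combined with Jensen's inequality (since $f\in\BT_2(s,K,M)$ is bounded, the bias also dominates $\sqrt{\mu^\tau_*(\kappa_j,T)}$ up to a constant multiple after absorbing the bounded factor) for case~(d). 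In each instance the multiplicative constants depending on $(K,M,C,\gamma,T,\sigma,\nu)$ are absorbed into the implicit asymptotic proportionality constant.

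Third, for the level variance at level $j\geq 1$ we have $\V[D_j^i]\leq\E[(f(\un\chi_T^{(\kappa_j)})-f(\un\chi_T^{(\kappa_{j+1})}))^2]$ under the coupling $\un\Pi_{n_j,T}^{\kappa_j,\kappa_{j+1}}$ produced by \nameref{alg:SBG}. Theorem~\ref{thm:summary}(a) then supplies $V(j)$ in the Lipschitz and locally Lipschitz sub-cases of~(a) (after noting $\ov\sigma_{\kappa_j}\kappa_j\leq \ov\sigma_{\kappa_j}^2+\kappa_j^2$ style absorptions are unnecessary because both terms are listed explicitly); Theorem~\ref{thm:summary}(b) supplies $V(j)$ for~(b); Theorem~\ref{thm:summary}(c) with $p=2$ supplies $V(j)$ for~(c); Theorem~\ref{thm:summary}(d) with $p=1$ (and boundedness of $f\in\BT_2$ to pass from $L^1$ to the square root) supplies $V(j)$ for~(d). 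In each case the $\ov\sigma_{\kappa_j}$-dependent rates quoted in the proposition are exactly those appearing in Theorem~\ref{thm:summary}.

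The only real subtlety is the bookkeeping in cases~(c) and~(d): the bounds in Theorem~\ref{thm:summary}(c),(d) depend on which interval the parameter $\delta$ satisfying Assumption~(\nameref{asm:(O)}) lies in, producing the piecewise exponents $\min\{1/2,2\delta/(2-\delta)\}$ and $\min\{1/4,\delta/(2-\delta)\}$ together with the boundary logarithmic correction at $\delta=2/5$. One checks that these exponents are precisely what emerges from $\ov\sigma_{\kappa_j}^{\min\{2\delta/(2-\delta),1/2\}}(1+|\log\kappa_j|\1_{\{2/5\}}(\delta))$ and its square-root counterpart. Once $(B(j),V(j),C(j))$ have been identified, the claim on $\C_\ML(\epsilon)$ follows verbatim from the MLMC calibration in Appendix~\ref{subsec:MLMC}; no further estimation is required.
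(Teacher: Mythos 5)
Your proposal is correct and follows exactly the route the paper takes: Proposition~\ref{prop:optimal_SBG_MLMC} is presented there as a direct compilation of the bias bounds (inequality~\eqref{eq:summary_1} and Propositions~\ref{prop:logLip}, \ref{prop:barrier}, \ref{prop:barrier-tau}), the level-variance bounds of Theorem~\ref{thm:summary}, and the cost bound of Corollary~\ref{cor:algs_complexities}, all fed into the generic MLMC calibration of Appendix~\ref{subsec:MLMC}. One small correction in case~(d): the square root in $B(j)=\tsqrt{\mu^\tau_*(\kappa_j,T)}$ does not come from Jensen's inequality or from the boundedness of $f$, but from applying Proposition~\ref{prop:barrier-tau} with $\gamma=1$, which is legitimate because Assumption~(\nameref{asm:(O)}) implies Assumption~(\nameref{asm:(Htau)}) with $\gamma=1$ (via Lemma~\ref{lem:density-tau}), so that the exponent $\gamma/(1+\gamma)$ equals $1/2$; likewise, for the level variance in case~(d) you can invoke Theorem~\ref{thm:summary}(d) directly with $p=2$ rather than passing through $p=1$.
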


\begin{rem}
\label{rem:optimal_SBG_MLMC}
By~\eqref{def:mu} and~\eqref{eq:MLMC_const} we note that 
$\kappa_m$ in Proposition~\ref{prop:optimal_SBG_MLMC}(a) is 
bounded by (and typically proportional to) 
$C_0 \epsilon/|\log\epsilon|$. Moreover, if 
$\kappa_m=e^{-r(m-1)}$ for some $r>0$, then the constant $C_0$ 
does not depend on the rate $r$. A similar statement holds for (b), 
(c) and (d), see Table~\ref{tab:MC} above. 
\end{rem}

It remains to choose the parameters 
$(n_j)_{j\in\N\cup\{0\}}$ and $(\kappa_j)_{j\in\N}$
for the estimator in~\eqref{eq:MLMC}.
Since we require the bias to vanish 
geometrically fast,  we set $\kappa_j=e^{-r(j-1)}$ 
for $j\in\N$ and some $r>0$. The value of the rate $r$ 
in Theorem~\ref{thm:SBG_MLMC} below is obtained by minimising 
the multiplicative constant in the complexity $\C_\ML(\epsilon)$.
Note that $n_j$ does not affect the bias (nor the bound $B(j)$) of 
$\Upsilon_{\ML}$. By Proposition~\ref{prop:optimal_SBG_MLMC}, 
$n_j$ may be as small as a multiple of 
$\log(1/\ov\sigma_{\kappa_j}^2)$ without affecting the asymptotic 
behaviour of the level variances $V(j)$ and as large as 
$\ov\nu(\kappa_{j+1})$ without increasing the asymptotic 
behaviour of the cost of each level $C(j)$. Moreover, to ensure that 
the term $\sigma^22^{-n_j}$ in the level variances 
(see Theorem~\ref{thm:summary} above) decays geometrically, 
it suffices to let $n_j$ grow at least linearly in $j$. In short, there is 
large interval within which we may choose $n_j$ without it having any 
effect on the asymptotic performance of the MLMC estimation 
(see Theorem~\ref{thm:SBG_MLMC} below). 
The choice $n_j=n_0+\cl{\max\{j,\log^2(1+\ov\nu(\kappa_{j+1})T)\}}$, 
for $j\in\N$, 
in the numerical examples of Section~\ref{sec:numerics}
fall within this interval
(recall $\cl{x}=\inf\{j\in\Z:j\ge x\}$ for $x\in\R$). 


\begin{thm}\label{thm:SBG_MLMC}
Suppose $q\in(0,2]$ and $c>0$ satisfy $\ov\nu(\kappa)
\le c\kappa^{-q}$ and $\ov\sigma^2_\kappa\le c\kappa^{2-q}$ 
for all $\kappa\in(0,1]$. Pick $r>0$, set $\kappa_j:=e^{-r(j-1)}$ and 
assume that $\max\{j,\log_{2/3}(\ov\sigma_{\kappa_j}^4)\}\le n_j 
\le C\ov\nu(\kappa_{j+1})$ for some $C>0$ and all sufficiently large 
$j\in\N$. Then, in cases (a)--(d) below, there exists  a constant $C_r>0$ such 
that, for all $\epsilon\in(0,1)$, the MLMC estimator $\Upsilon_\ML$ 
defined in~\eqref{eq:MLMC}, with parameters given 
by~\eqref{eq:bias_m_value}-\eqref{eq:MLMC_const}, is $L^2$-accurate at level $\epsilon$ 
with the stated expected computational complexity $\C_\ML(\epsilon)$. 
Moreover, $C_r$ is minimal for $r := (2/|a|)\log(1+|a|/q)
\cdot\1_{\R\setminus\{0\}}(a) + (2/q)\cdot \1_{\{0\}}(a)$,
	with $a\in\R$ given explicitly in each case (a)--(d).\\ 
\item[\nf{(a)}] Let $g\in\Lip_K(\R^2)\cup\locLip_K(\R^2)$ for $K>0$ 
and $f:(x,z,t)\mapsto g(x,z)$. Define $a:=2(q-1)$ and 
$b:=\1\{\sigma=0\} 
+ \1\{\sigma\ne 0\}\cdot (\1_{\{g\in\Lip_K(\R^2)\}}\cdot1/(3-q) 
\1_{\{g\notin\Lip_K(\R^2)\}}\cdot 2/(4-q))$, then 
\begin{equation}\label{eq:SBG_MLMC_1}
	\C_\ML(\epsilon)
	\le \frac{C_r}{\epsilon^{2+a^+b}}\big(1
	+ \log^2\epsilon\cdot\1_{\{1\}}(q)
	+ |\log\epsilon|^{(a/2)(1+\1\{g\in\Lip_K(\R^2)\})}\cdot\1_{(1,2]}(q)\big).
\end{equation}
\item[\nf{(b)}] Let $f:(x,z,t)\mapsto g(x,z)$ where $g\in\BT_1(y,K,M)$ 
for some $y<0$ and $K,M\geq0$, such that 
Assumption~(\nameref{asm:(H)}) is satisfied by $y$ and some 
$C,\gamma>0$. 
Define $a:=2\frac{q(1+\gamma)-\gamma}{2+\gamma}
	\in(-\tfrac{2\gamma}{2+\gamma},2]$ and 
$b:=(1/2+1/\gamma)(\1\{\sigma=0\}
	+\1\{\sigma\ne 0,q<1\}\cdot 4/(9-3q)
	+\1\{\sigma\ne 0,q\ge 1\}\cdot 2/(4-q))$, then 
\begin{equation}\label{eq:SBG_MLMC_2}
	\C_\ML(\epsilon)
	\le \frac{C_r}{\epsilon^{2+a^+b}}\big(
	1
	+ \log^2\epsilon
	\cdot\1_{\{\gamma/(1+\gamma)\}}(q)
	+ |\log\epsilon|^a
	\cdot\1_{(\gamma/(1+\gamma),1)}(q)
	+ |\log\epsilon|^{a/2}
	\cdot\1_{[1,2]}(q)\big),
\end{equation}
\item[\nf{(c)}] Let $f:(x,z,t)\mapsto g(t)$ where $g\in\Lip_K(\R)$, 
$K>0$, and let Assumption~(\nameref{asm:(O)}) hold for some 
$\delta\in(0,2]$. Define 
$a:=q-(1-\tfrac{q}{2})\min\{\tfrac{1}{2},\tfrac{2\delta}{2-\delta}\}$ 
and $b:=\min\{2/\delta,\max\{3/2,1/\delta\}\}$, then 
\begin{equation}\label{eq:SBG_MLMC_3}
	\C_\ML(\epsilon)
	\le \frac{C_r}{\epsilon^{2+a^+b}}
	\begin{cases}
		1
		+\log^2\epsilon\cdot\big(
		\1_{(0,2/5)}(\delta)\1_{\{\delta\}}(q)
		+\1_{\{2\}}(\delta)\1_{\{2/5\}}(q)\big),
		&\delta\in(0,2]\setminus\{\tfrac{2}{5},\tfrac{2}{3}\},\\
		|\log\epsilon|\cdot\1_{(2/5,2]}(q)
		+|\log\epsilon|^3\cdot\1_{\{2/5\}}(q),
		&\delta=2/5,\\
		|\log\epsilon|^a,
		&\delta=2/3.
	\end{cases}
\end{equation}
\nf{(d)} Fix $s\in(0,T)$ and let $\delta\in(0,2]$ satisfy 
Assumption~(\nameref{asm:(O)}). Define 
$a:=q-(1-\tfrac{q}{2})\min\{\tfrac{1}{4},\tfrac{\delta}{2-\delta}\}$ and 
$b:=\min\{4/\delta,\max\{3,2/\delta\}\}$, then for any $K,M\ge 0$ 
and $f\in\BT_2(s,K,M)$, we have 
\begin{equation}\label{eq:SBG_MLMC_4}
\C_\ML(\epsilon)
\le\frac{C_r}{\epsilon^{2+a^+b}}
\begin{cases}
	1
	+\log^2\epsilon\cdot\1_{\{2/9\}}(q),
	&\delta=2,\\
	1
	+\sqrt{|\log\epsilon|}\cdot\1_{\{2/5\}}(\delta)
	+|\log\epsilon|^{a/2}\cdot\1_{\{2/3\}}(\delta),
	&\delta\in(0,2).
\end{cases}
\end{equation}
\end{thm}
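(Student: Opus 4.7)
The plan is to substitute the bounds on the bias $B(j)$, level variance $V(j)$ and per-level cost $C(j)$ from Proposition~\ref{prop:optimal_SBG_MLMC} into the MLMC cost formula
\[
\C_\ML(\epsilon)\le\frac{2}{\epsilon^{2}}\Bigl(\sum_{j=0}^{m}\sqrt{V(j)C(j)}\Bigr)^{2}+\sum_{j=0}^{m}C(j)
\]
arising from~\eqref{eq:MLMC_const}, to set $\kappa_j=e^{-r(j-1)}$ and then to optimise the resulting multiplicative constant over $r>0$.

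First, using $\ov\nu(\kappa)\le c\kappa^{-q}$, $\ov\sigma^{2}_\kappa\le c\kappa^{2-q}$ and the assumed range for $n_j$, every term of the form $2^{-n_j}$ or $(2/3)^{n_j/2}$ appearing in Proposition~\ref{prop:optimal_SBG_MLMC} is absorbed into the corresponding power of $\ov\sigma_{\kappa_j}$, producing $C(j)\le c_1 e^{rqj}$ and $V(j)\le c_2 e^{-rpj}(1+\text{log factor})$ for an explicit payoff-dependent $p\ge 0$; by inspection, the exponent $a$ stated in the theorem equals $q-p$, and therefore $\sqrt{V(j)C(j)}\asymp e^{rq/2}\kappa_j^{-a/2}$. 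Next I would invert the bias constraint $B(m)\le\epsilon/\sqrt{2}$ using the explicit formulae for $\mu_1,\mu_2,\mu^\tau_*$ from Theorem~\ref{thm:Wd-triplet}, producing $\kappa_m\asymp\epsilon^{1/s}|\log\epsilon|^{\alpha/s}$ for payoff-dependent $s,\alpha\ge 0$ and determining $m$ via $r(m-1)=|\log\kappa_m|$. Summing the geometric series in $j$ gives, for $a>0$, the dominant contribution $\sum_{j=1}^{m}\sqrt{V(j)C(j)}\asymp e^{r(q+a)/2}\kappa_m^{-a/2}/(e^{ra/2}-1)$; the borderline $a=0$ replaces this by the arithmetic $m\asymp|\log\epsilon|$, and $a<0$ by the convergent $e^{rq/2}/(1-e^{ra/2})$ which is $O(1)$ in $\epsilon$. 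After squaring and adding the ceiling contribution $\sum_j C(j)\asymp\kappa_m^{-q}$, whose power of $\epsilon^{-1}$ is strictly smaller than that of the MLMC term in all the regimes of~(a)--(d), one obtains the bound $\C_\ML(\epsilon)\le K_r\epsilon^{-2-a^+ b}\times(\text{log factors})$ with the power $b$ read off case by case from the formulae for $\mu_1,\mu_2,\mu^\tau_*$.

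The log-factors in~\eqref{eq:SBG_MLMC_1}--\eqref{eq:SBG_MLMC_4} arise from three separate sources: the logarithmic factors inside $\mu_1,\mu_2,\mu^\tau_*$, inherited via the bias inversion as $|\log\epsilon|^{\alpha/s}$; the degenerate critical cases where $a=0$, which turn the MLMC geometric sum into an arithmetic one of length $m\asymp|\log\epsilon|$ and produce the extra $\log^2\epsilon$ or $|\log\epsilon|^{a/2}$ factors; and the exceptional values $\delta\in\{2/5,2/3\}$ in cases~(c)--(d), where the definition in~\eqref{def:mu_tau_delta} and Proposition~\ref{prop:optimal_SBG_MLMC}(c)--(d) introduce additional $(1+|\log\kappa_j|)$ and $\log^+(\cdot)$ factors. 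Finally, minimising $K_r$ reduces, in the dominant regime $a\ne 0$, to the one-dimensional minimisation of $e^{r(q+a)}/(e^{ra/2}-1)^2$ when $a>0$ (respectively $e^{rq}/(1-e^{ra/2})^2$ when $a<0$); setting the logarithmic derivative in $r$ to zero yields the stationary equations $qe^{ra/2}=q+a$ and $(q-a)e^{ra/2}=q$ respectively, both equivalent to $r=(2/|a|)\log(1+|a|/q)$, while the case $a=0$ recovers $r=2/q$ as the limit of $(2/|a|)\log(1+|a|/q)$ as $|a|\to 0$. The $q$-dependence of the optimal $r$ enters through the factor $e^{rq/2}$ in $\sqrt{V(j)C(j)}$, which traces back to $C(j)\propto\kappa_{j+1}^{-q}=e^{rq}\kappa_j^{-q}$. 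The main bookkeeping obstacle, beyond this calculus step, is to verify case-by-case across the sub-cases $\sigma\ne0$ and $\sigma=0$ in~(a)--(b) and the $\delta$-regimes in~(c)--(d) that the combination of all the logarithmic factors matches exactly the exponents in~\eqref{eq:SBG_MLMC_1}--\eqref{eq:SBG_MLMC_4}, especially at the critical values of $q$ and $\delta$ where the geometric ratio $a$ either vanishes or the definitions of $\mu^\tau_\delta$ acquire extra $\log$ corrections.
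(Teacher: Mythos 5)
Your proposal is correct and follows essentially the same route as the paper's proof: substitute the bounds $B,V,C$ from Proposition~\ref{prop:optimal_SBG_MLMC}, absorb the $2^{-n_j}$ and $(2/3)^{n_j/2}$ terms using the assumed range of $n_j$, reduce to the geometric sum $\sum_j e^{rq/2}e^{ar(j-1)/2}$, treat the regimes $a<0$, $a=0$, $a>0$ separately with the bias inversion fixing $m$, and minimise the leading constant over $r$. Your optimisation of $e^{r(q+a)}/(e^{ar/2}-1)^2$ for $a>0$ (keeping the extra $e^{ar}$ coming from the upper summation limit) is in fact the version whose stationary equation $qe^{ar/2}=q+a$ reproduces the stated optimal $r=(2/|a|)\log(1+|a|/q)$.
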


\begin{rem}
\label{rem:SBG_MLMC_BG}
For most models either $\beta=\delta$ or $\sigma>0$, 
implying $a^+b\in[0,2]$ in parts (a) and (c), 
$a^+b\in[0,2(1/2+1/\gamma)]$ in part (b) (with $\gamma$ typically 
equal to $1$) and $a^+b\in[0,4]$ in part (d). 
\end{rem}

\begin{proof}[Proof of Theorem~\ref{thm:SBG_MLMC}]
\label{proof:SBG_MLMC}
Note that $\kappa_1=1$ by definition 
independently or $r>0$, thus making both the variance $\V[D_0^i]$ 
and the cost of sampling of $D_0^i$ independent of $r$. We may 
thus ignore the $0$-th term in the bound 
$\epsilon^{-2} (\sum_{j=0}^m\sqrt{V(j)C(j)})^2$ on the complexity 
$\C_\ML(\epsilon)$  derived in Appendix~\ref{subsec:MLMC}. 
Since $m$ is given by~\eqref{eq:bias_m_value}, 
by Table~\ref{tab:MC} and Remark~\ref{rem:optimal_SBG_MLMC}, 
the function $\mu:(0,1)\mapsto(0,\infty)$ given by 
\begin{equation}
\label{def:m_bar}
\ov{m}(\epsilon)
:=\begin{cases}
(b|\log\epsilon|+c\log|\log\epsilon|)/r,
&\text{in parts (a) \& (b) and, if $\delta=\frac{2}{3}$, in parts (c) \& (d)},\\
b|\log\epsilon|/r,
&\text{in parts (c) \& (d) if $\delta\ne\frac{2}{3}$},
\end{cases}
\end{equation}
\[
\text{where}\qquad
c=\begin{cases}
1,
&\text{in parts (a) \& (c)},\\
1/2,
&\text{in parts (b) \& (d)},
\end{cases}
\]
satisfies $m\le\ov{m}(\epsilon)+C'/r$ for all 
$\epsilon\in(0,1)$ and $r>0$, where the constant $C'>0$ is 
independent of $r>0$. Thus, we need only study the growth rate of 
\[
\phi(\epsilon)
:=\sum_{j=1}^{\cl{\ov{m}(\epsilon)}}\sqrt{C(j)V(j)}
=\sum_{j=1}^{\cl{\ov{m}(\epsilon)}}
	\sqrt{(n_j+\ov\nu(\kappa_{j+1})T)V(j)},
\quad\text{as }\epsilon\to0, 
\]
because $\C_\ML(\epsilon)$ is bounded by a constant multiple of $\epsilon^{-2}\phi(\epsilon)^2$. 
In the cases where $V(j)$ contains a term of the form $e^{-sn_j}$ 
for some $s>0$ (only possible if $\sigma\neq0$), the 
product $n_je^{-sn_j}\le e^{-sn_j/2}$ vanishes geometrically fast 
since $n_j\ge j$ for all large $j$. Thus, the corresponding 
component in $\phi(\epsilon)$ is bounded as $\epsilon\to 0$ and 
may thus be ignored. 
By Proposition~\ref{prop:optimal_SBG_MLMC}, 
in all cases we may assume that 
$V(j)$ is bounded by a multiple of a power of 
$\ov\sigma_{\kappa_j}^2$
and $C(j)$
is dominated by a multiple of  $\ov\nu(\kappa_{j+1})$.

Since $\ov\nu(\kappa)\le c\kappa^{-q}$ and 
$\ov\sigma^2_{\kappa}\le c\kappa^{2-q}$ for $\kappa\in(0,1]$, 
Proposition~\ref{prop:optimal_SBG_MLMC} implies 
\begin{equation*}
\phi(\epsilon)
\le K_*\begin{cases}
\sum_{j=1}^{\cl{\ov{m}(\epsilon)}}
	\sqrt{\kappa_{j+1}^{-q}\kappa_j^{2-q}}, 
& \text{in part (a)},\\
\sum_{j=1}^{\cl{\ov{m}(\epsilon)}}
	\sqrt{\kappa_{j+1}^{-q}	
	\kappa_j^{(2-q)\gamma/(2+\gamma)}}, 
& \text{in part (b)},\\
\sum_{j=1}^{\cl{\ov{m}(\epsilon)}}
\sqrt{\kappa_{j+1}^{-q}
	\kappa_j^{(2-q)\min\{1/2,2\delta/(2-\delta)\}}
	(1+|\log\kappa_j|\1_{\{2/5\}}(\delta))}, 
& \text{in part (c)},\\
\sum_{j=1}^{\cl{\ov{m}(\epsilon)}}
\sqrt{\kappa_{j+1}^{-q}	
	\kappa_j^{(2-q)\min\{1/4,\delta/(2-\delta)\}}
	(1+\sqrt{|\log\kappa_j|}\1_{\{2/5\}}(\delta))}, 
& \text{in part (d)},
\end{cases}
\end{equation*}
for some constant $K_*>0$ independent of $r$ and all 
$\epsilon\in(0,1)$, where in part (a) we used the fact that 
$\ov\sigma_{\kappa}\kappa\le\sqrt{c}\kappa^{2-q/2}$ for all 
$\kappa\in(0,1]$. 

(a) Recall that $\kappa_j=e^{-r(j-1)}$ and 
$\kappa_{j+1}=e^{-r(j-1)-r}$, implying 
\begin{equation}
\label{eq:sigma_times_nu}
\kappa_{j+1}^{-q}\kappa_j^{2-q}
=e^{rq} e^{ar(j-1)},
	\quad\text{ for all $j\in\N$, where $a=2(q-1)$},
\end{equation}

Suppose $a<0$, implying $q\in(0,1)$. 
By~\eqref{eq:sigma_times_nu}, the sequence 
$(\kappa_{j+1}^{-q}\kappa_j^{2-q})_{j\in\N}$ decays geometrically 
fast. This implies that 
$\lim_{\epsilon\downarrow0}\phi(\epsilon)<\infty$ and gives 
the desired result. Moreover, the leading constant $C_r$, 
as a function of $r$, is proportional to 
$e^{rq}/(1-e^{ar/2})^2$ as $\epsilon\downarrow0$. 
Since $a\ne 0$ for $q\in(0,1)$, the minimal value of $C_r$ is 
attained when $r=(2/|a|)\log(1+|a|/q)$. 

Suppose $a=0$, implying $q=1$. 
By~\eqref{eq:sigma_times_nu} and~\eqref{def:m_bar}, 
$\phi(\epsilon)\le K_*e^{r/2}
(b|\log\epsilon|+\log|\log\epsilon|)/r$, giving the desired result. 
As before, the leading constant $C_r$, as a function of $r$ is 
proportional to $e^r/r^2$ as $\epsilon\to 0$, attaining its minimum 
at $r=2$. 

Suppose $a>0$, implying $q\in(1,2]$. By~\eqref{eq:sigma_times_nu}
and~\eqref{def:m_bar}, it similarly follows that 
\[
\phi(\epsilon)^2
\le\frac{K_*^2e^{rq}}{(e^{ar/2}-1)^2}e^{a
		(b|\log\epsilon|+\log|\log\epsilon|)}
=\frac{K_*^2e^{rq}}{(e^{ar/2}-1)^2}\epsilon^{-ab}|\log\epsilon|^a.
\]
The corresponding result follows easily, where the leading constant 
$C_r$, as a function of $r$, is proportional to 
$e^{rq}/(e^{ar/2}-1)^2$ as $\epsilon\downarrow0$ and attains 
its minimum at $r=(2/a)\log(1+a/q)$, concluding the proof of (a). 

(b) As before, we have 
\begin{equation}
\label{eq:sigma_times_nu_2}
\kappa_{j+1}^{-q}
	\kappa_j^{(2-q)\gamma/(2+\gamma)}
=e^{rq} e^{ar(j-1)},
\quad\text{ for all $j\in\N$, 
	where $a=2\frac{q(1+\gamma)-\gamma}{2+\gamma}$}.
\end{equation} 

Suppose $a<0$, implying $q<\gamma/(1+\gamma)$. 
Then $\lim_{\epsilon\downarrow0}\phi(\epsilon)<\infty$ 
by~\eqref{eq:sigma_times_nu_2}, implying the claim. 
Moreover, $C_r$ is minimal for $r=(2/|a|)\log(1+|a|/q)$ 
as in part (a). 

Suppose $a=0$, implying $q=\gamma/(1+\gamma)$. 
Then $\phi(\epsilon)^2
\le K_*^2r^{-2}e^{rq}(b|\log\epsilon|+\log|\log\epsilon|/2)^2$, and 
the leading constant is minimised when $r=2/q=2+2/\gamma$. 

Suppose $a>0$, implying $q>\gamma/(1+\gamma)$. 
By~\eqref{eq:sigma_times_nu_2}, we have 
\[
\phi(\epsilon)^2
\le\frac{K_*^2e^{rq}}{(e^{ar/2}-1)^2}
	e^{a(b|\log\epsilon|+\log|\log\epsilon|/2)}
=\frac{K_*^2e^{rq}}{(e^{ar/2}-1)^2}\epsilon^{-ab}
	|\log\epsilon|^{a/2},
\]
and the leading constant is minimal for $r=(2/a)\log(1+a/q)$. 

In parts (c) and (d), note that $a<0$ if and only if $\delta=2$ 
(i.e. $\sigma\ne 0$). Analogous arguments as in (a) and (b), 
complete the proof of the theorem.
 \end{proof}

\appendix

\section{MC and MLMC estimators}
\subsection{Monte Carlo  estimator}
\label{app:MonteCarlo}

Consider square integrable random variables $P,P_1,P_2,\ldots$.
Let $\{P_k^i\}_{k,i\in\N}$ be independent with $P_k^i\eqd P_k$ 
for $k,i\in\N$. Suppose $|\E P-\E P_k|\leq B(k)$ for all $k\in\N$ 
and assume $C(n)$ bounds the expected computational cost of 
simulating a single value of $P_n$. Pick arbitrary $\epsilon>0$ and 
define $m:=\inf\{k\in\N:B(k)<\epsilon/\sqrt{2}\}$,
	$N:=\cl{2\V[P_m]/\epsilon^2}$.
Then the Monte Carlo estimator  
\[
\hat{P}:=\frac{1}{N}\sum_{i=1}^N P_m^i\quad
\text{of $\E P$ is $L^2$-accurate at level $\epsilon$,}\quad\text{i.e.}\>\>
\E \big[(\hat{P} - \E P)^2\big]^{1/2}<\epsilon,
\]
since $\E[(\hat{P}-\E P)^2]=\V[\hat{P}]+(\E P_m-\E P)^2$ and the 
variance satisfies $\V[\hat{P}]<\epsilon^2/2$ (by the definition 
of $N$), while $(\E P_m-\E P)^2<\epsilon^2/2$ (by the definition of 
$m$). Thus, if the bound $B(m)$ on the bias is asymptotically sharp, 
the formulae for $m,N\in\N$ above result in the computational 
complexity given by 
$\C_\MC(\epsilon)=NC(m)=\cl{2\V[P_m]/\epsilon^2}C(m)$. 
Although in practice one does not have access to the variance 
$\V[P_m]$, it is typically close to $\V[P]$ (which often has an 
\textit{a priori} bound) or can be estimated via simulation.

\subsection{Multilevel Monte Carlo  estimator}
\label{subsec:MLMC}
This section is based on~\cite{Heinrich_MLMC,MR2436856}.  
Let $P,P_1,P_2,\ldots$ be square integrable random variables 
and set $P_0:=0$. Let $\{D_k^i\}_{k\in\N\cup\{0\},i\in\N}$ be independent random variables satisfying
$D_k^i\eqd D_k^1$ and $\E[D_k^i]=\E[P_{k+1}-P_k]$ 
for  any
$k\in\N\cup\{0\}$ and $i\in\N$. For $k\in\N\cup\{0\}$, 
assume that the bias and level variance satisfy $B(k)\ge|\E P-\E P_k|$ and $V(k)\ge\V[D_k^1]$ for some functions $k\mapsto B(k)$ and $k\mapsto V(k)$, respectively,  and let 
$C(k)$ bound the expected computational complexity of simulating 
a single value of $D_k^1$. For $m\in\N\cup\{0\}$ and any 
$N_0,\ldots,N_m\in\N$, the MLMC estimator 
\begin{equation*}
\hat{P} 
:= \sum_{k=0}^m \frac{1}{N_k}
\sum_{i=1}^{N_k}D_k^i
\end{equation*}
satisfies 
$\E[(\hat{P}-\E P)^2]=\V[\hat{P}]+(\E P_m-\E P)^2$, since 
$\E\hat{P}=\E P_m$. Thus, for any $\epsilon>0$, the inequality 
$\E\big[(\hat{P} - \E P)^2\big]<\epsilon^2$ holds if the number of 
levels in $\hat P$ equals 
\begin{equation}
\label{eq:bias_m_value}
m := \inf\{k\in\N\cup\{0\}:B(k)<\epsilon/\sqrt{2}\} 
\end{equation}
and the variance is bounded by 
$\V[\hat{P}] = \sum_{k=0}^m\V[D_k^1]/N_k
\le \sum_{k=0}^mV(k)/N_k\leq\epsilon^2/2$.
Since the computational complexity of $\hat P$, 
$\C_\ML(\epsilon)=\sum_{k=0}^m C(k)N_k$, is linear in the number 
of samples $N_k$ on each level $k$, we only require that the 
variance $\V[\hat{P}]$ be of the same order as  
$\epsilon^2/2=\sum_{k=0}^mV(k)/N_k$. Then, by the 
Cauchy-Schwartz inequality, we have
\[
\C_\ML(\epsilon) \epsilon^2/2
=\bigg(\sum_{k=1}^m C(k) N_k\bigg)
	\bigg(\sum_{k=0}^m\frac{V(k)}{N_k}\bigg)
\geq\bigg(\sum_{k=0}^m\sqrt{C(k)V(k)}\bigg)^2,
\]
where the lower bound does not depend on  $N_0,\ldots,N_m$ 
and is attained if and only if 
\begin{equation}\label{eq:MLMC_const}
N_k := \cl{2\epsilon^{-2}
	\sqrt{\frac{V(k)}{C(k)}}\sum_{j=0}^m\sqrt{C(j)V(j)}}
\text{ for }k\in\{0,\ldots,n\},
\end{equation}
ensuring that the expected cost is a multiple of 
\begin{equation}
\label{eq:ML_cost}
\C_\ML(\epsilon)
=2\epsilon^{-2}\big(\sum_{k=0}^m \sqrt{C(k)V(k)}\big)^2.
\end{equation} 
Moreover, if $B$, $V$ and $C$ are asymptotically sharp, the 
formulae in~\eqref{eq:MLMC_const}, up to constants, minimise the 
expected computational complexity. Consequently, the 
computational complexity analysis of the MLMC estimator is 
reduced to the analysis of the behaviour of 
$\sum_{j=0}^m\sqrt{C(j)V(j)}$ as $\epsilon\downarrow0$.

\bibliographystyle{amsalpha}
\bibliography{References}

\thanks{
	\noindent JGC and AM are supported by The Alan Turing Institute under the EPSRC grant EP/N510129/1; 
	AM supported by EPSRC grant EP/P003818/1 and the Turing Fellowship funded by the Programme on Data-Centric Engineering of Lloyd's Register Foundation;
	JGC supported by CoNaCyT scholarship 2018-000009-01EXTF-00624 CVU 699336. 
}

\end{document}